\documentclass{jams-l}

\usepackage[utf8]{inputenc}

\usepackage[english]{babel}
\usepackage{graphicx}
\usepackage{amssymb, amsfonts}

\usepackage[T1]{fontenc}

\usepackage{amsthm}
\usepackage{amsmath}
\usepackage{mathrsfs}
\usepackage{mathtools}
\usepackage{tensor}
\usepackage{esint}
\usepackage{tikz}
\usepackage{tikz-cd}
\usepackage{indentfirst}
\usepackage{multicol}

\usepackage{float}
\usepackage{enumerate}
\usepackage{tabularx}
\usepackage{appendix}
\usepackage{url}

\usepackage{listings}

\usepackage[most]{tcolorbox}
\usepackage{tikz,tikz-3dplot,tikz-cd,tkz-tab,tkz-euclide,pgf,pgfplots}
\usepackage{multicol}
\usepackage[bottom,multiple]{footmisc} 
\usepackage{hyperref}
 \hypersetup{colorlinks = true, linkcolor = blue, citecolor= cyan, urlcolor= blue}
\usepackage[nameinlink,noabbrev,capitalise]{cleveref}

\def\R{{\mathbb R}}

\def\S{{\Sigma}}
\def\N{{\mathbb N}}

\def\p{\partial}

\def\a{\alpha}
\def\b{\beta}
\def\g{\gamma}
\def\G{\Gamma}
\def\d{\delta}
\def\D{\Delta}
\def\e{\varepsilon}
\def\r{\rangle}
\def\t{\theta}
\def\l{\langle}
\def\L{\Lambda}

\def\o{\omega}

\def\n{\nabla}

\def\dim{\mathrm{dim\,}}

\def\s{\mathfrak{S}}

\def\W{\mathcal{W}}

\def\NN{\mathfrak{N}}

\def\GG{\mathscr{G}}
\def\A{\mathscr{A}}

\def\l{\left\langle}
\def\r{\right\rangle}
\def\2{\mathrm{I\!I}}
\def\1{\mathrm{I}}

\DeclareMathOperator{\Span}{span}
\DeclareMathOperator{\tr}{tr}

\makeatletter
\newsavebox{\@brx}
\newcommand{\llangle}[1][]{\savebox{\@brx}{\(\m@th{#1\langle}\)}%
  \mathopen{\copy\@brx\kern-0.5\wd\@brx\usebox{\@brx}}}
\newcommand{\rrangle}[1][]{\savebox{\@brx}{\(\m@th{#1\rangle}\)}%
  \mathclose{\copy\@brx\kern-0.5\wd\@brx\usebox{\@brx}}}
\makeatother

\newtheorem{theorem}{Theorem}[section]
\newtheorem{lemma}[theorem]{Lemma}

\theoremstyle{definition}
\newtheorem{definition}[theorem]{Definition}

\newtheorem{proposition}[theorem]{Proposition}
\newtheorem{corollary}[theorem]{Corollary}

\theoremstyle{remark}
\newtheorem{remark}[theorem]{Remark}

\numberwithin{equation}{section}

\begin{document}

\title[High codimension MCF with one spacelike codimension]{High codimension mean curvature flow of spacelike-convex submanifolds with one spacelike codimension}


\author{Ben Andrews}
\address{Mathematical Sciences Institute, Australian National University, Canberra ACT 2601, Australia}
\curraddr{}
\email{ben.andrews@anu.edu.au}
\thanks{}

\author{Qiyu Zhou}
\address{Mathematical Sciences Institute, Australian National University, Canberra ACT 2601, Australia}
\curraddr{}
\email{qiyu.zhou@anu.edu.au}
\thanks{}

\subjclass[2020]{53E10, 53A35}

\date{}

\dedicatory{}

\begin{abstract}
In the pseudo-Euclidean space $\R^{n+1,k}$, we consider the mean curvature flow 
of $n$-dimensional spacelike submanifolds
with spacelike codimension one and arbitrary timelike codimension $k$. We show that if the initial submanifold is compact and \emph{spacelike-convex} (the acceleration along every geodesic is strictly spacelike), then natural quantities measuring curvature pinching and noncollapsing are preserved under the flow. Moreover, we prove an analogue of the Huisken and Gage-Hamilton theorems in this setting, which states that the mean curvature flow deforms any such submanifold to a point in finite time, and that the solution is asymptotic to a shrinking sphere in a maximally spacelike affine subspace $\R^{n+1,0}\subset \R^{n+1,k}$.   
\end{abstract}

\maketitle

\section{Introduction}

The evolution of hypersurfaces by their mean curvature has been studied extensively since Huisken \cite{huisken84}, who proved that closed embedded convex hypersurfaces in Euclidean space contract to round points under mean curvature flow. 

More recently, the mean curvature flow of submanifolds with higher codimension has been studied. The first author and Baker \cite{andrewsbaker2010} proved an analogous result to that of \cite{huisken84} in any codimension of Euclidean background under a quadratic curvature pinching condition $|h|^2 \leq c |H|^2$ on the initial data, where 
$h$ is the second fundamental form, $H$ is the mean curvature, and $c$ is a dimensional constant. Since then, Baker and Nguyen \cite{bakernguyen2023} have explored the flow in the case where the background is a sphere of arbitrary codimension, where the sharp quartic pinching condition for the sphere is found in \cite{vogiatzi2024sharpquarticpinchingmean}. Other directions in high codimension have been explored, including ancient solutions \cite{lynchnguyen2021, Risa_2018}, singularity formation \cite{vogiatzi2023singularitymodelshighcodimension}, flow in complex projective space \cite{vogiatzi2023meancurvatureflowhigh}, and flow in pseudo-Euclidean space of submanifolds with maximal spacelike dimension \cite{Lambert_2019}. 

In this paper, we consider the mean curvature flow (MCF) of embedded spacelike submanifolds under the assumption that the normal subspace at each point has signature $(1,k)$, so that the codimension may be arbitrary but no two spacelike normal vectors are orthogonal.  This implies that the cone of strictly spacelike normal vectors at any chosen point separates into two connected components.  In addition, we impose the condition of \emph{spacelike convexity}, which is the condition that the acceleration vector along any geodesic of the submanifold is strictly spacelike, or equivalently that the second fundamental form $h(v,v)$ is strictly spacelike for any non-zero tangent vector $v$.  At each point, the values of $h(v,v)$ must lie in the same connected component of the spacelike normal cone, which we call the \emph{inward normal cone}.
 Spacelike-convex submanifolds are a natural analogue of convex hypersurfaces in Euclidean space:  For example, the orthogonal projection of such a submanifold onto any maximal spacelike subspace $\R^{n+1,0} \subset \R^{n+1,k}$ is locally convex, and the sectional curvature of the induced metric is positive. 
 More interestingly for our purposes, the condition of spacelike convexity leads to a 
 natural pinching condition for the second fundamental form: There exists $\a > 0$ such that $h(v,v) - \a H$ is inward spacelike for all unit $v\in T\S$, and similarly there exists $\b > 0$ such that $h(v,v) - \b H$ lies in the outward normal cone.  We call the largest $\a$ and smallest $\b$ for which this holds the inward and outward pinching ratios respectively.  Our first result is that such pinching is preserved under the flow. 

\begin{proposition}
    Suppose that $\S$ is compact and $F: \S^n \times I \to \R^{n+1,k}$ is a solution to MCF with $F(\cdot,0)$ spacelike and spacelike-convex. Then the inward pinching ratio $\a$ is non-decreasing in $t$, and the outward pinching ratio $\b$ is non-increasing in $t$.
\end{proposition}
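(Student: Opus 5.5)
Throughout, write $\langle\cdot,\cdot\rangle$ for the ambient inner product, which on the normal bundle has signature $(1,k)$. We use a vector-valued maximum principle in the spirit of Hamilton's tensor maximum principle, applied to the normal-bundle-valued symmetric tensor
\[
S_{ij}=h_{ij}-\a_0 H g_{ij},
\]
where $\a_0$ is the inward pinching ratio at the initial time, or at any earlier time $t_0$.

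The inward normal cone $C$ is the set of strictly spacelike normal vectors in the chosen component of the timelike-cone complement. It is convex: with a unit spacelike $\nu$ splitting $N=\R\nu\oplus N^-$, we have $C=\{a\nu+w: a>|w|\}$. It is also invariant under parallel transport in the normal connection, since that connection preserves the metric and the cone varies continuously. The statement that $\a$ is non-decreasing then becomes the claim that
\[
S(v,v)\in\bar C \quad\text{for all unit } v
\]
is preserved. Equivalently, $\langle S(v,v),\mu\rangle\ge 0$ for every $\mu$ in the dual cone, which is the closure of $C$ itself by self-duality of the Lorentz cone, with the sign convention $\langle \mu,\mu'\rangle>0$ for $\mu,\mu'\in C$. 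The outward statement for $\b$ is handled identically, with the roles of the two cones exchanged.

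\textbf{Step 1.} Compute the evolution of $h$ under MCF in $\R^{n+1,k}$, using the time-dependent normal connection, as in Andrews--Baker but with signs adjusted for the indefinite normal metric:
\[
\nabla_t h_{ij}=\Delta h_{ij}+\langle h_{ij},h_{pq}\rangle h_{pq}+\langle h_{ip},h_{pq}\rangle h_{qj}+\langle h_{jp},h_{pq}\rangle h_{qi}-2\langle h_{ip},h_{jq}\rangle h_{pq},
\]
and the trace of this for $H$. This gives
\[
\nabla_t S=\Delta S+Q(h),\qquad Q(h)=R(h)-\a_0 \operatorname{tr}R(h)\,g,
\]
where $R$ denotes the reaction terms above. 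Terms from $\partial_t g$ will appear and must be tracked.

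\textbf{Step 2.} Reduce to a null-eigenvector condition. Suppose that at a first time the pinching is sharp: there are a point, a unit $v=e_1$ and a lightlike-boundary direction such that $S(e_1,e_1)\in\partial C$. Then there is a nonzero null $\mu\in\partial C$ with $\langle S(e_1,e_1),\mu\rangle=0$, while $\langle S(u,u),\mu\rangle\ge0$ for all $u$. Extend $e_1$ and $\mu$ by parallel transport, so that $\mu$ stays in $\bar C$. The scalar function $f(x,u)=\langle S(u,u),\mu\rangle$ then attains a spatial minimum $0$ at $(x,e_1)$, and at that point $\Delta f\ge0$. It remains to show that the reaction term satisfies
\[
\langle Q(e_1,e_1),\mu\rangle\ge 0
\]
whenever $\langle S,\mu\rangle\ge0$ with equality at $e_1$. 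To make this strict, we perturb to $\a_0-\e$, or add $\e t$, in the standard way.

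\textbf{Step 3 (the main obstacle).} Establish the algebraic inequality $\langle Q(e_1,e_1),\mu\rangle\ge0$. The first-order condition on $f$ gives $\langle h_{1p},\mu\rangle=0$ for $p\ne1$, so $h^\mu:=\langle h,\mu\rangle$ is diagonal in a suitable frame, with eigenvalue $\lambda_1=\a_0\langle H,\mu\rangle$ in the $e_1$ direction. I would decompose the reaction term as
\[
\langle h_{11},h_{pq}\rangle\langle h_{pq},\mu\rangle
+2\langle h_{1p},h_{pq}\rangle\langle h_{q1},\mu\rangle
-2\langle h_{1p},h_{1q}\rangle\langle h_{pq},\mu\rangle
-\a_0\bigl(\langle H,h_{pq}\rangle\langle h_{pq},\mu\rangle+\cdots\bigr).
\]
The middle term reduces, via diagonality, to $2\lambda_1\langle h_{1p},h_{p1}\rangle$, which cancels against part of the third term. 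The key sign inputs are the following.
\begin{enumerate}[(i)]
\item $\langle a,b\rangle>0$ for $a,b\in C$ (reverse Cauchy--Schwarz in signature $(1,k)$). This controls products such as $\langle h_{11}-\a_0 H, h_{pp}\rangle$, since $h_{pp}\in C$ by spacelike convexity.
\item For off-diagonal entries $h_{1p}$, the combination $h_{11}\pm 2h_{1p}+h_{pp}=h(e_1\pm e_p,e_1\pm e_p)$ lies in $C$, which bounds the possibly timelike components of $h_{1p}$ against the diagonal entries.
\end{enumerate}
I expect the difficulty to be that the off-diagonal terms $-2\langle h_{1p},h_{1p}\rangle$ have indefinite sign in the Lorentzian normal bundle. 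I would first try to write the whole reaction term as $\sum_{p,q}\langle S\text{-type cone vector},\ \text{cone vector}\rangle\,(\ge0)$ plus a gradient-free remainder that vanishes by the first-order conditions. If that fails, I would exploit the freedom to choose $\mu$ among all supporting null vectors, together with second-variation conditions in $u$, which give $\langle S(e_1,e_1)\text{-variation},\mu\rangle$ inequalities, to absorb the off-diagonal contribution.
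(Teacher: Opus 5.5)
Your Steps 1--2 are the right setup. They are essentially the paper's own alternative argument (the boundary case in the proof of the noncollapsing theorem), which pairs $h(v,v)-\delta_-H$ with an inward null normal $N$ and minimises over $v$ and $N$. Your parallel, unnormalised $\mu$ is if anything cleaner, since it avoids the $\overset{\perp}{\nabla}H$ terms produced by the normalisation $\langle N,H\rangle=1$.

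However, Step 3 is the whole content of the proposition, and you leave it open, predicting that the off-diagonal terms $-2\langle h_{1p},h_{1p}\rangle$ have indefinite sign. They do not, and this is the missing idea. Your own first-order condition gives $\langle h_{1p},\mu\rangle=0$, so $h_{1p}\in\mu^\perp$. For a null $\mu$ in signature $(1,k)$ one has $\mu^\perp=\mathbb{R}\mu\oplus(\text{a negative definite subspace})$, hence $|h_{1p}|^2\le 0$.

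With this the computation closes at once. The last three reaction terms trace to zero, so $(\nabla_t-\Delta)H=\langle H,h_{pq}\rangle h_{pq}$ with no further terms. Setting $\lambda_p=\langle h_{pp},\mu\rangle$ in the diagonalising frame,
\[
\langle Q(e_1,e_1),\mu\rangle=\sum_p\langle h_{11}-\alpha_0H,\,h_{pp}\rangle\lambda_p+2\sum_{p>1}(\lambda_1-\lambda_p)\,|h_{1p}|^2 .
\]
Minimality of $e_1$ (simply $f(e_1)=0\le f(e_p)$; no second variation is needed) gives $\lambda_1\le\lambda_p$, so the second sum is nonnegative. For the first sum, $S(e_1,e_1)$ is null or zero and orthogonal to the null vector $\mu$. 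Hence $S(e_1,e_1)=c\mu$ with $c\ge0$, and the sum equals $c\sum_p\lambda_p^2\ge0$.

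Replacing $\alpha_0$ by $\alpha_0-\varepsilon(1+t)$ adds $\varepsilon\langle H,\mu\rangle>0$ to the reaction term and gives the strict inequality. For $\beta$, use $\beta H-h_{11}=c\mu$ and $\lambda_1\ge\lambda_p$ in the same way. This is the same sign mechanism as in both of the paper's arguments; in the proof via $|h(v,v)-\alpha H|^2$, $h(v,e_i)$ is non-spacelike because it is orthogonal to $h(v,v)-\alpha H$.

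Without this observation your proposal does not yet prove the statement. With it, neither the polarisation bound (ii) nor extra freedom in the choice of $\mu$ is required.
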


In \cite{huisken84}, pinching of the form $h(v,v)\geq \alpha Hg(v,v)$ is preserved under the flow and can be improved via Stampacchia iteration, eventually leading to Huisken's theorem. Such an approach may be possible in our setting, but we instead use an alternative approach using noncollapsing estimates, which has the advantage of giving direct control on the global geometry of the evolving submanifolds. Sheng and Wang \cite{shengwang} proved that embedded mean-convex solutions of mean curvature flow in Euclidean space satisfy a noncollapsing estimate:  For any point on the hypersurface there is a touching enclosed ball with curvature comparable to the mean curvature at that point.  In \cite{andrews2011noncollapsingmeanconvexmeancurvature}, the first author found a proof of such an estimate using a direct maximum principle argument, which also provides a corresponding result for balls touching from the exterior.  This argument enables a very direct proof of the result of \cite{huisken84}.  In this paper in \cref{noncollapsing section} we prove a version of noncollapsing for spacelike-convex submanifolds (the first high-codimension setting where such a result is known). This noncollapsing result holds also for the more general class of acausal submanifolds which are spacelike-mean-convex (that is, the mean curvature is strictly spacelike at each point).  We say that an acausal spacelike (mean) convex embedding $F: \S^n \to \R^{n+1,k}$ is interior noncollapsed with parameter $\delta$ if, for any $x\in \S$, the pseudosphere $\left\{\biggr|p\in \R^{n+1,k}: p-F(x)-\delta\frac{H(x)}{|H(x)|^2} \biggr| \leq \frac{\delta}{|H(x)|}\right\}$ has $F(\Sigma)$ in its exterior, touching at $F(x)$. There is a similar definition of exterior noncollapsing. Moreover, two sided noncollapsing indicates that $F(\S)$ is contained in a collection of unions of intersections of null cones at points of a slice of the touching pseudosphere, see \cref{second geometric interpretation of noncollapsing}. 

Our second result is that the flow preserves noncollapsing. 

\begin{theorem}
    Suppose that $\S$ is compact, spacelike (mean) convex and acausal, and $F: \S^n \times I \to \R^{n+1,k}$ is a solution to MCF, and for $n = 1$ we require the curve to have turning number (degree of the Gauss map) $1$ , then noncollapsing is preserved. 
\end{theorem}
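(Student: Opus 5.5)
We follow the two-point maximum principle argument of \cite{andrews2011noncollapsingmeanconvexmeancurvature}, adapted to the pseudo-Euclidean metric $\l\cdot,\cdot\r$ of $\R^{n+1,k}$. For $x\ne y$ in $\S$ write $d=F(y)-F(x)$. Acausality makes $d$ spacelike, so $\l d,d\r>0$. The condition that $F(y)$ lies outside the pseudosphere centred at $F(x)+\delta H(x)/|H(x)|^2$ with radius $\delta/|H(x)|$ is equivalent to
\[
Z(x,y,t)=\frac{|H(x,t)|^2}{2}\,\l d,d\r-\delta\,\l d,H(x,t)\r\ \geq 0 .
\]
Interior noncollapsing with parameter $\delta$ is therefore the statement $Z\geq 0$ on $\S\times\S\times I$. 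The plan is to show that if $Z\geq 0$ at $t=0$, then $Z\geq 0$ for all $t$. Exterior noncollapsing is handled by the same computation with $\delta$ replaced by $-\delta$; two-sided noncollapsing follows by combining the two.

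\textbf{Step 1: the boundary of the domain.} Near the diagonal we expand $d=\e\,dF(v)+\tfrac{\e^2}{2}h(v,v)+O(\e^3)$ with $v$ a unit vector. This gives
\[
Z=\tfrac{\e^2}{2}\bigl(|H|^2-\delta\l h(v,v),H\r\bigr)+O(\e^3).
\]
The leading coefficient is strictly positive as long as $\delta\l h(v,v),H\r<|H|^2$. We control this using the pinching of Proposition~1.1: the outward pinching ratio $\b$ is non-increasing, and $\l h(v,v),H\r\le \b|H|^2$ for a suitable reading of the cone condition, so it suffices that $\delta\b<1$. The initial noncollapsing condition already forces this inequality at $t=0$, and monotonicity of $\b$ preserves it. Hence a first zero of $Z$ cannot approach the diagonal. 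In the merely mean-convex case we instead treat the diagonal as in the Euclidean argument, via the limit of $Z/\l d,d\r$.

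\textbf{Step 2: the interior computation.} Suppose $Z$ first reaches $0$ at an interior pair $(x,y)$ at time $t_0$. There we have $\partial_{y}Z=0$, which forces the tangent space at $y$ to be $\l\cdot,\cdot\r$-orthogonal to $|H|^2 d-\delta H(x)$. We also have $\partial_{x}Z=0$. We choose orthonormal frames $\{e_i\}$ at $x$ and $\{e_i'\}$ at $y$ so that $e_i'$ is the reflection of $e_i$ across the hyperplane orthogonal to $d$, and we compute
\[
\Bigl(\partial_t-\sum_i(\nabla_{e_i}+\nabla_{e_i'})^2\Bigr)Z
\]
using the evolution equation of MCF. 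The needed ingredients are
\[
\partial_t H=\Delta^\perp H+\l H,h_{ij}\r h_{ij}
\quad\text{and}\quad
\partial_t |H|^2=\Delta|H|^2-2|\nabla^\perp H|^2+2\sum_{i,j}\l H,h_{ij}\r^2 .
\]
We then use the first-order conditions to cancel the gradient terms. A crucial point is that the normal space has signature $(1,k)$, so normal vectors orthogonal to the spacelike vector $H$ are timelike. This makes $|\nabla^\perp H|^2$ and the normal-direction components of the error terms come with favourable signs, whereas in the Riemannian high-codimension setting they are bad. The aim is to obtain $(\partial_t-\mathcal L)Z\geq c\,Z+(\text{gradient terms})=0$ at the point. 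This contradicts the minimum unless the terms vanish, and a standard perturbation $Z+\e e^{Ct}$ closes the argument.

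\textbf{Main obstacle.} The hardest step is handling the timelike normal components of $\nabla^\perp H$ and of $h(x)$ versus $h(y)$ in the second-variation terms, since the pseudosphere is not convex in timelike directions. We expect to decompose $d$ into its component along $H(x)$, its tangential part, and its timelike normal part, then use the sign of $\l\cdot,\cdot\r$ on the timelike part; the reverse Cauchy--Schwarz inequality gives the needed inequality. Acausality is used throughout to ensure $\l d,d\r>0$, so that $Z$ is a genuine distance-type quantity. For $n=1$, the turning-number-one assumption replaces spacelike convexity, ensuring the projected curve is embedded and locally convex so that Step 1 applies. Compactness of $\S$ guarantees that a first time of contact exists.
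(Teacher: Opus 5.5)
There is a genuine gap, and it sits in the interior computation, which is exactly the step you label the main obstacle and leave undone. Your $Z$ tests only the single pseudosphere in the direction $H(x)$. So at a first zero you learn only $\langle \Upsilon-\d^{-1}H_x, H_x\rangle=0$, where $\Upsilon=2\pi^\perp_x(d)/\langle d,d\rangle$ with your $d=F(y)-F(x)$. The discrepancy $\Upsilon-\d^{-1}H_x$ is therefore timelike (or zero), not zero as it would be for hypersurfaces. If you carry out the computation, the reaction terms from $(\p_t-\D)|H|^2$ and from the normal part $2\langle H,h_{ij}\rangle h_{ij}$ of $(\p_t-\D)H$ combine into $-\d\langle d,d\rangle\sum_{i,j}\langle H,h_{ij}\rangle\langle \Upsilon-\d^{-1}H_x,h_{ij}\rangle$. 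This pairs that timelike vector with the timelike parts of the $h_{ij}$ and has no sign, and the first-order conditions do not control it. Likewise, $\n^\perp H$ need not be orthogonal to $H$ at a two-point critical point, so your claimed favourable sign for $|\n^\perp H|^2$ is not available.

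The paper avoids this by formulating noncollapsing as a cone condition. It takes the infimum (supremum) of $\langle\Upsilon,N\rangle-\d$ over all $y$ \emph{and} over all inward null $N\in\mathcal{N}^+_x(H)$; your scalar condition is only the $\widehat H$-consequence of this. Minimising in $N$ as well forces $\Upsilon-\d_-H_x=cN_x$ with $c\ge 0$ at first contact. The reaction term then becomes $c\sum_i\langle h_{ii},N_x\rangle^2\ge 0$. The freedom in extending $N$ also lets one take $\ell_t=0$ and $\ell_i=0$ at the point, which removes the normal-connection terms.

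Your frame choice also fails in codimension $1+k$. The condition $\p_yZ=0$ only says that $T_y\S$ is orthogonal to the single vector $w=|H|^2d-\d H(x)$. At a zero of $Z$, $R(u)T_x\S$ and $T_y\S$ are in general two different spacelike $n$-planes inside $w^\perp$, so $e_i'=R(u)e_i$ need not be tangent at $y$. The paper replaces the reflected frame by its singular-value-decomposition lemma, writing $\p_{y^i}=\lambda_iR(u)\p_{x^i}+N_i$ with $N_i$ causal and $|\lambda_i|\ge 1$. It then chooses $\lambda_i\le -1$ and uses $\langle h_{ii},N_x\rangle\ge\d_-$ (from the boundary case). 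This gives $(\p_t-\mathcal L)\langle\Upsilon,N_x\rangle\ge \frac{4}{\langle d,d\rangle}(n\d_-+\sum_i(\langle h_{ii},N_x\rangle-\d_-)-1)=0$, where the last step uses $\langle H,N_x\rangle=1$.

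Two further steps are unsupported. First, acausality is not automatic for mean-convex data and must be shown to persist; the paper proves this separately, again via the SVD lemma. Second, in Step 1 the scalar bound $\d\langle h(v,v),H\rangle\le|H|^2$ does not give $\d\b<1$. The ratio $\b$ is defined by a cone condition that sees the timelike part of $h(v,v)$, and pinching is unavailable at all without spacelike convexity. The paper instead runs a maximum principle on the boundary function $\langle h(v,v)-\d H_x,N_x\rangle$.
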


The assumption of turning number $1$ is carried out throughout the paper. The Euclidean inner product leads to a notion of long time existence, hence a subsequence convergence result that a suitably rescaled flow admits a subsequence that converges smoothly to a sphere in some maximal spacelike subspace $\R^{n+1,0} \subset \R^{n+1,k}$. The approach to full convergence is motivated by first author's approach on Gauss curvature flow with the idea of considering normalised flow invariant under affine transformations \cite{andrews96} and showing that the original flow is stable near the final shape \cite[Chapter 16]{ben2020}. Since the rescaled flow consists of a Lorentz transform at each subsequent time, we consider the normalised flow that fixes the volume, centre of mass and the maximal spacelike subspace that has the minimum projected volume. This leads to a stability argument which proves our final result: an analogue of Huisken's theorem. 

\begin{theorem}
    Let $F: \S^n \times [0, T) \to \R^{n+1,k}$ be a maximal solution to MCF, if $F(\cdot, 0)$ is a spacelike-convex embedding, then $F(\cdot, t)$ is a spacelike-convex embedding for all $0 < t < T $, $F_t$ converges uniformly to a point as $t \to T$. Moreover, a suitably rescaled flow $\Tilde{F}(\cdot, \tau)$ exists for all $\tau \in [0, \infty)$ and converges in smooth topology to a smooth embedding whose image coincides with $S^n \subset \R^{n+1,0}$ of $\R^{n+1,k}$. 
\end{theorem}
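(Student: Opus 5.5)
We argue in four stages: preservation of spacelike convexity and embeddedness, convergence to a point in finite time, subsequential convergence of the rescaling, and full convergence via a normalised flow and a stability argument.

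\textbf{Preservation.} By the first proposition, the inward pinching ratio $\a$ is non-decreasing. Hence $h(v,v)-\a H$ stays inward spacelike for every unit $v$, so $h(v,v)$ stays strictly spacelike as long as the flow is smooth and $H$ remains inward spacelike. To see the latter, we compute the evolution of $-\langle H,H\rangle$ along a fixed inward direction, using the normal connection, and apply the maximum principle to show that spacelike mean convexity persists. Acausality and embeddedness then follow from the noncollapsing theorem. The two-sided noncollapsing confines $F(\S_t)$ between pseudospheres touching at each point, which rules out timelike or null chords. So $F(\cdot,t)$ remains a spacelike-convex embedding for $0<t<T$.

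\textbf{Finite time, convergence to a point, and blow-up.} Spacelike convexity gives $|H|\ge c>0$ on a compact submanifold. The projection to any maximal spacelike subspace is a locally convex hypersurface, and its enclosed volume should decrease at a definite rate, which bounds $T$. Projecting onto a fixed $\R^{n+1,0}$ gives convex sets whose diameters stay comparable to that of $F(\S_t)$, by pinching and noncollapsing. Comparison with shrinking spheres, using the touching pseudospheres as barriers, shows the diameter tends to zero. Hence $F_t$ converges uniformly to a point $p$. Next, the pinching ratios $\a,\b$ together with two-sided noncollapsing with parameter $\delta$ give $\max |H| \le C \min |H|$ and a diameter bound $\lesssim 1/\min|H|$. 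Standard derivative estimates, obtained by the maximum principle on $|\nabla^m h|^2$ in the Euclidean-type norm on the normal bundle, then give $|H|^2\sim (T-t)^{-1}$.

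\textbf{Rescaled flow and subsequential limit.} Set $\tilde F=(F-p)/\sqrt{2n(T-t)}$ with $\tau=-\tfrac12\log(T-t)$. The estimates above give uniform $C^\infty$ bounds, except for the Lorentz degeneracy: the normal bundle may tilt toward null directions. To handle this, at each time we compose with a Lorentz transformation $L_\tau$ mapping the maximal spacelike subspace of minimal projected volume to a fixed $\R^{n+1,0}$. This is the normalised flow that fixes volume, centre of mass and this subspace. In this gauge the induced metric is uniformly equivalent to a Euclidean one, and subsequences converge smoothly. By the monotonicity of the pinching ratios, the limit has $\a=\b$, which forces $h=\tfrac1n Hg$ with $H$ parallel. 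Hence the limit is totally umbilic and lies in an $\R^{n+1,0}$: it is a round sphere.

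\textbf{Full convergence (main obstacle).} The hardest step is showing that the normalisations $L_\tau$ converge, rather than drifting along the noncompact Lorentz group. I would first linearise the normalised flow about the round sphere in $\R^{n+1,0}$. Modulo the modes generated by translations, dilations and Lorentz boosts, which are exactly those killed by the normalisation, the linearised operator should have spectrum bounded away from zero. This yields exponential decay of the deviation $\|\tilde F_\tau - S^n\|_{C^k}\le Ce^{-\e\tau}$ once the flow enters a small neighbourhood, which the subsequential convergence guarantees, in the spirit of \cite[Chapter 16]{ben2020}. The rate of change of $L_\tau$ is controlled by this deviation, so it is integrable, and $L_\tau$ converges to some $L_\infty$. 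Undoing the gauge, $\tilde F$ converges smoothly to $L_\infty^{-1}(S^n)$, which is a round sphere in the maximal spacelike subspace $L_\infty^{-1}(\R^{n+1,0})$.
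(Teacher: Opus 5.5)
\paragraph{Gap for $n\ge 2$: identifying the limit.} Your preservation step and your normalised-flow stability step broadly match the paper, but the step that carries the content of the theorem is missing. The sentence ``by the monotonicity of the pinching ratios, the limit has $\alpha=\beta$'' does not follow. Monotonicity only gives $\alpha(t)\nearrow\alpha_\infty\le\frac1n$ and $\beta(t)\searrow\beta_\infty\ge\frac1n$. By scale and Lorentz invariance, the blow-up limit then has pinching ratios $\alpha_\infty,\beta_\infty$ that are constant in time and attained. Nothing so far excludes $\alpha_\infty<\frac1n<\beta_\infty$. You need a rigidity (strong maximum principle) argument on the limit flow.

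The paper supplies it as follows. On the limit, the minimum of the pinching quantity is constant in time, so the signed reaction terms in its evolution must vanish at a minimiser. This forces $h(v,v)=\alpha_\infty H$ for some $v$. Then $f(x,v,N)=\langle h(v,v)-\alpha_\infty H,N\rangle$, viewed on the unit tangent bundle with $N$ null, satisfies a parabolic inequality whose diffusion is the horizontal Laplacian plus a degenerate, nonnegative vertical part. So the ordinary strong maximum principle does not apply. The paper uses the Bony--Hill maximum principle instead. Its hypothesis holds because the induced metric has positive sectional curvature (Gauss equation plus spacelike convexity), so brackets of horizontal lifts span the fibre directions. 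This propagates the zero set of $f$ to the whole bundle, giving $h(v,v)=\alpha_\infty H$ for every unit $v$. Hence $\alpha_\infty=\frac1n$ and the limit is totally umbilic. Without some argument of this kind your subsequential limit is not identified, and the later stability step has nothing to linearise around.

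\paragraph{Gap for $n=1$.} Your identification of the limit fails completely for curves, which the statement includes as the Gage--Hamilton analogue. For a curve, $h(v,v)=H$ for the unit tangent $v$. So the optimal pinching ratios are always $\alpha=\beta=1=\frac1n$ and carry no information. ``Totally umbilic'' is then vacuous, and ``$H$ parallel'', which for $n\ge2$ you would get from Codazzi, does not follow.

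The paper instead runs the rigidity argument on the two-point noncollapsing quantity $\langle\Upsilon,N_x\rangle-\delta_-$. The boundary case is excluded using the boundary version of Bony--Hill. In the interior case it derives a degenerate parabolic inequality to which Bony--Hill applies, concluding $\delta_-=\delta_+=1$ on the limit. From $\Upsilon=H_x$ and the touching pseudospheres it then shows that $F(y)-F(x)\in\mathrm{span}\{T_x,H_x\}$. So the curve lies in a spacelike $2$-plane and is a round circle. For curves you need an argument of this kind, based on noncollapsing rather than pinching.
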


For $n = 1$, this is an analogue of Gage--Hamilton theorem.

\section{Preliminaries}

\subsection{Pseudo-Euclidean space}

A pseudo-Euclidean space $\R^{n+1,k}$ of signature $(n+1, k)$ is a finite dimensional real $(n+1+k)$-dimensional $\R$-vector space equipped 
with a non-degenerate bilinear form $\l \cdot, \cdot \r$ that has $n+1$ positive eigenvalues and $k$ negative eigenvalues. The norm of a vector $u\in \R^{n+1,k}$ is defined by $|u|^2 = \l u, u \r$. We adapt some terminologies from general relativity. 
\begin{definition}[Causal character]
    A vector $u \in \R^{n+1,k}$ is timelike if $|u|^2 < 0$, null if $|u|^2 = 0$, and spacelike if $|u|^2 > 0$. We say $u$ is causal if $u$ is either timelike or null.
\end{definition}

The signature condition $(n+1, k)$ implies that we can find an orthonormal basis $\{e_i, \nu_\a\}_{i = 1, \a = 1}^{i = n+1, \a = k}$ such that they are pairwise orthogonal and satisfies $|e_i|^2 = 1, |\nu_\a|^2 = -1$ for all $i, \a$. 

$\R^{n+1,k}$ can be realised as a pseudo-Riemannian manifold with pseudo-Riemannian metric $\l \cdot, \cdot \r$ at each tangent space. Throughout this paper, we will consider only acausal submanifolds with one spacelike codimension and occasionally abbreviate $\l u, v \r$ by $u \cdot v$. 

\begin{definition}
    A submanifold $\S \subset \R^{n+1,k}$ is \textit{spacelike} if the induced metric on $\S$ is Riemannian. In addition, a spacelike submanifold $\Sigma$ is acausal if no two points can be joined by a causal curve.
\end{definition}

If $\dim \S = n$, then the normal space is isomorphic to Minkowski space $N_x\S \cong \R^{1,k}$ for each $x\in \S$. The space of spacelike vectors in $\R^{1,k}$ has two disconnected components. Suppose $\S$ has spacelike mean curvature $H$, we call a non-timelike vector $u\in N_x\S$ \emph{inward}-pointing if $\l u, H \r > 0$, i.e. $u$ and $H$ lie in the same connected component of spacelike and null vectors. Otherwise,  $u$ is \emph{outward}-pointing if $\l u, H \r < 0$. 

\begin{definition}
    A two-dimensional plane $E \subset \R^{n+1,k}$ is called \textit{spacelike} (\textit{timelike}, \textit{degenerate}) if the restriction of $\l \cdot, \cdot \r$ on $E$ is a symmetric positive definite (Lorentzian, degenerate) quadratic $2$-form. 
\end{definition}

The following result is a characterisation of spacelike vectors in $N_x\S$. 
\begin{lemma}\label{characterisation of being a spacelike vector lemma}
    $u\in \R^{1,k}$ is inward spacelike if and only if $\l u, n \r > 0$ for all inward null vectors $n$. 
\end{lemma}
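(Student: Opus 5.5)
We work in Minkowski space $\R^{1,k}$, with the inward spacelike vectors forming the component of spacelike vectors containing $H$. We choose coordinates so that $\R^{1,k}$ has a spacelike unit vector $e$ and timelike orthonormal vectors $\nu_1,\dots,\nu_k$, and orient $e$ so that $\l e, H\r>0$. A vector $u = ae + w$, with $w\in \Span\{\nu_\alpha\}$, is spacelike exactly when $a^2 > |w|_E^2$, where $|w|_E^2 = -|w|^2\geq 0$ is the positive norm on the negative definite part. The two spacelike components are therefore $\{a > |w|_E\}$ and $\{a<-|w|_E\}$. The inward one is $\{a>|w|_E\}$: writing $H = a_H e + w_H$, we have $\l e, H\r = a_H>0$, and $H$ is spacelike, so $a_H>|w_H|_E$. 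In the same coordinates, the non-zero null vectors are $n = b(e + \omega)$ with $|\omega|_E = 1$ and $b\neq 0$. The inward ones (in the closure of the inward cone) are those with $b>0$.

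We compute $\l u, n\r = b\,(a + \l w,\omega\r) = b\,(a - \l w, \omega\r_E)$, where $\l\cdot,\cdot\r_E$ is the positive inner product on $\Span\{\nu_\alpha\}$.

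For the forward direction, suppose $u$ is inward spacelike, so $a>|w|_E$. Cauchy–Schwarz gives $\l w,\omega\r_E\le |w|_E<a$, so $\l u,n\r>0$ for every $b>0$ and every unit $\omega$.

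For the converse, suppose $\l u,n\r>0$ for all inward null $n$. If $w\neq 0$, take $\omega = w/|w|_E$; this gives $a-|w|_E>0$. If $w=0$, any unit $\omega$ gives $a>0 = |w|_E$. In both cases $a>|w|_E$, so $u$ is inward spacelike.

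The only delicate point is the bookkeeping for "inward". The definition in the paper uses $\l u,H\r>0$, so we must check that this agrees with the component description $a>|w|_E$ for both spacelike and null vectors. For two non-timelike vectors $u$ and $H$ with $H$ spacelike, the reverse Cauchy–Schwarz-type computation above (taking $\omega$ along $w_H$) shows that $\l u,H\r$ has the sign of $a$. This ties the paper's definition to the coordinate description and completes the proof.
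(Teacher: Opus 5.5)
Your proof is correct, and it takes a somewhat different route from the paper.

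\textbf{The paper's route.} The paper argues by causal type.
\begin{itemize}
\item For the forward direction it observes only that a spacelike vector has no nonzero null vector in its orthogonal complement, so $\langle u,n\rangle\neq 0$. The sign is left to an implicit connectedness argument: fix $n$ and move $u$ within the convex inward cone to $H$, where $\langle H,n\rangle>0$ by the definition of inward null.
\item For the converse it rules out timelike $u$ by setting $\nu_1=u/\sqrt{-|u|^2}$, completing to an orthonormal basis with an inward unit spacelike $e\perp u$, and testing against $e+\nu_1$.
\item It rules out inward null $u$ by testing against $n=u$.
\end{itemize}

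\textbf{Your route.} You fix one frame and compute $\langle u,b(e+\omega)\rangle=b\,(a-\langle w,\omega\rangle_E)$ once and for all.
\begin{itemize}
\item The forward direction becomes plain Cauchy--Schwarz.
\item The converse uses the single test vector $e+w/|w|_E$. This is the same idea as the paper's $e+\nu_1$: align the timelike part of the null vector with that of $u$. But it excludes timelike, null, outward and zero $u$ in one stroke, whereas the paper leaves the outward and zero cases unstated.
\end{itemize}

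\textbf{What each buys.} Your computation gives a quantitative statement, $\min_{|\omega|_E=1}\langle u,e+\omega\rangle=a-|w|_E$. This is exactly the kind of infimum over normalised inward null directions that the paper later uses to define noncollapsing. You also make explicit the match between ``inward'' defined by $\langle\cdot,H\rangle>0$ and the component $\{a>|w|_E\}$, which the paper takes for granted. Your phrase ``taking $\omega$ along $w_H$'' is loose. The actual content is that for nonzero non-timelike $u$,
\[
|\langle w,w_H\rangle_E|\le|w|_E|w_H|_E\le|a|\,|w_H|_E<|a|\,a_H ,
\]
so $\langle u,H\rangle$ has the sign of $a$. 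The paper's argument is shorter and frame-free, but it leaves the sign argument to the reader.
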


\begin{proof}
    The only if direction follows from definition, since there cannot be any null vectors orthogonal to a spacelike vector. Conversely, suppose for the contradiction that $u$ is timelike, then one can extend it to an orthonormal basis $\{e, \nu_1 \coloneqq \frac{u}{\sqrt{-|u|^2}}, \cdots, \nu_k\}$ for $\R^{1,k}$ with $e$ being inward unit spacelike, then $e + \nu_1$ is inward null with $\l e +  \nu_1, u\r = -1$, contradiction. Likewise, $u$ cannot be inward null, otherwise $|u|^2 = 0$. 
\end{proof}

\subsection{High codimension bundle formalism}

The material in this section is adapted from \cite[Chapter 2]{baker2011mean}, which we recommend to the reader for detailed exposition of this topic.

\subsubsection{Connection and curvature}
Given a connection $\n$ on $E$ over a smooth manifold $M$, there is a unique connection on $E^*$ (also denoted $\n$) such that for all $\xi\in \G(E), \o\in \G(E^*), X\in \mathfrak{X}(M)$, 
\begin{equation*}
    X(\o(\xi)) = (\n_X\o)(\xi) + \o(\n_X\xi).
\end{equation*}
If $\n^i$ is a connection on $E_i$, then there is a unique connection $\n$ on $E_1\otimes E_2$ such that 
\begin{equation*}
    \n_X(\xi_1\otimes \xi_2) = (\n^1_X\xi_1)\otimes \xi_2 + \xi_1\otimes (\n^2_X\xi_2)
\end{equation*}
for $\xi_i\in \G(E_i)$. In particular, for $S\in \G(E_1^*\otimes E_2)$, $\n S\in \G(T^*M\otimes E_1^*\otimes E_2)$ is given by 
\begin{equation*}
    (\n_XS)(\xi) = \n_X^2(S(\xi)) - S(\n^1_X\xi).
\end{equation*}

The curvature of the connection $\n$ is defined by 
\begin{equation*}
    R(X, Y)\xi = \n_Y \n_X \xi - \n_X \n_Y \xi - \n_{[Y,X]}\xi
\end{equation*}
for all $X,Y\in \mathfrak{X}(M), \xi\in \G(E)$. On the dual bundle,  
\begin{equation*}
    (R(X,Y)\o)(\xi) + \o(R(X,Y)\xi) = 0
\end{equation*}
for all $\o\in \G(E^*)$. In particular, the curvature on $E_1^*\otimes E_2$ is given by 
\begin{equation*}
    (R(X,Y)S)(\xi) = R^{\n_2}(X,Y)(S(\xi)) - S(R^{\n_1}(X,Y)\xi). 
\end{equation*}
We extend the connections to different bundles this way and omit extra notations.

\subsubsection{Pull back bundles}

Let $M,N$ be smooth manifolds and $E$ a vector bundle on $N$ and $f: M\to N$, the pull back bundle $f^*E$ is defined with fibre $(f^*E)_x = E_{f(x)}$. We write $f_*(u) = Df|_x(u)$ for $u\in T_xM$, there is a unique connection ${}^f\n$ on $f^*E$, called the pullback connection that satisfies 
\begin{equation*}
    {}^f\n_u(f^*\xi) = \n_{f_*u}\xi
\end{equation*}
for any $u\in TM$ and $\xi\in \G(E)$. The pullback metric $f^*g$ is defined by 
    \begin{equation*}
        f^*g|_x(u,v) \coloneqq g(f_*(u), f_*(v)),
    \end{equation*}
and it is compatible with ${}^f\n$. The curvature of the pull-back connection is the pull-back of the curvature of the original connection, i.e., 
    \begin{equation*}
        R^{{}^f\n}(f^*X,f^*Y)f^*Z = R(X,Y)Z
    \end{equation*}
for any $X,Y\in \mathfrak{X}(N)$ and $Z\in \G(E^*)$. If $\n$ is symmetric on $TN$, then ${}^f\n$ is symmetric in the sense that for any $X,Y\in \mathfrak{X}(M)$, 
    \begin{equation*}
        {}^f\n_X (f_*Y) - {}^f\n_Y(f_*X) = f_*([X,Y]),
    \end{equation*}
    where $T_{f(x)}N$ is viewed as $f^*TN|_x$.

\subsubsection{Time dependent immersion and subbundles}

Let $I\subseteq \R$ be a connected interval, and write $T(\S \times I) = \s\oplus \R\p_t$, where $\s = \{u\in T(\S\times I) : dt(u) = 0\}$ is the ``spatial'' tangent bundle. Consider now $F: \S^n\times I \to \R^{n+1, k}$, then $F_*|_\s : \s \to F^*T\R^{n+1, k}$ defines a subbundle of rank $n$. Denote its orthogonal complement by $\NN$. Also, we denote $\iota$ the inclusion of $\NN$ into $F^*T\R^{n+1, k}$.

Quantities in the ambient flat space are denoted with a bar, the induced Levi-Civita connections on $\s, \NN$ are 
\begin{equation*}
    \n = \pi\circ {}^F\Bar{\n}\circ F_*, \qquad \overset{\perp}{\n} = \overset{\perp}{\pi}\circ {}^F\Bar{\n}\circ \iota
\end{equation*}
respectively, where we use the projection maps $\pi \coloneqq \pi_\s$ and $\overset{\perp}{\pi} \coloneqq \pi_\NN$. The induced metrics on $\s, \NN$ are 
\begin{equation*}
    g(u,v) = \Bar{g}(F_*u, F_*v), \qquad \overset{\perp}{g}(\xi, \eta) = \Bar{g}(\iota \xi, \iota \eta)
\end{equation*}
respectively, where $u, v\in \G(\s)$ and $\xi, \eta\in \G(\NN)$. The second fundamental form $h$ is defined by 
\begin{equation*}
    {}^F\Bar{\n}_u F_*v = \Bar{\n}_{F_*u} F_*v = F_*(\n_uv) + \iota h(u,v), 
\end{equation*}
and its trace $H = g^{ij}h_{ij}$ is the mean curvature. The Weingarten equation is 
\begin{equation*}
    {}^F\Bar{\n}_u \iota \xi = \Bar{\n}_{F_*u} \iota \xi = \iota(\overset{\perp}{\n}_u\xi) - F_*(\W(u,\xi)),
\end{equation*}
where $\W \in \G(\s^* \otimes \NN^* \otimes \s)$. In local coordinates, the Gauss relation reads 
\begin{align}
    &\frac{\p^2 F^a}{\p x^i\p x^j} - \tensor{\G}{_{ij}^k}\frac{\p F^a}{\p x^k}  = \tensor{h}{_{ij}^\a}\tensor{\nu}{_\a^a}, \label{gauss local eq}\\
    &\frac{\p \tensor{\nu}{_\a^a}}{\p x^k} + \tensor{\Bar{\G}}{_{cb}^a}\frac{\p F^b}{\p x^k}\tensor{\nu}{_\a^c} = \tensor{C}{_{k\a}^\b}\nu_\b - \tensor{h}{_{kp}_\a}g^{pq}\frac{\p F^a}{\p x^q}\label{weingarten local eq},
\end{align}
where $i,j,k$ are used for indices for the tangent bundle, $a,b,c$ are used for indices on the ambient manifold, $\a,\b$ are used for indices on the normal bundle, and $\tensor{\G}{_{ij}^k}, \tensor{C}{_{k\a}^\b}$ are the Christoffel symbols for $\n, \overset{\perp}{\n}$ respectively. 

The Codazzi equation is 
\begin{equation}\label{codazzi equation}
    \n_u h(v,w) = \n_v h(u,w)
\end{equation}
for $u,v,w\in \G(\s)$. And the Gauss equation for $\s$ is 
\begin{equation}\label{gauss equation}
    R(u,v,w,z) = \overset{\perp}{g}(h(u,w),h(v,z)) - \overset{\perp}{g}(h(v,w),h(u,z)).
\end{equation}

The following Simon's type of identity holds: 
\begin{equation}\label{simon's identity eq}
        \begin{split}
            &\,\,\,\,\, \n_w\n_z h(u,v) - \n_u \n_v h(w,z) \\
            & = h(v, \W(u, h(w,z))) - h(z, \W(w, h(u,v))) - h(u, \W(w, h(v,z)))\\
            & \qquad + h(w, \W(u, h(v,z))) + h(z, \W(u, h(w,v))) - h(v, \W(w, h(v,z)))
        \end{split}
\end{equation}
for $u,v,w,z \in \G(\s)$.

\subsubsection{Evolution equations}

We say that $F: \S^n \times I \to \R^{n+1,k}$ satisfies the mean curvature flow (MCF) if 
\begin{equation}
    \begin{cases}
        \frac{\p F}{\p t }(\cdot, t) &= H(\cdot, t),\\
        F(\cdot, 0) &= F_0(\cdot).
    \end{cases}
\end{equation}

The following evolution equations for the induced metric and the volume measure hold: 
\begin{align}
    \p_t  g_{ij} &= - 2H^\a h_{ij\a}, \label{evolution of g}\\
        \p_t d\mu &= -|H|^2 d\mu.  \label{evolution of area form}
\end{align}

The second fundamental form $h_{ij}$ satisfies the following evolution equation:  
    \begin{equation}\label{flat evolution h}
        \n_t h_{ij} = \n_i \n_j H + H^\a h_{ip\a}\tensor{h}{^p_j}. 
    \end{equation}

\section{spacelike-convex submanifolds of one spacelike codimension}

We consider the flow of \emph{spacelike-convex} submanifolds of one spacelike codimension, which are naturally seen as analogues of convex hypersurfaces in $\R^{n+1}$. 
\begin{definition}
    A spacelike submanifold $\S\subset \R^{n+1,k}$ is \emph{spacelike-convex} if 
    \begin{equation*}
        \Bar{g}(h(v,v), h(v,v)) > 0 \qquad \text{for all unit $v\in T\S$}.
    \end{equation*}
\end{definition}

For example, hypersurfaces in any maximal spacelike subspace $\R^{n+1,0}$ (and reasonable perturbations of them) are spacelike-convex. Moreover, strictly horospherically convex hypersurfaces in hyperbolic spaces are also spacelike-convex. An explicit example in $\R^{2,1}$ is 
\begin{equation*}
    \g(\t) = (2\cos\t, 2\sin\t, \sin\t)
\end{equation*}
for $\t\in [0, 2\pi]$, where the last coordinate is for the timelike direction.

We have some geometrical characterisations for compact spacelike-convex submanifolds. 

\subsection{Gauss map parameterisations}

First of all, they admit Gauss maps with respect to any decomposition $\R^{n+1,k} \cong \R^{n+1,0} \oplus \R^{0,k}$. 

\begin{proposition}\label{gauss map spacelike-convex prop}
    Suppose that $F: \S^n \to \R^{n+1,k}$ is a proper immersion and $\S$ is compact and spacelike-convex. For any decomposition $\R^{n+1,k} \cong \R^{n+1,0} \oplus \R^{0,k}$, there exists a diffeomorphism (\emph{Gauss} map) $\GG:\S\to S^n \subset \R^{n+1,0}$. 
\end{proposition}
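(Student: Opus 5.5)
Fix the decomposition $\R^{n+1,k} \cong \R^{n+1,0} \oplus \R^{0,k}$ and write $P$ for the projection onto $\R^{n+1,0}$. The plan is to define the Gauss map as the unit normal of the projected hypersurface $P\circ F$, and then show it is a local diffeomorphism with compact, simply connected target.

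\textbf{Step 1: the normal bundle picks out a unique direction in $\R^{n+1,0}$.} At each $x\in\S$ the normal space $N_x\S$ has signature $(1,k)$.
\begin{itemize}
\item The subspace $N_x\S\cap\R^{0,k}$ is negative definite. Its dimension is at least $(1+k)+k-(n+1+k)$.
\item I claim that $N_x\S^{\perp}=T_x\S$ meets $\R^{0,k}$ only in $0$, since $T_x\S$ is spacelike. Hence $P|_{T_x\S}$ is injective, so $P\circ F$ is an immersion.
\item Consider $\R^{0,k}\cap N_x\S$. The map $N_x\S\to\R^{0,k}$ given by projection has kernel $N_x\S\cap\R^{n+1,0}$.
\end{itemize}
The cleaner route is this. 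The orthogonal complement in $\R^{n+1,0}$ of the $n$-plane $P(T_x\S)$ is a line. This line is spanned by a unique (up to sign) unit vector $\nu_x\in\R^{n+1,0}$. I then check that $\nu_x$ equals $P$ applied to a spacelike normal.

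Concretely, the vector $\nu_x$ is Euclidean-orthogonal to $P(T_x\S)$. Since $\langle \nu_x, v\rangle=\langle \nu_x,Pv\rangle$ for $\nu_x\in\R^{n+1,0}$, the vector $\nu_x$ is in fact pseudo-orthogonal to $T_x\S$. So $\nu_x\in N_x\S$, and it is a spacelike unit normal.

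The sign is fixed by requiring $\nu_x$ to be inward, i.e. $\langle \nu_x,H\rangle>0$. This is possible because $\nu_x$ is spacelike in a $(1,k)$ space, hence not orthogonal to the spacelike $H$. Set $\GG(x)=\nu_x\in S^n$. This map is smooth, because it is the normalised normal of the immersed hypersurface $P\circ F$.

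\textbf{Step 2: $\GG$ is a local diffeomorphism.} Using the Weingarten equation, differentiate $\nu$ along $u\in T_x\S$. This gives $d\GG(u)=P\big(\iota\overset{\perp}{\n}_u\nu - F_*\W(u,\nu)\big)$. Pairing with $P F_*w$ yields
\[
\langle d\GG(u), PF_*w\rangle = \langle d\GG(u),F_*w\rangle=-\langle h(u,w),\nu\rangle .
\]
The first equality holds because $d\GG(u)\in\R^{n+1,0}$. The second holds by differentiating $\langle\nu,F_*w\rangle=0$.

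By spacelike convexity, $h(u,u)$ is inward spacelike for $u\neq 0$. The vector $\nu$ is also inward spacelike. In $\R^{1,k}$, two spacelike vectors in the same component have positive inner product; this follows from the reverse Cauchy–Schwarz argument behind \cref{characterisation of being a spacelike vector lemma}. Hence the bilinear form $-\langle h(\cdot,\cdot),\nu\rangle$ is definite.

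Note that $d\GG(u)$ is tangent to $S^n$ at $\nu$, i.e. it lies in $P(T_x\S)$. So $d\GG(u)=0$ would force $\langle h(u,u),\nu\rangle=0$, and hence $u=0$. Thus $d\GG$ is injective, so $\GG$ is a local diffeomorphism.

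\textbf{Step 3: $\GG$ is a global diffeomorphism.} Since $\S$ is compact, $\GG$ is a proper local diffeomorphism onto the connected manifold $S^n$. It is therefore a covering map. For $n\ge2$, $S^n$ is simply connected and $\S$ is connected, so $\GG$ is a diffeomorphism.

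For $n=1$ a covering $S^1\to S^1$ has degree equal to the turning number. The paper's standing assumption that the turning number is $1$ gives injectivity.

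\textbf{Main obstacle.} The step I expect to need the most care is the sign consistency in Step 2. We need $\langle h(u,u),\nu\rangle$ to have a fixed sign for all $u$, and this requires that $\nu$ and the values $h(u,u)$ lie in the same spacelike component. That holds because all the $h(u,u)$ lie in one component (the inward cone, which contains $H$), and $\nu$ was chosen inward.
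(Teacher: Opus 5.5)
Your proposal is correct and follows essentially the same route as the paper. Both arguments take the unique unit spacelike normal lying in $\mathbb{R}^{n+1,0}$ as $\GG(x)$: you obtain it as the Euclidean normal of the projection $P\circ F$, whereas the paper uses a dimension count on $N_x\Sigma\cap\mathbb{R}^{n+1,0}$. Both then use the Weingarten equation and definiteness of $\langle h(\cdot,\cdot),\nu\rangle$ to get a local diffeomorphism, and conclude by compactness. Your version is somewhat more careful than the paper's. You prove $d\GG$ is injective rather than only nonzero on coordinate directions, and you make the covering-map step explicit, including the turning-number-one assumption needed for $n=1$.
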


\begin{proof}
    We have that 
    \begin{equation*}
        \dim (N_x\S \cap (\R^{n+1,0}\times \{0\})) = 1
    \end{equation*}
    for each $x\in \S$ because $\dim (N_x\S) = 1 + k$, so $\dim (N_x\S \cap (\R^{n+1}\times \{0\}))\geq \dim (N_x\S) + \dim (\R^{n+1,0}) - (n + 1 + k) = 1$. However, the intersection cannot have more dimensions, otherwise $N_x\S$ will have more than one linearly independent spacelike direction.

    Thus, there exists some smooth choice of spacelike unit normal  vector $N(x)$ in some plane $\R^{n+1,0}$; we define the Gauss map $\GG: \S \to N\S$ by $x\mapsto N(x)$. We note that $\l N(x), H(x) \r \neq 0$ since they are both normal and there is only one spacelike direction in $N_x\S$, thus
    \begin{equation*}
        \begin{split}
            \p_i N(x) &= \overset{\perp}{\n}_i N(x) + \sum_{j = 1}^ n\l \p_i N(x), \p_j F \r \p_j F\\
            &= \overset{\perp}{\n}_i N(x) - \sum_{j = 1}^n\l  N(x), h_{ij} \r \p_j F \neq 0,
        \end{split}
    \end{equation*}
    for all $i = 1, \cdots, n$. This implies $D\GG|_x \neq 0$ for all $x\in \S$, so that $\GG$ is a local diffeomorphism at each point by inverse function theorem. Since $\S$ is compact, we conclude that $\GG$ is a diffeomorphism. 
\end{proof}

For $n = 1$, the degree of a chosen Gauss map defines the turning number for spacelike-convex curves, for the rest of this paper, any result including $n = 1$ will always have the assumption that the turning number of the curve is $1$, and the curve is closed. 

Moreover, the projection of such submanifolds to any spacelike subspace $\R^{n+1,0}$ is strictly locally convex. 

\begin{lemma}\label{projection of spacelike-convex submanifold to spacelike subspace is convex lemma}
    Suppose that $F: \S^n \to \R^{n+1,k}$ is a proper immersion and $\S$ is spacelike-convex, then the projection of $\S$ to any spacelike subspace $S \cong \R^{n+1,0}$ is strictly locally convex. 
\end{lemma}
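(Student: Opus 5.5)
Fix a decomposition $\R^{n+1,k}\cong S\oplus S^\perp$ with $S\cong\R^{n+1,0}$ and $S^\perp\cong\R^{0,k}$, and let $P:\R^{n+1,k}\to S$ be the orthogonal projection. The plan is to show that $\hat F=P\circ F$ is an immersed hypersurface of Euclidean $S$ whose second fundamental form with respect to the unit normal $N$ of \cref{gauss map spacelike-convex prop} is definite.

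\emph{Step 1: $\hat F$ is an immersion.} Take $v\in T_x\S$ with $P F_*v=0$. Then $F_*v\in S^\perp$, and $S^\perp$ is negative definite, so $|F_*v|^2\le 0$. Since $\S$ is spacelike, this forces $v=0$.

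\emph{Step 2: identify the normal and the second fundamental form.} As in the proof of \cref{gauss map spacelike-convex prop}, $N_x\S\cap S$ is one-dimensional. Let $N(x)$ be the unit vector spanning it. For $v\in T_x\S$ we have
\[
\l PF_*v, N\r=\l F_*v,N\r=0,
\]
because $N\in S$ and $P$ is self-adjoint. The vectors $PF_*v$ span an $n$-dimensional subspace of $S$, so $N$ is the Euclidean unit normal of $\hat F$ at $x$. Differentiating again and using that $P$ is linear and commutes with flat differentiation,
\[
\hat h(u,v)=\l D_uD_v\hat F,N\r=\l P(D_uD_vF),N\r=\l D_uD_vF,N\r=\l h(u,v),N\r .
\]
The last equality holds because $N$ is normal, so the tangential part $F_*(\n_uv)$ of $D_uD_vF$ drops out.

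\emph{Step 3: definiteness.} By spacelike convexity, $h(v,v)$ is spacelike for every $v\ne0$. The vector $N$ is also spacelike and normal. In the Minkowski space $N_x\S\cong\R^{1,k}$, two spacelike vectors cannot be orthogonal: a spacelike vector has timelike (negative definite) orthogonal complement. Hence $\l h(v,v),N\r\neq0$ for all $v\neq0$. The unit sphere of $T_x\S$ is connected when $n\ge2$, and $v\mapsto\l h(v,v),N\r$ is continuous, so it has constant sign there. For $n=1$ the quadratic form is even in $v$, so the same conclusion holds. Replacing $N$ by $-N$ if needed, $\hat h$ is positive definite, which is strict local convexity of the projection.

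The main obstacle is Step 2. One has to check that $\hat h$ really equals $\l h,N\r$ without a correction from the $S^\perp$-components. This works because $P$ is a linear map commuting with $D$ and $N$ lies in $S$, so $P$ can be dropped against $N$.
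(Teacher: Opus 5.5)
Your proof is correct and follows the paper's argument step for step: the projection is an immersion because $S^\perp$ is negative definite (the paper's ``distance non-decreasing''), the unit vector spanning $N_x\Sigma\cap S$ is the Euclidean normal of the projection, and $\hat h=\langle h,N\rangle$ follows from the Gauss formula. Your Step 3 is more careful than the paper, which only asserts that $h(v,v)$ and the inward choice of $N$ ``lie in the same cone''; you actually justify the constant sign, using the fact that two spacelike vectors in $\mathbb{R}^{1,k}$ cannot be orthogonal, together with connectedness of the unit sphere (or evenness in $v$ when $n=1$).
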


\begin{proof}
    The projection is a smooth manifold since it is distance non-decreasing. Consider the Gauss map $\GG$ defined for $S$, then we choose $\GG(x)\in N_x\S \cap S$ to be inward spacelike and normalised. It is clear that $\p_i(\pi\circ F) \cdot \GG = 0$ since $\p_i F \cdot \GG = 0$, so $\GG$ is the inward unit normal of $\pi_S(\S)$. By \cref{gauss local eq}, we conclude that the vector-valued second fundamental form $\2$ of $\pi_S(\S)$ is 
    \begin{equation*}
        \2_{ij} = \l \p_i\p_j (\pi\circ F), \GG\r \GG, 
    \end{equation*}
    which is the projection of $h_{ij}$ on $\S$. Since $h$ on $\S$ is spacelike, $h \cdot \GG$ is positive definite since they lie in the same cone, which implies $ \l \p_i\p_j (\pi\circ F), \GG\r$ is positive definite. 
\end{proof}

This suggests that $\GG$ is the usual Gauss map for $\pi_S(\S)$, and it leads to a notion of \emph{support functions}. Fix a decomposition $\R^{n+1,k} \cong \R^{n+1,0} \oplus \R^{0,k}$ and define $\sigma : S^n \subset \R^{n+1,0} \to \R$ by $z \mapsto \sup_{p\in \S} \l z, F(p) \r$. 

\begin{lemma}
    $\sigma(z) = \l z, \GG^{-1}(z) \r$ and hence is smooth.  
\end{lemma}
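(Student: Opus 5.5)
Here $\GG^{-1}(z)$ is shorthand for $F(\GG^{-1}(z))$, and the claim is that the supremum defining $\sigma(z)$ is attained exactly at the point whose Gauss image is $z$. The plan is to reduce everything to the projected hypersurface $\pi_S(\S)\subset S=\R^{n+1,0}$.

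The first step is to note that for $z\in S^n\subset\R^{n+1,0}$ and any $p\in\R^{n+1,k}$ we have $\l z, p\r = \l z, \pi_S(p)\r$. The timelike factor $\R^{0,k}$ is orthogonal to $\R^{n+1,0}$, so only the spacelike component of $p$ pairs with $z$. Consequently
\begin{equation*}
\sigma(z)=\sup_{q\in \pi_S(F(\S))}\l z,q\r,
\end{equation*}
which is the usual Euclidean support function of the projected set.

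The second step is to identify where this supremum is attained. By \cref{projection of spacelike-convex submanifold to spacelike subspace is convex lemma}, $\pi_S\circ F$ is a strictly locally convex immersion whose Gauss map is $\GG$. By \cref{gauss map spacelike-convex prop}, $\GG$ is a diffeomorphism onto $S^n$; for $n=1$ this uses the standing turning number $1$ assumption. A compact immersed hypersurface in $\R^{n+1}$ with positive definite second fundamental form and Gauss map of degree one (Hadamard's theorem) is embedded and bounds a strictly convex body $K$. We argue as follows:
\begin{enumerate}[(i)]
\item Since $\S$ is compact, the supremum is attained at some $p_0$.
\item At $p_0$ the function $p\mapsto \l z,F(p)\r$ has zero derivative, so $\l z,\p_iF\r=0$ for all $i$. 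Hence $z\in N_{p_0}\S\cap S$, and so $z=\pm \GG(p_0)$.
\item The sign is fixed by the second derivative at a maximum. By \cref{gauss local eq}, $\p_i\p_j\l z,F\r=\l z,h_{ij}\r$ must be negative semidefinite. Since $\l \GG,h_{ij}\r$ is positive definite, we get $z=-\GG(p_0)$ when $\GG$ is taken inward. This is the outward normal, matching the convention that makes the claimed formula correct.
\item Injectivity of $\GG$ then shows that $p_0$ is the unique critical point with that normal, so $p_0=\GG^{-1}(z)$.
\end{enumerate}
Hence $\sigma(z)=\l z,F(\GG^{-1}(z))\r$.

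The final step is smoothness. Since $\GG$ is a diffeomorphism, $z\mapsto F(\GG^{-1}(z))$ is smooth, and so is its pairing with $z$. The main point needing care is the sign/orientation convention in (iii), which must be reconciled with which normal $\GG$ denotes. Apart from that, the argument uses only the orthogonal splitting and the global injectivity of $\GG$; Hadamard's theorem is not strictly needed, because uniqueness of the critical point comes from that injectivity.
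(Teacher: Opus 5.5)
Your argument is correct and is essentially the paper's: the supremum is attained by compactness, the first-order condition $\langle z,\partial_i F\rangle=0$ forces $z\in N_{p_0}\Sigma\cap\R^{n+1,0}$, and bijectivity of $\GG$ then identifies $p_0$. Your second-order step (iii) improves on the paper's proof, which never excludes the other critical point (the minimum, whose normal is $-z$), and whose conventions conflict: $\GG$ is inward in the projection lemma but $\mathrm{N}(z)=z$ is the outward normal immediately afterwards. The formula therefore holds exactly when $\GG$ is taken outward, as you say.
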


\begin{proof}
    Since $\S$ is compact, the supremum is achieved somewhere for a fixed $z\in S^n$. At a critical point $p\in \S$, we have that for all $i = 1, \cdots, n$
    \begin{equation*}
        0 = \p_i \l z, F(p) \r = \l z, \p_i F(p) \r, 
    \end{equation*}
    i.e., $z$ is orthogonal to $\p_i F(p)$ for all $i$ at the critical point, which implies that $z \in N_p\S$, and this corresponds to the point $p = \GG^{-1}(z)$. 
\end{proof}

We call $\sigma$ the \emph{support function} of the spacelike-convex submanifold $F : \S^n \to \R^{n+1,k}$. 

\begin{lemma}
    Let $F : \S^n \to \R^{n+1,k}$ be a spacelike-convex embedding with support function $\sigma$, then 
    \begin{equation}\label{Gauss map parameterisation eq}
        \GG^{-1}(z) = \sigma(z) z + \p_i \sigma \Tilde{g}^{ij} \p_j z + \eta^\a(z) \nu_\a,
    \end{equation}
    where $\{\nu_\a\}_{\a = 1}^k$ is an orthonormal basis for $(S^n)^\perp \cong \R^{0,k}$, $\Tilde{g}$ denotes the standard metric on $S^n$, and 
    \begin{equation*}
        \eta^\a(z) = -\l \nu_\a, \GG^{-1}(z) \r. 
    \end{equation*}
\end{lemma}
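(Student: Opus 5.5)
We decompose the point $p=\GG^{-1}(z)\in\R^{n+1,k}$ along the orthogonal splitting $\R^{n+1,k}\cong \R^{n+1,0}\oplus\R^{0,k}$, and then further split the $\R^{n+1,0}$ component into its part along $z$ and its part tangent to $S^n$ at $z$. That is, we write
\begin{equation*}
    p = a\, z + w + \sum_{\a} c^\a \nu_\a, \qquad w\in T_zS^n,
\end{equation*}
and identify each coefficient in turn.

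The coefficients are determined as follows.

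\textbf{Normal component.} Since $|z|^2=1$ and $z\perp w$, $z\perp\nu_\a$, pairing with $z$ gives $a=\l z,p\r=\sigma(z)$ by the preceding lemma.

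\textbf{Timelike component.} Since $\l\nu_\a,\nu_\b\r=-\d_{\a\b}$, pairing with $\nu_\a$ gives $c^\a=-\l \nu_\a,p\r=\eta^\a(z)$. This is immediate from the definition of $\eta^\a$ and the sign of the basis.

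\textbf{Tangential component.} This is the only real content. We differentiate $\sigma(z)=\l z,\GG^{-1}(z)\r$ along $S^n$ in local coordinates on the sphere:
\begin{equation*}
    \p_i\sigma=\l \p_i z,\GG^{-1}(z)\r+\l z,\p_i(\GG^{-1}(z))\r.
\end{equation*}
By \cref{gauss map spacelike-convex prop}, $\GG$ is a diffeomorphism, so $\p_i(\GG^{-1}(z))$ is a tangent vector $F_*(\cdot)$ to $\S$ at $p$. Since $z=\GG(p)$ lies in $N_p\S$, the second term vanishes. (Alternatively, this is the envelope argument: $p$ maximises $\l z,F\r$.) Hence $\p_i\sigma=\l\p_i z,p\r=\l \p_i z,w\r$, because $\p_i z$ is tangent to $S^n\subset\R^{n+1,0}$ and is therefore orthogonal to $z$ and to every $\nu_\a$.

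Expanding $w$ in the basis $\{\p_j z\}$ of $T_zS^n$, whose Gram matrix is $\Tilde g_{ij}=\l\p_i z,\p_j z\r$ (this is the standard metric, since the ambient form is Euclidean on $\R^{n+1,0}$), gives $w=\p_i\sigma\,\Tilde g^{ij}\p_j z$. Combining the three components yields \eqref{Gauss map parameterisation eq}.

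The only point requiring care is the vanishing of $\l z,\p_i\GG^{-1}\r$. This needs $\GG^{-1}$ to be differentiable, which is supplied by the diffeomorphism statement in \cref{gauss map spacelike-convex prop}. It also needs $z$ to be normal at $\GG^{-1}(z)$, which holds by the construction of the Gauss map. I do not expect any genuine obstacle beyond this bookkeeping.
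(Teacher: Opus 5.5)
Your proposal is correct and follows essentially the same route as the paper: the envelope observation that $\langle z, \partial_i(\mathscr{G}^{-1})\rangle = 0$ (tangent versus normal at $\mathscr{G}^{-1}(z)$) shows that the gradient of $\sigma$ on $S^n$ is the tangential part of the $\mathbb{R}^{n+1,0}$-projection of $\mathscr{G}^{-1}(z)$. The radial part is $\langle z,\mathscr{G}^{-1}(z)\rangle=\sigma(z)$, and your explicit orthogonal decomposition with the sign check $c^\alpha=-\langle\nu_\alpha,p\rangle$ (from $\langle\nu_\alpha,\nu_\beta\rangle=-\delta_{\alpha\beta}$) simply makes the paper's ``adding the timelike part'' step precise.
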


\begin{proof}
    For any $v\in TS^n$ and $z = \GG^{-1}(p)$, 
    \begin{equation*}
        \begin{split}
            D\sigma|_z(v) = \l v, F(p) \r + \l z , \underbrace{D\GG^{-1}|_z(v)}_{\text{tangential}} \r = \l v, F(p) \r
        \end{split}
    \end{equation*}
    so $F(p) = D\sigma|_z$, i.e., $\GG^{-1}(z) = D\sigma|_z$. 

    The spatial part of $\GG^{-1}$ is derived in \cite[Lemma 5.21]{ben2020}, but nevertheless we include the argument here. Consider the embedding $X \coloneqq \pi_{\R^{n+1,0}} \circ \GG^{-1} : S^n \to \R^{n+1,0}$, we can write 
    \begin{equation*}
        \pi_{\R^{n+1,0}} \circ \sigma(z) = \l X(z), \mathrm{N}(z) \r, 
    \end{equation*}
    where $\mathrm{N}(z) = z$ is the unit outward normal in $\R^{n+1,0}$. Denote the quantities on $S^n$ with an extra tilde, differentiating $\pi_{\R^{n+1,0}} \circ \sigma$ in the direction of some tangent vector field $W\in \mathfrak{X}(S^n)$ yields
    \begin{equation*}
        W (\pi_{\R^{n+1,0}} \circ \sigma) = \l \Tilde{\n}_W X, \mathrm{N} \r + \l X, \Tilde{\n}_W \mathrm{N} \r = \l X, W \r, 
    \end{equation*}
    where $\Tilde{\n}$ is the Levi-Civita connection on $S^n$ and $\Tilde{\n}_W \mathrm{N}$ is tangent to $S^n$. It follows that the gradient of $\sigma$ gives the tangent part of $X$, i.e., 
    \begin{equation*}
        \pi_{\R^{n+1,0}} \circ \sigma(z) = \sigma(z) z + \Tilde{\n} \sigma(z) = \sigma(z) z + \p_i \sigma \Tilde{g}^{ij} \p_j z. 
    \end{equation*}
    Adding the timelike part yields the result. 
\end{proof}

The difference between this and the hypersurface case is that the tangent spaces $T_x\S$ and $T_{\GG(x)}S^n$ do not necessarily coincide, so we cannot keep following the argument in \cite[Lemma 5.21]{ben2020}, but part of it is still true. For any $V, W \in \mathfrak{X}(S^n)$, 
\begin{equation*}
    \begin{split}
        \Tilde{\n}_V \Tilde{\n}_W \sigma & = V(W\sigma) - (\Tilde{\n}_VW) \sigma \\
        & = \l D_V \GG^{-1}, W \r + \l \GG^{-1}, \Bar{\n}_VW - \Tilde{\n}_VW \r \\
        & = \l D_V \GG^{-1}, W \r - \Tilde{g}(V,W) \sigma, 
    \end{split}
\end{equation*}
where $\Bar{\n}_VW - \Tilde{\n}_VW = \Tilde{h}(V,W) = - \Tilde{g}(V,W) z$. Thus, we can determine the spatial part of $D\GG^{-1}$ by 
\begin{equation*}
    \l D_V \GG^{-1}, W \r = \Tilde{\n}_V \Tilde{\n}_W \sigma + \Tilde{g}(V,W) \sigma. 
\end{equation*}

Define $\mathscr{A}_{ij}[\sigma] \coloneqq \Tilde{\n}_i \Tilde{\n}_j \sigma + \Tilde{g}_{ij} \sigma$, we can write 
\begin{equation*}
    \begin{split}
        \Tilde{\p}_k \GG^{-1} & = \Tilde{g}^{ij}\mathscr{A}_{ik}[\sigma] \Tilde{\p}_j z + \Tilde{\p}_k\eta^\a \nu_\a\\
        & = \sigma \Tilde{\p}_k z + \Tilde{g}^{ij}\Tilde{\p}_k \Tilde{\p}_i \sigma \Tilde{\p}_j z  + \Tilde{\p}_k\eta^\a \nu_\a
    \end{split} 
\end{equation*}

The induced metric is then 
\begin{equation}\label{induced metric with gauss map parameterisation eq}
    \begin{split}
        g_{kl} = \l \Tilde{\p}_k \GG^{-1}, \Tilde{\p}_l \GG^{-1} \r &= \mathscr{A}_{ik} \Tilde{g}^{ij} \mathscr{A}_{jl} - \Tilde{\p}_k\eta^\a \Tilde{\p}_l\eta_\a\\
        & = \sigma \Tilde{g}_{kl} + 2 \sigma \Tilde{\p}_k \Tilde{\p}_l \sigma + \Tilde{g}^{ij} \Tilde{\p}_k \Tilde{\p}_i \sigma \Tilde{\p}_l \Tilde{\p}_j \sigma - \Tilde{\p}_k\eta^\a \Tilde{\p}_l\eta_\a, 
    \end{split}
\end{equation}
where we write $\eta_\a \coloneqq \eta^\a$ for simplicity. 

The second derivative is 
\begin{equation*}
    \begin{split}
        \Tilde{\p}_l\Tilde{\p}_k \GG^{-1} & = \Tilde{g}^{ij} \Tilde{\p}_j z \Tilde{\p}_l \mathscr{A}_{ik}[\sigma] + \mathscr{A}_{lk}[\sigma] z + \Tilde{\p}_l\Tilde{\p}_k\eta^\a \nu_\a\\
        & = \Tilde{\p}_l \sigma \Tilde{\p}_k z - \sigma \Tilde{g}_{lk} z + \Tilde{g}^{ij}\Tilde{\p}_l\Tilde{\p}_k \Tilde{\p}_i \sigma \Tilde{\p}_j z - \Tilde{\p}_k\Tilde{\p}_l \sigma z + \Tilde{\p}_l\Tilde{\p}_k\eta^\a \nu_\a. 
    \end{split}
\end{equation*}
Its normal projection gives the second fundamental form: 
\begin{equation}\label{second fundamental form with gauss map parameterisation eq}
    \begin{split}
        h_{lk} &= \Tilde{\p}_l\Tilde{\p}_k \GG^{-1} - \Tilde{g}^{pq} \l \Tilde{\p}_l\Tilde{\p}_k \GG^{-1}, \Tilde{\p}_p \GG^{-1} \r \Tilde{\p}_q \GG^{-1}\\
        & = \biggr( \Tilde{\p}_l \mathscr{A}_{ik} - \Tilde{g}^{pq}(\Tilde{\p}_p \eta^\a \Tilde{\p}_l \Tilde{\p}_k \eta_\a + \Tilde{\p}_l \mathscr{A}_{ak}\Tilde{g}^{ab}\mathscr{A}_{bp})\mathscr{A}_{iq}\biggr)\Tilde{g}^{ij}\Tilde{\p}_j z + \mathscr{A}_{kl} z \\
        & \qquad + \biggr( \Tilde{\p}_l\Tilde{\p}_k\eta^\a + \Tilde{g}^{pq}(\Tilde{\p}_p \eta^\b \Tilde{\p}_l \Tilde{\p}_k \eta_\b + \Tilde{\p}_l \mathscr{A}_{ak}\Tilde{g}^{ab}\mathscr{A}_{bp})\Tilde{\p}_q \eta^\a \biggr) \nu_\a. 
    \end{split}
\end{equation}

\subsection{Other properties}

Moreover, spacelike-convex submanifolds are acausal and topologically spheres.  

\begin{lemma}\label{spacelike convex diffeo to spheres and acausal}
    Suppose that $F: \S^n \to \R^{n+1,k}$ is a proper immersion and $\S$ is compact and spacelike convex, then $\S$ is acausal and diffeomorphic to $S^n$. 
\end{lemma}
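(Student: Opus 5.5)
Two things must be shown: $\S$ is diffeomorphic to $S^n$, and $\S$ is acausal.

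\textbf{Diffeomorphism with $S^n$.} This is essentially contained in \cref{gauss map spacelike-convex prop}. Fix any decomposition $\R^{n+1,k}\cong\R^{n+1,0}\oplus\R^{0,k}$. The Gauss map $\GG:\S\to S^n$ is a local diffeomorphism. Since $\S$ is compact, $\GG$ is proper, hence a covering map.
\begin{itemize}
\item For $n\ge 2$, $S^n$ is simply connected, so the covering is one-sheeted (the connected components of $\S$ being handled as usual), and $\GG$ is a diffeomorphism.
\item For $n=1$, the standing turning number $1$ assumption says $\deg\GG=1$, so again $\GG$ is a diffeomorphism.
\end{itemize}

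\textbf{Acausality, reduction.} Suppose $p\neq q$ in $\S$ can be joined by a causal curve. In flat space, $F(q)-F(p)$ is then a causal vector, possibly zero. I will rule out both possibilities using the support function. It suffices to find one decomposition $\R^{n+1,0}\oplus\R^{0,k}$ in which the spatial projections of $F(p)$ and $F(q)$ differ by at least the timelike separation.

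\textbf{Key step: injectivity of the spatial projection.} By \cref{projection of spacelike-convex submanifold to spacelike subspace is convex lemma}, $\pi_S\circ F$ is a strictly locally convex immersion into $S\cong\R^{n+1,0}$. Its Gauss map is $\GG$, which is a diffeomorphism onto $S^n$ (for $n=1$, turning number $1$). By Hadamard's theorem it is therefore an embedding onto the boundary of a strictly convex body. Two consequences follow:
\begin{itemize}
\item $\pi_S\circ F$ is injective, so $F(p)\neq F(q)$ and $F$ is an embedding;
\item $v:=\pi_S(F(q)-F(p))\neq 0$.
\end{itemize}

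\textbf{Main obstacle: bounding the timelike part.} Write $w$ for the timelike component of $F(q)-F(p)$. Causality means $|v|_{\mathrm{eucl}}\le |w|$, so I must show the reverse strict inequality. The plan is to choose the decomposition adapted to the chord.

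First, let $N$ be the spacelike unit normal at $p$ lying in $S$, and take the spacelike plane containing it. The cleaner route is to argue along a curve in $\S$ from $p$ to $q$. Take the preimage under $\pi_S\circ F$ of the segment $[\pi_S F(p),\pi_S F(q)]$ intersected with the convex body, or better, a curve whose projection is a planar convex arc. Along this arc the tangent $\dot\gamma$ of the curve in $\S$ is spacelike, so its timelike part has Euclidean length less than its spatial part. Integrating gives only $|w|<\text{length of the projected arc}$, which is weaker than $|v|$.

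To close the gap I would instead use a global extremal argument. Consider
$$
\Phi(x,y)=|F(y)-F(x)|^2
$$
on $\S\times\S$, and suppose it attains a value $\le 0$ at some off-diagonal pair. Near the diagonal, $\Phi>0$ because $\S$ is spacelike. So $\Phi$ either has a nonpositive minimum at an off-diagonal pair $(x,y)$, or is nonpositive on a set that must be analysed. At such a minimum:
\begin{itemize}
\item the first variation gives $F(y)-F(x)\perp T_x\S$ and $\perp T_y\S$, so the chord is normal at both ends;
\item the second variation in the direction $(u,0)$ gives $\langle h(u,u),F(y)-F(x)\rangle\ge -|u|^2<0$-type information.
\end{itemize}
Using \cref{characterisation of being a spacelike vector lemma}, the chord is a causal normal vector, so it pairs with the inward spacelike $h(u,u)$ with a definite sign at one endpoint. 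Evaluating at both endpoints, where the chord is respectively future- and past-directed relative to the inward cone, should give contradictory signs. This sign bookkeeping at the two endpoints, including the null and zero cases, is where I expect the difficulty.
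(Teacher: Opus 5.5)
Your proof that $\S\cong S^n$ is fine. The paper instead obtains it from a Morse function with two critical points; your covering argument for $\GG$ even gives a diffeomorphism directly, which Reeb's theorem alone does not.

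The acausality part, however, has a genuine gap: the extremal argument for $\Phi(x,y)=|F(y)-F(x)|^2$ cannot produce the contradiction you describe.

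First, at a critical pair the chord $\o$ lies in $N_x\S\cong\R^{1,k}$. If $\o$ is timelike, then $\o^\perp\cap N_x\S$ has signature $(1,k-1)$ and meets the open inward spacelike cone, so $\l h_x(u,u),\o\r$ has no definite sign. Among causal normal vectors, only null ones pair with a fixed sign against that cone.

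Second, and more seriously, the diagonal second variations only give $\l h_x(u,u),\o\r\le|u|^2$ and $\l h_y(u,u),\o\r\ge-|u|^2$. These are compatible with any signs of $\l h(u,u),\o\r$ at either endpoint, so sign bookkeeping alone can never yield a contradiction.

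Notice also that your own key step already rules out zero and timelike chords. A timelike $\o$ lies in some $k$-dimensional negative definite subspace $T$, and with $S=T^\perp$ you get $\pi_SF(p)=\pi_SF(q)$, contradicting injectivity of $\pi_S\circ F$. What remains is a null chord, which no maximal spacelike projection can detect, and your proposal has no mechanism for that case.

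The missing idea is the paper's: use inward null normals as height directions.

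\begin{itemize}
\item Fix $x$ and an inward null $N\in N_x\S$, and set $f(y)=\l F(y)-F(x),N\r$.
\item At a critical point $y$ you have $N\in N_y\S$. A null vector of $\R^{1,k}$ is orthogonal to no spacelike vector, and the values $h_y(v,v)$ lie in one connected component of the spacelike cone. Hence the Hessian $\l h_y(\cdot,\cdot),N\r$ is definite, and every critical point is a nondegenerate local maximum or minimum.
\item For $n\ge2$ such a function on a closed connected manifold has a unique local minimum. Since $x$ is a strict local minimum with $f(x)=0$, you get $f>0$ on $\S\setminus\{x\}$.
\item This holds for every inward null $N$ at $x$, so \cref{characterisation of being a spacelike vector lemma} makes $\overset{\perp}{\pi}_x(F(y)-F(x))$ inward spacelike, and hence $|F(y)-F(x)|^2>0$.
\end{itemize}

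Finally, drop your reduction ``joined by a causal curve $\Rightarrow$ causal chord''. It fails for $k\ge2$: the null helix $s\,e_1+\cos s\,\nu_1+\sin s\,\nu_2$ joins points with spacelike separation $2\pi e_1$. Acausality has to be read, as in the paper's proof and later use, as $|F(y)-F(x)|^2>0$ for $y\neq x$.
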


\begin{proof}
    Fix an inward null vector $N\in N_x\S$ and consider the function $f\in C^\infty(\S)$ defined by 
    \begin{equation*}
        f(y) \coloneqq \l F(y) - F(x), N\r. 
    \end{equation*}
    At a critical point $y$ of $f$, we have 
    \begin{align*}
        \p_i f(y) & = \l \p_i F(y), N\r = 0\\
        \p_i \p_i f(y) &= \l h(\p_iF(y), \p_i F(y)), N\r \neq 0, 
    \end{align*}
    where the former indicates that $N\in N_y\S$, and the latter indicates all critical points of $f$ cannot be a saddle point, since no spacelike vectors are orthogonal to a null vector in $\R^{1,k}$. We note that $x$ is a minimum with $f(x) = 0$, since $\S$ is compact, there is another maximum point, thus $f$ is a Morse function on $\S$ with two non-degenerate critical points. Therefore, $\S$ is diffeomorphic to a sphere. 

    Moreover, let $y = \exp_x(tv)$ for some small $t$, then 
    \begin{equation*}
        F(y) - F(x) = Df|_x(v) \, t + \frac{1}{2}h_x(v,v)\, t^2 + O(t^3), 
    \end{equation*}
    so 
    \begin{equation*}
        f(y) = \frac{1}{2}\l h_x(v,v), N \r > 0
    \end{equation*}
    around a neighbourhood of $x$. Since there is no more minimum point of $f$, we conclude that $f > 0$ everywhere for all $y\neq x$. By \cref{characterisation of being a spacelike vector lemma}, $F(y) - F(x)$ is spacelike for all $y \neq x$. 
\end{proof}

One more nice property is that they have positive sectional curvature.  

\begin{lemma}\label{spacelike-convex has positive sectional curvature lemma}
    Suppose that $F: \S^n \to \R^{n+1,k}$ is a proper immersion and $\S$ is spacelike-convex, then $\S$ has positive sectional curvature. 
\end{lemma}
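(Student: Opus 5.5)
The plan is to use the Gauss equation \eqref{gauss equation} and reduce positivity of sectional curvature to a statement about pairs of vectors in the Minkowski plane $N_x\S\cong\R^{1,k}$. Take orthonormal $u,v\in T_x\S$. The sectional curvature of the plane they span is
\begin{equation*}
K(u,v)=\l h(u,u),h(v,v)\r-|h(u,v)|^2 .
\end{equation*}
There are two tasks. First, show that $\l h(u,u),h(v,v)\r$ is positive. Second, show that it dominates $|h(u,v)|^2$. The point is that $|h(u,v)|^2$ may be negative, in which case the bound is trivial, but it may also be positive.

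Step 1 uses the cone structure. By spacelike convexity, $h(w,w)$ is strictly spacelike for every unit $w$. Since the unit sphere of $T_x\S$ is connected when $n\ge 2$, all the values $h(w,w)$ lie in one component of the spacelike cone, namely the inward one. (For $n=1$ there are no 2-planes, so there is nothing to prove.) Two spacelike vectors in $\R^{1,k}$ lying in the same component have positive inner product. To see this, write each vector as $a e+\tau$ with $e$ the unit spacelike direction and $\tau$ timelike-part. Then $|a|>|\tau|$ for each vector, the signs of $a$ agree, and Cauchy--Schwarz on the negative-definite part gives the claim. Hence $\l h(u,u),h(v,v)\r>0$.

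Step 2 compares with the mixed term, using the unit vectors $w_\theta=\cos\theta\, u+\sin\theta\, v$. Write $A=h(u,u)$, $B=h(v,v)$, $C=h(u,v)$. Expanding,
\begin{equation*}
h(w_\theta,w_\theta)=\cos^2\theta\, A+2\sin\theta\cos\theta\, C+\sin^2\theta\, B ,
\end{equation*}
and this is inward spacelike for every $\theta$. I would test it against inward null vectors $N$, using \cref{characterisation of being a spacelike vector lemma}. This shows that the real quadratic form
\begin{equation*}
(s,t)\mapsto s^2\l A,N\r+2st\l C,N\r+t^2\l B,N\r
\end{equation*}
is positive definite. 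Consequently
\begin{equation*}
\l C,N\r^2<\l A,N\r\l B,N\r \qquad\text{for every inward null } N.
\end{equation*}

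The main obstacle is converting this family of null inequalities into the Lorentzian inequality $|C|^2<\l A,B\r$ when $|C|^2>0$. My approach is to reduce to the plane or 3-space spanned by $A,B,C$ and parametrise inward null vectors as $N=e+\nu$, with $e$ inward unit spacelike and $\nu$ a unit timelike vector. Then $\l X,N\r=x_0-\l x,\nu\r_{E}$, where $x_0$ is the spacelike component of $X$ and $x$ its timelike part viewed in Euclidean $\R^k$. Averaging over $\nu\in S^{k-1}$ (or choosing $\nu$ cleverly) should produce the Lorentz form.

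A cleaner alternative, which I would try first, is to pick $\nu$ so that $\l A,N\r\l B,N\r$ is minimised. A more robust variant: by a Lorentz boost in $N_x\S$, arrange that $A$ and $B$ have parallel timelike parts. Then choose the two null vectors $N_\pm=e\pm\hat\nu$, with $\hat\nu$ along that common direction, and multiply the two resulting inequalities. The product of $\l X,N_+\r$ and $\l Y,N_-\r$ yields the combination $x_0y_0-\l x,y\r_E$, the Lorentz product, in the 2-dimensional case. The component of $C$ orthogonal to this plane is timelike, so it only decreases $|C|^2$. Combining these should give $|C|^2<\l A,B\r$, hence $K(u,v)>0$.
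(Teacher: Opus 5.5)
Your plan can be completed, but it takes a much longer route than the paper, and its final step needs a correction.

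\textbf{Comparison with the paper.} The paper never meets your ``main obstacle''. The sectional curvature of a plane does not depend on which orthonormal basis of it you use. So the paper chooses $\{u,v\}$ to diagonalise the real quadratic form $\langle h(\cdot,\cdot),\widehat H_x\rangle$ restricted to the plane. Then $h(u,v)\perp\widehat H_x$. Since $\widehat H_x^\perp\cap N_x\Sigma$ is negative definite (signature $(1,k)$), this forces $|h(u,v)|^2\le 0$. Hence
\[
K=\langle h(u,u),h(v,v)\rangle-|h(u,v)|^2\ge\langle h(u,u),h(v,v)\rangle>0
\]
by your Step 1 alone. You instead fix an arbitrary orthonormal pair and derive the Lorentzian bound $|C|^2<\langle A,B\rangle$ from positivity of $h(w_\theta,w_\theta)$ against null test vectors. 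This works, and it uses only the null characterisation of the inward cone. But it buys nothing extra, since the inequality is basis-independent anyway. The paper's choice of basis also gives the cleaner lower bound $K\ge\langle h(u,u),h(v,v)\rangle$ directly.

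\textbf{The final step.} Your claim that $\langle X,N_+\rangle\langle Y,N_-\rangle$ is the Lorentz product is false. Writing $X=x_0e+x_1\hat\nu+\dots$, you have $\langle X,N_\pm\rangle=x_0\mp x_1$, so
\[
\langle X,N_+\rangle\langle Y,N_-\rangle=(x_0y_0-x_1y_1)+(x_0y_1-x_1y_0).
\]
Let $W=\mathrm{span}\{e,\hat\nu\}\ni A,B$ and let $C_W$ be the projection of $C$ onto $W$. Multiplying your two strict inequalities (left sides $\ge0$, right sides $>0$) actually gives
\[
\bigl(|C_W|^2\bigr)^2=\langle C,N_+\rangle^2\langle C,N_-\rangle^2<\langle A,N_+\rangle\langle A,N_-\rangle\,\langle B,N_+\rangle\langle B,N_-\rangle=|A|^2|B|^2 ,
\]
so $|C_W|^2<|A||B|$.

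To finish you need the reversed Cauchy--Schwarz inequality $|A||B|\le\langle A,B\rangle$. It holds because $\mathrm{span}\{A,B\}$ is either a spacelike line or a Lorentzian plane: a null vector cannot be orthogonal to a spacelike one in $\R^{1,k}$. Its Gram determinant is therefore $\le0$, and $\langle A,B\rangle>0$ by Step 1. Since $C-C_W$ lies in the negative definite $W^\perp$, you have $|C|^2\le|C_W|^2$, and so $|C|^2<\langle A,B\rangle$.

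The ``Lorentz boost'' is just the choice of an orthonormal frame adapted to $\mathrm{span}\{A,B\}$, with $e$ taken inward. The averaging over $\nu\in S^{k-1}$ can be dropped.
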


\begin{proof}
    Denote $\widehat{H}_x \coloneqq H(x)/|H(x)|$ and extend it to an orthonormal basis $\{\widehat{H}_x, \nu_1,$ $ \cdots, \nu_k\}$ for $N_x\S$. Given any $2$-dimensional subspace in $T_x\S$, we choose an orthonormal basis $\{u, v\}$ such that $h_{ij} \cdot \widehat{H}_x$ is diagonalised when restricted to this subspace. In particular, $h(u,v) \cdot \widehat{H}_x = 0$. 

    By \cref{gauss equation}, the sectional curvature of $\Span\{u,v\}$ is 
    \begin{equation*}
        \begin{split}
            R(u,v,u,v) &= h(u,u) \cdot h(v,v) - |h(u,v)|^2\\
            & = h(u,u) \cdot h(v,v) - \left(\l h(u,v), \widehat{H}_x \r^2 - \sum_{\a = 1}^k \l h(u,v), \nu_\a \r^2 \right)\\
            & = h(u,u) \cdot h(v,v) + \sum_{\a = 1}^k \l h(u,v), \nu_\a \r^2\\
            & > 0 
        \end{split}  
    \end{equation*}
    since $\S$ is spacelike-convex. 
\end{proof}

\section{Curvature pinching}

We first show that spacelike-convexity is preserved along the flow, assuming the induced metric stays spacelike. This additional assumption can be removed once we have the preservation of pinching, so we omit this assumption in the statement of the results.  

\begin{proposition}[spacelike-convexity is preserved]\label{spacelike-convexity is preserved}
    Suppose that $\S$ is compact, spacelike-convex and $F: \S^n \times I \to \R^{n+1,k}$ is a solution to MCF, then spacelike-convexity is preserved along the flow. 
\end{proposition}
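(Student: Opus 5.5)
We will reduce spacelike-convexity to a family of scalar inequalities and apply a tensor maximum principle to each. By \cref{characterisation of being a spacelike vector lemma}, $h(v,v)$ is inward spacelike for every nonzero $v$ if and only if $\l h(v,v), n\r > 0$ for every inward null normal $n$. So we will work with the function
\[
Z(x,t) \coloneqq \min\left\{ \l h(v,v), n \r : v\in T_x\S,\ |v| = 1,\ n \in N_x\S \text{ inward null},\ \l n, H \r = |H| \right\},
\]
where the last condition fixes the scale of $n$ and keeps the set of admissible $n$ compact as long as $H$ is spacelike. Spacelike-convexity at time $t$ is equivalent to $\min_\S Z(\cdot,t) > 0$. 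The task is then to show that $Z>0$ cannot be lost first at some time $t_0 > 0$. We argue on the maximal interval on which the induced metric is spacelike and $H$ is spacelike, as permitted by the remark before the statement.

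Suppose $t_0$ is the first time at which $Z$ reaches $0$, attained at $(x_0, v, n)$. We extend $v$ and $n$ to a neighbourhood in space-time, using $\n_t$-parallel and $\overset{\perp}{\n}$-parallel extensions at $x_0$, so that $Z \le \l h(v,v), n\r$ nearby with equality at the point. From \cref{flat evolution h}, together with Codazzi \cref{codazzi equation} and the Simons identity \cref{simon's identity eq}, we have
\[
\n_t h = \Delta h + (\text{terms quadratic in } h \text{ contracted with } h).
\]
Pairing this with the frozen null vector $n$ gives
\[
(\p_t - \Delta)\l h(v,v),n\r \ \ge\ \text{reaction terms} + \text{gradient terms from moving } v,n,
\]
and at a spatial minimum the derivative-in-$v$ and derivative-in-$n$ terms have a favourable sign (first-order conditions vanish, second-order terms are nonnegative). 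This is the usual Hamilton maximum principle for a minimum over a compact fibre bundle. The remaining work is to show that the reaction terms are $\ge 0$ when $\l h(v,v),n\r = 0$ and $\l h(w,w),n\r \ge 0$ for all $w$.

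The reaction terms have the schematic form
\[
\l h(v,e_p), n\r \,\l H, h(e_p,v)\r\text{-type terms} \;+\; \l h(v,v), h_{pq}\r\l h_{pq}, n\r\text{-type terms} \;-\; |h|^2\text{-type terms contracted with } n.
\]
Using the minimality, the symmetric form $A(w,w) = \l h(w,w), n\r$ is nonnegative definite with $v$ in its kernel, so $A(v,\cdot) = 0$. Hence every term of the form $\l h(v,e_p),n\r$ vanishes. The surviving terms should then be of the type $\l h(v,e_p), h(v,e_q)\r A_{pq}$ and similar. A normal vector orthogonal to the null vector $n$ splits as a multiple of $n$ plus a timelike part, and a timelike vector contributes with a definite sign because $|\cdot|^2<0$ on the timelike complement. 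This is exactly the mechanism in the proof of \cref{spacelike-convex has positive sectional curvature lemma}, where the cross terms $\sum_\a \l h(u,v),\nu_\a\r^2$ appeared with a favourable sign.

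\textbf{Main obstacle.} The main obstacle is the algebra in this last step. One must choose a normal frame $\{n, \bar n, \nu_\a\}$ adapted to the null vector, with $\bar n$ the second null direction, and verify that every surviving quadratic term is a sum of squares of timelike components times the nonnegative form $A$. I would first try writing the normal bundle part in such a null frame, expanding $\l h(v,e_p),h(v,e_q)\r$ accordingly, and checking that the sign matches the Euclidean argument for Hamilton's convexity preservation.

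A secondary issue is the strict inequality. The argument above only shows that $Z$ cannot strictly decrease through $0$. To obtain strict positivity, I would either apply a strong maximum principle or apply the argument to $Z - \e e^{Ct}$ and let $\e\to 0$, which rules out $Z$ touching $0$ at a finite first time.
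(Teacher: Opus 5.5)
Your approach works, but it is not the route the paper takes for this proposition.

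\textbf{The paper's route.} The paper minimises the quadratic quantity $\overset{\perp}{g}(h(v,v),h(v,v))$ over unit $v$. At the minimum, $\n_i h(v,v)$ and (by \cref{minimiser lemma 1}) $h(v,e_i)$, $i>1$, are orthogonal to the spacelike vector $h(v,v)$, hence timelike. This makes the gradient term $-2|\n h(v,v)|^2$ favourable. With \cref{minimiser lemma 2}, the reaction terms become a sum of squares plus $4\sum_{i>1}h(v,v)\cdot(h(v,v)-h(e_i,e_i))\,|h(v,e_i)|^2\ge 0$.

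\textbf{Your route.} You minimise the linear supporting functionals $\l h(v,v),n\r$ over inward null $n$, i.e.\ a Hamilton-type preservation of a closed convex cone. This is exactly the argument the paper uses later in Case 1 of the proof of \cref{acausal spacelike-mean-convex submanifold preserves noncollapsing} (take $\d_-=0$), which it notes gives an alternative proof of \cref{pinching preserved prop}. Your algebra does close. Pairing the evolution of $h$ with $n$ at $(v,v)$ gives
\[
\l h(v,v),h_{pq}\r A_{pq}+2\l h(v,e_q),h_{qp}\r A(e_p,v)-2\l h(v,e_p),h(v,e_q)\r A_{pq}.
\]
The middle term vanishes since $A(v,\cdot)=0$. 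In a frame diagonalising $A$, the last term is $-2\sum_{p>1}A_{pp}|h(v,e_p)|^2\ge 0$ by your splitting $n^\perp=\R n\oplus(\text{timelike})$. The first term is not of the ``timelike squares times $A$'' form you expect. It needs one extra remark: a vector in the closed inward cone orthogonal to the null vector $n$ equals $c\,n$ with $c\ge 0$, so the term is $c\sum_{p,q}A_{pq}^2\ge 0$.

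\textbf{Comparison.} The linear route has no gradient term, does not need \cref{minimiser lemma 2}, and extends verbatim to $h(v,v)-\a H$ (pinching). The paper's quadratic route works at a positive minimum, so it shows directly that $\min_{|v|=1}|h(v,v)|^2$ is nondecreasing, and strictness comes for free.

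\textbf{Strictness needs repair.} Your zero-level argument only preserves the closed cone. The proposed barrier $Z-\e e^{Ct}$ has the wrong sign: at its first zero you would need the reaction terms to dominate $CZ$, which they do not. What works is $Z\ge (\min_{\S}Z(\cdot,0))\,e^{-Ct}$ on compact time intervals. This forces you to work at a positive minimum, and two things change there.
\begin{enumerate}
\item Your parallel extension of $n$ no longer gives $Z\le\l h(v,v),n\r$. You must extend $n$ so that $\l n,H\r=|H|$ is kept. The extra terms this produces are all multiples of $\l h(v,v),n\r=Z$, so they are harmless for an $e^{-Ct}$ barrier.
\item The reaction terms remain nonnegative, but for different reasons than at the zero level. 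First, $\sum_p\l h(v,v),h_{pp}\r A_{pp}\ge 0$ because both vectors lie in the closed inward cone and $A\ge 0$. Second, $2\sum_{p>1}(A_{vv}-A_{pp})|h(v,e_p)|^2\ge 0$, because $h(v,e_p)\in n^\perp$ still holds by the first-order condition in $v$.
\end{enumerate}
Alternatively, run your zero-level argument for $h(v,v)-\a H$ with small $\a>0$. Then write $h(v,v)=(h(v,v)-\a H)+\a H$ and use the preservation of spacelike-mean-convexity.
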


The proof requires two intermediate results. 

\begin{lemma}\label{minimiser lemma 1}
    Suppose $\overset{\perp}{g}(h(v,v), h(v,v))$ achieves the minimum at $v$, then 
    \begin{equation*}
        \overset{\perp}{g}(h(v,v), h(v, u)) = 0
    \end{equation*}
    for any $u\in \G(\s), u \perp v$. Moreover, $v$ is one of the eigenvalue of $\overset{\perp}{g}(h(v,v), h(v,\cdot))$. 
\end{lemma}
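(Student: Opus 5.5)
This is a first-variation argument on the unit sphere of $T_x\S$ (with respect to the induced metric $g$). Fix the point $x$ and define the quartic function
\begin{equation*}
Q(w) \coloneqq \overset{\perp}{g}(h(w,w),h(w,w)), \qquad w\in T_x\S,\ |w|=1,
\end{equation*}
and suppose $Q$ attains its minimum at the unit vector $v$. Take any unit $u\perp v$ and consider the curve $w(s)=\cos s\, v+\sin s\, u$, which stays on the unit sphere. Then $s=0$ is a minimum of $s\mapsto Q(w(s))$, so its derivative vanishes there.

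The derivative is computed from the bilinearity of $h$ and of $\overset{\perp}{g}$. Expanding,
\begin{equation*}
h(w(s),w(s)) = h(v,v) + 2s\,h(v,u) + O(s^2),
\end{equation*}
and therefore
\begin{equation*}
Q(w(s)) = Q(v) + 4s\,\overset{\perp}{g}(h(v,v),h(v,u)) + O(s^2).
\end{equation*}
Setting the first-order term to zero gives $\overset{\perp}{g}(h(v,v),h(v,u))=0$ for every $u\perp v$. Since the expression is linear in $u$, the identity extends from unit $u$ to all $u\in\G(\s)$ orthogonal to $v$.

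For the eigenvector claim, consider the symmetric bilinear form $B(a,b)\coloneqq \overset{\perp}{g}(h(v,v),h(a,b))$. The intended object is the form $u\mapsto \overset{\perp}{g}(h(v,v),h(v,u))$, i.e.\ the covector $B(v,\cdot)$, or equivalently the endomorphism obtained by raising an index with $g$. The first part says $B(v,u)=0$ for all $u\perp v$, so $B(v,\cdot)$ annihilates $v^\perp$. Hence $B(v,\cdot)=\lambda\, g(v,\cdot)$ with
\begin{equation*}
\lambda = B(v,v) = Q(v).
\end{equation*}
Equivalently, $v$ is an eigenvector of the $g$-self-adjoint endomorphism associated to $B$, with eigenvalue $Q(v)$. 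This is presumably what "$v$ is one of the eigenvalue" means.

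There is no substantial obstacle here. The only points needing care are that the variation must preserve unit length (the rotation curve handles this, so no Lagrange multiplier is needed), and that the minimization takes place within a fixed fibre at fixed $x$ and $t$, so no connection terms enter the computation.
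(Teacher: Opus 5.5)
Your proposal is correct and uses the same first-variation argument as the paper. The only difference is that the paper differentiates along an arbitrary family of unit vectors $v(s)$ and uses $g(\dot v, v)=0$, whereas you use the explicit rotation $\cos s\, v+\sin s\, u$. Your derivation of the eigenvector claim (with eigenvalue $\overset{\perp}{g}(h(v,v),h(v,v))$) fills in what the paper leaves as ``the moreover part follows from this''.
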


\begin{proof}
    Along a family of unit vectors $v(s)$ we have 
\begin{equation*}
    \begin{split}
        0 & = \frac{d}{ds} \overset{\perp}{g}(h(v(s), v(s)), h(v(s), v(s)))\biggr |_{s = 0}\\
        & = 4\overset{\perp}{g}\left(h(v,v), h\left(v,\frac{dv}{ds}\biggr |_{s=0}\right)\right).
    \end{split}
\end{equation*}
Since $v(s)$ is unit, we need 
\begin{equation*}
    \frac{d}{ds} g (v(s), v(s)) = 2 g\left(\frac{dv}{ds}, v(s)\right) = 0. 
\end{equation*}
The moreover part follows from this. 
\end{proof}

\begin{lemma}\label{minimiser lemma 2}
    Suppose $\overset{\perp}{g}(h(v,v), h(v,v))$ achieves the minimum at $v$, in the frame $\{v\coloneqq e_1, e_i\}_{i=2}^n$, 
    \begin{equation*}
        h(v,v)\cdot h(v,v) \leq h(v,v)\cdot h(e_i, e_i). 
    \end{equation*}
\end{lemma}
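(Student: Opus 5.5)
The plan is not to use a second variation of $Q(w)\coloneqq \overset{\perp}{g}(h(w,w),h(w,w))$ along a curve of unit vectors. The term $2|h(v,e_i)|^2$ in $Q''$ has the wrong sign: by \cref{miniser lemma 1} — more precisely by \cref{minimiser lemma 1} — $h(v,e_i)\perp h(v,v)$, and since $h(v,v)$ is spacelike in $N_x\S\cong\R^{1,k}$ its orthogonal complement is negative definite, so $|h(v,e_i)|^2\le 0$. Instead, I would prove the inequality directly from a reversed Cauchy--Schwarz inequality for spacelike vectors in $\R^{1,k}$, together with the minimality of $v$. We work at a point and time where $\S$ is still spacelike-convex, which is the setting in which the lemma is applied in \cref{spacelike-convexity is preserved}.

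\emph{Step 1 (reverse Cauchy--Schwarz).} Let $a,b\in\R^{1,k}$ be spacelike vectors in the same connected component of the spacelike cone. Then
\begin{equation*}
\l a,b\r \ge |a|\,|b|.
\end{equation*}
To prove this, write $a=|a|\,e$ with $e$ unit spacelike, and decompose $b=\lambda e+b^\perp$ with $b^\perp\in e^\perp$. Since $e^\perp$ is negative definite, $|b|^2=\lambda^2+|b^\perp|^2\le\lambda^2$, so $|\lambda|\ge|b|$. Because $a$ and $b$ lie in the same component, $\lambda=\l b,e\r>0$; this can be checked by connectedness of the component together with \cref{characterisation of being a spacelike vector lemma}, noting that $\l\cdot,e\r$ cannot vanish on spacelike vectors. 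Hence $\l a,b\r=|a|\lambda\ge|a|\,|b|$.

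\emph{Step 2 (apply with the minimiser).} Take $a=h(v,v)$ and $b=h(e_i,e_i)$. Spacelike-convexity makes both vectors spacelike, and both lie in the inward cone, because the unit sphere of $T_x\S$ is connected for $n\ge2$ and $w\mapsto h(w,w)$ never leaves the spacelike set (for $n=1$ there is nothing to prove). Minimality of $v$ gives $|h(e_i,e_i)|\ge|h(v,v)|$, where $|\cdot|=\sqrt{\overset{\perp}{g}(\cdot,\cdot)}$. Combining this with Step 1,
\begin{equation*}
h(v,v)\cdot h(e_i,e_i)\ \ge\ |h(v,v)|\,|h(e_i,e_i)|\ \ge\ |h(v,v)|^2=h(v,v)\cdot h(v,v),
\end{equation*}
which is the claim.

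The main point to check carefully is the sign and component bookkeeping in Step 1, i.e.\ that $\l b,e\r>0$ rather than merely $|\l b,e\r|\ge|b|$; this is exactly where the choice of the inward cone enters. If the lemma is also to be used at a first time where the minimum of $Q$ reaches $0$, we would pass to the limit: the inequality holds on the open spacelike-convex set and is closed under limits, so by continuity it also holds when $h(v,v)$ is null.
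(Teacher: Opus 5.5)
Your argument is correct and is essentially the paper's: writing $h(e_i,e_i)=\lambda\,\widehat{h(v,v)}+b^\perp$ with $b^\perp$ in the negative-definite complement of $h(v,v)$ and using minimality to get $\lambda\ge|h(v,v)|$ is exactly what the paper does, and your same-component argument for $\lambda>0$ supplies a step the paper leaves implicit. One correction to your preamble: the second-variation route does not fail, because $Q''(0)=4\langle h(e_i,e_i)-h(v,v),h(v,v)\rangle+8|h(v,e_i)|^2\ge 0$ together with $|h(v,e_i)|^2\le 0$ gives $\langle h(v,v),h(e_i,e_i)-h(v,v)\rangle\ge -2|h(v,e_i)|^2\ge 0$, so the timelike term actually has the favourable sign (it would be harmful only in the Euclidean setting), and this route needs no component bookkeeping.
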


\begin{proof}
    The choice of the frame is valid due to \cref{minimiser lemma 1}. We write $h(e_i,e_i) = a \, \widehat{h(v,v)} + b \, \nu$, where $\widehat{h(v,v)}$ is the unit vector in the $h(v,v)$ direction, $\nu \perp h(v,v)$ is unit timelike. Since $v$ is the minimiser we have 
    \begin{equation*}
        a^2 - b^2 = h(e_i, e_i)\cdot h(e_i, e_i) \geq h(v,v)\cdot h(v,v), 
    \end{equation*}
    which implies $a\geq \sqrt{h(v,v)\cdot h(v,v)}$. Thus, 
    \begin{equation*}
        \begin{split}
            h(v,v)\cdot h(e_i, e_i) &= a\, h(v,v) \cdot \widehat{h(v,v)}\\
            & \geq h(v,v)\cdot h(v,v). 
        \end{split}
    \end{equation*}
\end{proof}

\begin{proof}[Proof of \cref{spacelike-convexity is preserved}]
    
Since $\n \overset{\perp}{g} = 0$, we choose a frame such that $\n v \equiv 0$. 
\begin{equation*}
    \begin{split}
        0 & = \n_t \overset{\perp}{g}(h(v,v), h(v,v))\\
        & = \p_t \overset{\perp}{g}(h(v,v), h(v,v)) - 2 \overset{\perp}{g}(\n_t h(v,v), h(v,v)). 
    \end{split}
\end{equation*}

Note that \cref{flat evolution h} implies
\begin{equation*}
    \begin{split}
        \n_t h(v,v) = \n_v \n_v H + h(v, \W(v, H)),  
    \end{split}
\end{equation*}
where by \cref{simon's identity eq}, 

\begin{equation*}
    \begin{split}
        \n_v \n_v H & = \D h(v,v) + h(v,v)\cdot h^{ip}h_{ip} - h(v, \W(v,H)) \\
        & \qquad + v^kv^l(\tensor{h}{_{jl}^\a}\tensor{h}{^{jp}_\a} h_{kp} + \tensor{h}{_{jk}^\a} \tensor{h}{^{jp}_\a}h_{lp} - 2\tensor{h}{_{il}^\a}\tensor{h}{_k^p_\a}\tensor{h}{^i_p}). 
    \end{split}
\end{equation*}

Moreover, 
\begin{equation*}
    \begin{split}
        0 & = \D\overset{\perp}{g}(h(v,v), h(v,v))\\
        & = \D (\overset{\perp}{g}(h(v,v), h(v,v))) - 2\overset{\perp}{g}(\D h(v,v), h(v,v)) - 2 |\n h(v,v)|^2, 
    \end{split}
\end{equation*}
where $|\n h(v,v)|^2  = \sum_{i=1}^{n+1} \overset{\perp}{g}(\n_i h(v,v), \n_i h(v,v))$. The main term we need to deal with is the product of $h(v,v)$ with $(\n_t - \D)h(v,v)$, where
\begin{equation*}
    \begin{split}
        (\n_t - \D)h(v,v) &= \n_v \n_v H + h(v, \W(v, H)) - \D h(v,v)\\
        & = h(v,v)\cdot h^{ip}h_{ip} + v^kv^l(\tensor{h}{_{jl}^\a}\tensor{h}{^{jp}_\a} h_{kp} + \tensor{h}{_{jk}^\a} \tensor{h}{^{jp}_\a}h_{lp} - 2\tensor{h}{_{il}^\a}\tensor{h}{_k^p_\a}\tensor{h}{^i_p})\\
        & = v^kv^l(\tensor{h}{_{kl}^\a} h^{ip}h_{ip} + \tensor{h}{_{jl}^\a}\tensor{h}{^{jp}_\a} h_{kp} + \tensor{h}{_{jk}^\a} \tensor{h}{^{jp}_\a}h_{lp} - 2\tensor{h}{_{il}^\a}\tensor{h}{_k^p_\a}\tensor{h}{^i_p}). 
    \end{split}
\end{equation*}

We note that $\n_i h(v,v)$ is timelike (or zero) because at the minimum, 
\begin{equation*}
    \p_i \overset{\perp}{g}( h(v,v), h(v,v)) = 2 \n_i h(v,v)\cdot h(v,v) = 0
\end{equation*}
for all $i$, and that there is only one spacelike dimension in the normal space. Thus, 
\begin{equation*}
    \begin{split}
        &\quad (\p_t - \D)(\overset{\perp}{g}(h(v,v), h(v,v))\\
        & = 2 \left( \overset{\perp}{g}(\n_t h(v,v), h(v,v)) - \overset{\perp}{g}(\D h(v,v), h(v,v)) - |\n h(v,v)|^2\right)\\
        & = 2 h(v,v)\cdot v^kv^l(\tensor{h}{_{kl}^\a} \tensor{h}{^{ip}_\a}h_{ip} + \tensor{h}{_{jl}^\a}\tensor{h}{^{jp}_\a} h_{kp} + \tensor{h}{_{jk}^\a} \tensor{h}{^{jp}_\a}h_{lp} - 2\tensor{h}{_{il}^\a}\tensor{h}{_k^p_\a}\tensor{h}{^i_p}) \\
        &\qquad  - 2|\n h(v,v)|^2 \\
        & > 2 h(v,v)\cdot v^kv^l(\tensor{h}{_{kl}^\a} \tensor{h}{^{ip}_\a}h_{ip} + \tensor{h}{_{jl}^\a}\tensor{h}{^{jp}_\a} h_{kp} + \tensor{h}{_{jk}^\a} \tensor{h}{^{jp}_\a}h_{lp}- 2\tensor{h}{_{il}^\a}\tensor{h}{_k^p_\a}\tensor{h}{^i_p}). 
    \end{split}
\end{equation*}
By \cref{minimiser lemma 1} we choose the frame $\{v\coloneqq e_1, e_i\}_{i=2}^n$ such that $h(v,v)\cdot h_{ij}$ is diagonal, the equation above becomes 
\begin{equation*}
    \begin{split}
        &\quad (\p_t - \D)(\overset{\perp}{g}(h(v,v), h(v,v))\\ 
        & > 2 \sum_i (h(v,v)\cdot h(e_i, e_i))^2 + 4\sum_{j} h(e_j, v)\cdot h(e_j,e_j) h(v,e_j)\cdot h(e_j,e_j)\\
        & \qquad - 4\sum_{i}h(v,v)\cdot h(e_i,e_i) h(v,e_i)\cdot h(v,e_i)\\
        &> 4 \sum_{i > 1} \underbrace{h(v,v) \cdot (h(v,v) - h(e_i,e_i))}_{\leq 0} \underbrace{h(v,e_i)\cdot h(v,e_i)}_{\leq 0}\\
        & \geq 0, 
    \end{split}
\end{equation*}
by \cref{minimiser lemma 2}, where $h(v,e_i)$ is timelike (or zero) from \cref{minimiser lemma 1}. 
\end{proof}

Pinching is formulated naturally in the following way. 

\begin{definition}[Pinching]
    A spacelike-convex submanifold is $\a$-inward pinched if there exists $\a > 0$ such that $h(v,v) - \a H$ is inward spacelike for all unit $v\in T_x\S$. Similarly, it is $\b$-outward pinched if there exists $\b > 0$ such that $h(v,v) - \b H$ is outward spacelike for all unit $v\in T\S$.
\end{definition}

Two sided pinching indicates that $h(v,v)$ stays in a compact "causal diamond" in the normal space for all unit $v\in T\S$, an illustration is shown in \cref{geo meaning of pinching fig}. 
\begin{figure}[h!]
    \centering
    \begin{tikzpicture}[scale=2]

  \draw[->, thick] (-0.1, 0) -- (2.0, 0) node[right] {\(H(x)\)};
  \draw[->, thick] (0, -1.0) -- (0, 1.0) node[above] {\(\nu\)};
  
  \def\alphaCoord{0.3}
  \def\betaCoord{1.7}
  \def\slope{1.0}  

  \pgfmathsetmacro{\xint}{(\alphaCoord+\betaCoord)/2}
  \pgfmathsetmacro{\yint}{\slope*(\betaCoord-\alphaCoord)/2}

  \fill[red, opacity=0.25]
    (\alphaCoord,0) -- (\xint,\yint) -- (\betaCoord,0) -- (\xint,-\yint) -- cycle;

  \draw[thick, orange] (\alphaCoord,0) -- (\xint,\yint);
  \draw[thick, orange] (\alphaCoord,0) -- (\xint,-\yint);
  \draw[thick, orange] (\betaCoord,0) -- (\xint,\yint);
  \draw[thick, orange] (\betaCoord,0) -- (\xint,-\yint);

  \filldraw[blue] (\alphaCoord,0) circle (0.02);
  \filldraw[blue] (\betaCoord,0) circle (0.02);
  
  \node[blue] at ({\alphaCoord-0.1}, -0.1) {\(\alpha\)};
  \node[blue] at ({\betaCoord+0.1}, -0.1) {\(\beta\)};
  
\end{tikzpicture}
    \caption{An illustration of pinching when $N_x\S \cong \R^{1,1}$, the second fundamental form $h(v,v)$ lies in the red shaded region for all $v\in T\S, |v|^2 = 1$, where $\nu \perp H(x)$ is a unit timelike basis in $N_x\S$.} \label{geo meaning of pinching fig}
\end{figure}
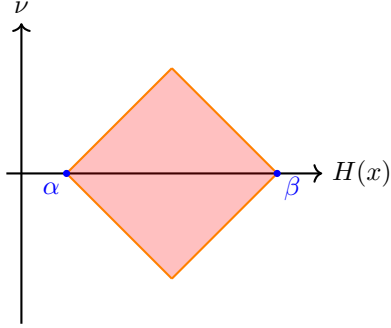

We derive a bound for pinching constants. 

\begin{lemma}\label{pinching constant bounds lemma}
    Suppose $F: \S^n \to \R^{n+1,k}$ is a compact and spacelike-convex immersion, then it is $\a$-inward and $\b$-outward pinched for some $0 < \a < \frac{1}{n} < \b$.
\end{lemma}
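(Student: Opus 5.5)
Existence follows from compactness and continuity; the constraints $\a<\frac1n<\b$ come from tracing. Fix $x\in\S$ and work in $N_x\S\cong\R^{1,k}$. By \cref{characterisation of being a spacelike vector lemma}, a vector $u$ is inward spacelike if and only if $\l u,N\r>0$ for every inward null $N$. Normalise the inward null vectors by $\l N,H(x)\r=1$. The set of such $N$ is compact: it is an ellipsoid-like slice of the null cone, a sphere $S^{k-1}$, since $H$ is spacelike. So the condition that $h(v,v)-\a H$ is inward spacelike for all unit $v$ becomes
\[
\l h(v,v),N\r>\a \quad\text{for all unit } v\in T_x\S \text{ and all normalised inward null } N.
\]
Similarly, $h(v,v)-\b H$ is outward spacelike if and only if $\l h(v,v)-\b H,N\r<0$ for all inward null $N$. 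This uses the analogue of the lemma for the outward cone, proved the same way by replacing $u$ with $-u$. That condition is $\l h(v,v),N\r<\b$.

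Define $\phi(x,v,N)=\l h(v,v),N\r$ on the compact set of triples $(x,v,N)$ with $v$ a unit tangent vector at $x$ and $N$ a normalised inward null normal at $x$. This set is a fibre bundle over the compact $\S$ with compact fibres $S^{n-1}\times S^{k-1}$, and $\phi$ is continuous. By spacelike-convexity, $h(v,v)$ is spacelike. It is also inward, because the unit-tangent sphere is connected for $n\ge2$ and $\sum_i h(e_i,e_i)=H$ is inward, so the values of $h(v,v)$ cannot all lie in the outward component. For $n=1$ we have $h(v,v)=H$, which is inward trivially. Hence $\phi>0$ everywhere, so $\phi$ attains a positive minimum $\a_0$ and a finite maximum $\b_0$. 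Any $\a<\a_0$ and $\b>\b_0$ then work, and in particular we may take $\a>0$.

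For the bounds relative to $\frac1n$, fix $x$ and $N$. Summing over an orthonormal frame gives
\[
\sum_i\phi(x,e_i,N)=\l H,N\r=1.
\]
Hence $\min_v\phi\le\frac1n\le\max_v\phi$, and so $\a<\frac1n<\b$ whenever we choose $\a$ strictly below the minimum and $\b$ strictly above the maximum. Since $\a$ is chosen strictly below $\a_0\le\frac1n$ and $\b$ strictly above $\b_0\ge\frac1n$, both inequalities are strict even when $h(v,v)=\frac1n H$ identically, as for the round sphere. If the intended $\a$ and $\b$ are the optimal ones, the inequalities may be non-strict in that equality case, and I would remark on this.

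The main obstacle is making the normalisation rigorous. I need to check that $\{N \text{ inward null}: \l N,H\r=1\}$ is a compact set varying continuously with $x$, which requires that $\l N,H\r>0$ for all inward null $N\neq0$, again by \cref{characterisation of being a spacelike vector lemma}. I also need the connectedness argument above showing that $h(v,v)$ is inward rather than merely spacelike.
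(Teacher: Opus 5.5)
Your proof is correct and is essentially the paper's argument: the paper also gets $\a<\frac1n<\b$ by tracing. It pairs with $H$ at $v=\frac{1}{\sqrt n}(e_1+\cdots+e_n)$, in a frame diagonalising $(h_{ij}-\a H)\cdot H$, whereas you pair with normalised inward null normals and sum over an orthonormal frame; your compactness and connectedness argument for the existence of $\a,\b$ and for $\a>0$ fills in what the paper only asserts in one line.
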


\begin{proof}
    Since $\S$ is compact, there always exists some $\a, \b$ such that $\S$ is $\a$-inward and $\b$-outward pinched. We first note that $\a > 0$ follows from spacelike-convexity. At a fixed $x\in \S$, we can choose an orthonormal basis $\{e_1, \cdots, e_n\}$ that diagonalises $(h_{ij} - \a H) \cdot H$. Consider $v = 1/\sqrt{n}(e_1 + \cdots + e_n)$, then inward pinching condition implies 
    \begin{equation*}
        \l h(v,v) - \a H, H \r = \left(\frac{1}{n} - \a\right)|H|^2 > 0, 
    \end{equation*}
    i.e., $\a < \frac{1}{n}$. Similarly, considering the outward pinching quantity gives $\b > \frac{1}{n}$. 
\end{proof}

We now prove that pinching is preserved. 

\begin{proposition}[Pinching is preserved]\label{pinching preserved prop}
    Suppose that $\S$ is compact, spacelike-convex and $F: \S^n \times I \to \R^{n+1,k}$ is a solution to MCF, then there exists some $\a > 0$ such that $h(v,v) - \a H$ is spacelike for all unit $v\in T\S$ at all times. 
\end{proposition}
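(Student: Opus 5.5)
Choose $\a_0>0$ with $F(\cdot,0)$ $\a_0$-inward pinched; this exists by \cref{pinching constant bounds lemma}. The plan is to show by the maximum principle that $\a_0$-inward pinching persists. The natural quantity is
\begin{equation*}
    Q(x,t,v) \coloneqq \overset{\perp}{g}\bigl(h(v,v)-\a_0 H,\ h(v,v)-\a_0 H\bigr), \qquad |v|=1,
\end{equation*}
or the tensor $\hat h_{ij}\coloneqq h_{ij}-\a_0 H g_{ij}$, and the aim is to prove that $\min_{|v|=1} Q>0$ is preserved. Because the normal space has signature $(1,k)$, a continuity argument applies: while $\hat h(v,v)$ stays spacelike it stays in the inward component, since it starts there and cannot cross the null cone without $Q$ vanishing. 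So positivity of $\min Q$ is exactly $\a_0$-inward pinching.

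The computation copies the proof of \cref{spacelike-convexity is preserved} with $h$ replaced by $\hat h$. First derive the evolution of $\hat h$. From \cref{flat evolution h}, from $\n_t H = \D H + H^\a h_{pq\a}h^{pq}$, and from the Simons identity \cref{simon's identity eq}, one gets
\begin{equation*}
    (\n_t-\D)\hat h_{ij} = h_{ij}\cdot h^{pq}h_{pq} - \a_0 g_{ij}\, H\cdot h^{pq}h_{pq} + (\text{cubic terms from Simons}).
\end{equation*}
Here I would use the time derivative along a frame with $\n_t v = 0$, so that the $H^\a h_{ip\a}h^p{}_j$ term is absorbed as in \cref{spacelike-convexity is preserved}. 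Next, at a spatial minimum $(x,v)$ of $Q$, the analogues of \cref{minimiser lemma 1} and \cref{minimiser lemma 2} hold with $\hat h$ in place of $h$. Their proofs only use that $v$ is a constrained minimiser, together with the one spacelike direction in the normal space. They give $\hat h(v,v)\cdot h(v,e_i)=0$ for $e_i\perp v$, so one can take a frame in which $\hat h(v,v)\cdot h_{ij}$ is diagonal. They also give $\hat h(v,v)\cdot \hat h(v,v)\le \hat h(v,v)\cdot \hat h(e_i,e_i)$, which is the same as $\hat h(v,v)\cdot h(v,v)\le \hat h(v,v)\cdot h(e_i,e_i)$. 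In addition, the gradient condition makes $\n_i \hat h(v,v)$ timelike, so $-2|\n \hat h(v,v)|^2\ge 0$ and the gradient term has the good sign.

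The main obstacle is the sign of the reaction term
\begin{equation*}
    2\hat h(v,v)\cdot\Bigl(h(v,v)\,|h|^2 - \a_0 H\,|h|^2 + \text{cubic}\Bigr)
\end{equation*}
at the minimum, assuming $Q=0$ there is approached, i.e. $\hat h(v,v)$ null and inward. To handle it, write $\mu\coloneqq\hat h(v,v)$ and $\lambda_i\coloneqq \mu\cdot h(e_i,e_i)$. The minimiser lemma gives $\lambda_1\le\lambda_i$, and $H\cdot\mu=\sum_i\lambda_i$. When $\mu$ is null, $\lambda_1=\a_0\, \mu\cdot H$, and one expects the $|h|^2$ contribution to reduce to $(\lambda_1-\a_0\sum\lambda_i)|h|^2=0$. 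The hardest part is the $h\cdot h$ traces involving timelike components. For those I would expand $h_{ij}$ in the basis $\{\mu,\mu'\}\oplus\{\text{timelike}\}$, where $\mu'$ is the dual null vector. Then I would group the cubic terms as in the last display of the proof of \cref{spacelike-convexity is preserved}, into $4\sum_{i>1}(\lambda_1-\lambda_i)\,h(v,e_i)\cdot h(v,e_i)\ge 0$, using that $h(v,e_i)$ is timelike or zero after projecting off $\mu$. If the leftover terms do not have a sign, I would instead work with the strictly larger quantity $\min Q - \varepsilon e^{Ct}$ and absorb them. Once the reaction term is nonnegative at $Q=0$, the maximum principle shows $\min Q$ cannot reach zero, so $\a_0$-pinching holds for all $t\in I$. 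The same argument applied to $\b$ would give the outward statement.
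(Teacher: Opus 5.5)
Your strategy is the paper's: the maximum principle for $Q=|h(v,v)-\alpha_0 H|^2$ on the unit tangent bundle, the analogues of \cref{minimiser lemma 1} and \cref{minimiser lemma 2}, a frame in which $\Lambda_{ij}\coloneqq\mu\cdot h_{ij}$ (with $\mu=h(v,v)-\alpha_0 H$) is diagonal with $v=e_1$ an eigenvector, and the timelike-gradient observation. The gap is in the one step that carries the proof, the reaction term, which you partly miscompute and leave unresolved. The term you write as $h_{ij}\cdot h^{pq}h_{pq}$ is the normal-bundle contraction $\sum_{p,q}(h_{ij}\cdot h_{pq})\,h_{pq}$, not $h_{ij}\,|h|^2$; the two agree only in codimension one. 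Pair it with $\mu$ and combine it with $-\alpha_0\sum_{p,q}(H\cdot h_{pq})\,h_{pq}$ from $(\nabla_t-\Delta)H$. The result is $\sum_{p,q}\Lambda_{pq}^2$, not $(\lambda_1-\alpha_0\sum_i\lambda_i)|h|^2$. The cubic Simons terms also leave no residue. Using $\Lambda(v,e_p)=\Lambda_{11}\delta_{1p}$, the terms $v^kv^l\,h_{jl}\cdot h^{jp}\,h_{kp}$ and $v^kv^l\,h_{jk}\cdot h^{jp}\,h_{lp}$ together give $2\Lambda_{11}\sum_j h(v,e_j)\cdot h(v,e_j)$. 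The last term gives $-2\sum_i\Lambda_{ii}\,h(v,e_i)\cdot h(v,e_i)$. Hence
\[
\mu\cdot(\nabla_t-\Delta)\mu=\sum_i\Lambda_{ii}^2+2\sum_{i>1}(\Lambda_{11}-\Lambda_{ii})\,h(v,e_i)\cdot h(v,e_i),
\]
which is exactly the paper's identity.

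This error matters for closing the argument. In your version the first term vanishes at a touching point where $\mu$ is null, so you only get $(\partial_t-\Delta)Q\ge 0$ at the first time $\min Q$ reaches $0$. That is no contradiction: at such a point one also has $(\partial_t-\Delta)Q\le 0$, and the two are compatible. So ``the maximum principle shows $\min Q$ cannot reach zero'' does not follow. The fallback with $\min Q-\varepsilon e^{Ct}$ does not repair this. It would need an inequality of the form $(\partial_t-\Delta)Q\ge -CQ$ at \emph{positive} minima, which you never derive because you only analyse $Q=0$. It could not absorb unsigned $O(1)$ leftover terms in any case. With the correct term the argument closes directly. At any minimum, $\mu\neq0$ lies in the closed inward cone and $h(v,v)$ is inward spacelike, so $\Lambda_{11}=\mu\cdot h(v,v)>0$. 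In the null case this is $\Lambda_{11}=\alpha_0\,\mu\cdot H>0$ by \cref{characterisation of being a spacelike vector lemma}. Hence $\sum_i\Lambda_{ii}^2>0$. Together with your $\Lambda_{11}\le\Lambda_{ii}$, with $h(v,e_i)\cdot h(v,e_i)\le 0$, and with $|\nabla\mu|^2\le0$, this gives the strict inequality $(\partial_t-\Delta)Q>0$ that the paper uses. It holds at every minimum, not only at $Q=0$.
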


\begin{proof}
    Since $\S$ is spacelike-convex at the initial time, we may find some $\a > 0$ such that $h(v,v) - \a H$ is spacelike at the initial time. Given such $\a$, we want to apply the scalar maximum principle to $\overset{\perp}{g}(h(v,v) - \a H, h(v,v) - \a H)$. First of all, by a similar argument as in \cref{minimiser lemma 1} we deduce that at the minimum, 
    \begin{equation}\label{minimiser of pinching}
        \overset{\perp}{g}(h(v,v) - \a H, h(v,u)) = 0
    \end{equation}
    for all $u\perp v$. Furthermore, $v$ is an eigenvector of $(h(v,v) - \a H) \cdot h_{ij}$. 

    Similar to the proof of \cref{spacelike-convexity is preserved}, we want to show that $(\p_t - \D)\overset{\perp}{g}(h(v,v) - \a H, h(v,v) - \a H) > 0$ at the minimum. 
    \begin{equation*}
        \begin{split}
            (\p_t - \D)&(\overset{\perp}{g}(h(v,v) - \a H, h(v,v) - \a H))  = 2 \left( \overset{\perp}{g}(\n_t (h(v,v) - \a H), h(v,v) - \a H) \right.\\
            & \left.- \overset{\perp}{g}(\D (h(v,v) - \a H), h(v,v) - \a H)\right) - 2|\n (h(v,v) - \a H)|^2
        \end{split}
    \end{equation*}
    First of all, $\n_i (h(v,v) - \a H)$ is timelike (or zero) for all $i$ at the minimum, hence $|\n (h(v,v) - \a H)|^2 \leq 0$. For the other terms, we can treat $(\n_t - \D)h(v,v)$ similarly as in the proof of \cref{spacelike-convexity is preserved}, and we note that by \cref{flat evolution h}
    \begin{equation*}
        (\n_t - \D)H = H\cdot h_{pq}h^{pq}. 
    \end{equation*}
    By \cref{simon's identity eq}, 
    \begin{equation*}
        \begin{split}
            (\n_t - \D)(h(v,v) - \a H) = & v^kv^l(\tensor{h}{_{kl}^\a} \tensor{h}{^{ip}_\a}h_{ip} + \tensor{h}{_{jl}^\a}\tensor{h}{^{jp}_\a} h_{kp} + \tensor{h}{_{jk}^\a} \tensor{h}{^{jp}_\a}h_{lp}  \\
            & - 2\tensor{h}{_{il}^\a}\tensor{h}{_k^p_\a}\tensor{h}{^i_p}) - \a H\cdot h_{pq}h^{pq}. 
        \end{split}
    \end{equation*}
    
    Fix a frame $\{v\coloneqq e_1, e_i\}_{i=2}^n$ such that $(h(v,v) - \a H) \cdot h_{ij} \coloneqq \Lambda_{ij}$ is diagonal, 
    \begin{equation*}
        \begin{split}
            (h(v,v) - \a H) &\cdot (\n_t - \D)(h(v,v) - \a H) =\sum_i h(v,v)\cdot h(e_i,e_i) \Lambda(e_i,e_i) \\
            & + 2\sum_i \Lambda(v,v) h(v,e_i)\cdot h(v,e_i) - 2\sum_i \Lambda(e_i,e_i) h(v,e_i) \cdot h(v,e_i) \\
            & - \a \sum_i H \cdot h(e_i,e_i) \Lambda(e_i,e_i).
        \end{split}
    \end{equation*}
    The first and the last term can be combined, as well as the middle two, 
    \begin{equation*}
        \begin{split}
            (h(v,v) - \a H) \cdot (\n_t - \D)(h(v,v) &- \a H) = \sum_i (\Lambda(e_i,e_i))^2 \\
            &+2 \sum_{i>1}(\Lambda(v,v) - \Lambda(e_i,e_i)) \underbrace{h(v,e_i) \cdot h(v,e_i)}_{\leq 0}, 
        \end{split}
    \end{equation*}
    where $h(v,e_i)$ is timelike (or zero) from \cref{minimiser of pinching}. Define $\Xi_{ij} \coloneqq (h(v,v) - \a H) \cdot (h_{ij} - \a H g_{ij})$, we note that for $i \geq 2$, 
    \begin{equation*}
        \Lambda(v,v) - \Lambda(e_i,e_i) = \Xi(v,v) - \Xi(e_i,e_i). 
    \end{equation*}
    By a similar argument as in the proof of \cref{minimiser lemma 2}, we know that $\Xi(v,v) - \Xi(e_i,e_i) \leq 0$. Thus, 
    \begin{equation*}
        (\p_t - \D)(\overset{\perp}{g}(h(v,v) - \a H, h(v,v) - \a H)) > 0
    \end{equation*}
    and the result follows by maximum principle. 
\end{proof}


The pinching estimate indicates that spacelike-convex data stays spacelike under MCF. 

\begin{corollary}\label{spacelike-convex data preserves spacelike property coro}
    Suppose that $\S$ is compact, spacelike-convex and $F: \S^n \times I \to \R^{n+1,k}$ is a solution to MCF, then $\S_t$ stays spacelike. 
\end{corollary}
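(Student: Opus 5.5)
The plan is a continuity argument in time. Let $T^*\le \sup I$ be the supremum of times $t_0$ such that $\S_t$ is spacelike for all $t\in[0,t_0)$. Since $\S_0$ is compact and spacelike, the induced metric is positive definite with a uniform positive lower bound at $t=0$. By smoothness of the solution, it stays spacelike on a short interval, so $T^*>0$. It remains to show that if $T^*<\sup I$ we reach a contradiction. We do this by producing a uniform lower bound for $g_t$ on $[0,T^*)$ that persists to $t=T^*$.

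On $[0,T^*)$ the submanifolds are spacelike, so \cref{spacelike-convexity is preserved} and \cref{pinching preserved prop} apply there. Hence there is a fixed $\a>0$ such that $h(v,v)-\a H$ is inward spacelike for all unit $v$. In particular $h(v,v)\cdot H \ge \a |H|^2>0$ for unit $v$, so $H^\a h_{ij\a}$ is positive definite. Plugging this into \cref{evolution of g},
\begin{equation*}
\p_t g_{ij} = -2H^\a h_{ij\a},
\end{equation*}
shows the metric is decreasing in time. That alone gives no lower bound, so we also need an upper bound. The pinching gives $H\cdot h(v,v)\le \b|H|^2$ from the outward pinching, which is also preserved by the analogous argument. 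This yields
\begin{equation*}
-2\b|H|^2 g \le \p_t g \le -2\a |H|^2 g .
\end{equation*}
So $g_t \ge e^{-2\b\int_0^t \max|H|^2}\, g_0$, and the metric stays uniformly positive definite as long as $|H|$ stays bounded up to $T^*$.

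Since $T^*<\sup I$, the solution is smooth on the compact set $\S\times[0,T^*]$. Hence $|H|^2=\bar g(H,H)$, and indeed all of $\p_t F$, are bounded there; note that $\bar g(H,H)$ is computed in the ambient metric and is continuous even at $t=T^*$. The exponential bound then gives $g_{T^*}\ge c\,g_0$ with $c>0$. So $\S_{T^*}$ is spacelike, and by openness it remains spacelike a bit beyond $T^*$, contradicting maximality.

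The main obstacle is that the above appears circular: the pinching estimate is proved assuming spacelikeness. The continuity setup handles this, because pinching is only used on $[0,T^*)$ where spacelikeness holds. A subtler point is that near $T^*$ the tensor $g^{ij}$ could in principle degenerate, making $|H|^2$ as defined via $g$ meaningless. I would avoid this by expressing the bounds through the ambient quantity $\bar g(\p_tF,\p_tF)$, which is continuous up to $T^*$. Alternatively, one can use only the lower bound direction: bound $|\p_t g(v,v)|\le 2|H|\,|h(v,v)|$ using the diamond from two-sided pinching, which controls $|h(v,v)|\le C|H|$.
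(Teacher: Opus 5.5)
Your proposal is correct and is essentially the paper's argument. Preserved two-sided pinching gives $H\cdot h(v,v)\le \b|H|^2 g(v,v)$, so by $\p_t g_{ij}=-2\,H\cdot h_{ij}$ the metric decays at worst exponentially and cannot degenerate. Your continuity argument on $[0,T^*)$ and the bound $|H|^2=\bar g(\p_t F,\p_t F)$ on $\S\times[0,T^*]$ make explicit what the paper's short proof leaves implicit.

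One small caution about your closing aside: the bound $|\p_t g(v,v)|\le 2|H|\,|h(v,v)|$ is false in general. For spacelike vectors in $N_x\S\cong\R^{1,k}$ the Cauchy--Schwarz inequality reverses, giving $|H\cdot h(v,v)|\ge |H|\,|h(v,v)|$. You do not need that inequality, though, since the outward pinching bound already supplies the required lower bound on $\p_t g$.
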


\begin{proof}
    We know $\S_t$ stays spacelike for a short time and the pinching indicates that $h(v,v)$ stays in a compact set for all unit $v\in T\S$. Thus, 
    \begin{equation*}
        h(v,v) \cdot H \leq C g(v,v)
    \end{equation*}
    for some $C > 0$. Since pinching is preserved by \cref{pinching preserved prop}, \cref{evolution of g} indicates that $g(v,v)$ at worst satisfies an exponential decay, so it must stay spacelike throughout the flow. 
\end{proof}

Hence, the short time existence of the flow is guaranteed by \cite[Chapter 3]{baker2011mean}.

\begin{proposition}[Short time existence]\label{short time existence MCF}
    Suppose that $\S$ is compact, spacelike-convex and let $F_0: \S^n \to \R^{n+1,k}$ be a smooth immersion, then there exists $\e > 0$ and a smooth solution $F(\cdot,t): \S^n \to \R^{n+1,k}$ to MCF defined for $t\in [0, \e)$ satisfying $F(\cdot,0) = F_0$.
\end{proposition}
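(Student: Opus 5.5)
The strategy is to recast MCF for spacelike immersions as a quasilinear parabolic system. I would then apply the standard short-time existence theory for strictly parabolic systems on compact manifolds, as in \cite[Chapter 3]{baker2011mean}. Spacelike-convexity enters only through the requirement that $F_0$ is a spacelike immersion, so that the induced metric $g_0$ is Riemannian.

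First I would write the flow in local coordinates. By \cref{gauss local eq}, the mean curvature is $H = g^{ij}(\p_i\p_j F - \G_{ij}^k \p_k F) = \D_{g(F)} F$, where $g_{ij} = \l \p_i F, \p_j F\r$. Hence MCF reads $\p_t F = g^{ij}(F)\p_i\p_j F + \text{lower order}$. This operator is degenerate in the tangential directions because of diffeomorphism invariance. This is the main obstacle, and I would remove it with DeTurck's trick. Fix the background metric $\hat g = g_0$ with connection $\hat\G$, and consider the modified flow
\begin{equation*}
\p_t F = g^{ij}\bigl(\p_i\p_j F - \hat\G_{ij}^k \p_k F\bigr).
\end{equation*}
This differs from $H$ by the tangential term $F_*(V)$, where $V^k = g^{ij}(\G_{ij}^k - \hat\G_{ij}^k)$. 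Its principal symbol at $\xi$ is $g^{ij}\xi_i\xi_j\,\mathrm{Id}$ acting on $\R^{n+1,k}$-valued maps. Because $g$ is positive definite near $t=0$, this symbol is strictly elliptic, and the indefinite ambient form plays no role in the symbol. Compactness of $\S$ together with the fact that being spacelike is an open condition ensures that $g(F)$ stays uniformly positive definite for maps $C^1$-close to $F_0$. Standard quasilinear parabolic theory, via linearisation and a contraction mapping or inverse function theorem in parabolic H\"older spaces, then gives a smooth solution $\tilde F$ on some interval $[0,\e)$.

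Next I would pull back. I would solve the ODE $\p_t \phi_t = -V(\phi_t, t)$ with $\phi_0 = \mathrm{id}$, which is possible on the compact manifold $\S$ and yields diffeomorphisms for short time. Then $F(\cdot,t) = \tilde F(\phi_t(\cdot),t)$ satisfies $\p_t F = H$ and $F(\cdot,0)=F_0$. Smoothness is inherited from $\tilde F$ and $\phi$.

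Finally, shrinking $\e$ if needed, $F(\cdot,t)$ remains a spacelike immersion by continuity. \cref{spacelike-convex data preserves spacelike property coro} and \cref{pinching preserved prop} then apply on $[0,\e)$, so the solution is consistent with the earlier estimates.
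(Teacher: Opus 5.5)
Your proposal is correct and follows essentially the same route as the paper, which simply cites the DeTurck-trick short-time existence argument of \cite[Chapter 3]{baker2011mean}, applicable once the induced metric is Riemannian. You also note that the principal symbol $g^{ij}\xi_i\xi_j\,\mathrm{Id}$ involves only the positive definite induced metric, and that observation is what justifies carrying the argument over to the indefinite ambient space $\R^{n+1,k}$, which the paper leaves implicit.
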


\section{Noncollapsing}\label{noncollapsing section}

The result holds for a more general class of submanifolds, which is an analogue of mean-convex hypersurfaces. 

\begin{definition}
    A spacelike submanifold $\S^n \subset \R^{n+1,k}$ is \emph{spacelike-mean-convex} if its mean curvature is everywhere spacelike and non-vanishing.
\end{definition}

First, we show that spacelike-mean-convexity is preserved, assuming that the induced metric stays spacelike, this condition is implied by noncollapsing so we omit the assumption for results in this section. 
\begin{proposition}
    Suppose that $\S$ is compact, spacelike-mean-convex and $F: \S^n \times I \to \R^{n+1,k}$ is a solution to MCF, then spacelike-mean-convexity is preserved along the flow. 
\end{proposition}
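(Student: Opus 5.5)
We apply the scalar maximum principle to $|H|^2=\overset{\perp}{g}(H,H)$, following the proof of \cref{spacelike-convexity is preserved}. On a compact $\S$ the function $|H|^2$ is continuous and positive initially. Take the first time $t_0$ at which $\min_{\S}|H|^2$ would decrease to a smaller value, or touch zero, and let $x_0$ be a spatial minimum point at that time. The aim is to show that $(\p_t-\D)|H|^2\geq 0$ there, with a strict inequality when $|H|^2>0$. This gives $\min|H|^2$ non-decreasing, so $H$ stays strictly spacelike. Spacelike $H$ cannot pass continuously through the timelike or null region without $|H|^2$ reaching $0$, so it also remains in the inward component of the spacelike cone.

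\textbf{Step 1 (evolution of $H$).} Taking the trace of \cref{flat evolution h} against $g$, and using \cref{evolution of g} for $\p_t g^{ij}$, gives
\[
(\n_t-\D)H = \tensor{h}{_{pq}^\b}\tensor{H}{_\b}\, h^{pq}.
\]
This is the identity already used in the proof of \cref{pinching preserved prop}. Since $\n\overset{\perp}{g}=0$ in both space and time, with the time connection on $\NN$ being the normal projection of ${}^F\Bar{\n}_t$, we obtain
\[
(\p_t-\D)|H|^2 = 2\,\overset{\perp}{g}\big((\n_t-\D)H,H\big)-2|\n H|^2 = 2\sum_{p,q}(H\cdot h_{pq})^2-2\sum_i \n_iH\cdot\n_iH .
\]

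\textbf{Step 2 (sign of each term at the minimum).} At $x_0$ the first-order condition reads $0=\p_i|H|^2=2\,\n_iH\cdot H$ for every $i$. The normal space $N_{x_0}\S\cong\R^{1,k}$ has only one spacelike dimension and $H$ is spacelike there. Hence every vector orthogonal to $H$ is timelike or zero, so $\n_iH\cdot\n_iH\le 0$ and $-2|\n H|^2\ge 0$; this is exactly the mechanism of \cref{spacelike-convexity is preserved}. The reaction term $2\sum(H\cdot h_{pq})^2$ is a sum of squares of real numbers. It is at least $\frac{2}{n}(H\cdot H)^2=\frac{2}{n}|H|^4>0$ by Cauchy--Schwarz applied to the trace of the symmetric matrix $H\cdot h_{pq}$, whose trace is $|H|^2$. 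Thus $(\p_t-\D)|H|^2\ge \frac{2}{n}|H|^4>0$ at any spatial minimum where $|H|^2>0$.

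\textbf{Step 3 (conclusion).} By the standard maximum principle argument, or Hamilton's trick applied to $\min_\S |H|^2(t)$, this minimum is non-decreasing; in fact it satisfies $\frac{d}{dt}\min|H|^2\ge\frac{2}{n}(\min|H|^2)^2$. So $|H|^2\ge\min_{\S_0}|H|^2>0$ for all $t$, on any interval where the induced metric is Riemannian. That interval is the standing assumption here.

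The main obstacle I expect is justifying the gradient term when $\n_i H$ is merely orthogonal to $H$. Its non-positivity depends on $H(x_0)$ being spacelike, which holds because we argue up to the first bad time. The strict inequality is needed to avoid a degenerate minimum, and the Cauchy--Schwarz reaction bound supplies it.
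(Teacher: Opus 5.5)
Your proposal is correct and follows the paper's proof essentially step for step. Both apply the maximum principle to $|H|^2$ using $(\nabla_t-\Delta)H=(H\cdot h_{pq})h^{pq}$, note that $\nabla_i H\perp H$ at a minimum forces $\nabla_i H$ to be timelike or zero, and conclude from positivity of the reaction term $2\sum_{p,q}(H\cdot h_{pq})^2$. Your Cauchy--Schwarz bound $\geq \frac{2}{n}|H|^4$ justifies the strict positivity that the paper only asserts, and you rightly treat the gradient-term inequality as non-strict, whereas the paper writes it as strict.
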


\begin{proof}
    By \cref{flat evolution h} we have 
    \begin{equation*}
        \n_t H = \D H + H \cdot h_{ip}h^{ip}. 
    \end{equation*}
    Hence, at a minimum point for $|H|^2$ we have 
    \begin{equation*}
        \begin{split}
            (\p_t - \D)|H|^2 &= 2 (\n_t - \D)H \cdot H - 2 |\n H|^2  \\
            & > 2 (\n_t - \D)H \cdot H\\
            & = 2\sum_{i,p} (H \cdot h_{ip})^2\\
            & > 0, 
        \end{split}
    \end{equation*}
    where $\n H$ is timelike (or zero). The result follows from scalar maximum principle. 
\end{proof}

\subsection{Formulation}

Define 
\begin{equation*}
    Z(x,y,N(x)) \coloneqq \frac{2 \l F(y) - F(x), N(x)\r }{|F(y) - F(x)|^2}, 
\end{equation*}
for acausal submanifolds, where $\l \cdot, \cdot \r$ denotes the standard scalar product in pseudo-Euclidean space, and $N(x)$ is an inward null normal at $x$. It has the following geometric meaning, where the proof can be found in \cite[Lemma 12.2]{ben2020}. 

\begin{lemma}
    Let $\S^n \subset \R^{n+1,k}$ be smooth, properly embedded, and acausal, then $Z(x, y, N(x))$ corresponds to the norm of the mean curvature of the pseudosphere which touches $x\in \S$ in the direction $N$ and passes through $y$.
\end{lemma}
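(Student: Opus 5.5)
We need to show that $Z(x,y,N(x))$ equals the norm of the mean curvature of the pseudosphere that is tangent to $\S$ at $F(x)$, has centre along the null-normal direction determined by $N$, and passes through $F(y)$. The approach mirrors \cite[Lemma 12.2]{ben2020}, with Euclidean inner products replaced by the pseudo-Euclidean one.

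\textbf{Step 1: the family of pseudospheres.} A pseudosphere tangent to $\S$ at $F(x)$ must have its centre $c$ on the normal affine space $F(x)+N_x\S$. For the direction $N$, I would take the centre on the ray adapted to $N$ as in the definition of noncollapsing, $c = F(x) + \lambda N$ or its spacelike analogue, with radius $r$ satisfying $|F(x)-c|^2 = r^2$.

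\textbf{Step 2: impose that $F(y)$ lies on it.} The condition $|F(y)-c|^2 = |F(x)-c|^2$ expands to
\begin{equation*}
|F(y)-F(x)|^2 - 2\l F(y)-F(x), c-F(x)\r = 0 .
\end{equation*}
Write $c - F(x) = \rho\,\hat N$, where $\hat N$ is the appropriately normalised normal direction. Solving gives
\begin{equation*}
\rho^{-1} = \frac{2\l F(y)-F(x),\hat N\r}{|F(y)-F(x)|^2}.
\end{equation*}
Acausality ensures $|F(y)-F(x)|^2>0$ for $y\neq x$, so this expression is well defined.

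\textbf{Step 3: identify the mean curvature.} For an $n$-dimensional spacelike pseudosphere $\{p : |p-c|^2=r^2\}$ with spacelike radius vector, intersected with the affine slice through $F(x)$ spanned by $T_x\S$ and $c-F(x)$, the second fundamental form at $F(x)$ is $\frac{1}{r^2}(c-F(x))\,g$ (up to the dimension normalisation used in the paper's convention). The norm of its curvature is therefore $1/r$, which matches $Z$ once the normalisation of $N$ is fixed consistently with the definition of the touching pseudosphere.

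\textbf{Main obstacle.} The main difficulty is the null normalisation in Step 3. For null $N$ the quantity $|N|^2=0$, so "radius" must be read through the slice convention: one pairs $N$ with $H$ so that $\l N, H\r$ fixes the scale. One then checks that the pseudosphere's curvature vector paired with $N$ gives exactly $Z$. I would first verify this in the model case of $\R^{n+1,0}$, where $N$ is a unit vector and the statement is the classical lemma, and then extend by this normalisation.
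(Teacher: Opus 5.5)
Your Steps 1--2 are the computation behind the Euclidean Lemma 12.2 that the paper cites in place of a proof, and they already contain the whole content. With $c=F(x)+\rho\hat N$, the condition $|F(y)-c|^2=|F(x)-c|^2$ reduces to $d^2=2\rho\langle \o,\hat N\rangle$, because the $\rho^2|\hat N|^2$ terms cancel. Hence $\rho^{-1}=Z(x,y,\hat N)$ for \emph{any} normal $\hat N$, and no normalisation is needed. For unit spacelike $\hat N$, $\rho$ is the radius and $Z=1/\rho$ is the curvature in the sense of the cited lemma. The $n$-dimensional slice in your Step 3 has mean curvature of norm $n/\rho$, so you should state explicitly that $Z=1/\rho$ rather than appeal to ``the dimension normalisation''.

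The gap is in the paragraph you flag as the main obstacle. That case is exactly the one for which the paper defines $Z$, since $N$ is inward null there.

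\textbf{Why your Step 3 fails for null $N$.} No pseudosphere of positive radius centred on $F(x)+\R N$ passes through $F(x)$, because $|F(x)-c|^2=\rho^2|N|^2=0$. So the formula $\frac{1}{r^2}(c-F(x))g$ is undefined, and the slice $T_x\S\oplus\R N$ is degenerate.

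\textbf{Why your proposed fixes do not work.} Checking $\R^{n+1,0}$ and then ``extending by normalisation'' is not an argument: when $k=0$ there are no null normals at all. Pairing $N$ with $H$ does not help either. $Z$ is homogeneous of degree one in $N$, so the point $F(x)+Z^{-1}N$ is unchanged under $N\mapsto sN$, and the normalisation $\langle N,H\rangle=1$ plays no geometric role in this lemma.

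\textbf{The correct reading for null $N$.} This matches the paper's later remark that pseudospheres are replaced by null cones in null directions. Your Step 2 computation shows that $F(y)$ lies on the null cone $\{p:|p-c|^2=0\}$ with vertex $c=F(x)+Z^{-1}N$. This cone contains $F(x)$, and its tangent space there is $N^\perp\supset T_x\S$, so it touches $\S$ at $x$. $Z$ is then the reciprocal of the affine parameter of the vertex.

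\textbf{If you want a genuine pseudosphere.} You must name it. Set $\Upsilon=\frac{2}{d^2}\overset{\perp}{\pi}_x(\o)$ and assume it is spacelike. Take the pseudosphere centred at $F(x)+\Upsilon/|\Upsilon|^2$ with radius $1/|\Upsilon|$. It touches $\S$ at $F(x)$. It passes through $F(y)$ because $\langle\o,\Upsilon\rangle=\frac{d^2}{2}|\Upsilon|^2$. Its second fundamental form at $F(x)$ along unit vectors of $T_x\S$ equals $\Upsilon$, and $\langle \Upsilon,N\rangle=Z(x,y,N)$. This is presumably what your final sentence was reaching for, but as written it is an unspecified check rather than an argument.
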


Since we do not have a canonical choice of normal, noncollapsing is formulated as the following. Define 
\begin{equation*}
    Q(x, y, N(x), \d) \coloneqq Z(x, y, N(x)) - \d \l N(x), H(x) \r, 
\end{equation*}
and 
\begin{align*}
    \underline{Q}(x, \d) &\coloneqq \inf_{\mathcal{N}^+_x(H)}\inf_{y\neq x} Q(x, y, N(x), \d),\\
    \overline{Q}(x, \d) &\coloneqq \sup_{\mathcal{N}^+_x(H)}\sup_{y\neq x} Q(x, y, N(x), \d), 
\end{align*}
where 
\begin{equation*}
    \mathcal{N}^+_x(H) = \{ N \in N_x\S: |N|^2 = 0, \l N, H(x)\r = 1\},
\end{equation*}
is the set of inward null directions in $N_x\S$ with a normalisation condition, which is a compact set diffeomorphic to $S^{k-1}$.   

\begin{definition}
    A spacelike (mean) convex submanifold $\S^n \subset \R^{n+1,k}$ is $\d_-$-exterior noncollapsed if $\underline{Q}(x, \d_-) \geq 0$, and it is $\d_+$-interior noncollapsed if $\overline{Q}(x, \d_+) \leq 0$. We say $\S$ is noncollapsed if it is both $\d_-$-exterior noncollapsed and $\d_+$-interior noncollapsed. 
\end{definition}

By \cref{characterisation of being a spacelike vector lemma}, noncollapsing means that at every $x \in \S$, there exists a pseudosphere touching $x$ in every inward spacelike normal direction $N(x)$ with radius $1/(\d_- \l N(x), H(x)\r)$ from the outside (or $1/(\d_+ \l N(x), H(x)\r)$ from the "inside"), see \cref{exterior ball lemma noncollapsing}. \cref{noncollapsing illustration pic} gives an illustration for noncollapsing in the $\widehat{H}(x)$ direction. 

There is another geometric interpretation that the submanifold is contained in a collection of unions of intersection of null cones, this will be made precise in

\begin{figure}[h!]
    \includegraphics[scale=0.4]{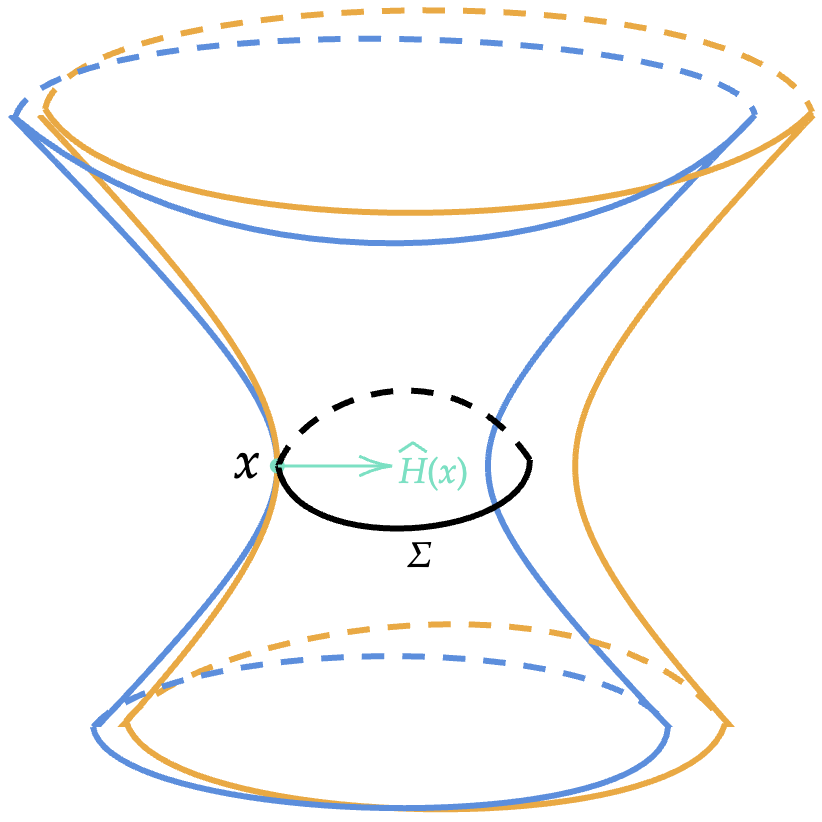}
    \caption{Noncollapsing illustration in the direction of $\widehat{H}(x)$, where the blue and orange hyperboloids represent the interior and exterior touching pseudosphere at $x$ respectively.}
    \label{noncollapsing illustration pic}
\end{figure}

Since $\S$ is compact, we always have some $ \d_- \leq \d_+$ such that $\S$ is $\d_-$-exterior and $\d_+$-interior noncollapsed. 

The extreme value over $y$ may not be achieved on any $y \neq x$, the subsequent analysis requires a boundary construction, which is an extension of $\S$ to $\widehat{\S}$ with a unit tangent bundle. The construction is the same as \cite[Chapter 12]{ben2020}, we include some necessary parts here. First of all, $Z$ recovers the second fundamental form on the diagonal $D \coloneqq \{(x,x) : x\in \S \}$ since if we choose $\g: (-s_0, s_0)\to \S$ any regular curve through $\g(0) = x$, then 
\begin{equation}\label{extension of Z to diagonal eq}
    \lim_{s\to 0} Z (x, \g(s), N(x)) = \frac{\l h_x(v,v), N(x)\r}{g_x (v,v)}, 
\end{equation}
where $v = \g'(0) \neq 0$.

$Z$ with a fixed $N(x)$ is discontinuous on the diagonal $D$ which has codimension $n$, we attach $(\S \times \S) \setminus D$ a $(2n-1)$-dimensional unit tangent bundle $S\S = \{(x,v)\in T\S: |v| = 1\}$ to form a $2n$-dimensional manifold with boundary $\widehat{S}$. The boundary charts $\widehat{Y}: S\S \times (0, r_0) \to \S$ is given by 
\begin{equation*}
    \widehat{Y}(r, z, \vartheta) \coloneqq (\exp(rY(z, \vartheta)), \exp(-rY(z, \vartheta))), 
\end{equation*}
where $Y$ is a chart for $S\S$. The normal space $N_{(x,x)}D$ is an $n$-dimensional subspace $\{(u, -u): u\in T_x\S$ of $T_{(x,x)}(\S\times \S) \cong T_x\S \times T_x\S$. The tubular neighbourhood theorem provides some $r_0 > 0$ such that the exponential map is a smooth diffeomorphism on $\{((x,u), (x,-u))\in T(\S\times \S): 0 < |u| < r_0\}$, so that the boundary chart is well-defined. 

We can now extend $Z$ to $\widehat{\S}$ by setting $Z(x,v, N) \coloneqq h_x(v,v)$ for every $(x,v)\in S\S = \p \widehat{\S}$, the extension is smooth. For any given point $x_0\in \S$, either of the two cases occur: 
\begin{enumerate}[(i)]
    \item The interior case: $\underline{Q}(x_0, \d) = Q(x_0, y_0, N(x_0), \d)$ for some $y_0 \in \S_{t_0}\setminus \{x_0\}$, 

    \item The boundary case: $\underline{Q}(x_0, \d) = \lim_{y\to x_0} Q(x_0, y, N(x_0), \d)$. 
\end{enumerate}

We now derive some bounds for the noncollapsing constants. 

\begin{lemma}\label{noncollapsing constants bound lemma}
    Suppose $F: \S^n \to \R^{n+1,k}$ is a spacelike (mean) convex embedding, and $\S$ is $\d_-$-exterior and $\d_+$-interior noncollapsed, then $\d_- \leq \frac{1}{n} \leq \d_+$. Moreover, if $\S$ is spacelike-convex, then $\d_- > 0$. 
\end{lemma}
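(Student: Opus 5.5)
We will prove both bounds by letting $y\to x$ along geodesics, using the diagonal limit \cref{extension of Z to diagonal eq}. At that limit $Q$ reduces to a quantity involving only the second fundamental form and the mean curvature.

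Fix $x\in\S$ and a normalised inward null normal $N\in\mathcal{N}^+_x(H)$, so that $\l N,H(x)\r=1$. For a unit tangent vector $v\in T_x\S$, take $y=\exp_x(sv)$ and let $s\to0$. By \cref{extension of Z to diagonal eq},
\begin{equation*}
    \lim_{s\to0}Q(x,y,N,\d)=\l h_x(v,v),N\r-\d\l N,H(x)\r=\l h_x(v,v),N\r-\d .
\end{equation*}

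Exterior noncollapsing gives $\underline{Q}(x,\d_-)\ge0$. Taking the infimum over $y$, this forces $\l h(v,v),N\r\ge\d_-$ for every unit $v$ and every $N\in\mathcal{N}^+_x(H)$. Interior noncollapsing likewise forces $\l h(v,v),N\r\le\d_+$ for every unit $v$ and every such $N$.

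To extract the bounds, average over an orthonormal frame $e_1,\dots,e_n$ of $T_x\S$. Summing the inequalities over $v=e_i$ gives
\begin{equation*}
    \l H,N\r=\sum_i\l h(e_i,e_i),N\r .
\end{equation*}
This sum is at least $n\d_-$ and at most $n\d_+$. Since $\l H,N\r=1$, we get $n\d_-\le1\le n\d_+$, that is, $\d_-\le\frac1n\le\d_+$. As an alternative, one can argue as in \cref{pinching constant bounds lemma} using $v=\frac{1}{\sqrt n}\sum_i e_i$ in a frame diagonalising $\l h_{ij},N\r$; the trace argument avoids that choice.

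For the second claim, assume $\S$ is spacelike-convex. By \cref{spacelike convex diffeo to spheres and acausal}, $\S$ is acausal, so $|F(y)-F(x)|^2>0$ for $y\ne x$. The proof of that lemma also shows $f(y)=\l F(y)-F(x),N\r>0$ for all $y\neq x$, for any inward null $N\in N_x\S$. Hence $Z(x,y,N)>0$ at interior points of $\widehat{\S}$.

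On the boundary $S\S$, the extension of $Z$ equals $\l h(v,v),N\r$. This is positive because $h(v,v)$ is inward spacelike and $N$ is inward null, by \cref{characterisation of being a spacelike vector lemma}. So $Z>0$ on all of $\widehat{\S}$, with $N$ ranging over the compact set $\mathcal{N}^+(H)$.

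The main obstacle is turning this pointwise positivity into a uniform one: we need a constant $c>0$ with $Z\ge c$ everywhere. The difficulty is that $\widehat{\S}\times\mathcal{N}^+$ is only compact after the boundary compactification, and $Z$ must extend continuously to that boundary. Both facts are supplied by the construction above, since $\widehat{\S}$ is compact (as $\S$ is) and $Z$ is continuous, even smooth, up to $\p\widehat{\S}$. Taking the minimum $c>0$ of $Z$ over $\widehat{\S}$ and the compact set of normalised null normals, we can choose any $\d_-\le c$ with $\d_->0$. Then $Q\ge c-\d_-\ge0$, so $\S$ is $\d_-$-exterior noncollapsed with $\d_->0$.
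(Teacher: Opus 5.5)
Your proof is correct and uses the same ingredients as the paper. For $\d_-\le\frac1n\le\d_+$ you combine the diagonal limit \cref{extension of Z to diagonal eq} with the trace/averaging argument of \cref{pinching constant bounds lemma}, and for $\d_->0$ you use the positivity of $\l F(y)-F(x),N\r$ from the proof of \cref{spacelike convex diffeo to spheres and acausal}; you streamline this by dropping the paper's boundary/interior case split, since letting $y\to x$ handles both cases at once, and by making explicit the compactness of $\widehat{\S}$ and the continuity of $Z$ up to $\p\widehat{\S}$, which the paper's pointwise choice of a small $\d_->0$ implicitly requires.
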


\begin{proof}
    \noindent \textbf{Case 1}: Boundary
    
    On the diagonal, the extension of $Z$ recovers the second fundamental form by \cref{extension of Z to diagonal eq}, for a fixed $x\in \S$ we have 
    \begin{equation*}
        \underline{Q}(x, \d_-) = \l h(v,v), N\r - \d_- \geq 0
    \end{equation*}
    for some unit $v\in T_x\S$ and inward null $N\in N_x\S$ by \cref{extension of Z to diagonal eq}. Since $\d_-$ is an infimum for $\l h(v,v), N\r$, the proof of \cref{pinching constant bounds lemma} indicates that $\d_- \leq \frac{1}{n}$. Considering the interior noncollapsing quantity in the boundary case yields $\d_+ \geq \frac{1}{n}$. If $\S$ is spacelike-convex, it follows by \cref{pinching constant bounds lemma} that $\d_- > 0$.\\

    \noindent \textbf{Case 2}: Interior  

    For exterior noncollapsing, the minimality condition and \cref{pinching constant bounds lemma} indicates that for any unit $v\in T_x\S$, 
    \begin{equation*}
        Q(x,y,N, \d_-)\leq \l h(v,v) , N\r - \d_- \leq \frac{1}{n} - \d_-,
    \end{equation*}
    which indicates that $\d_- \leq \frac{1}{n}$. Similarly, considering the interior noncollapsing quantity gives $\d_+ \geq \frac{1}{n}$. 
    
    If $\S$ is spacelike-convex, recall that in the proof of \cref{spacelike convex diffeo to spheres and acausal} we know that for a fixed $x\in \S$ and an inward null normal vector $N$ at $x$, the function $\l F(y) - F(x), N \r$ is positive for all $y\neq x$, this implies that $\overset{\perp}{\pi}_x(F(y) - F(x))$ is inward spacelike by \cref{characterisation of being a spacelike vector lemma}. Thus, we can find some small $\d_- > 0$ such that 
    \begin{equation*}
        \frac{2}{|F(y) - F(x)|^2}\overset{\perp}{\pi}_x(F(y) - F(x)) - \d_-H(x)
    \end{equation*}
    is also inward spacelike. 
\end{proof}

Now for  a family of acausal embeddings $F: \S^n \times [0, T) \to \R^{n+1,k}$, we define 
\allowdisplaybreaks{
\begin{align*}
    Z(x,y,N(x,t),t) &\coloneqq \frac{2 \l F(y,t) - F(x,t), N(x,t)\r }{|F(y,t) - F(x,t)|^2}, \\
    Q(x, y, N(x), \d, t) &\coloneqq Z(x, y, N(x,t), t) - \d \l N(x,t), H(x,t) \r,\\
    \underline{Q}(x, \d, t) &\coloneqq \inf_{N\in \mathcal{N}^+_x(H)}\inf_{y\neq x} Q(x, y, N(x), \d, t),\\
    \overline{Q}(x, \d, t) &\coloneqq \sup_{N\in \mathcal{N}^+_x(H)}\sup_{y\neq x} Q(x, y, N(x), \d, t).
\end{align*}}

The main result is that MCF preserves noncollapsing. 

\begin{theorem}\label{acausal spacelike-mean-convex submanifold preserves noncollapsing}
    Suppose $F: \S^n \times [0, T) \to \R^{n+1,k}$ is a spacelike-convex, compact and embedded solution to MCF, if $\S\times \{0\}$ is $\d_-$-exterior noncollapsed and $\d_+$-interior noncollapsed, then
    \begin{equation*}
        \overline{Q}(x, \d_-, t) \leq 0, \qquad \underline{Q}(x,\d_+, t) \geq 0
    \end{equation*}
    for all $t\in [0, T)$. 
\end{theorem}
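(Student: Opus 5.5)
For a fixed $\d>0$ I would prove the following general preservation statement: if $\overline{Q}(\cdot,\d,0)\le 0$ on $\S$, then $\overline{Q}(\cdot,\d,t)\le 0$ for all $t\in[0,T)$; and if $\underline{Q}(\cdot,\d,0)\ge 0$, then $\underline{Q}(\cdot,\d,t)\ge 0$ for all $t\in[0,T)$. Applying the first part with $\d=\d_-$ and the second with $\d=\d_+$ gives exactly the two conclusions of \cref{acausal spacelike-mean-convex submanifold preserves noncollapsing}, as worded. This reads the hypothesis on $\S\times\{0\}$ as supplying these two inequalities at $t=0$. The method is the two-point maximum principle of \cite[Chapter 12]{ben2020}, enlarged to include the variable null direction $N\in\mathcal N^+_x(H)$. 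That set is compact, and by \cref{spacelike-convex data preserves spacelike property coro} and the preservation of spacelike-mean-convexity it stays well defined along the flow.

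\emph{Setup.} I treat the two inequalities symmetrically and describe the $\overline{Q}$ case. I would work with $Q_\e\coloneqq Q-\e e^{Ct}$. Suppose the claim fails, and let $t_0$ be the first time at which $\sup Q_\e=0$, attained at some $(x_0,y_0,N_0)$ on the compactified space $\widehat{\S}\times\mathcal N^+(H)$. Compactness of $\widehat{\S}$ and of $\mathcal N^+_x(H)\cong S^{k-1}$ guarantees such a point exists.

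\emph{Boundary case.} Here $Q=\l h(v,v),N\r-\d$ by \cref{extension of Z to diagonal eq}. The maximum over $v$ and $N$ then says that $h(v,v)-\d H$ is outward non-timelike, with equality in some null direction. I would run the argument of \cref{pinching preserved prop} in its outward form. The maximising $v$ is an eigenvector of $N\cdot h$, the quantities $\n_i\l h(v,v)-\d H,N\r$ vanish, and the reaction terms have the favourable sign because the mixed terms $h(v,e_i)$ are timelike. This gives $(\p_t-\D)Q<0$, a contradiction.

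\emph{Interior case.} Put $w=F(y)-F(x)$, $d=|w|$ (spacelike by acausality), and let $\partial_i^x$ be an orthonormal frame at $x_0$. At $y_0$ I would take the reflected frame $\partial_i^y=\partial_i^x-2\l\partial_i^x,\hat w\r\hat w$ projected to $T_{y_0}\S$, and extend $N$ by normal parallel transport in $x$. The first-order conditions are the following.
\begin{itemize}
\item Vanishing $y$-derivatives give $\l\partial_i^y F,N\r=\tfrac{Z}{2}\cdot\tfrac{2\l w,\partial_i^yF\r}{1}\cdot\tfrac{1}{d^2}\cdot d^2/2$, i.e.\ the usual tangency of the touching pseudosphere.
\item Vanishing $x$-derivatives give the analogous relation involving $\l h(\partial_i^x,\cdot),\cdot\r$.
\item Vanishing derivatives in $N$, tangent to the constraint $\l N,H\r=1$, $|N|^2=0$, force the normal projection $\overset{\perp}{\pi}_{x}\!\big(\tfrac{2w}{d^2}\big)-\d H$ to be a combination of $N$ and $H$.
\end{itemize}
I would then compute
\[
\Big(\p_t-\sum_i\big(\n_{\partial_i^x}+\n_{\partial_i^y}\big)^2\Big)Q .
\]
The flat-space terms $\l H(y)-H(x),N\r$ and $|w|^2$-derivative terms cancel as in \cite[Chapter 12]{ben2020}. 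The extra term is $(\p_t-\D)\l N,H\r$. It contributes $\sum\l N,h_{pq}\r\l H,h^{pq}\r$, and it also contributes the evolution of $N$ needed to keep $\l N,H\r=1$.

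\emph{Main obstacle.} The hard step is controlling the normal-bundle cross terms in the interior case. These are $\l\overset{\perp}{\n}N,\cdot\r$, the timelike parts of $\n H$, and the component of $\overset{\perp}{\pi}_x w$ orthogonal to $\Span\{N,H\}$. My first attempt would use the second-order condition in the $N$-variable together with the $N$-criticality relation above to write every such cross term as a square of a timelike normal vector. By the remark used repeatedly in Section 4, such a square is nonpositive, so it enters with the good sign. Then $(\p_t-\D)Q_\e<0$ at $(x_0,y_0,N_0,t_0)$, contradicting the first-time choice. Letting $\e\to0$ finishes the $\overline{Q}$ statement; the $\underline{Q}$ statement follows by reversing all inequalities.
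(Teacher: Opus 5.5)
Your set-up (a two-point maximum principle on $\widehat{\S}$ together with the compact fibre $\mathcal N^+_x(H)$, and a first touching time) matches the paper, and so does your boundary case. The interior case, which you yourself flag as the main obstacle, is not closed, and the ingredient that closes it is missing.

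\textbf{The frame at $y$.} In this codimension the reflection $R(u)v=v-2\langle u,v\rangle u$, with $u=w/d$, does not carry $T_x\S$ onto $T_y\S$. The correct form of your first bullet is $\langle\p_{y^i},N\rangle=Z\langle w,\p_{y^i}\rangle$. It says exactly that $R(u)N$ is a null normal at $y$. Hence $R(u)T_x\S$ and $T_y\S$ are two spacelike $n$-planes in the degenerate hyperplane $(R(u)N)^\perp$. Projecting the reflected frame onto $T_y\S$ does not give an orthonormal frame in general. Then $\sum_i h^y(\p_i^y,\p_i^y)\neq H_y$, the $H_y$ terms coming from $\p_t$ are not matched, and the cancellation ``as in \cite[Chapter 12]{ben2020}'' does not happen.

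The paper instead uses \cref{singular value decomp for maximal dimensional spacelike subspaces} to pick orthonormal frames with $\p_{y^i}=\lambda_iR(u)\p_{x^i}+N_i$ and $N_i\perp R(u)T_x\S$. Here $|\lambda_i|\geq 1$, because the orthogonal complement of a spacelike $n$-plane inside $(R(u)N)^\perp$ is causal. This gives $\langle\p_{y^i},\p_{x^i}\rangle-\frac{2}{d^2}\langle\p_{y^i},w\rangle\langle\p_{x^i},w\rangle=\lambda_i$. After fixing the sign of $\lambda_i$, the frame terms reduce, modulo terms of good sign, to $\frac{4}{d^2}\sum_i(|\lambda_i|-1)\bigl(\langle h(e_i,e_i),N\rangle-\d\bigr)$ in the exterior case; the interior case is symmetric. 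This is nonnegative only because two facts hold at the first touching time:
\begin{itemize}
\item the diagonal inequality $\langle h(v,v),N\rangle\geq\d$ still holds;
\item $\sum_i\langle h(e_i,e_i),N\rangle=\langle H,N\rangle=1$.
\end{itemize}
Your plan contains neither the bound $|\lambda_i|\geq1$ nor this use of the boundary inequality inside the interior case.

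\textbf{The reaction term.} Your proposed mechanism for the zeroth-order terms is not the one that works. The $N$-criticality gives $\Upsilon-\d H\in\Span\{N,H\}$. At the touching point, $\langle\Upsilon-\d H,N\rangle=0$ kills the $H$-coefficient. Minimality over all inward null directions (\cref{characterisation of being a spacelike vector lemma}) then gives $\Upsilon-\d H=cN$ with $c\geq0$. This is what signs the reaction term:
\[
\sum_{i,j}\langle\Upsilon-\d H,h_{ij}\rangle\langle h_{ij},N\rangle=c\sum_{i,j}\langle h_{ij},N\rangle^2\geq0 .
\]
The sign comes from $c\geq0$, not from squares of timelike vectors, and no second-order condition in $N$ is needed. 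The ``component of $\overset{\perp}{\pi}_x w$ orthogonal to $\Span\{N,H\}$'' that you list as a cross term is simply zero. The $\overset{\perp}{\n}N$ terms are removed by the choice of extension. The paper takes $\ell_t\equiv0$ and $\ell_i=0$ at the point. Your normal parallel transport is also legitimate, since $Q$ is linear in $N$ and the normalisation can be dropped.

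\textbf{The statement.} The conclusion as printed has $\d_\pm$ interchanged relative to the definitions. The paper's proof preserves $\langle\Upsilon,N\rangle\geq\d_-$, i.e.\ $\underline{Q}(\cdot,\d_-,t)\geq0$. The hypothesis does not give $\overline{Q}(\cdot,\d_-,0)\leq0$. Combined with the actual hypothesis, that would force $\langle h(v,v),N\rangle\equiv\d_-=1/n$, i.e.\ umbilic initial data. So apply your general statement to $\underline{Q}$ with $\d_-$ and to $\overline{Q}$ with $\d_+$.
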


A consequence of noncollapsing for spacelike-mean-convex submanifolds is that the induced metric stays spacelike. 

\begin{corollary}\label{spacelike-mean-convex data stays spacelike coro}
    Suppose that $\S$ is compact, spacelike-mean-convex and $F: \S^n \times I \to \R^{n+1,k}$ is a solution to MCF, then $\S_t$ stays spacelike under the flow. 
\end{corollary}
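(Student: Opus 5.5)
The plan is to argue by continuity in time. Let $t^*$ be the supremum of times up to which $\S_t$ remains spacelike. By \cref{short time existence MCF}, and since spacelikeness is an open condition, $t^*>0$. I want to show that on $[0,t^*)$ the induced metric $g$ is uniformly bounded below on unit initial vectors, which rules out degeneration at $t^*$. By \cref{evolution of g},
\begin{equation*}
\p_t g(v,v) = -2\l h(v,v), H\r ,
\end{equation*}
so it suffices to bound $\l h(v,v),H\r \le C\, g(v,v)$ by a constant depending only on $t$, along the lines of \cref{spacelike-convex data preserves spacelike property coro}. Gr\"onwall then gives $g(v,v)(t)\ge e^{-2Ct}g(v,v)(0)>0$.

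The bound comes from noncollapsing. On $[0,t^*)$ the submanifold is spacelike and spacelike-mean-convex by the preservation proposition above. It is also acausal, since acausality is preserved as long as the interior noncollapsing holds. So \cref{acausal spacelike-mean-convex submanifold preserves noncollapsing}, applied in its spacelike-mean-convex version, gives $\overline{Q}(x,\d_+,t)\le 0$. Taking the boundary limit $y\to x$ via \cref{extension of Z to diagonal eq}, this yields
\begin{equation*}
\l h(v,v),N\r \le \d_+\l N,H\r = \d_+ \quad\text{for all unit } v \text{ and all } N\in\mathcal{N}^+_x(H).
\end{equation*}
Similarly, exterior noncollapsing gives $\l h(v,v),N\r\ge \d_-$. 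Hence, for unit $v$, the vector $h(v,v)$ lies in the causal diamond of \cref{geo meaning of pinching fig}, with vertices $\d_-H$ and $\d_+H$. This follows from \cref{characterisation of being a spacelike vector lemma}, since $\d_+H-h(v,v)$ pairs nonnegatively with every inward null $N$. It follows that $\l h(v,v),H\r\le \d_+|H|^2$ up to a bounded factor.

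It remains to bound $|H|^2$ on $[0,t^*)$ for $t^*<T$. Since the flow is smooth on $[0,T)$, $|H|$ is bounded on compact time intervals. (Alternatively, the maximum principle for $|H|^2$ gives at most finite-time blowup, and on $[0,t^*]\subset[0,T)$ everything is smooth.) So $C=\d_+\sup_{[0,t^*]}|H|^2<\infty$. Then $g$ stays uniformly positive on $[0,t^*]$, which contradicts maximality unless $t^*=T$.

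The main obstacle is the apparent circularity: the earlier propositions assume the induced metric stays spacelike. This is handled by the continuity argument, since those results are only used on $[0,t^*)$. A further subtlety is checking that the boundary-limit form of interior noncollapsing gives the timelike bound needed, namely that the diamond is compact rather than merely a cone. That is exactly what combining $\d_-$ and $\d_+$ via \cref{characterisation of being a spacelike vector lemma} provides.
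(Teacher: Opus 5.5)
This is essentially the paper's argument. Preserved noncollapsing, evaluated on the diagonal via the limit $y=\exp_x(sv)\to x$, gives $\langle h(v,v),H\rangle\le\delta_+|H|^2$ for unit $v$. Then $\partial_t g=-2\langle H,h\rangle$ allows at worst exponential decay of $g(v,v)$, exactly as in \cref{spacelike-convex data preserves spacelike property coro}. Your continuity argument on $[0,t^*)$ and your bound on $|H|^2$ over compact time intervals make explicit what the paper leaves implicit.

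One correction: take acausality on $[0,t^*)$ from \cref{preservation of acausality general}, which needs only spacelike mean-convexity. You cannot deduce it from interior noncollapsing, since noncollapsing already presupposes acausality for $Z$ to be defined.
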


\begin{proof}
    Fix $x\in \S$, for $s$ sufficiently small we consider $y = \exp_x(sv)$ for any $v\in T\S \setminus \{0\}$, $|v| = 1$, then to second order we have 
    \begin{equation*}
        F(y) = F(x) + s DF|_x(v) + \frac{1}{2}s^2 h(v,v),  
    \end{equation*}
    which indicates that 
    \begin{equation*}
        Z(x,y, N) = \frac{s^2 h(v,v)\cdot N}{s^2 g(v,v)} = h(v,v)\cdot N. 
    \end{equation*}
    From \cref{acausal spacelike-mean-convex submanifold preserves noncollapsing} we know that 
    \begin{equation*}
        -|\d_-| |H|^2 \leq h(v,v)\cdot H \leq \d_+ |H|^2,  
    \end{equation*}
    the proof then follows similarly from the proof of \cref{spacelike-convex data preserves spacelike property coro}. 
\end{proof}

Before the proof, we explore some geometric implications. 

\subsection{Geometric implications}

We adapt the following notations for the rest of this subsection:  $H_x \coloneqq H(x)$, $N_x \coloneqq N(x)$, $\o = F(y) - F(x)$ for some $y\in \S$, and $d = |F(y) - F(x)|$.

First of all, exterior noncollapsing implies that $\S$ must be contained in the exterior touching pseudosphere if $\d_- > 0$.

\begin{lemma}\label{exterior ball lemma noncollapsing}
        Suppose that $F: \S^n \to \R^{n+1,k}$ is an embedding, $\S$ is compact, spacelike (mean) convex and exterior $\d_-$-noncollapsed for some $\d_- > 0$, then
            \begin{equation*}
                \begin{split}
                    F(\S) \subset \biggr\{y\in \R^{n+1,k}: \biggr|y-\left(F(x) + \frac{N_x}{\d_-\l N_x, H_x\r }\right)\biggr|^2 \leq \frac{1}{\d_-^2\l N_x, H_x\r^2}\biggr\}.
                \end{split} 
            \end{equation*}
        for any unit spacelike $N_x\in N_x\S$. 
        \end{lemma}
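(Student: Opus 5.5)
The plan is to turn the exterior noncollapsing inequality into a linear inequality in the normal vector, extend it from null directions to all inward spacelike directions by positivity, and then complete the square.

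\textbf{Step 1: the null case.} Fix $x\in\S$ and $y\neq x$. Write $\o=F(y)-F(x)$ and $d^2=|\o|^2$. Since $\S$ is acausal, $d^2>0$. The hypothesis $\underline{Q}(x,\d_-)\geq 0$ says that for every $N\in\mathcal{N}^+_x(H)$
\begin{equation*}
    2\l \o, N\r \geq \d_- \l N, H_x\r d^2 .
\end{equation*}
Both sides are linear in $N$, so the normalisation $\l N,H_x\r=1$ can be dropped. The inequality then holds for every inward null $N\in N_x\S$, and it also holds trivially for $N=0$.

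\textbf{Step 2: extension to spacelike directions.} Every inward spacelike $N\in N_x\S\cong\R^{1,k}$ is a positive combination of two inward null vectors. To see this, let $e$ be the inward unit spacelike vector orthogonal to the timelike part of $N$, and write $N=ae+w$ with $w$ timelike or zero. Spacelikeness means $|w|_{-}<a$, where $|w|_{-}=\sqrt{-|w|^2}$. If $w\neq 0$, set $\hat w=w/|w|_{-}$; if $w=0$, take any unit timelike $\hat w\perp e$. Then
\begin{equation*}
    N=\tfrac{a+|w|_{-}}{2}(e+\hat w)+\tfrac{a-|w|_{-}}{2}(e-\hat w).
\end{equation*}
Both coefficients are positive and $e\pm\hat w$ are inward null. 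Applying Step 1 to $e\pm\hat w$ and adding with these positive weights gives
\begin{equation*}
    2\l\o,N\r\geq \d_-\l N,H_x\r d^2
\end{equation*}
for every inward spacelike $N$. In that case $\l N,H_x\r>0$, by \cref{characterisation of being a spacelike vector lemma} applied to $H_x$.

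\textbf{Step 3: completing the square.} Set $c=\d_-\l N_x,H_x\r>0$, using $\d_->0$. Expanding gives
\begin{equation*}
    \Bigl|\o-\frac{N_x}{c}\Bigr|^2=d^2-\frac{2\l\o,N_x\r}{c}+\frac{|N_x|^2}{c^2}.
\end{equation*}
By Step 2, $2\l \o,N_x\r/c\geq d^2$, so the right-hand side is at most $|N_x|^2/c^2=1/c^2$ for unit $N_x$. This is exactly the claimed containment for $y\neq x$. The point $y=x$ is trivially included, since then $|\o-N_x/c|^2=1/c^2$. If $N_x$ is null instead, the same computation gives the degenerate version with right-hand side $0$.

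The only step that is more than bookkeeping is Step 2. There one has to check that the null decomposition really uses inward null vectors, i.e.\ that $\l e\pm\hat w, H_x\r>0$. This follows because $e\pm\hat w$ lie in the closure of the component of the spacelike cone containing $e$. I would also state explicitly that the noncollapsing hypothesis is being used through its scale-invariant linear form. That is what lets the inequality pass from $\mathcal{N}^+_x(H)$ to arbitrary positive multiples, and then to sums.
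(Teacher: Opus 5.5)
Your proof is correct and follows the same route as the paper: turn exterior noncollapsing into $2\langle \omega, N_x\rangle \geq \delta_-\langle N_x, H_x\rangle d^2$ and complete the square. The paper simply asserts this inequality for spacelike $N_x$ via \cref{characterisation of being a spacelike vector lemma}, whereas your Step 2 (an inward spacelike normal as a positive combination of two inward null normals) actually justifies the passage from the null directions in $\mathcal{N}^+_x(H)$, where noncollapsing is defined; outward unit $N_x$ then follow from $N_x\mapsto -N_x$, which leaves the ball unchanged.
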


        \begin{proof}
            Exterior noncollapsing implies
            \begin{equation*}
                \frac{2}{d^2} \l \o, N_x \r  \geq \d_- \l H_x, N_x\r. 
            \end{equation*}
            Thus, 
            \begin{equation*}
                \begin{split}
                    \biggr| \o - \frac{N_x}{\d_-\l N_x, H_x\r } \biggr|^2 & = d^2 - \frac{2\l \o, N_x \r}{\d_-\l N_x, H_x\r} + \frac{1}{\d_-^2 \l N_x, H_x\r^2}\\
                    &\leq \frac{1}{\d_-^2 \l N_x, H_x\r^2}. 
                \end{split}
            \end{equation*}
        \end{proof}

        Similarly, interior noncollapsing at $x$ implies that any $y\neq x\in \S$ must lie on the interior touching pseudosphere in the direction $N_x$ or outside it. 

        \begin{lemma}\label{interior ball lemma noncollapsing}
            Suppose that $F: \S^n \to \R^{n+1,k}$ is an embedding, $\S$ is compact, acausal, spacelike (mean) convex and interior $\d_+$-noncollapsed. Fix $x\in \S$, then $F(y)$ either lies on the interior touching pseudosphere in the direction $N_x$ or outside it. 
        \end{lemma}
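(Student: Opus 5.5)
Following the proof of \cref{exterior ball lemma noncollapsing}, I will rewrite the noncollapsing inequality as a statement about the pseudo-Euclidean distance to the centre of the touching pseudosphere. Write $\o = F(y)-F(x)$ and $d^2 = |\o|^2$. Since $\S$ is acausal, $d^2>0$ for every $y\neq x$, so multiplying by $d^2$ does not flip inequalities. Interior $\d_+$-noncollapsing gives $\overline{Q}(x,\d_+)\le 0$, that is,
\begin{equation*}
    \frac{2\l \o, N_x\r}{d^2} \le \d_+\l N_x, H_x\r
\end{equation*}
for every admissible inward normal $N_x$ and every $y\neq x$. Here I use $N_x$ in the same role as in \cref{exterior ball lemma noncollapsing}, normalised so that $\l N_x,H_x\r>0$. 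Equivalently, $2\l\o,N_x\r \le \d_+\l N_x,H_x\r d^2$.

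Set $c = F(x) + \frac{N_x}{\d_+\l N_x,H_x\r}$, which is the centre of the interior touching pseudosphere in the direction $N_x$, and let $\rho^2 = \frac{|N_x|^2}{\d_+^2\l N_x,H_x\r^2}$ be its squared radius. Expanding gives
\begin{equation*}
    \biggl|\o - \frac{N_x}{\d_+\l N_x,H_x\r}\biggr|^2 = d^2 - \frac{2\l\o,N_x\r}{\d_+\l N_x,H_x\r} + \rho^2 \ge d^2 - d^2 + \rho^2 = \rho^2 .
\end{equation*}
The inequality uses the rearranged noncollapsing bound divided by $\d_+\l N_x,H_x\r>0$; positivity of this quantity follows from $\d_+\ge \frac1n>0$ (\cref{noncollapsing constants bound lemma}) and from $N_x$ being inward. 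So $|F(y)-c|^2\ge \rho^2$. This means $F(y)$ lies on the pseudosphere, when equality holds, or outside it.

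For a null $N_x\in\mathcal{N}^+_x(H)$ the radius is $0$ and the same computation gives $|F(y)-c|^2\ge 0$, so the argument is uniform over all inward directions. By \cref{characterisation of being a spacelike vector lemma}, checking all inward null directions also covers every inward spacelike direction. I therefore expect no real obstacle. The only points needing care are the sign of $\d_+\l N_x,H_x\r$ and the use of acausality, which ensures $d^2>0$ so that $Z$ is defined and the multiplication by $d^2$ preserves the inequality.
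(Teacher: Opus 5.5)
Your proof is correct and follows the same route as the paper. The paper gives no separate proof here; it only says the claim holds ``similarly'' to \cref{exterior ball lemma noncollapsing}, meaning the same expansion of $|\o - N_x/(\d_+\l N_x,H_x\r)|^2$ with the noncollapsing inequality reversed. Your use of acausality to get $d^2>0$ and your passage from null to spacelike $N_x$ both match the paper's conventions, and the latter could be made explicit by writing an inward spacelike normal as a positive combination of inward null normals.
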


        \begin{remark}
            One can change pseudospheres to the spacelike null cones for the interpretation of noncollapsing for inward null directions. 
        \end{remark}

        By \cref{characterisation of being a spacelike vector lemma}, noncollapsing indicates that the vector $\displaystyle{\frac{\overset{\perp}{\pi}_x(F(y) - F(x))}{|F(y) - F(x)|^2}} - \frac{\d_\pm}{2} H(x)$ lies in the inward (or outward) null cone at $x$, inverting the orthogonal projection leads to the following geometric interpretation of noncollapsing. 

        \begin{lemma}\label{second geometric interpretation of noncollapsing}
            Suppose that $F: \S^n \to \R^{n+1,k}$ is an embedding, $\S$ is compact, spacelike (mean) convex and interior $\d_+$-noncollapsed, then 
            \begin{equation*}
                F(\S) \subset \bigcup_{p\in S^n_{F(x)}} \{p + \mathcal{C}^+_p\}
            \end{equation*}
            for any $x\in \S$, where $S^n_{F(x)}$ is the slice of the interior touching pseudosphere on the plane $P_x\coloneqq T_x\S \oplus \R H_x$, and $\mathcal{C}^+_p$ denotes the inward null cone at $p\in S^n_{F(x)}$, defined by 
            \begin{equation*}
                \mathcal{C}^+_p = \{ - a \nu_p + b n_x: a \geq |b|, a \geq 0\}, 
            \end{equation*}
            where $\nu_p$ denotes the outward unit normal at $p\in S^n_{F(x)}$, and $n_x\in N_pS^n_{F(x)}$ is unit timelike. 
        \end{lemma}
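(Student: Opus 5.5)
The plan is to translate interior noncollapsing at $x$ into a scalar inequality in an adapted frame at $x$. I then choose the point $p\in S^n_{F(x)}$ explicitly so that $F(y)-p$ has the stated form $-a\nu_p+b\,n_x$ with $a\ge |b|$.

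\textbf{Notation.} Fix $x$ and $y\neq x$, and write $\o=F(y)-F(x)$ and $d^2=|\o|^2>0$; the latter holds by acausality, as in \cref{spacelike convex diffeo to spheres and acausal}. Put $\widehat H=H_x/|H_x|$ and $R=1/(\d_+|H_x|)$. Decompose
\begin{equation*}
\o=\o^T+s\,\widehat H+\tau\, n ,
\end{equation*}
where $\o^T\in T_x\S$ and $n$ is a unit timelike normal at $x$ orthogonal to $H_x$. If $\overset{\perp}{\pi}_x\o$ has no timelike part, $n$ is an arbitrary such vector and $\tau=0$. Then $d^2=|\o^T|^2+s^2-\tau^2$.

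\textbf{Step 1: a scalar inequality.} The normalised inward null normals are exactly $N=(\widehat H+e)/|H_x|$ with $e$ unit timelike and $e\perp H_x$. Interior noncollapsing, $\overline Q(x,\d_+)\le 0$, reads
\begin{equation*}
2\l \o,N\r\le \d_+ d^2 \quad\text{for all such } N,
\end{equation*}
that is, $2(s-\tau\l n,e\r)\le \d_+|H_x|\, d^2$. Choosing $e=\mp n$ gives $-\tau\l n,e\r=|\tau|$, hence
\begin{equation*}
2R\,(s+|\tau|)\le |\o^T|^2+s^2-\tau^2 .
\end{equation*}
Let $c=F(x)+R\widehat H$ be the centre of the slice $S^n_{F(x)}\subset F(x)+P_x$. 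Let $q=F(x)+\o^T+s\widehat H$ be the projection of $F(y)$ to $F(x)+P_x$, so $F(y)=q+\tau n$. Then $|q-c|^2=|\o^T|^2+(s-R)^2$. Substituting the inequality yields $|q-c|^2\ge (R+|\tau|)^2$, so $q$ lies outside the slice sphere, at distance at least $R+|\tau|$ from $c$.

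\textbf{Step 2: choice of $p$.} Let $u=(q-c)/|q-c|$, which is defined since $|q-c|\ge R>0$. I take $p$ to be the \emph{far} point $p=c-Ru\in S^n_{F(x)}$. Its outward unit normal is $\nu_p=-u$, so
\begin{equation*}
q-p=(|q-c|+R)\,u=-a\,\nu_p,\qquad a=|q-c|+R\ge 2R+|\tau|\ge |\tau| .
\end{equation*}
Set $b=\tau$ and $n_x=n$. The vector $n$ is unit timelike and orthogonal to $P_x$, hence normal to $S^n_{F(x)}$ at every point; in particular $n_x\in N_pS^n_{F(x)}$. Thus $F(y)-p=-a\nu_p+b\,n_x$ with $a\ge|b|$ and $a\ge0$, i.e. $F(y)\in p+\mathcal C^+_p$. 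The case $y=x$ is covered by $p=F(x)$: there $\nu_p=-\widehat H$, so $p=c-R\widehat H$ as before, and one takes $a=b=0$.

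\textbf{Main obstacle.} The delicate step is Step 1: one must verify that ranging over all of $\mathcal N^+_x(H)$ and choosing $e=\mp n$ produces the sharp combination $s+|\tau|$. Any other choice of $e$ gives a weaker bound, which would not control the timelike component $b$. Once this bound is in hand, the geometric construction in Step 2 is elementary.
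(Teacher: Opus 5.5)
Your argument is correct and is essentially the paper's proof: the paper also takes the far point $p=c-Ru$ on the slice and reads off $a=|q-c|+R$, $b=\tau$. The only difference is that the paper uses just $|q-c|^2-\tau^2\ge R^2$ (the interior pseudosphere in the direction $\widehat H$, \cref{interior ball lemma noncollapsing}) instead of your sharper $|q-c|\ge R+|\tau|$.

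Contrary to your ``main obstacle'' paragraph, that sharpening is not needed, because the weaker bound already gives $a\ge|q-c|\ge|\tau|$; in fact acausality alone gives $|\tau|<|\omega^T+s\widehat H|\le|q-c|+R=a$. The sign slip in $\langle\omega,N\rangle=(s+\tau\langle n,e\rangle)/|H_x|$ is harmless because $e$ ranges over both signs.
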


        \begin{proof}
            Fix an $x\in \S$, we denote $H_x \coloneqq H(x)$, $\displaystyle{C_x \coloneqq F(x) + \frac{H_x}{\d_+ |H_x|^2}}$ which is the centre of $S^n_{F(x)}$, and $\displaystyle{R \coloneqq \frac{1}{\d_+|H_x|}}$. Any $F(y)\in \R^{n+1,k}$ has the following decomposition
            \begin{equation*}
                F(y) = C_x + u + s\, n_x, 
            \end{equation*}
            where $u \in P_x$ and $|u| = r > 0$. We choose $\displaystyle{p \coloneqq C_x - R \frac{u}{r} \in S_{F(x)}^n}$ which satisfies 
            \begin{equation*}
                F(y) - p = u + sn_x + R\frac{u}{r} = - (r + R)\nu_p + sn_x \in N_pS^n_{F(x)}, 
            \end{equation*}
            where $\displaystyle{\nu_p = - \frac{u}{r}}$. By \cref{interior ball lemma noncollapsing}, we know that $r^2 - s^2 \geq R^2$ and hence $F(y)\in p + \mathcal{C}_p^+$. 
        \end{proof}

        Similarly for exterior noncollapsing, we conclude that 
        \begin{equation*}
            F(\S) \subset \bigcup_{p\in S^n_{F(x)}} \{p + \mathcal{C}^-_p\}, 
        \end{equation*}
        where $S^n_{F(x)}$ is the slice of the exterior touching pseudosphere at $F(x)$ and $\mathcal{C}^-_p$ denotes the outward null cone at $p\in S^n_{F(x)}$. 


We now derive some bounds on the normal component of the separation.

\begin{proposition}\label{normal component of separation bound prop}
            Suppose that $F: \S^n \to \R^{n+1,k}$ is an embedding and $\S$ is compact, spacelike-convex and noncollapsed. For any $x,y\in \S$, there exists some $e\in N_x\S$, $e\perp H_x$ such that
            \begin{equation*}
                \overset{\perp}{\pi}_x(F(y) - F(x)) = a \widehat{H}_x + b e,
            \end{equation*}
            where 
            \begin{equation*}
                0 < a \leq \frac{(\d_+ + \d_-)^2}{2\d_+ \d_-^2|H_x|}, \qquad  |b| \leq \frac{\d_+^2 - \d_-^2}{2\d_+ \d_-^2|H_x|}.
            \end{equation*}
        \end{proposition}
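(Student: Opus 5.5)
The plan is to work entirely in the normal space $N_x\S\cong\R^{1,k}$, using $\widehat{H}_x$ as the spacelike axis. Write $w\coloneqq\overset{\perp}{\pi}_x(F(y)-F(x))$ and split it as $w=a\widehat{H}_x+be$, where $e\perp H_x$ is a unit timelike vector in the direction of the component of $w$ orthogonal to $H_x$. If that component vanishes, take any unit timelike $e\perp H_x$ and $b=0$. We may also choose the sign of $e$ so that $b\ge 0$.

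\emph{Step 1: convert noncollapsing into linear inequalities in $a,b$.} Every element of $\mathcal{N}^+_x(H)$ has the form $N=(\widehat{H}_x+\nu)/|H_x|$ with $\nu$ unit timelike and $\nu\perp H_x$. For such $N$,
\begin{equation*}
\l F(y)-F(x),N\r=\l w,N\r=\frac{a+b\l e,\nu\r}{|H_x|}.
\end{equation*}
As $\nu$ ranges over the unit timelike vectors orthogonal to $H_x$ (take $\nu=\pm e$ for the extremes), $\l e,\nu\r$ ranges over $[-1,1]$. Write $d^2=|F(y)-F(x)|^2$, which is positive by \cref{spacelike convex diffeo to spheres and acausal}. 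Applying exterior noncollapsing (in the form of \cref{exterior ball lemma noncollapsing}) and interior noncollapsing to the extreme null directions gives
\begin{equation*}
\frac{\d_-|H_x|d^2}{2}\le a-|b|,\qquad a+|b|\le \frac{\d_+|H_x|d^2}{2}.
\end{equation*}
In particular $a>|b|\ge0$, so $a>0$. This is consistent with \cref{characterisation of being a spacelike vector lemma}: $w$ is inward spacelike, as in the proof of \cref{noncollapsing constants bound lemma}, using $\d_->0$ from that lemma.

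\emph{Step 2: bound $d^2$ from below by the normal part.} The tangential projection of $F(y)-F(x)$ lies in $T_x\S$, which is spacelike. Hence
\begin{equation*}
d^2\ge |w|^2=a^2-b^2=(a-|b|)(a+|b|).
\end{equation*}
Insert this into the first inequality of Step 1 and divide by $a-|b|>0$. This yields $a+|b|\le 2/(\d_-|H_x|)$.

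\emph{Step 3: combine the two estimates.} Eliminating $d^2$ between the two inequalities of Step 1 gives $a+|b|\le(\d_+/\d_-)(a-|b|)$, that is,
\begin{equation*}
|b|\le\frac{\d_+-\d_-}{\d_++\d_-}\,a.
\end{equation*}
Together with Step 2 this gives
\begin{equation*}
a\le \frac{2}{\d_-|H_x|},\qquad |b|\le\frac{2(\d_+-\d_-)}{(\d_++\d_-)\d_-|H_x|}.
\end{equation*}
By AM--GM, $(\d_++\d_-)^2\ge4\d_+\d_-$, so these bounds are at least as strong as the stated ones. The stated bounds therefore follow.

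I expect the main obstacle to be bookkeeping rather than ideas. One must match the paper's sign and $\d_\pm$ conventions for exterior and interior noncollapsing, since the definition of $\underline{Q},\overline{Q}$ and \cref{acausal spacelike-mean-convex submanifold preserves noncollapsing} swap the roles of $\d_-$ and $\d_+$. One must also check that the extremal null directions $\nu=\pm e$ are admissible in $\mathcal{N}^+_x(H)$, including in the degenerate case $b=0$. Finally, the step $d^2\ge a^2-b^2$ needs the tangential part to be spacelike, which is where spacelikeness of $\S$ is used; the factor $1/(a-|b|)$ in Step 2 needs $a-|b|>0$, which is where strict exterior noncollapsing with $\d_->0$ is used.
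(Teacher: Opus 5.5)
Your proof is correct, and its second half takes a different route from the paper. The first half is the same: the paper also gets the cone condition $|b|\le \frac{\delta_+-\delta_-}{\delta_++\delta_-}\,a$ from the two-sided bound $\delta_-\le \frac{2}{d^2}\langle w,N\rangle\le \delta_+$ over $N\in\mathcal{N}^+_x(H)$, which is what your elimination of $d^2$ in Step 3 does.

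The routes differ in how $a$ is bounded. The paper takes the pseudosphere through $F(y)$ touching at $F(x)$ in the direction $\widehat{H}_x$, with radius $R=d^2/(2a)\le 1/(\delta_-|H_x|)$. It parametrises $a=R(1+\cosh\phi)$, $|b|=R\sinh\phi$, and bounds $\phi$ by $\tanh(\phi/2)\le \frac{\delta_+-\delta_-}{\delta_++\delta_-}$. You instead apply exterior noncollapsing in the single extremal null direction $\nu=e$ and combine it with $d^2\ge a^2-b^2$, which gives $a+|b|\le 2/(\delta_-|H_x|)$ in one step.

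Your route gains two things:
\begin{itemize}
\item It handles the tangential part $\omega^{T}$ of $\omega=F(y)-F(x)$ explicitly through $d^2\ge |w|^2$. The paper's parametrisation tacitly puts $w$ exactly on the radius-$R$ hyperbola in the normal plane. In fact $(a-R)^2-b^2=R^2-|\omega^{T}|^2\le R^2$, so the paper's bound still holds, but its step is not literally correct as written.
\item Your constants are sharper by the factor $(\delta_++\delta_-)^2/(4\delta_+\delta_-)\ge 1$. The paper effectively uses exterior noncollapsing only in the averaged spacelike direction $\widehat{H}_x$. Feeding in the cone condition then loses exactly $1/(1-A^2)$ with $A=\frac{\delta_+-\delta_-}{\delta_++\delta_-}$.
\end{itemize}
The paper's version keeps the hyperbolic-angle picture of the touching pseudosphere in view, but the proposition does not need it.
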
 

        \begin{proof}
            Two sided noncollapsing implies that 
            \begin{equation*}
                \d_-\l N_x, H_x \r \leq \frac{2}{d^2}\, \l \o, N_x \r \leq \d_+ \l N_x, H_x \r
            \end{equation*}
            for all inward pointing unit spacelike $N_x\in N_x\S$. This indicates that $\frac{2}{d^2}\, \overset{\perp}{\pi}_x(\o) - \d_- H_x$ is inward pointing and $\frac{2}{d^2}\, \overset{\perp}{\pi}_x(\o) - \d_+ H_x$ is outward pointing, so the normal component of the separation lies in a cone 
            \begin{equation*}
                \overset{\perp}{\pi}_x(F(y) - F(x))\in \mathscr{C} \coloneqq \biggr\{a \widehat{H}_x + b e : a > 0, \frac{|b|}{a} \leq \frac{\d_+ - \d_-}{\d_+ + \d_-}\biggr\}. 
            \end{equation*}
            for some fixed unit timelike $e \perp H_x, e\in N_x\S$. 

            \begin{figure}[h!]
                \centering
                \begin{tikzpicture}[scale=1]

              \draw[->, thick] (-2.5,0) -- (2.5,0) node[right] {\small \(\widehat{H}_x\)};
              \draw[->, thick] (0,-2.5) -- (0,2.5) node[above] {\small \(e\)};
              
              \draw[thick, blue, domain=1:2.5, samples=1000] 
                plot (\x, {sqrt(\x*\x - 1)});
              \draw[thick, blue, domain=1:2.5, samples=1000] 
                plot (\x, {-sqrt(\x*\x - 1)});
            
              \draw[thick, blue, domain=-2.5:-1, samples=1000] 
                plot (\x, {sqrt(\x*\x - 1)});
              \draw[thick, blue, domain=-2.5:-1, samples=1000] 
                plot (\x, {-sqrt(\x*\x - 1)});
              
              \filldraw[orange] (-1,0) circle (0.04);
              \node[orange, below left] at (-1.1, -0.05) {\small \(\overset{\perp}{\pi}_x(F(x))\)};
              
              \pgfmathsetmacro{\xval}{1.5}
              \pgfmathsetmacro{\yval}{sqrt(\xval*\xval - 1)}
              \filldraw[orange] (\xval, \yval) circle (0.04);
              \node[orange, right] at (1.65, \yval) {\small \(\overset{\perp}{\pi}_x(F(y))\)};
              
              \draw[cyan, dashed, thick] (-1,0) -- (\xval, \yval);
              
              \draw[cyan, dashed, thick] (0,0) -- (\xval, \yval);
              
              \draw[cyan, dashed, thick] (\xval, 0) -- (\xval, \yval);
              
              \draw[cyan, thick] (0.4,0) arc[start angle=0, end angle={atan(\yval/\xval)}, radius=0.4];
              \node[cyan] at (0.55,0.15) {\small \(\phi\)};
              
              \draw[cyan, <->] (-1, -0.1) -- (0, -0.1);
              \node[cyan, below] at (-0.5, -0.1) {\small \(R\)};
            
            \end{tikzpicture}
            \end{figure}
            
            We now find bounds on $a, |b|$. Note that $F(y)$ lies on some pseudosphere touching $F(x)$ with radius $R \leq \frac{1}{\d_-|H_x|}$, this gives
            \begin{equation*}
                |b| = R\sinh \phi, \qquad a = R(1 + \cosh\phi)
            \end{equation*}
            for some $\phi\in \R$.

            Also, $\overset{\perp}{\pi}_x(\o)\in \mathscr{C}$ provides a bound on $\phi$: 
            \begin{equation*}
                \frac{|b|}{a} = \frac{\sinh \phi}{1 + \cosh\phi} = \tanh \frac{\phi}{2} \leq \frac{\d_+ - \d_-}{\d_+ + \d_-}\coloneqq A, 
            \end{equation*}
            hence 
            \begin{equation*}
                0< \phi \leq 2\tanh^{-1} A = \frac{\d_+}{\d_-}. 
            \end{equation*}
            Thus,  
            \begin{equation*}
                1 + \cosh \phi \leq 1 + \cosh \left(2\tanh^{-1} A\right) = \frac{(\d_+ + \d_-)^2}{2\d_+ \d_-},  
            \end{equation*}
            which implies
            \begin{equation*}
                a \leq \frac{(\d_+ + \d_-)^2}{2\d_+ \d_-^2|H_x|}. 
            \end{equation*}
            Similarly,  
            \begin{equation*}
                |b| \leq R\sinh (2\tanh^{-1} A) = R\, \frac{\d_+^2 - \d_-^2}{2\d_+ \d_-} \leq \frac{\d_+^2 - \d_-^2}{2\d_+ \d_-^2|H_x|}.
            \end{equation*}
        \end{proof}

It follows by exterior noncollapsing that 
\begin{corollary}[Extrinsic diameter bound]\label{diameter bound coro}
            Suppose that $F: \S^n \to \R^{n+1,k}$ is an embedding and $\S$ is compact, spacelike-convex and noncollapsed, then 
            \begin{equation*}
                |F(y) - F(x)|^2 \leq \frac{(\d_+ + \d_-)^2}{\d_+ \d_-^3 |H|^2_{\max}}
            \end{equation*}
            for any pair of $x,y\in \S$.  
        \end{corollary}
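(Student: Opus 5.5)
The extrinsic diameter bound will follow from the normal-component estimate in \cref{normal component of separation bound prop}, combined with the exterior noncollapsing inequality evaluated in the direction of the mean curvature. Write $\o = F(y)-F(x)$ and $d^2 = |\o|^2$. By \cref{spacelike convex diffeo to spheres and acausal}, $\S$ is acausal, so $d^2>0$ for $y\neq x$; the case $y=x$ is trivial.

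The first step is to pick a convenient normal at the point where $|H|$ is largest. The right-hand side involves $|H|_{\max}$, so I would not work at an arbitrary $x$. Instead I choose $x_0$ with $|H(x_0)| = |H|_{\max}$ and take the inward unit spacelike normal $N_{x_0} = \widehat{H}_{x_0}$. With this choice, \cref{exterior ball lemma noncollapsing} gives
\[
\frac{2}{d^2}\l \o, \widehat{H}_{x_0}\r \geq \d_-|H_{x_0}|,
\qquad\text{that is,}\qquad
d^2 \leq \frac{2\l \o,\widehat{H}_{x_0}\r}{\d_-|H|_{\max}},
\]
where $\o = F(y)-F(x_0)$. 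The quantity $\l \o,\widehat{H}_{x_0}\r$ is exactly the coefficient $a$ in \cref{normal component of separation bound prop}. Inserting $a \le \frac{(\d_++\d_-)^2}{2\d_+\d_-^2|H_{x_0}|}$ yields
\[
|F(y)-F(x_0)|^2 \le \frac{(\d_++\d_-)^2}{\d_+\d_-^3|H|^2_{\max}},
\]
which matches the stated constant.

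The second step passes from pairs involving $x_0$ to an arbitrary pair $x,y$. Applying the same argument at a general $x$ gives the bound with $|H_x|^2$ in place of $|H|^2_{\max}$. This is weaker than what is claimed, and I expect this to be the main obstacle. The triangle inequality is unavailable for the Lorentzian "distance", so I would try to obtain a uniform bound by other means.

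My first attempt would use the containment in the exterior pseudosphere at $x_0$ from \cref{exterior ball lemma noncollapsing}. All of $F(\S)$ lies in a "ball" of radius $R_0 = 1/(\d_-|H|_{\max})$ about $c_0 = F(x_0)+\widehat{H}_{x_0}/(\d_-|H|_{\max})$, and by the proposition the normal projections are confined to a causal-diamond region. Writing $F(x)-c_0$ and $F(y)-c_0$ in the splitting $T_{x_0}\S\oplus\R\widehat{H}_{x_0}\oplus(\text{timelike})$, I would then bound $|F(y)-F(x)|^2$ by expanding the difference. The spacelike part is at most twice the spatial radius. The timelike part enters with a negative sign and can only decrease $d^2$. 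The spatial components are controlled by $a$ together with the pseudosphere inequality.

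If this does not close with exactly the stated constant, my fallback is to read the corollary as the pointwise estimate at the maximising point $x_0$. Alternatively, I would replace $|H|_{\max}$ by $|H_x|$, noting that the uniform version follows once $|H|$ is shown to be comparable across $\S$. That comparability is what the noncollapsing and pinching estimates should ultimately provide.
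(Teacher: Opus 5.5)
Your Step 1 is exactly the paper's argument. The paper gives no proof beyond ``It follows by exterior noncollapsing''. The intended computation is $d^2\le 2a/(\delta_-|H_x|)$, from exterior noncollapsing in the direction $\widehat H_x$, combined with the bound on $a$ from \cref{normal component of separation bound prop}.

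You are also right that at an arbitrary base point $x$ this only puts $|H_x|^2$ in the denominator, which is weaker than $|H|^2_{\max}$. The paper supplies nothing that upgrades this to arbitrary pairs. However, the paper only uses the corollary in two places:
\begin{enumerate}[(i)]
\item in \cref{mean curvature comparison at different points prop}, with one endpoint at the maximiser $x_+$;
\item in \cref{mean curvature does not tilt too much prop}, where any upper bound, such as $C/|H|^2_{\min}$, suffices.
\end{enumerate}
So your first fallback is exactly what the argument proves and what is needed later.

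If you want a bound that is genuinely uniform over pairs, your Step 2 does close. The key input is the bound on the timelike coefficient $|b|$, not on $a$. The pseudosphere inequality lets the spatial part exceed the radius by the size of the timelike part, so $a$ alone cannot control it.

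Concretely, set $P=T_{x_0}\Sigma\oplus\mathbb{R}\widehat H_{x_0}$, $R_0=1/(\delta_-|H|_{\max})$ and $c_0=F(x_0)+R_0\widehat H_{x_0}$. Write $F(z)-F(x_0)=\omega_T+a_z\widehat H_{x_0}+b_z e_z$. Then \cref{exterior ball lemma noncollapsing} gives $|\omega_T|^2+(a_z-R_0)^2\le R_0^2+b_z^2$. Hence every vector $\pi_P(F(z)-c_0)$ has (Euclidean, since $P$ is spacelike) norm at most $\sqrt{R_0^2+B^2}$, where $B=\frac{\delta_+^2-\delta_-^2}{2\delta_+\delta_-^2|H|_{\max}}$ is the bound on $|b_z|$ from \cref{normal component of separation bound prop} at $x_0$.

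Since $P^\perp$ is negative definite, $|F(y)-F(x)|^2\le|\pi_P(F(y)-F(x))|^2$. The triangle inequality in $P$ then yields
\[
|F(y)-F(x)|^2\le 4\left(R_0^2+B^2\right)=\frac{(\delta_+^2+\delta_-^2)^2}{\delta_+^2\delta_-^4|H|^2_{\max}}
\]
for all $x,y\in\Sigma$. This exceeds the stated constant unless $\delta_+=\delta_-$. Over the common denominator $\delta_+^2\delta_-^4|H|^2_{\max}$, the numerators differ by $(\delta_+-\delta_-)^2(\delta_+^2+\delta_+\delta_-+\delta_-^2)$. So the stated constant for arbitrary pairs is reached by neither route, as you suspected.
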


Next, we show that for spacelike-convex submanifolds, there is a notion of antipodal points, which eventually leads to a bound on how much the mean curvature at different points can differ to each other. This also gives a curvature comparison in the pseudo-Euclidean norm, but it is not as strong as in the Euclidean case. 

 Since $\S$ is compact, there exists $x_-, x_+$ such that $|H|_{\min}, |H|_{\max}$ is achieved respectively, then there exists a unique unit spacelike direction $v$ in the smallest circle lying in the interior touching sphere centred at $p$ with curvature $\d_-|H|_{\min}$, such that $F(x_+)$ lies on the leaf 
        \begin{equation*}
            \S_+ \coloneqq \biggr\{p + r v + s e: r > 0, s\in \R, r^2 - s^2 \leq \frac{1}{\d_-^2|H|_{\min}^2}\biggr\}
        \end{equation*}
        where $e$ is some unit timelike direction and $p = F(x_-) + \frac{H_{x_-}}{\d_+ |H|_{\min}^2}$ is the centre of the interior touching pseudosphere at $x_-$.

        \begin{lemma}\label{existence of antipodal point lemma}
            Suppose that $F: \S^n \to \R^{n+1,k}$ is an embedding and $\S$ is compact, spacelike-convex and noncollapsed, then there exists a unique antipodal point $F(\Tilde{x}_+)$ of $F(x_+)$ lying on the leaf
            \begin{equation*}
                \S_- \coloneqq \biggr\{p + r' v + s' e: r' < 0, s'\in \R, r'^2 - s'^2 \leq \frac{1}{\d_-^2|H|_{\min}^2}\biggr\}. 
            \end{equation*}
        \end{lemma}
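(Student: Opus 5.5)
The plan is to use the Gauss map parameterisation from \cref{gauss map spacelike-convex prop}, together with the two-sided noncollapsing geometry of \cref{second geometric interpretation of noncollapsing} and \cref{exterior ball lemma noncollapsing}. The setup is as follows. Fix the decomposition $\R^{n+1,k}\cong P\oplus P^\perp$, where $P$ is a maximal spacelike subspace containing the spacelike direction $v$ (for instance $P \supset P_{x_-} = T_{x_-}\S\oplus \R H_{x_-}$ when $v$ lies in that plane). By \cref{projection of spacelike-convex submanifold to spacelike subspace is convex lemma}, the projection $\pi_P(\S)$ is a strictly convex closed hypersurface in $P\cong\R^{n+1}$. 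By \cref{gauss map spacelike-convex prop}, its Gauss map $\GG$ is a diffeomorphism $\S\to S^n$.

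\textbf{Existence.} First I would locate the point $x_+$ with respect to the line $\ell = p+\R v$ inside $P$. Since $F(x_+)\in\S_+$, its $v$-coordinate relative to $p$ is positive. I then consider the linear function $f(y)=\l F(y)-p, v\r$ on the compact $\S$. Its minimum is attained at the point $\Tilde{x}_+ := \GG^{-1}(-v)$, by the support-function lemma: $\sigma(-v)=\l -v,\GG^{-1}(-v)\r$. I claim this minimum is negative. Choose the direction $N_{x_-}$ (or the null directions) so that the exterior and interior pseudospheres at $x_-$ are centred on $\ell$. The interior noncollapsing interpretation, \cref{second geometric interpretation of noncollapsing}, then forces $\S$ to lie outside the interior touching pseudosphere at $x_-$. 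Since $\S$ is a topological sphere (\cref{spacelike convex diffeo to spheres and acausal}) whose projection encloses a convex region containing the projection of $p$, the set $\S$ must meet the half-space $\{r'<0\}$. Combining this with the exterior containment of \cref{exterior ball lemma noncollapsing} should give the bound $r'^2-s'^2\le 1/(\d_-^2|H|_{\min}^2)$ for $F(\Tilde{x}_+)$. This places $F(\Tilde{x}_+)$ in $\S_-$.

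\textbf{Uniqueness.} I would define the antipodal point as the point whose Gauss image is $-\GG(x_+)$, or equivalently as the extremiser of $f$ in the direction $-v$. Uniqueness then follows because $\GG$ is a bijection and, by strict convexity of the projection, each linear function on $\pi_P(\S)$ has a unique minimiser.

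\textbf{Main obstacle.} The step I expect to be hardest is matching the definition of $\S_\pm$, which uses radius $1/(\d_-|H|_{\min})$ and centre $p$ built with $\d_+$, to the exterior-ball bound. The exterior ball at $x_-$ has radius $1/(\d_-|H|_{\min})$ but a different centre from $p$. I would first try to show that the exterior ball, projected onto the slice $\mathrm{span}\{v,e\}$ through $p$, lies inside the hyperbolic region $r'^2-s'^2\le R^2$ with $R=1/(\d_-|H|_{\min})$, using $\d_-\le \d_+$ (\cref{noncollapsing constants bound lemma}). If that containment fails, I would instead redefine the antipodal point through the leaf structure of \cref{second geometric interpretation of noncollapsing} and apply the same support-function argument.
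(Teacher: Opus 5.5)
There is a genuine gap: the point you construct is not the antipodal point of the statement. Membership in $\Sigma_-$ means $F(\tilde x_+)-p=r'v+s'e$ with $r'<0$ and $e$ timelike in $S^\perp$, where $S\ni v$ is the maximal spacelike subspace defining the leaves. This is what makes the leaf through a point unique. Equivalently, the orthogonal projection of $F(\tilde x_+)$ onto the affine plane $p+S$ must lie on the open ray $p-\mathbb{R}_{>0}v$.

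Your candidate $\GG^{-1}(-v)$, the minimiser of $y\mapsto\langle F(y)-p,v\rangle$, is a different point: it is the point of the projected convex hypersurface whose outward normal is $-v$. It lies on that ray only in special situations (e.g.\ a round sphere centred at $p$). In general its projection minus $p$ has a nonzero component in $S\cap v^\perp$. So proving that the minimum of the height is negative only gives the sign of the $v$-component; it does not give $F(\tilde x_+)\in\Sigma_-$. Your uniqueness step has the same conflation: $\GG^{-1}(-\GG(x_+))$ and $\GG^{-1}(-v)$ differ in general, because $v$ is the radial direction from $p$ to the projection of $F(x_+)$, not the normal $\GG(x_+)$.

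The missing idea, which is the entire content of the paper's proof, is that the projection is a radial (spherical) graph over $p$. Take $S=T_{x_-}\Sigma\oplus\mathbb{R}\widehat H_{x_-}$, and let $\pi$ be the orthogonal projection onto $p+S$, which contains $F(x_-)$.
\begin{itemize}
\item By \cref{projection of spacelike-convex submanifold to spacelike subspace is convex lemma}, $\pi(\Sigma)$ is locally strictly convex, hence a closed strictly convex hypersurface.
\item $p$ lies in the region it bounds. Since $F(y)-\pi(F(y))\in S^\perp$ is timelike or zero, \cref{interior ball lemma noncollapsing} gives $|\pi(F(y))-p|^2\ge|F(y)-p|^2\ge 1/(\delta_+^2|H|_{\min}^2)$. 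So $\pi(\Sigma)$ avoids the open ball of that radius about $p$, a ball tangent to $\pi(\Sigma)$ at $F(x_-)$ on its inner side.
\item Hence every ray from $p$ meets $\pi(\Sigma)$ exactly once. Using the ray $p-\mathbb{R}_{>0}v$, together with injectivity of $\pi|_\Sigma$ (its Gauss map is the diffeomorphism of \cref{gauss map spacelike-convex prop}), you get a unique $\tilde x_+$ with $F(\tilde x_+)-p\in r'v+S^\perp$ and $r'<0$, i.e.\ on the leaf $\Sigma_-$.
\end{itemize}

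On your ``main obstacle'': the paper's proof does not derive the radius constraint from the exterior ball. Moreover, the containment you propose is false in general when $\delta_-<\delta_+$, because that ball is centred at $F(x_-)+R\widehat H_{x_-}$ with $R=1/(\delta_-|H|_{\min})$, not at $p$. For instance, if $v=-\widehat H_{x_-}$ (and, by continuity, for $v$ near it), the boundary point $F(x_-)+2R\widehat H_{x_-}$ lies on the side $r'<0$ with $r'^2=(2R-1/(\delta_+|H|_{\min}))^2>R^2$.
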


        \begin{proof}
            We note that for any $x\in \S$ lying outside the interior touching pseudosphere at $x_-$, there is a unique spacelike $v_x$ such that $x$ lies on a leaf $\{p + r_xv_x + s_xe_x: r_x > 0, s_x\in \R, r_x^2 - s_x^2 \leq \frac{1}{\d_-^2|H|_{\min}^2}\}$ for some unit timelike $e_x$. We prove a more general statement: any $x$ has an antipodal point $\Tilde{x}$ lying on the leaf $\{p + r_x'v_x + s_x'e_x: r'_x<0, s_x'\in \R, r_x'^2 - s_x'^2 \leq \frac{1}{\d_-^2|H|_{\min}^2}\}$. 

            Consider $S \coloneqq T_x\S \oplus \R \widehat{H}_x \cong \R^{n+1}$ and the orthogonal projection $\pi : \S \to S$, we know that by \cref{projection of spacelike-convex submanifold to spacelike subspace is convex lemma}, $\pi(\S)$ is a hypersurface of $S$ enclosing $p$ and is strictly locally convex, hence convex \cite{locallyconvexmanifold} and therefore can be represented by a spherical graph over $p$, the result we want follows from here.  \end{proof}

\begin{proposition}[Curvature comparison]\label{mean curvature comparison at different points prop}
            Suppose that $F: \S^n \to \R^{n+1,k}$ is an embedding and $\S$ is compact, spacelike-convex and noncollapsed, then 
            \begin{equation*}
                \frac{|H|^2_{\max}}{|H|^2_{\min}} \leq \frac{(\d_+ + \d_-)^2\d_+}{4\d_-^3}. 
            \end{equation*}
        \end{proposition}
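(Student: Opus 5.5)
The plan is to compare the two touching pseudospheres produced by noncollapsing at the extremal points $x_-$ and $x_+$, using the antipodal point of \cref{existence of antipodal point lemma}. This mirrors the Euclidean argument: the exterior ball at the point of smallest curvature must contain the whole submanifold. Meanwhile, the interior balls at a point of large curvature and at its antipode both lie inside, so their diameters are bounded by the diameter of the big exterior ball.

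\textbf{Step 1: an upper bound on the extent of $F(\S)$ along a line through a point of maximal curvature.} By \cref{exterior ball lemma noncollapsing} applied at $x_-$ (and the normal-component bound in \cref{normal component of separation bound prop}), every $F(y)$ lies in the exterior touching pseudosphere at $x_-$, whose radius is $1/(\d_-|H|_{\min})$. I will use the leaf structure $\S_+\cup\S_-$ around the centre $p$. The goal is to bound the spacelike separation of $F(x_+)$ and its antipode $F(\Tilde x_+)$ along the direction $v$ by the diameter $2/(\d_-|H|_{\min})$. More precisely, I will bound the projection of $F(x_+)-F(\Tilde x_+)$ onto the spacelike plane $P=T_{x_+}\S\oplus\R H_{x_+}$. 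Projecting orthogonally to this spacelike $(n+1)$-plane is distance non-decreasing on spacelike separations (as in the proof of \cref{projection of spacelike-convex submanifold to spacelike subspace is convex lemma}). I expect to lose a factor coming from the timelike component $b$ of \cref{normal component of separation bound prop}, which is where factors like $(\d_++\d_-)^2/\d_+\d_-$ appear.

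\textbf{Step 2: a lower bound from interior noncollapsing at $x_+$.} By \cref{interior ball lemma noncollapsing} (or \cref{second geometric interpretation of noncollapsing}) at $x_+$, the point $F(\Tilde x_+)$ lies outside the interior touching pseudosphere of radius $1/(\d_+|H|_{\max})$. Take $N=H_{x_+}/|H_{x_+}|^2$, so that $\l N,H\r=1$. The interior noncollapsing inequality then gives
\[
\frac{2\l \o,\widehat H_{x_+}\r}{|\o|^2}\le \d_+|H|_{\max}, \qquad \o=F(\Tilde x_+)-F(x_+).
\]
Because $\Tilde x_+$ is antipodal, $\l\o,\widehat H_{x_+}\r$ is comparable to $|\o|$ (the separation points essentially along $\widehat H_{x_+}$ in $P$). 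Combining this with exterior noncollapsing at $x_+$ yields $|\o|^2 \gtrsim \l\o,\widehat H_{x_+}\r^2$, and hence a lower bound on $|\o|$ of the form $c/(\d_+|H|_{\max})$. The squared quantities in the statement suggest working with $|\o|^2$ throughout and using $\l\o,\widehat H\r^2\le$ (pinched) $|\o|^2$ via the cone bound $|b|/a\le(\d_+-\d_-)/(\d_++\d_-)$.

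\textbf{Step 3: combine.} Step 1 gives $|\o|^2\le C_1/(\d_-^2|H|_{\min}^2)$ and Step 2 gives $|\o|^2\ge C_2/(\d_+^2|H|^2_{\max})$-type bounds. Squaring and tracking the constants from $1+\cosh(2\tanh^{-1}A)=(\d_++\d_-)^2/(2\d_+\d_-)$ should produce the stated $\frac{(\d_++\d_-)^2\d_+}{4\d_-^3}$.

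The main obstacle is Step 1. In Lorentzian signature, lying in a pseudosphere does not bound Euclidean diameter, because of the hyperbolic leaves. I will first try to control the timelike parts via the angle bound $\phi\le 2\tanh^{-1}A$ from \cref{normal component of separation bound prop}, applied at both $x_+$ and $\Tilde x_+$. I will also use acausality (\cref{spacelike convex diffeo to spheres and acausal}), so that $|\o|^2>0$ and $|\o|^2\le$ (spacelike projection)$^2$.
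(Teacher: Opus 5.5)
There is a genuine gap: your two estimates are attached to the wrong extremal points, so Step 3 cannot produce the claimed inequality. Step 1 gives $|\omega|^2\le C_1/(\delta_-^2|H|^2_{\min})$ and Step 2 gives $|\omega|^2\ge C_2/(\delta_+^2|H|^2_{\max})$. Chaining them yields $|H|^2_{\min}/|H|^2_{\max}\le C_1\delta_+^2/(C_2\delta_-^2)$. That is only a \emph{lower} bound on $|H|^2_{\max}/|H|^2_{\min}$, and it is vacuous because that ratio is already at least $1$; no bookkeeping of constants fixes this. Geometrically, you are fitting the \emph{smallest} interior pseudosphere (at $x_+$) inside the \emph{largest} exterior one (at $x_-$), which carries no information. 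The Euclidean argument you have in mind instead fits the largest interior ball, at the point of \emph{minimal} curvature, inside the smallest exterior ball, at the point of \emph{maximal} curvature.

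Swap the roles. For the upper bound, use exterior noncollapsing at $x_+$ in the direction $\widehat H_{x_+}$. It gives $|\omega|^2\le 2\langle\omega,\widehat H_{x_+}\rangle/(\delta_-|H|_{\max})$. The bound on $a=\langle\omega,\widehat H_{x_+}\rangle$ from \cref{normal component of separation bound prop} turns this into $|\omega|^2\le(\delta_++\delta_-)^2/(\delta_+\delta_-^3|H|^2_{\max})$, which is \cref{diameter bound coro}. This controls the Lorentzian quantity $|\omega|^2$ directly, so the projection and Euclidean-diameter issues you flag as the main obstacle never arise.

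For the lower bound, use interior noncollapsing at $x_-$, which is exactly what the antipodal construction is built around. $F(x_+)$ and $F(\tilde x_+)$ lie on opposite leaves in $p+\Span\{v,e\}$, where $p$ is the centre of the interior pseudosphere at $x_-$, at Lorentzian radii $R,\tilde R\ge 1/(\delta_+|H|_{\min})$. Write $F(x_+)=p+R(\cosh\psi\,v+\sinh\psi\,e)$ and $F(\tilde x_+)=p-\tilde R(\cosh\phi\,v+\sinh\phi\,e)$. Then $|\omega|^2=R^2+\tilde R^2+2R\tilde R\cosh(\phi-\psi)\ge(R+\tilde R)^2\ge 4/(\delta_+^2|H|^2_{\min})$. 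Comparing the two bounds gives precisely $(\delta_++\delta_-)^2\delta_+/(4\delta_-^3)$.
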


        \begin{proof}
            \cref{existence of antipodal point lemma} provides the existence of the unique antipodal point $F(\Tilde{x}_+)$, where \cref{diameter bound coro} provides us an upper bound of the separation
            \begin{equation*}
                |F(\Tilde{x}_+) - F(x_+)|^2 \leq \frac{(\d_+ + \d_-)^2}{\d_+ \d_-^3 |H|^2_{\max}}.  
            \end{equation*}
            
            We know that $F(\Tilde{x}_+), F(x_+) \in p + \Span\{v, e\}$, one can choose $\phi, \psi$ such that 
            \begin{equation*}
                F(\Tilde{x}_+) = p - \Tilde{R}\cosh\phi \, v - \Tilde{R} \sinh \phi \, e, \qquad  F(x_+) = p + R\cosh\psi \, v + R \sinh \psi \, e. 
            \end{equation*}
            Thus, 
            \begin{equation*}
                \begin{split}
                    |F(\Tilde{x}_+) - F(x_+)|^2 &= (\Tilde{R} \cosh\phi + R \cosh \psi)^2 - (\Tilde{R} \sinh\phi + R \sinh \psi)^2\\
                    & = \Tilde{R}^2 + R^2 + 2 \Tilde{R} R \cosh(\phi - \psi)\\
                    & \geq (R + \Tilde{R})^2\\
                    & > \frac{4}{\d_+^2 |H|_{\min}^2}, 
                \end{split}
            \end{equation*}
            since $R, \Tilde{R} \geq \frac{1}{\d_+|H|_{\min}}$ from interior noncollapsing. Therefore, 
            \begin{equation*}
                \frac{|H|^2_{\max}}{|H|^2_{\min}} \leq \frac{(\d_+ + \d_-)^2\d_+}{4\d_-^3}. 
            \end{equation*}
        \end{proof}

        As a consequence, we see that as $\d_+ \to \d_-$, 
        \begin{equation*}
            \frac{|H|^2_{\max}}{|H|^2_{\min}} \sim \frac{\d_+^3}{\d_-^3} \sim 1. 
        \end{equation*}

On the other hand, noncollapsing gives us a comparison for $H$ at different points on $\S$, in the sense that the hyperbolic angle between $H$ at different points are bounded.  

\begin{proposition}\label{mean curvature does not tilt too much prop}
    Suppose $F: \S^n \to \R^{n+1,k}$ is an embedding, $\S$ is compact, spacelike-convex, then for any $x\in \S$, there exists some $C = C(\d_-, \d_+) > 0$ such that
    \begin{equation*}
        |\l H_y, e \r| \leq C
    \end{equation*}
    for all $y\in \S$ and $e \perp T_x\S \oplus \R \widehat{H}_x$. 
\end{proposition}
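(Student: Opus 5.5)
Fix $x\in\S$ and set $P_x\coloneqq T_x\S\oplus\R\widehat{H}_x\cong\R^{n+1,0}$, so that $P_x^\perp$ is a negative definite $k$-dimensional subspace and $\R^{n+1,k}=P_x\oplus P_x^\perp$. The plan is to bound, uniformly in $y$, the timelike component of the unit vector $\widehat{H}_y$ with respect to this decomposition, and then to bound $|H_y|$ (which is a Minkowski norm, so by itself says nothing about the size of the timelike part). Concretely, I will show two things: (1) $|H|_{\max}\le C_1(\d_\pm)\,|H|_{\min}$, which is \cref{mean curvature comparison at different points prop}; and (2) the hyperbolic angle between $\widehat{H}_y$ and $P_x$ is bounded by a constant depending only on $\d_\pm$. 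Since $H_y=|H_y|(\cosh\theta\, w+\sinh\theta\, e')$ with $w\in P_x$ unit and $e'\in P_x^\perp$ unit, we get $|\l H_y,e\r|\le |H_y|\sinh\theta$. I would also normalise $e$ to be unit (the statement presumably intends this). The difficulty is that $|H_y|$ itself is not scale invariant, so the natural form of the estimate is $|\l H_y,e\r|\le C|H_x|$, or $\le C$ after rescaling. I would prove the scale-invariant version, and note that $C$ may depend on the scale through $|H|_{\min}$ unless the statement is read in normalised form.

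\textbf{Step for the angle bound.} I would use \cref{normal component of separation bound prop} applied at the point $y$ rather than at $x$: for every $z\in\S$, $\overset{\perp}{\pi}_y(F(z)-F(y))$ lies in the cone $\mathscr{C}_y$ of half-angle $2\tanh^{-1}\frac{\d_+-\d_-}{\d_++\d_-}$ around $\widehat{H}_y$. The key is to choose $z$ with $F(z)-F(y)$ nearly in $P_x$. By \cref{existence of antipodal point lemma}, applied to the projection onto $P_x$, take $z=\Tilde{y}$ to be the antipodal point of $y$. The chord $F(\Tilde y)-F(y)$ is then spacelike, and by \cref{normal component of separation bound prop} applied at $x$ to both $y$ and $\Tilde y$, the components of $F(y)-F(x)$ and $F(\Tilde y)-F(x)$ in $P_x^\perp$ are bounded by $C/|H_x|$. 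Meanwhile the $P_x$-component of the chord has length at least roughly $2/(\d_+|H|_{\max})$, using the interior noncollapsing argument from the proof of \cref{mean curvature comparison at different points prop}. So the chord makes a bounded hyperbolic angle with $P_x$, with the bound depending on $\d_\pm$ and $|H|_{\max}/|H|_{\min}$. Its normal projection at $y$ makes a bounded angle with $\widehat{H}_y$. Combining these two facts should bound the angle between $\widehat{H}_y$ and $P_x$.

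\textbf{Main obstacle.} The bound on the tangential part of the chord is the hard step: passing from ``the normal projection at $y$ is close to $\widehat{H}_y$'' to ``$\widehat{H}_y$ is close to $P_x$'' requires control of the tangential part of the chord at $y$. In particular, $T_y\S$ could itself be tilted with respect to $P_x$. I would handle this by running the same argument with $n+1$ suitably chosen points $z$ (antipodes in each direction of $T_y\S$ projected to $P_x$), so that the tilt of the whole $(n+1)$-plane $T_y\S\oplus\R\widehat{H}_y$ relative to $P_x$ is bounded. Alternatively, I would use the Gauss map parametrisation \eqref{Gauss map parameterisation eq} with respect to $P_x$, where the tilt is measured by $\Tilde{\n}\eta$, and bound $\Tilde{\n}\eta$ via the metric \eqref{induced metric with gauss map parameterisation eq} being positive definite together with the diameter bound \cref{diameter bound coro}.
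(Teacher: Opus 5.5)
You correctly identify the main obstacle, but you do not overcome it, and it cannot be bypassed. Nothing in your argument bounds the tilt of $T_y\S$ relative to $P_x$. Without such a bound, your two facts say nothing about $\l \widehat H_y, e\r$. Those facts are: the chord $\omega\coloneqq F(\Tilde y)-F(y)$ is close to $P_x$, and $\overset{\perp}{\pi}_y\omega$ is within bounded rapidity of $\widehat H_y$.

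Here is a concrete example. Take $n=k=1$, $P_x=\Span\{e_1,e_2\}$, $\GG(y)=e_1$, and $T_y\S$ spanned by $\cosh\gamma\, e_2+\sinh\gamma\,\nu_1$. Then $N_y\S=\Span\{e_1,t\}$ with $t=\sinh\gamma\, e_2+\cosh\gamma\,\nu_1$. The vector $\widehat H_y=\cosh\phi\, e_1+\sinh\phi\, t$ is at rapidity $\phi$ from a vector of $P_x$. Yet $|\l \widehat H_y,\nu_1\r|=\sinh\phi\cosh\gamma$ is unbounded as $\gamma\to\infty$. So hyperbolic angles to $P_x$ are not subadditive under boosts inside a tilted normal space.

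Your route actually needs the tilt bound twice:
\begin{itemize}
\item once to control the tangential part of $\omega$ at $y$, so that $\overset{\perp}{\pi}_y\omega$ inherits the bounded angle of $\omega$;
\item once for the boost from $\overset{\perp}{\pi}_y\omega$ to $\widehat H_y$ inside $N_y\S$.
\end{itemize}
Neither fallback provides it. Chords to antipodal points are long and carry no first-order information at $y$. In the Gauss-map chart, positivity of $g_{kl}$ only says that the $P_x^\perp$-component $\Tilde{\p}\eta^\alpha\nu_\alpha$ of each tangent vector is strictly shorter than its $P_x$-component, with no uniform gap. The diameter bound is only zeroth-order information.

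The missing idea is the paper's Step 1, a first-order estimate obtained before $H_y$ is considered at all. By the normal-component bound at $x$, the height $f=\l F-F(x),e\r$ is bounded on all of $\S$ by $C/|H_x|$. Along a unit-speed geodesic from $y$ in direction $v$, one has $f'=\l v,e\r$ and $f''=\l h(v,v),e\r$. Boundedness of $f$ and $f''$ on a long interval then gives $|f'(0)|\le 2\sqrt{\sup|f|\,\sup|f''|}$, i.e.\ $T_y\S$ has uniformly bounded tilt. (The paper takes the bound on $f''$ from pinching and compactness.)

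Your use of the antipodal point is the same ingredient as the paper's Step 2, with one difference. The paper applies noncollapsing at $y$ in the direction $N_y=\GG(y)$, the Gauss-map normal for the splitting $P_x\oplus P_x^\perp$, which lies in $P_x$ by construction. Since $\l\omega,\GG(y)\r$ only sees the $P_x$-part of the chord, this bounds $\l\GG(y),H_y\r$ without decomposing $\omega$ at $y$ at all. Step 3 then uses that the timelike part of $N_y\S$ is $(T_y\S\oplus\R\GG(y))^\perp$. Its tilt relative to $P_x^\perp$ is controlled by Step 1, so the boost from $\GG(y)$ to $\widehat H_y$ cannot amplify. This is precisely what excludes the example above.

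Two smaller points. First, a bounded $P_x^\perp$-component plus a lower bound on the $P_x$-component does not bound the chord's hyperbolic angle. Here $\tanh\theta$ is their ratio, and with the available constants it is only bounded by roughly $(\d_+^2-\d_-^2)/(2\d_-^2)$, which may exceed $1$. Instead, use the lower bound on the Minkowski length $|\omega|^2$ (as in the proof of the curvature comparison) together with $|\pi_{P_x}\omega|<2/(\d_-|H_x|)$. This gives $\cosh\theta=|\pi_{P_x}\omega|/|\omega|\le C$.

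Second, your remark on scale is correct and consistent with the paper's constants, which involve $|H_x|$ and $|H|_{\min}$. Read literally for a fixed compact $\S$ and unit $e$, the bound follows from compactness of $\S$ and of the unit sphere of the negative definite space $P_x^\perp$. So the only real content is the quantitative dependence.
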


\begin{proof}
    Fix $x\in \S$ and let $P \coloneqq T_x\S \oplus \R H_x \cong \R^{n+1,0}$, we then have a decomposition of the ambient space $\R^{n+1,k} \cong P \oplus \R^{0,k}$, where $\R^{0,k} \subset N_x\S$. Consider the Gauss map $\GG$ with respect to this decomposition, we know that $\GG$ is a diffeomorphism by \cref{gauss map spacelike-convex prop}. By \cref{existence of antipodal point lemma}, for any $y\neq x \in \S$, there exists a unique $\Tilde{y}\in \S$ such that $\pi_P(F(y) - F(\Tilde{y})) = c_1 \, \GG(y)$ for some $c_1\in \R$ (this is the antipodal point of $F(y)$ on $S^n$ in the normal direction $\GG(y)\in P$), where $|c_1| > \frac{1}{\d_+ |H_x|}$ since none of the points lie inside the interior touching pseudosphere at $x$. 
        
    We know from \cref{normal component of separation bound prop} that the height $\l F(y) - F(x) , e \r$ with respect to any unit timelike direction $e \perp \widehat{H}_x$ is bounded (so that $\S$ lies in a slab bounded by cyan dashed lines shown in the figure below). 
    \begin{figure}[h!]
        \centering
        \begin{tikzpicture}[scale=1]

          \draw[black, thick, domain=180:360, samples=100]
            plot ({1.5*cos(\x)}, {1*sin(\x)});
          \draw[black, thick, dashed, domain=0:180, samples=100]
            plot ({1.5*cos(\x)}, {1*sin(\x)});
        
          \draw[->, thick, blue] (-2.5, 0) -- (2.5, 0) node[right] {};
          \draw[->, thick, blue] (0, -2.5) -- (0, 2.5) node[above] {$e$};
          \draw[dashed, thick, blue] (-2.5, -2.5) -- (2.5, 2.5);
        
          \draw[dashed, thick, cyan] (-2.5, 1.5) -- (2.5, 1.5);
          \draw[dashed, thick, cyan] (-2.5, -1.5) -- (2.5, -1.5);
        
          \filldraw[orange] (-1.5, 0) circle (0.05);
          \node[orange, below] at (-1.6, -0.05) {\(x\)};
        
          \node[blue] at (2.5, 1.2) {\(P\)};
        
          \node at (1.6, -0.6) {\(\Sigma\)};
        
        \end{tikzpicture}
    \end{figure}

    We follow three steps for the proof. \\
    \noindent \textbf{Step 1}: $|\l v, e \r|$ is bounded for any $v\in T_y\S$, $y\neq x\in \S$. 

    Fix a unit timelike $e \perp P$ and $y\neq x\in \S$, define $f(y) \coloneqq \l F(y) - F(x), e \r$. Let $\g$ be a geodesic on $\S$ with $\g(0) = y, \g'(0) = v \in T_y\S$, differentiating $f$ along $\g$ gives 
    \begin{equation*}
        \p_s f  = \l v, e \r, \qquad \p_s^2 f = \l h(v,v), e \r.  
    \end{equation*}
    From inward and outward pinching and that $\S$ is compact we know that $|h(v,v)|^2$ and $|\l h(v,v), H \r|$ are bounded (using the fact that $|H|$ is bounded), which implies  $|\l h(v,v), e \r|$ is bounded. From previous argument we know $f(y)$ is bounded since $\S$ stays in the slab, with bounded acceleration in all directions we must have the first derivative bounded as well, otherwise $F(y)$ escapes the slab. Therefore, $|\l v , e \r| \leq C$ for some $C > 0$ for any $v\in T\S$. \\

    \noindent \textbf{Step 2}: $|\l \GG(y), H_y \r|$ is bounded. 

    We know $y, \Tilde{y}$ must lie outside (or on) the slice of the projection (onto the plane $E = \Span\{\GG(y), e\}$, see the next figure) of the interior pseudosphere at $x$ in the direction of $\widehat{H}_x$ by \cref{interior ball lemma noncollapsing} and within exterior touching sphere in the direction of $\widehat{H}_x$ at $x$ by \cref{exterior ball lemma noncollapsing}. 
    \begin{figure}[h!]
        \centering
        \begin{tikzpicture}[scale=1]

          \draw[->, thick] (-2.5,0) -- (2.5,0) node[right] {\(\mathscr{G}(y)\)};
          \draw[->, thick] (0,-2.5) -- (0,2.5) node[above] {\(e\)};
          
          \draw[thick, blue, domain=1:2.5, samples=500] 
            plot (\x, {sqrt(\x*\x - 1)});
          \draw[thick, blue, domain=1:2.5, samples=500] 
            plot (\x, {-sqrt(\x*\x - 1)});
        
          \draw[thick, blue, domain=-2.5:-1, samples=500] 
            plot (\x, {sqrt(\x*\x - 1)});
          \draw[thick, blue, domain=-2.5:-1, samples=500] 
            plot (\x, {-sqrt(\x*\x - 1)});
          
          \filldraw[orange] (-1.5, 0) circle (0.05);
          \filldraw[orange] (1.5, 0) circle (0.05);
        
          \node[orange, below] at (-1.6, -0.05) {\(\pi_P(y)\)};
          \node[orange, below] at (1.6, -0.05) {\(\pi_P(\tilde{y})\)};

          \draw[->, blue, thick] (3,1.1) -- (2,1.1);
          \node[blue, right, align=left] at (3,1.1) {\small interior touching\\\small sphere at \(x\)};
        
        \end{tikzpicture}
    \end{figure}
    
    We thus have 
    \begin{equation*}
       \frac{2}{\d_+|H_x|} <  |\pi_P(F(\Tilde{y}) - F(y))| < \frac{2}{\d_-|H_x|}. 
    \end{equation*}
    The projection of the separation $\pi_P(F(\Tilde{y}) - F(y))$ is merely $\l F(\Tilde{y}) - F(y), \GG(y) \r$, thus we have $|\l F(\Tilde{y}) - F(y), \GG(y) \r| \leq C$ for some $C > 0$. Moreover, noncollapsing at $y$ with $N_y \coloneqq \GG(y)$ states that 
    \begin{equation*}
        \d_-\l \GG(y), H_y \r \leq \frac{2\l F(\Tilde{y}) - F(y), \GG(y) \r}{|F(\Tilde{y}) - F(y)|^2} \leq \d_+\l \GG(y), H_y \r. 
    \end{equation*}
    Since $y, \Tilde{y}$ are antipodal points, there is a lower bound for $|F(\Tilde{y}) - F(y)|^2$ from the proof of \cref{mean curvature comparison at different points prop}, where as the upper bound is given in \cref{diameter bound coro}. We have shown that $|\l F(\Tilde{y}) - F(y), \GG(y) \r|$ is bounded, this indicates $|\l \GG(y), H_y \r|$ is bounded. \vspace{\baselineskip}

    \noindent \textbf{Step 3}: $|\l H_y, e \r|$ is bounded. 

    Since there is only one spacelike direction in $N_y\S$, we cannot have $\l H_y, \GG(y) \r = 0$. Thus, we have that $\widehat{H}_y = \cosh \phi \, \GG(y) + \sinh \phi \, e'$ for some unit timelike $e'\in N_y\S$, where $\phi$ must be bounded since $|\l H_y, \GG(y) \r|$ and $|H_y|$ are bounded. Assume $\l e , e' \r \neq 0$ (otherwise $\l H_y, e \r = 0$ and this gives the result we want), we write 
    \begin{equation*}
        e' = - \sinh \varphi\, v + \cosh \varphi \, e
    \end{equation*}
    for some unit spacelike $v \perp \GG(y)$ and $\varphi \in \R$. Note that $e \perp \GG(y)$ and $u \coloneqq \cosh \varphi\, v + \sinh \varphi\, e$ is unit spacelike and $u \perp \GG(y)$. Again since there is only one spacelike direction in $N_y\S$, we have $u, v \in T_y\S$. Hence, the usual Cauchy-Schwarz inequality applies and we have $|\l u, v \r| \leq |u||v| = 1$, so that $\varphi$ is bounded. Therefore, 
    \begin{equation*}
        \l H_y, e \r = \sinh \phi \l e', e \r = -\sinh\phi \cosh \varphi
    \end{equation*}
    is bounded.     
\end{proof}

It is essential that the estimate does not depend on $y\in \S$, since the space of unit timelike normal vectors is noncompact, having a compact submanifold does not necessarily guarantee that any Euclidean norm has bounded mean curvature.

This gives us a way of defining a Euclidean inner product such that the norm of $h$ is bounded. 
\begin{corollary}\label{euclidean norm for spacelike-convex submanifolds with bounded second fundamental form}
    Suppose $F: \S^n \to \R^{n+1,k}$ is an embedding, $\S$ is compact, spacelike-convex, then there exists an orthonormal basis $\{\nu_1, \cdots, \nu_k \}$ for some $k$-dimensional purely timelike subspace such that $\llangle \cdot, \cdot \rrangle : \R^{n+1,k} \times \R^{n+1,k} \to \R$ defined by 
    \begin{equation*}
        \llangle u, v \rrangle = \l u, v \r + 2 \sum_{\a = 1}^k \l u, \nu_\a \r \l v, \nu_\a \r
    \end{equation*}
    for $u, v\in \R^{n+1,k}$ is an inner product on $\R^{n+1,k}$. Moreover, the induced norm $\|\cdot\|$ for $h$ is bounded: 
    \begin{equation*}
        \|h\|^2 = |h|^2 + 2 \sum_{\a = 1}^k\l h, \nu_\a \r^2 \leq C 
    \end{equation*}
    for some $C \geq 0$. 
\end{corollary}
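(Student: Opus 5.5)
Fix a point $x\in\S$ and use the timelike subspace supplied by \cref{mean curvature does not tilt too much prop}. Set $P\coloneqq T_x\S\oplus\R\widehat{H}_x\cong\R^{n+1,0}$, and let $\{\nu_1,\dots,\nu_k\}$ be an orthonormal basis of its orthogonal complement $P^\perp\subset N_x\S$, which is purely timelike of dimension $k$. The plan has three steps: check that $\llangle\cdot,\cdot\rrangle$ is an inner product, rewrite $\|h\|^2$ in terms of the components $\l h,\nu_\a\r$, and then bound those components uniformly.

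\emph{Positive definiteness.} Write $u=u_P+\sum_\a c_\a\nu_\a$ with $u_P\in P$, so that $c_\a=-\l u,\nu_\a\r$. Then
\begin{equation*}
\llangle u,u\rrangle=|u_P|^2-\sum_\a c_\a^2+2\sum_\a c_\a^2=|u_P|^2+\sum_\a c_\a^2 .
\end{equation*}
This is the Euclidean norm adapted to the splitting $P\oplus P^\perp$, so $\llangle\cdot,\cdot\rrangle$ is symmetric, bilinear and positive definite.

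\emph{Reduction of $\|h\|^2$.} Applying the same identity componentwise to the normal-valued tensor $h$, with traces taken using $g$, gives
\begin{equation*}
\|h\|^2=|h|^2+2\sum_\a\l h,\nu_\a\r^2=\sum_{i,j}|h_{ij}^P|^2+\sum_{\a,i,j}\l h_{ij},\nu_\a\r^2 .
\end{equation*}
It therefore suffices to bound $|h|^2$ and each $\l h_{ij},\nu_\a\r$ uniformly over $\S$.

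\emph{Bounding $|h|^2$.} By the two-sided pinching of \cref{pinching constant bounds lemma}, for unit $v$ the vector $h(v,v)$ lies in the causal diamond between $\a H$ and $\b H$. Since $\S$ is compact, $|H|$ is bounded, so $|h(v,v)|^2\le\b^2|H|^2$ is bounded. Polarisation then bounds the Minkowski quantities $|h(u,v)|^2$ and $\l h(u,v),H\r$, and hence $|h|^2$.

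\emph{Bounding $\l h,\nu_\a\r$: the main obstacle.} This is hard because the unit timelike directions form a noncompact set, so bounds on $|h|^2$ do not control Euclidean components. I would reproduce Step 1 of the proof of \cref{mean curvature does not tilt too much prop}. For $y\in\S$ and a unit-speed geodesic $\g$ of $\S$ through $y$, set $f=\l F-F(x),\nu_\a\r$. Then $f$ is uniformly bounded, because $\S$ lies in the slab given by \cref{normal component of separation bound prop}, and $\p_s^2f=\l h(\g',\g'),\nu_\a\r$. The key step is to bound this second derivative. At $y$, decompose $\nu_\a$ into its $T_y\S$ and $N_y\S$ parts. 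The tangential part is bounded by Step 1 of that proposition. For the normal part, note that $h(v,v)$ lies in the compact diamond in $N_y\S$ spanned by multiples of $H_y$. Here $|\l H_y,\nu_\a\r|\le C$ by \cref{mean curvature does not tilt too much prop}, and $\l H_y,\GG(y)\r$ is bounded by Step 2 of that proof; together these fix the hyperbolic position of the diamond relative to $\nu_\a$. Consequently $|\l h(v,v),\nu_\a\r|\le C(\d_\pm,\a,\b,\max|H|)$. Polarising gives the bound for $\l h_{ij},\nu_\a\r$ in an orthonormal frame, and summing over $i,j,\a$ completes the bound on $\|h\|^2$.
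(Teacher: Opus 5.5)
Your proposal is correct and follows the paper's route. You make the same choice of $\nu_\a$ spanning $(T_x\S\oplus\R\widehat{H}_x)^\perp$, get positivity of $\llangle\cdot,\cdot\rrangle$ by flipping the timelike directions, and control $\l h,\nu_\a\r$ through \cref{mean curvature does not tilt too much prop} together with the pinching diamond, just more explicitly than the paper's two-line proof. The framing via $f=\l F-F(x),\nu_\a\r$ and the slab plays no role: bounding $\p_s^2 f=\l h(\g',\g'),\nu_\a\r$ is already your goal, and you obtain it from Step 1's tangential bound rather than feeding it into Step 1, so drop it and present the diamond argument directly.
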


\begin{proof}
    $\llangle \cdot, \cdot \rrangle$ is an inner product because we can extend $e_i$ to an orthonormal basis of $\R^{n+1,k}$, its definition is equivalent to flipping the timelike directions (Wick rotation). By \cref{mean curvature does not tilt too much prop} we know $|\l h, e_i \r|\leq C'$ for each $i$, hence $\|h\|^2 \leq C$ for some $C$.   
\end{proof}

In particular, we choose $\{\nu_\a\}_{\a = 1}^k$ to be a basis for the purely timelike subspace $\R^{0, k} \subset N_x\S$ for a fixed $x\in \S$, so that by step 1 of the proof of \cref{mean curvature does not tilt too much prop}, $|\l u, \nu_\a \r|$ is bounded for any $u\in T_y\S$. Once $e_\a$'s are fixed, we also have $\|\n^m h\|^2$ bounded. This is crucial for smoothing and long time existence.

\subsection{Preservation of noncollapsing}

We adapt the following notations for the rest of this subsection:  $H_x \coloneqq H(x,t)$, $H_y \coloneqq H(y,t)$ $\o = F(y,t) - F(x,t)$ for some $y\neq x$, $d = |F(y,t) - F(x,t)|$, $N_x\coloneqq N(x,t)$, $N_x\S \coloneqq N_{(x,t)}\S$, $\p_{x^i} \coloneqq \p_i^x F(x,t)$, $\p_{y^i} \coloneqq \p_i^y F(y,t)$,  $h_{ij} \coloneqq h_{ij}(x,t)$, $h_{ij}^y \coloneqq h_{ij}(y,t)$ and 
\begin{equation*}
    \Upsilon = \frac{2}{d^2}\overset{\perp}{\pi}_x(\o). 
\end{equation*}
For convenience, we also denote terms in $T_{(x,t)}\S$ by $\mathscr{T}$. 

For spacelike-mean-convex submanifolds, we need to show that MCF preserves the acausality first, this requires a linear algebra result.

\begin{lemma}\label{singular value decomp for maximal dimensional spacelike subspaces}
        Given two $n$-dimensional spacelike subspaces $U, V$ in $\Span\{v_1, \cdots, v_n,$ $\nu_1, \cdots, \nu_k, n_1, \cdots, n_m\}$, where $v_1, \cdots, v_n$ are unit spacelike, $\{\nu_1, \cdots, \nu_k\}$ are unit timelike, $n_1, \cdots, n_m$ are linearly independent and null, and all of them are pairwise orthogonal to each other. There exist orthonormal bases $\{e_i\}$ for $U$ and $\{E_i\}$ for $V$ such that 
            \begin{equation*}
                e_i = \lambda_i E_i + N_i
            \end{equation*}
            for each $i$, where $N_i\in V^\perp$ and $|\lambda_i| \geq 1$. 
        \end{lemma}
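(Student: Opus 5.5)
The plan is to realise the decomposition as a singular value decomposition of the orthogonal projection from $U$ onto $V$. Write $W\coloneqq\Span\{v_1,\cdots,v_n,\nu_1,\cdots,\nu_k,n_1,\cdots,n_m\}$ with the restricted (possibly degenerate) form $\l\cdot,\cdot\r$. Let $V^\perp$ denote the orthogonal complement of $V$ inside $W$.

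\textbf{Step 1: $W = V\oplus V^\perp$.} Since $V$ is spacelike, the restriction of $\l\cdot,\cdot\r$ to $V$ is positive definite, so $V\cap V^\perp=\{0\}$. For $w\in W$, choose an orthonormal basis $\{E_i\}$ of $V$ and set $\pi_V(w)=\sum_i\l w,E_i\r E_i$. Then $w-\pi_V(w)\in V^\perp$, which gives the decomposition.

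\textbf{Step 2: $V^\perp$ contains no spacelike vector.} This is the step I expect to need the most care, since it is where the signature of $W$ enters. The form on $W$ is diagonal in the given basis with $n$ positive, $k$ negative and $m$ zero entries. Hence its positive index is $n$, meaning every positive definite subspace of $W$ has dimension at most $n$. To see this, note that a positive definite subspace meets $\Span\{\nu_\a,n_j\}$ trivially, because the form is $\leq 0$ there; so its dimension is at most $\dim W-(k+m)=n$. Now suppose $w\in V^\perp$ had $|w|^2>0$. Then $V\oplus\R w$ would be an $(n+1)$-dimensional positive definite subspace, a contradiction. So $|N|^2\le 0$ for all $N\in V^\perp$.

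\textbf{Step 3: projection bound.} Let $P\coloneqq\pi_V|_U:U\to V$. For $u\in U$ write $u=Pu+N$ with $N\in V^\perp$. Orthogonality gives
\begin{equation*}
    |Pu|^2=|u|^2-|N|^2\ge|u|^2 .
\end{equation*}
In particular $P$ is injective, hence an isomorphism since $\dim U=\dim V=n$.

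\textbf{Step 4: simultaneous diagonalisation.} Consider the symmetric bilinear form $B(a,b)\coloneqq\l Pa,Pb\r$ on $U$. By Step 3 it satisfies $B(u,u)\ge\l u,u\r$ and is positive definite. Diagonalise $B$ with respect to the inner product $\l\cdot,\cdot\r|_U$. This gives an orthonormal basis $\{e_i\}$ of $U$ with $B(e_i,e_j)=\lambda_i^2\d_{ij}$, where $\lambda_i\ge1$ by Step 3.

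\textbf{Step 5: conclusion.} Define $E_i\coloneqq Pe_i/\lambda_i$. These are orthonormal in $V$, and they form a basis of $V$ because $P$ is an isomorphism. Setting $N_i\coloneqq e_i-Pe_i\in V^\perp$ gives $e_i=\lambda_iE_i+N_i$ with $|\lambda_i|\ge1$, as required.
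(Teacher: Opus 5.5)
Your proof is correct and follows the paper's route. The paper also shows that the orthogonal projection $U\to V$ is injective because the complement of $V$ contains no spacelike vectors, obtains the bases from a singular value decomposition, and gets $|\lambda_i|\ge 1$ from $1=|e_i|^2=\lambda_i^2+|N_i|^2$ with $|N_i|^2\le 0$. Your Steps 2--3 also make explicit what the paper only asserts: $V^\perp$ must be taken inside $\Span\{v_i,\nu_\alpha,n_j\}$, and it is the positive index $n$ of that span that makes $V^\perp$ causal.
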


        \begin{proof}
            Let $\pi: U \to V$ denote the orthogonal projection from $U$ to $V$, we note that $\ker \pi = \{u\in U: u\in V^\perp\} = \varnothing$ since $V^\perp$ is purely causal, thus $\pi$ is an isomorphism. By singular value decomposition, there are orthonormal bases $e_i$ for $U$ and $E_i$ for $V$ such that $\pi(e_i)=\lambda_iE_i$ with $|\lambda_i| > 0$. This indicates 
            \begin{equation*}
                e_i = \lambda_i E_i + N_i
            \end{equation*}
            for each $i$, where $N_i\in V^\perp$ and $|\lambda_i| \geq 1$, since $e_i$ is unit spacelike. 
        \end{proof}

\begin{lemma}[Preservation of acausality]\label{preservation of acausality general}
    Suppose a spacelike mean convex solution of MCF in $\R^{n+1,k}$ is initially acausal, that is, $|F(y, 0) - F(x,0)|^2 > 0$ for all $y\neq x$, then it stays acausal. 
\end{lemma}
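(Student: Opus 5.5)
We plan to run a maximum principle argument on the two-point function $\Phi(x,y,t) \coloneqq |F(y,t) - F(x,t)|^2$ on $(\S\times\S)\setminus D$. The threat is that some pair becomes null, i.e.\ $\Phi$ first reaches zero at some off-diagonal pair at a first time $t_0>0$. Near the diagonal this cannot happen while the submanifold stays spacelike. Indeed, the expansion $F(y)-F(x) = s\,DF|_x(v) + O(s^2)$ gives $\Phi \approx s^2 g(v,v) > 0$, uniformly on $[0,t_0]$ by compactness and smoothness. The spacelike property itself holds on compact subintervals because the flow is smooth and spacelike-mean-convex there. Hence any first zero occurs at a pair $(x_0,y_0)$ with $d(x_0,y_0)$ bounded below. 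At such a point $\Phi$ attains its spatial minimum value $0$, so we need a contradiction with $\Phi$ being nondecreasing in time at a minimum. Preferably we show that $(\p_t - \D_{x} - \D_{y} - 2\,\text{mixed terms})\Phi \ge 0$ there, with strictness obtained by perturbation.

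The key computation is as follows. Write $\o = F(y)-F(x)$. At a spatial critical point, $\l \o, \p_{x^i}\r = 0 = \l \o, \p_{y^i}\r$, so $\o \in N_x\S \cap N_y\S$. Since $|\o|^2=0$ and $\o\neq 0$ (embeddedness), $\o$ is a null vector in $N_x\S$. We have $\p_t \Phi = 2\l \o, H_y - H_x\r$. We take the second variation along directions $(e_i, E_i)$ paired using \cref{singular value decomp for maximal dimensional spacelike subspaces}. Apply it to $U = T_y\S$ and $V = T_x\S$, both lying in $\o^\perp$, which has the required form: spacelike, timelike and one null direction $\o$. This gives $e_i = \lambda_i E_i + N_i$ with $|\lambda_i|\ge 1$ and $N_i \in (T_x\S)^\perp$. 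We then compute
\begin{equation*}
\sum_i \frac{\p^2}{\p s^2}\Phi\bigl(\g_x(s), \g_y(\lambda_i^{-1} s)\bigr)\Big|_{s=0} \ge 0 ,
\end{equation*}
where the second derivative lands on the terms $2\l\o, \lambda_i^{-2}h^y(e_i,e_i) - h(E_i,E_i)\r$ plus the first-order term $2|\lambda_i^{-1}e_i - E_i|^2$. By orthogonality this equals $2|\lambda_i^{-1}N_i|^2$, and $N_i$ lies in $(T_x\S)^\perp \cap \o^\perp$, which is causal. A cleaner alternative is to take the pairing with coefficient $1$ and use $|e_i - E_i|^2 = (1-\lambda_i)^2 + |N_i|^2$.

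Combining the pieces yields an inequality of the form $0 \ge \p_t\Phi - (\text{second variation}) \ge 2\l \o, H_y - H_x\r - 2\l\o, \textstyle\sum\lambda_i^{-2}h^y_{ii} - H_x\r$ plus gradient terms. The sign then follows from the signs $\l \o, H_x\r$ and $\l \o, H_y\r$. Here $\o$ is null and normal at both points, so these signs are determined by whether $\o$ is inward or outward at $x$ and $-\o$ at $y$. This uses spacelike-mean-convexity together with \cref{characterisation of being a spacelike vector lemma}: $\l n, H\r\ne 0$ for any null normal $n$. The main obstacle is precisely this sign bookkeeping. We must arrange the combination so that the factors $\lambda_i^{-2}\le 1$ together with the definite sign of $\l\o,H_y\r$, and of $\l \o, h^y(e_i,e_i)\r$ when the data is convex, give the right inequality. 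We must also check that the causal term $|N_i|^2 \le 0$ enters with the favourable sign.

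If the inequality obtained is only non-strict, we would instead apply the argument to $\Phi + \e e^{Ct}$, or to $\Phi$ weighted by $e^{-Ct}$, and let $\e\to0$. This rules out a first time $t_0$ at which $\min\Phi = 0$, and hence proves that acausality is preserved.
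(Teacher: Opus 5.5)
Your framework is the paper's: a two-point maximum principle for $\Phi=|F(y,t)-F(x,t)|^2$ away from the diagonal, the critical-point conditions $\omega\perp T_x\Sigma,T_y\Sigma$, and pairing directions via \cref{singular value decomp for maximal dimensional spacelike subspaces}. The proposal does not close, however. The ``sign bookkeeping'' you single out as the main obstacle is created by your choice of pairing. Along $s\mapsto(\gamma_x(s),\gamma_y(\lambda_i^{-1}s))$, using $|N_i|^2=1-\lambda_i^2$, one gets exactly
\[
\partial_t\Phi-\sum_i\Phi_i''=2\sum_i\bigl(1-\lambda_i^{-2}\bigr)\bigl(1+\langle\omega,h^y(e_i,e_i)\rangle\bigr).
\]
Spacelike-mean-convexity says nothing about the individual terms $\langle\omega,h^y(e_i,e_i)\rangle$. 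Only their trace $\langle\omega,H_y\rangle$ has a definite sign, and that sign is not tied to the sign of $\langle\omega,H_x\rangle$. Convexity of the data is not a hypothesis of the lemma. So this route cannot produce a contradiction.

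The missing observation is to use the coefficient-one pairing, i.e.\ the operator $\sum_i(\partial^x_{E_i}+\partial^y_{e_i})^2$ (the paper's $\mathcal{L}$ up to orienting the bases). With it, the curvature terms sum to exactly $2\langle\omega,H_y-H_x\rangle=\partial_t\Phi$, so they cancel identically and mean convexity plays no role at all. What is left is $2\sum_i|e_i-E_i|^2=2\sum_i\bigl((1-\lambda_i)^2+|N_i|^2\bigr)=4\sum_i(1-\lambda_i)\le0$ once $\lambda_i\ge1$. Hence $\partial_t\Phi\ge4\sum_i(\lambda_i-1)\ge0$ at any spatial minimum. You wrote down $(1-\lambda_i)^2+|N_i|^2$ as an aside but did not notice that, with $|N_i|^2=1-\lambda_i^2$, it finishes the computation.

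The strictness step also fails as proposed. At a first zero of $\Phi$ you only get $\partial_t\Phi\ge0$, with equality possible (e.g.\ all $\lambda_i=1$), so there is no contradiction. Perturbing to $\Phi+\varepsilon e^{Ct}$ and excluding its zero puts you at a point where $\Phi<0$. There $\omega$ is timelike, $\omega^\perp$ has signature $(n+1,k-1)$, and $(T_x\Sigma)^\perp\cap\omega^\perp$ contains a spacelike direction, so neither $|N_i|^2\le0$ nor $|\lambda_i|\ge1$ is available. Weighting by $e^{-Ct}$ changes nothing at a zero.

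The perturbation must be made at a positive level, as the paper does with $d^2+\varepsilon t$. Let $\rho$ denote the $g(0)$-distance. Choose $\mu>0$ smaller than $\min\Phi(\cdot,\cdot,0)$ on $\{\rho(x,y)\ge r\}$ and smaller than the near-diagonal lower bound on $\{\rho(x,y)=r\}$. Suppose $\Phi+\varepsilon t$ first drops to $\mu$ at some point of $\{\rho\ge r\}$, with $\varepsilon$ small. That point is interior and $\Phi>0$ there, so $\omega$ is spacelike, $\omega^\perp$ has signature $(n,k)$, and the lemma applies. Then $\partial_t(\Phi+\varepsilon t)\ge\varepsilon>0$, contradicting $\partial_t(\Phi+\varepsilon t)\le0$ at a first touching time. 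Letting $\varepsilon\to0$ gives $\Phi\ge\mu>0$ on $\{\rho\ge r\}$.
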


        \begin{proof}
            Since $H_x$ is bounded, we have $d^2 \geq A > 0$ for $y, x$ close to each other and $t > 0$. Suppose for the contradiction that there is some $(x,y,t)$ in the complement of the diagonal $D$ where $d^2 + \e t$ reaches a new minimum other than $A$, then one would have 
            \begin{equation*}
                \p_t (d^2 + \e t) > 2 \l\o, H_y - H_x \r. 
            \end{equation*}
            At the minimum, the $x,y$ derivatives imply
            \begin{equation*}
                \l \o, \p_{x^i} \r = 0 = \l \o, \p_{y^i}\r, 
            \end{equation*}
            for all $i$. 
            
            The spatial derivatives of $d^2$ are 
            \begin{align*}
                \p_j^x \p_i^x d^2 & = -2 (\l h_{ji} , \o \r - \d_{ij}),\\
                \p_j^y \p_i^y d^2 & = 2 (\l h_{ji}^y , \o \r + \d_{ij}),\\
                \p_j^y \p_i^x d^2 & = -2 \l \p_{x^i}, \p_{y^j} \r.
            \end{align*}

            We choose the basis of $T_x\S$ and $T_y\S$ as in \cref{singular value decomp for maximal dimensional spacelike subspaces}. Consider the weakly elliptic operator $\mathcal{L}\coloneqq \sum_i (\p^x_i - \p^y_i)^2$, then 
            \begin{equation*}
                \mathcal{L}d^2 = 2 \l \o, H_y - H_x \r + 4n - 4 \l \p_{x^i}, \p_{y^i}\r. 
            \end{equation*}
            Hence, choosing $\lambda_i \geq 1$ gives
            \begin{equation*}
                (\p_t - \mathcal{L})(d^2 + \e t) > 4 \l \p_{x^i}, \p_{y^i}\r - 4n = 4 \sum_i \lambda_i - 4n \geq 0, 
            \end{equation*}
            which is a contradiction. Letting $\e \to 0$ yields the result.  
\end{proof}

Since the proof for exterior and interior noncollapsing is almost identical, we will only prove exterior noncollapsing. By \cref{characterisation of being a spacelike vector lemma}, it is sufficient to show that $\l \Upsilon, N_x\r - \d_-$ remains nonnegative under the flow, we prove this by maximum principle. 

We first note that for a fixed $(x,t)$, $N_x^\perp$ is a degenerate subspace. However, it has the following orthogonal decomposition 
\begin{equation}\label{radical of Nx eq}
    N_x^\perp \cong \R N_x \oplus (N_x^\perp \cap H_x^\perp). 
\end{equation}
Since $N_x\in \mathcal{N}^+_x(H)$, its evolution must satisfy 
\begin{equation*}
    0 = \p_t |N_x|^2 = \p_i^x |N_x|^2 = \p_t \l N_x, H_x \r = \p_i^x \l N_x, H_x \r, 
\end{equation*}
where the first two equalities indicate that $\overset{\perp}{\n}_t N_x = a\, N_x + \ell_t, \overset{\perp}{\n}_i N_x = b_i N_x + \ell_i$ for some $a, b_i \in \R$ and $\ell_t, \ell_i \in N_x^\perp \cap H_x^\perp$, along with the rest of the equalities we deduce 
\begin{align}
    \overset{\perp}{\n}_t N_x & = \biggr( - \l \D^\perp H_x, N_x \r - \l H_x, h_{ij}\r \l h^{ij}, N_x\r \biggr) N_x + \ell_t, \label{definition l_t eq}\\
    \overset{\perp}{\n}_i N_x & = - \l \overset{\perp}{\n}_i H_x, N_x \r N_x + \ell_i \label{defining l_i eq}, 
\end{align}
by \cref{flat evolution h}, where $\D^\perp \coloneqq g^{ij}\overset{\perp}{\n}_i \overset{\perp}{\n}_j$. For simplicity, we can choose $N_x$ to be the solution for \cref{definition l_t eq} when $\ell_t \equiv 0$ everywhere.

The derivatives for $N_x$ that we need are
\allowdisplaybreaks{
\begin{align*}
    \p_t N_x & = \biggr( - \l \D^\perp H_x, N_x \r - \l H_x, h_{ij}\r \l h^{ij}, N_x\r \biggr) N_x + \mathscr{T}, \nonumber\\
    \p_i^x N_x &= - \l \overset{\perp}{\n}_i H_x, N_x \r N_x + \ell_i - \sum_k \l N_x, h_{ik}\r \p_{x^k},\\
    \D_x N_x & = - \l \D^\perp H_x, N_x\r N_x + 2 \sum_i \l \overset{\perp}{\n}_i H_x,  N_x\r^2 N_x - \sum_i \l \overset{\perp}{\n}_i H_x,  \ell_i\r N_x\\
    & \qquad  - \sum_i \l \overset{\perp}{\n}_i H_x, N_x\r \ell_i + \sum_i \overset{\perp}{\n}_i \ell_i -  \l N_x, h_{ik}\r h^{ik} + \mathscr{T}. 
\end{align*}}

Since the proof of noncollapsing requires us to consider the derivatives at a critical point, we can choose $\ell_i = 0$ at this point. For a fixed time $t$, choosing an exponential coordinate gives $\overset{\perp}{\n}_i \ell_i = 0$. 

The proof for the interior case requires various derivatives for $\Upsilon$, its first derivatives are 
\begin{align*}
    \p_t \Upsilon &= \frac{2}{d^2}\left( - \overset{\perp}{\n}_{\pi_x(\o)} H_x + \overset{\perp}{\pi}_x(H_y) - H_x - \l H_y - H_x, \o \r \Upsilon\right) + \mathscr{T}, \nonumber\\
    \p_i^x \Upsilon & = \frac{2}{d^2}\left(\sum_k ( - \l \o, \p_{x^k} \r h_{ik} - \l \o, h_{ik}\r \p_{x^k}) + \l \p_{x^i}, \o \r \Upsilon\right), \nonumber\\
    \p_i^y \Upsilon & = \frac{2}{d^2}\left( \overset{\perp}{\pi}_x(\p_{y^i}) - \l \p_{y^i}, \o \r \Upsilon \right), 
\end{align*}
and the second derivatives are 
\begin{align*}
    \p_j^y \p_i^y \Upsilon& = \frac{2}{d^2}\left( \overset{\perp}{\pi}_x (h_{ji}^y) - (\l h_{ji}^y, \o\r + \d_{ij})\Upsilon - \l \p_{y^i}, \o \r \p_j^y \Upsilon - \l \p_{y^j}, \o \r \p_i^y \Upsilon\right),\\
    \p_j^x \p_i^x \Upsilon & = \frac{2}{d^2}\biggr( h_{ji} + \sum_k (-\l \o, h_{jk}\r h_{ik} - \l \o, h_{ik}\r h_{jk}) - \overset{\perp}{\n}_{\pi_x(\o)} h_{ij} + (\l h_{ji}, \o\r - \d_{ij})\Upsilon \\
    & \qquad + \l \p_{x^i}, \o \r \p_j^x \Upsilon + \l \p_{x^j}, \o \r \p_i^x \Upsilon\biggr) + \mathscr{T},\\
    \p_j^y \p_i^x \Upsilon & = \frac{2}{d^2}\biggr( - \sum_k \l \p_{y^j}, \p_{x^k}\r h_{ik} + \l \p_{x^i}, \p_{y^j}\r \Upsilon + \l \p_{x^i}, \o\r \p_j^y \Upsilon - \l \p_{y^j}, \o\r \p_i^x \Upsilon\biggr) + \mathscr{T}.
\end{align*}

We are now ready to prove noncollapsing. 
\begin{proof}[Proof of \cref{acausal spacelike-mean-convex submanifold preserves noncollapsing}]
    \noindent \textbf{Case 1}: Boundary

    On the diagonal $D$, $\Upsilon$ recovers the second fundamental form by \cref{extension of Z to diagonal eq}. Suppose $\l h(\cdot, \cdot) - \d_- H_x, N_x\r$ achieves a new minimum $0$ by some unit $v\in T\S$ and $N_x$, i.e., $h(v,v) \cdot N_x = \d_-$ and $h(v,v) - \d_- H_x$ is inward null or zero. Similar to \cref{minimiser lemma 1} and \cref{minimiser lemma 2}, we have that $h(v,v) \cdot N_x > 0$, $h(v, u) \cdot N_x = 0$ for all $u \perp v$, so that $|h(v,u)|^2\leq 0$ by \cref{radical of Nx eq}. Moreover, we can choose a frame $\{v\coloneqq e_1, e_2, \cdots, e_n\}$ that diagonalises $h_{ij}\cdot N_x$ and assume $v$ is parallel.  

    The minimality condition indicates 
    \begin{equation*}
        \begin{split}
            0 = \p_i \l h(v,v), N_x\r &= \l \overset{\perp}{\n}_i h(v,v) , N_x \r + \l h(v,v), \overset{\perp}{\n}_i N_x\r \\
            & = \l \overset{\perp}{\n}_i h(v,v) , N_x \r - \l \overset{\perp}{\n}_i H_x, N_x\r \l h(v,v) , N_x \r, 
        \end{split}
    \end{equation*}

    Thus, by \cref{flat evolution h} we get 
    \begin{equation*}
        \begin{split}
            (\p_t - \D)\l h(v,v), N_x\r & = 2\sum_i \underbrace{\l h(v,v) - h(e_i,e_i), N_x\r }_{ \leq 0}\underbrace{|h(e_i, v)|^2}_{ \leq 0} \\
            & \qquad + \sum_i \underbrace{\l h(v,v) - \d_- H_x, h_{ii}\r  \l h_{ii}, N_x\r}_{ \geq 0}\\
            & \geq 0, 
        \end{split}
    \end{equation*}
    where $h(v,v) - \d_- H_x = c\, N_x$ for some $c \geq 0$ since it is inward null or zero. The result then follows by barrier maximum principle, and note that this is an alternative proof for \cref{pinching preserved prop}. \\

    \noindent \textbf{Case 2}: Interior 

    Suppose for the contradiction that the lower bound of $\l \Upsilon, N_x\r$ is not preserved, then there exists some local minimum point $(x,y, N_x, t)$ such that $\l \Upsilon, N_x\r = \d_-$. At a minimum over both $x$ and $y$, we have 
    \begin{equation*}
        \begin{split}
            \l \p_i^x \Upsilon, N_x\r = - \l \Upsilon, \p_i^x N_x\r = \l \overset{\perp}{\n}_i H_x, N_x\r \l \Upsilon, N_x \r, \qquad \l \p_i^y \Upsilon, N_x\r  = 0, 
        \end{split} 
    \end{equation*}
    so that 
    \begin{equation*}
        \begin{split}
            \l \p_i^x \Upsilon, \p_i^x N_x\r &= - \l \overset{\perp}{\n}_i H_x, N_x\r^2 \l \Upsilon, N_x \r + \frac{2}{d^2}\sum_k \l \o, h_{ik} \r \l N_x, h_{ik}\r, \\
            \l \p_i^x \Upsilon, \p_i^y N_x\r & = 0,\\
            \l \p_i^y \Upsilon, \p_i^x N_x\r & = \l \overset{\perp}{\n}_i H_x, N_x\r \l \p_i^y \Upsilon, N_x\r = 0. 
        \end{split}
    \end{equation*}

    Consider the weakly elliptic operator $\mathcal{L} \coloneqq \sum_i (\p_i^x - \p_i^y)^2$, choosing a frame that diagonalises $h_{ij}\cdot N_x$, we then have  
    \begin{equation*}
        \begin{split}
            (\p_t - \mathcal{L})\l \Upsilon, N_x\r & = \frac{4}{d^2}\l \sum_i \left(\l \p_{x^i} , \p_{y^i}\r - \frac{2}{d^2}\l \o, \p_{x^i} + \p_{y^i} \r \l \o, \p_{x^i}\r \right) (\Upsilon - h_{ii}), N_x \r \\
            & \qquad + \frac{4}{d^2}\l n\Upsilon - H_x, N_x\r + \sum_i \l \Upsilon - \d_-H_x, h_{ii}\r \l h_{ii}, N_x\r. 
        \end{split}
    \end{equation*}
    For the second sum, we notice that $\Upsilon - \d_-H_x = c\, N_x$ for some $c \geq 0$ since it is inward null or zero, thus
    \begin{equation*}
        \sum_i \l \Upsilon - \d_-H_x, h_{ii}\r \l h_{ii}, N_x\r = c \sum_i \l h_{ii}, N_x\r^2 \geq 0. 
    \end{equation*}

    The critical point condition $\l \p_i^y \Upsilon, N_x\r = 0$ indicates that $N_y \coloneqq N_x - 2 \l \frac{\o}{d}, N_x\r \frac{\o}{d} \in N_y\S$ is a null normal at $y$. Denote $u = \frac{\o}{d}$, then the reflection about $u$ defined by 
    \begin{equation*}
        R(u)v = v - 2\l u, v\r u
    \end{equation*}
    is an isometry. Thus, $R(u)T_x\S \subset N_y^\perp$, where the orthogonal complement of $N_y$ is spanned by $n$-spacelike directions, $k - 1$ timelike directions, and $N_y$ itself. By \cref{singular value decomp for maximal dimensional spacelike subspaces}, we can always arrange $T_y\S$ and $R(u)T_x\S$ such that there exists bases $\{\p_{x^i}\}$ for $T_x\S$ and $\{\p_{y^i}\}$ for $T_y\S$ that satisfy 
    \begin{equation*}
        \p_{y^i} = \lambda_i R(u)\p_{x^i} + N_i
    \end{equation*}
    for some $|\lambda_i| \geq 1$ and $N_i\in N_y^\perp\setminus R(u)T_x\S$, where $ N_i \perp R(u)T_x\S$ indicates that 
    \begin{equation*}
        \l N_i, \p_{x^i}\r - \frac{2}{d^2}\l \o, \p_{x^i}\r\l N_i, \p_{x^i}\r = 0.
    \end{equation*}
    In addition, 
    \begin{equation*}
        \begin{split}
            \l \p_{y^i}, \o\r &= - \lambda_i \l \p_{x^i}, \o\r + \l N_i, \o\r, \\
            \l \p_{y^i}, \p_{x^i}\r & = \lambda_i \left( 1 - \frac{2}{d^2}\l \p_{x^i}, \o\r^2\right) + \l N_i, \p_{x^i}\r. 
        \end{split}
    \end{equation*}
    We note that 
    \begin{equation*}
        \frac{2}{d^2}\l \p_{y^i}, \o\r \l \p_{x^i}, \o\r = \frac{2}{d^2}\left(- \lambda_i \l \p_{x^i}, \o\r^2 + \l N_i, \o\r \l \o, \p_{x^i} \r \right), 
    \end{equation*}
    thus 
    \begin{equation*}
        \l \p_{y^i}, \p_{x^i}\r - \frac{2}{d^2}\l \p_{y^i}, \o\r \l \p_{x^i}, \o\r = \lambda_i.  
    \end{equation*}

    Recall that in the boundary case we know $\l h_{ii}, N_x\r \geq \d_-$, so that $\l \Upsilon - h_{ii}, N_x\r \leq 0$. Hence, choosing $\lambda_i \leq -1$ yields
    \allowdisplaybreaks{
    \begin{equation*}
        \begin{split}
            (\p_t - \mathcal{L})\l \Upsilon, N_x\r & \geq  \frac{4}{d^2}\l n\Upsilon - H_x + \sum_i \left(\lambda_i - \frac{2}{d^2} \l \o, \p_{x^i} \r^2 \right) (\Upsilon - h_{ii}), N_x \r\\
            & \geq \frac{4}{d^2}\l n\Upsilon - H_x + \sum_i \lambda_i(\Upsilon - h_{ii}), N_x\r \\
            & = \frac{4}{d^2} \left(n \d_- - \sum_i \lambda_i (\l h_{ii}, N_x\r - \d_-) - 1 \right)\\
            & \geq \frac{4}{d^2}\left( n \d_- + \sum_i (\l h_{ii}, N_x\r - \d_-) - 1 \right)\\
            & = 0, 
        \end{split}
    \end{equation*}}
    the result then follows from barrier maximum principle. 
\end{proof}

\section{Long time existence}

A proper solution $F: \S^n \times [0, T)\to \R^{n+1,k}$, where $0 < T \leq \infty$ to MCF is a maximal solution if it has no smooth extension to a larger time interval. That is, if every smooth solution $X: \S^n \times [0, S) \to \R^{n+1,k}$ which agrees with $F$ on $\S^n \times ([0, T) \cap [0, S))$ satisfies $S\leq T$. If $F: \S^n \times [0, T)\to \R^{n+1,k}$ is a maximal solution, then $T$ is its \emph{maximal time}.

The Euclidean inner product in \cref{euclidean norm for spacelike-convex submanifolds with bounded second fundamental form} gives us a notion of singular spacelike-convex solution to MCF.

\begin{definition}[Singular solution]
    A proper spacelike-convex solution $F: \S^n \times [0, T)\to \R^{n+1,k}$, where $0 < T \leq \infty$, to MCF is \emph{singular} if $T < \infty$ and 
    \begin{equation*}
        \sup_{\S \times [0,T)} \|h\|^2 \to \infty.
    \end{equation*} 
\end{definition}

The goal is to prove the following long time existence result. 
\begin{theorem}[Long time existence]\label{long time existence thm}
    Let $\S^n$ be compact, spacelike-convex, and $F: \S^n \times [0, T)\to \R^{n+1,k}$ is a smooth solution to MCF. If $F$ is maximal and $T < \infty$, then $F$ is singular. 
\end{theorem}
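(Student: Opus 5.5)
We argue by contraposition, in the standard way (as in Huisken and \cite[Chapter 3]{baker2011mean}). Suppose $F$ is maximal with $T<\infty$ but not singular, so that $\sup_{\S\times[0,T)}\|h\|^2\le C_0$, with $\|\cdot\|$ the Euclidean norm from \cref{euclidean norm for spacelike-convex submanifolds with bounded second fundamental form}. We will extend $F$ smoothly past $T$, contradicting maximality. Throughout, \cref{pinching preserved prop} and \cref{spacelike-convex data preserves spacelike property coro} ensure that $\S_t$ stays spacelike-convex and spacelike on $[0,T)$. We fix the timelike frame $\{\nu_\a\}$ once, at the initial time. To keep $\|\cdot\|$-controls uniform in $t$, we must check that the constants in \cref{mean curvature does not tilt too much prop} depend only on $\d_\pm$, on pinching, and on $|H|$ bounds; these are all controlled by \cref{acausal spacelike-mean-convex submanifold preserves noncollapsing} and \cref{pinching preserved prop}.

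\textbf{Step 1: uniform control of the metric.} From \cref{evolution of g} we have $|\p_t g(v,v)|\le 2|\l H,h(v,v)\r|\le C g(v,v)$, since pinching keeps $h(v,v)$ in a compact causal diamond and $|H|^2\le C\|h\|^2\le C$. Hence $g(t)$ is uniformly equivalent to $g(0)$ on $[0,T)$. Moreover, $\|\p_t F\|=\|H\|\le C$, so $F(\cdot,t)$ converges uniformly to a continuous limit $F_T$. By Step 1 of \cref{mean curvature does not tilt too much prop}, the tangent planes have uniformly bounded hyperbolic tilt, so $\|\p_iF\|$ is also bounded with respect to $g(0)$.

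\textbf{Step 2: higher derivative bounds.} We show that $\|\n^m h\|^2\le C_m$ on $[0,T)$ for all $m$. The evolution of $\n^m h$ follows from \cref{flat evolution h} and Simons' identity \cref{simon's identity eq}:
\begin{equation*}
(\n_t-\D)\n^m h=\sum_{i+j+l=m}\n^ih*\n^jh*\n^lh .
\end{equation*}
The difficulty is that $|\n^m h|^2$ computed with $\overset{\perp}{g}$ is indefinite, so we would apply the maximum principle to the Euclidean quantity $\|\n^m h\|^2$ instead. The difference between $\|\cdot\|$ and $\overset{\perp}{g}$ involves the fixed vectors $\nu_\a$, and their projections and derivatives along the flow are bounded by Step 1 and \cref{mean curvature does not tilt too much prop}. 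The resulting equation has the form
\begin{equation*}
(\p_t-\D)\|\n^m h\|^2\le -c\|\n^{m+1}h\|^2+C\sum\|\n^ih\|\|\n^jh\|\|\n^lh\|\|\n^mh\| + \text{(lower order)},
\end{equation*}
where the good gradient term survives because, in the Wick-rotated metric, $\|\n^{m+1}h\|^2$ is positive definite. We then run the usual induction with the auxiliary function $\|\n^m h\|^2+K\|\n^{m-1}h\|^2$, using that $T<\infty$.

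\textbf{Step 3: extension.} From the bounds above, all derivatives of $F$ (in $g(0)$ coordinates and Euclidean ambient norms) are uniformly bounded, and $\p_t$ of each is bounded. So $F(\cdot,t)\to F_T$ in $C^\infty$. The metric $g(T)$ is positive definite by Step 1. By pinching, together with the strict inequality $h(v,v)\cdot h(v,v)\ge c|H|^2$ and the lower bound on $|H|$ from \cref{spacelike-convexity is preserved}, $F_T$ is a smooth spacelike-convex immersion; it is an embedding by \cref{spacelike convex diffeo to spheres and acausal}. \cref{short time existence MCF} then restarts the flow from $F_T$. Gluing gives a smooth solution on $[0,T+\e)$, contradicting maximality.

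\textbf{Main obstacle.} The hardest step is Step 2: producing a coercive evolution inequality for the Euclidean norm $\|\n^m h\|^2$. The pseudo-Riemannian norm is not definite, and the fixed frame $\nu_\a$ is not parallel for $\overset{\perp}{\n}$. I would handle this by decomposing $\n^m h$ into its component along the unique spacelike normal direction and its timelike part, which is controlled via the Gauss map $\GG$ of \cref{gauss map spacelike-convex prop}. I would then bound the cross terms using the tilt estimates, absorbing them into the gradient term via Cauchy--Schwarz.
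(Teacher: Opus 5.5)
Your proposal is correct and takes essentially the same route as the paper. The paper's proof likewise runs Baker's proof of Theorem 4.5 with the pseudo-Riemannian norm replaced by the Wick-rotated Euclidean norm $\|\cdot\|$: it uses the evolution of $\|\nabla^m h\|^2$, controls the extra $\langle \partial F, e\rangle$ terms by Step 1 of the tilt estimate, obtains derivative bounds, and extends the flow past $T$. The only cosmetic difference is that you close the induction with Huisken's quantity $\|\nabla^m h\|^2 + K\|\nabla^{m-1}h\|^2$ and $T<\infty$, whereas the paper uses a Shi-type quotient $\|\nabla h\|^2/(M-\|h\|^2)$ for the base case; this does not matter here.
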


We derive some general evolution equations first. 

\begin{definition}[Coarse tensor product]
    Given tensors $A$ and $B$, we denote by $A*B$ any tensor obtained from the summing of constant multiples (depending only on the ranks of $A$ and $B$) of contractions (possibly using the metric g or its inverse $g^{-1}$) of $A\otimes B$. The (associative) higher-order products $A*B*C*\cdots$ are defined similarly. We denote by $A^{*p}$ any $p$-fold product $\underbrace{A*\cdots * A}_{p}$. 
\end{definition}

The evolution equation for higher derivatives of $h$ are given by the following results. 

\begin{proposition}[{\cite[Proposition 4.6]{baker2011mean}}]\label{evolution eq of higher derivative of second fundamental form prop}
    The evolution of the $m$-th covariant derivative of $h$ is 
    \begin{equation}
        \n_t \n^m h = \D \n^m h + \sum_{i+j+l = m} \n^i h * \n^j h * \n^l h. 
    \end{equation}
\end{proposition}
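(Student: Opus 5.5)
We argue by induction on $m$, using the base case \cref{flat evolution h} together with the commutation rules for $\n_t$ with spatial covariant derivatives. For $m=0$, \cref{flat evolution h} gives $\n_t h_{ij} = \n_i\n_j H + H^\a h_{ip\a}\tensor{h}{^p_j}$. We then apply the Simons-type identity \cref{simon's identity eq}, traced over one pair of indices (the same computation that produced $(\n_t-\D)h(v,v)$ in the proof of \cref{spacelike-convexity is preserved}). This rewrites $\n_i\n_j H$ as $\D h_{ij}$ plus cubic terms in $h$, with the Weingarten map $\W$ contracted against $h$ via $\overset{\perp}{g}$ and $g^{-1}$. Every such term has the form $h*h*h$, which is the claim for $m=0$.

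For the inductive step we need the commutator $[\n_t,\n]$ acting on a section $S$ of $\s^{*\otimes p}\otimes\NN$. The connection $\n_t$ on $\s$ is induced through the time-dependent metric. Differentiating the Christoffel symbols via \cref{evolution of g} gives $\p_t\G = \n(H^\a h_\a) = h*\n h$. The normal connection has time derivative coming from the flat ambient curvature together with the Codazzi/Ricci equations, and this also has the form $h*\n h$. Hence
\begin{equation*}
\n_t\n S = \n\n_t S + h*\n h*S .
\end{equation*}
Similarly, commuting $\n$ past $\D$ produces curvature terms. By the Gauss equation \cref{gauss equation} the tangential curvature is $h*h$, and by the Ricci equation the normal curvature is also $h*h$. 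This gives
\begin{equation*}
\n\D S = \D\n S + h*h*\n S + h*\n h*S .
\end{equation*}

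Now assume the formula holds for $m$. Apply $\n$ to it and use the two commutation rules with $S=\n^m h$. This yields
\begin{equation*}
\n_t\n^{m+1}h = \D\n^{m+1}h + \sum_{i+j+l=m}\n(\n^ih*\n^jh*\n^lh) + h*h*\n^{m+1}h + h*\n h*\n^m h .
\end{equation*}
By the Leibniz rule, the middle sum is a sum of terms $\n^{i'}h*\n^{j'}h*\n^{l'}h$ with $i'+j'+l'=m+1$. The remaining two terms have the same form, with index triples $(0,0,m+1)$ and $(0,1,m)$. This closes the induction.

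The main obstacle is bookkeeping the normal-bundle contributions correctly in the indefinite setting. One must check that $\p_t$ of the normal connection coefficients, and the normal curvature, are genuinely of the form $h*\n h$ and $h*h$ respectively. This uses that the ambient space is flat and that all contractions use $\overset{\perp}{g}$, which is nondegenerate, so signatures only affect constants. Since the notation $*$ absorbs constants and signs, the Lorentzian signature of $\NN$ causes no further difficulty. The argument then follows \cite[Proposition 4.6]{baker2011mean} verbatim.
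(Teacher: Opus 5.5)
Your induction matches the argument of \cite[Proposition 4.6]{baker2011mean}, which the paper cites without giving a proof of its own. The case $m=0$ comes from the traced Simons identity. The inductive step commutes $\n$ past $\n_t$ and $\D$, and the Gauss and Ricci equations turn curvature into $h*h$ and its derivative into $h*\n h$.

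There is one slip, and it does not affect the conclusion. Your time commutator $\n_t\n S=\n\n_t S+h*\n h*S$, justified through $\p_t\G=h*\n h$, is the identity for the componentwise derivative $\p_t$ on the tangential slots. It is not the identity for the paper's $\n_t=\pi\circ\bar{\nabla}_{\p_t}\circ F_*$. That operator satisfies $\n_t g=0$ and
\begin{equation*}
\n_t\p_i=\pi(\p_i H)=-\W(\p_i,H)\neq 0 .
\end{equation*}
The correct identity is
\begin{equation*}
(\n_t\n S)_i=(\n\n_t S)_i+R(\p_t,\p_i)S-\n_{\n_t\p_i}S .
\end{equation*}
The curvature term is indeed $h*\n h*S$, but the last term is an extra $h*h*\n S$. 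You can see this already for a scalar $f$: $(\n_t\n f)_i-(\n\n_t f)_i=\langle H,h_{iq}\rangle g^{qp}\n_p f$, which is nonzero in general.

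With $S=\n^m h$, this extra term is $h*h*\n^{m+1}h$, the $(0,0,m+1)$ term you already carry, so the induction closes unchanged. Alternatively, you can run the whole induction with $\p_t$ and convert at the end, since the two time derivatives of $\n^m h$ differ by terms of the form $h*h*\n^m h$.
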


Thus, the evolution of the norm of higher derivatives of $h$ is
\begin{corollary}[{\cite[Proposition 4.7]{baker2011mean}}]\label{evolution of norm of higher derivative of second fundamental form coro}
    The evolution of $|\n^m h|^2$ is 
    \begin{equation}
        \p_t |\n^m h|^2 = \D |\n^m h|^2 - 2 |\n^{m+1}h|^2 + \sum_{i+j+l = m} \n^i h * \n^j h * \n^l h * \n^m h. 
    \end{equation}
\end{corollary}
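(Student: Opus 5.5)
We obtain the corollary directly from \cref{evolution eq of higher derivative of second fundamental form prop} by differentiating the norm along the flow. Write $|\n^m h|^2 = g^{i_1 j_1}\cdots g^{i_{m+2} j_{m+2}}\,\overset{\perp}{g}\big(\n^m h_{i_1\cdots i_{m+2}}, \n^m h_{j_1\cdots j_{m+2}}\big)$, using the induced metric $g$ on $\s$ and the normal metric $\overset{\perp}{g}$ on $\NN$.

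\textbf{Step 1: the time derivative.} The connection $\n_t$ on $\s$ and $\NN$ (with the time component of the connection chosen as in \cite[Chapter 2]{baker2011mean}) is compatible with both $g$ and $\overset{\perp}{g}$. Hence
\[
\p_t|\n^m h|^2 = 2\,\l \n_t \n^m h, \n^m h\r .
\]
If one instead uses $\p_t$ on components, \cref{evolution of g} gives $\p_t g^{ij} = 2H^\a \tensor{h}{^{ij}_\a}$. The extra terms are then of the form $h*h*\n^m h*\n^m h$, which corresponds to the case $(i,j,l)=(0,0,m)$ in the sum. Either way, the time derivative is accounted for.

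\textbf{Step 2: the Laplacian.} Since $\n g = 0$ and $\n\overset{\perp}{g}=0$,
\[
\D|\n^m h|^2 = 2\l \D\n^m h, \n^m h\r + 2|\n^{m+1}h|^2 .
\]

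\textbf{Step 3: combine.} Substitute the evolution of $\n^m h$ from \cref{evolution eq of higher derivative of second fundamental form prop} into Step 1:
\[
\p_t|\n^m h|^2 = 2\l \D \n^m h,\n^m h\r + 2\sum_{i+j+l=m}\l \n^ih*\n^jh*\n^lh, \n^m h\r .
\]
Replacing $2\l\D\n^m h,\n^m h\r$ using Step 2 gives
\[
\p_t|\n^m h|^2 = \D|\n^m h|^2 - 2|\n^{m+1}h|^2 + \sum_{i+j+l=m}\n^i h*\n^j h*\n^l h*\n^m h,
\]
since contracting with $\n^m h$ produces terms of the $*$-form.

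\textbf{Main obstacle: signature bookkeeping.} $\overset{\perp}{g}$ is indefinite, so $|\n^{m+1}h|^2$ is not a sum of squares. The $*$-notation is insensitive to signs and metric contractions, so the formula as stated still holds. The point needing care is that every contraction in the $*$-products uses only $g$, $g^{-1}$ and $\overset{\perp}{g}$. Any sign conventions from the Lorentzian normal bundle are absorbed into the constant multiples allowed in the definition of the coarse tensor product.

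For later use in long time existence, one would rerun the same computation with the Euclidean norm $\|\cdot\|$ of \cref{euclidean norm for spacelike-convex submanifolds with bounded second fundamental form}. Because the basis $\{\nu_\a\}$ is fixed, differentiating the added terms $\l\n^m h,\nu_\a\r^2$ produces only further $*$-terms.
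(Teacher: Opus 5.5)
Your proposal is correct and is the standard computation behind \cite[Proposition 4.7]{baker2011mean}, which the paper cites without giving its own proof: metric compatibility of $\nabla_t$ gives $\partial_t|\nabla^m h|^2 = 2\langle \nabla_t\nabla^m h,\nabla^m h\rangle$, the identity $\Delta|\nabla^m h|^2 = 2\langle\Delta\nabla^m h,\nabla^m h\rangle + 2|\nabla^{m+1}h|^2$ holds unchanged for the indefinite normal metric, and substituting the evolution of $\nabla^m h$ finishes the argument. Your closing remark on the Euclidean norm is outside this statement and somewhat loose (pairing with the fixed vectors $\nu_\alpha$ produces Weingarten terms carrying factors $\langle \partial F,\nu_\alpha\rangle$, not pure $*$-products of the $\nabla^i h$), but nothing in the corollary depends on it.
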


From these, we deduce the evolution equation for the Euclidean norm. 

\begin{proposition}\label{evolution of euclidean norm of derivatives of h prop}
    The evolution of $\|\n\vphantom{a}^m h\|^2$ is 
    \begin{equation*}
        \begin{split}
            \p_t \| \n^m h\|^2 & = \D \| \n^m h\|^2 - 2\| \n^{m+1} h \|^2 + \n^m h * h * \l \p F, e \r * \l \n^m h , e \r \\
            & \qquad + \sum_{i+j+l = m} \n^i h * \n^j h * \l \n^l h, e \r * \l \n^m h , e \r \\
            & \qquad + \sum_{i+j+l = m} \n^i h * \n^j h * \n^l h * \n^m h.
        \end{split} 
    \end{equation*}
\end{proposition}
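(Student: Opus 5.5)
The plan is to expand $\|\n^m h\|^2$ using the definition in \cref{euclidean norm for spacelike-convex submanifolds with bounded second fundamental form}. With the orthonormal timelike vectors $\nu_\a$ (written $e$ in the statement) fixed in the ambient space, we have
\begin{equation*}
    \|\n^m h\|^2 = |\n^m h|^2 + 2\sum_{\a=1}^k \l \n^m h, \nu_\a \r^2 .
\end{equation*}
Here $\l \n^m h, \nu_\a\r$ denotes the tensor obtained by pairing the normal-valued tensor $\n^m h$ with the constant ambient vector $\nu_\a$, or equivalently with its normal projection $\overset{\perp}{\pi}(\nu_\a)$. The evolution of the first term is given exactly by \cref{evolution of norm of higher derivative of second fundamental form coro}, and it contributes the last sum in the statement together with $\D|\n^m h|^2 - 2|\n^{m+1}h|^2$. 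It remains to compute $(\p_t - \D)$ of the scalar quantities $f_\a \coloneqq \l \n^m h, \nu_\a\r^2$.

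For each $\a$ I would write $u_\a \coloneqq \l \n^m h, \nu_\a \r$, which is a real $(m+2)$-tensor on $\s$. Then
\begin{equation*}
    (\p_t - \D) |u_\a|^2 = 2\l (\n_t - \D) u_\a, u_\a\r - 2|\n u_\a|^2 .
\end{equation*}
Since $\nu_\a$ is constant in $\R^{n+1,k}$, its ambient derivative vanishes. Splitting $\nu_\a$ into tangential and normal parts and using the Gauss--Weingarten relations \cref{gauss local eq}, \cref{weingarten local eq} gives
\begin{equation*}
    \overset{\perp}{\n}_i\, \overset{\perp}{\pi}(\nu_\a) = h_{ip} * \l \p_p F, \nu_\a\r ,
\end{equation*}
and a similar identity holds for the time derivative, involving $\n H$ and $\l \p F, \nu_\a\r$. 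Hence $\n_i u_\a = \l \n^{m+1}h, \nu_\a\r + \n^m h * h * \l \p F,\nu_\a\r$. Applying $\D$ once more produces the terms $\l \D\n^m h,\nu_\a\r$ and cross terms involving $\n h$, $h*h$ paired with $\nu_\a$, and $h$ paired with $\l \p F,\nu_\a\r$. Combining these with \cref{evolution eq of higher derivative of second fundamental form prop} gives
\begin{equation*}
    (\n_t - \D) u_\a = \sum_{i+j+l=m} \n^i h * \n^j h * \l \n^l h, \nu_\a\r + \n^m h * h * \l \p F, \nu_\a\r + (\text{cross terms}).
\end{equation*}
Contracting with $u_\a$ produces the two middle groups of terms in the statement.

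The main obstacle is the gradient term. I need $-2|\n u_\a|^2$, together with $-2|\n^{m+1} h|^2$, to reassemble into $-2\|\n^{m+1}h\|^2$. The difference $|\n u_\a|^2 - |\l\n^{m+1}h,\nu_\a\r|^2$ and the cross terms from $\D u_\a$ contain $\n^{m+1}h$ paired against $\n^m h * h * \l\p F,\nu_\a\r$. These must either cancel between the $\D$ term and the gradient term, or be absorbed into the displayed coarse products after the first-order terms are matched. I would first check whether the $\n^{m+1}$-terms cancel exactly, as they do in the Euclidean-graph computations when one works with the normal projection $\overset{\perp}{\pi}(\nu_\a)$ throughout. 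Failing exact cancellation, I would note that the statement records only the structure of the terms as coarse products, so any leftover term can be written in the admissible form $\n^i h * \n^j h * \l \n^l h, e\r * \l \n^m h, e\r$ with $i+j+l=m$, or as $\n^m h * h*\l\p F,e\r*\l\n^m h,e\r$, after using Codazzi \cref{codazzi equation} and the Simons identity \cref{simon's identity eq} to commute derivatives.
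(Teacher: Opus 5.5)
\textbf{Verdict: there is a genuine gap, at the step you yourself flag as the main obstacle, and it cannot be closed because the displayed identity is not correct as written.}

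Your route is the paper's route. You split $\|\nabla^m h\|^2=|\nabla^m h|^2+2\sum_\alpha\langle\nabla^m h,\nu_\alpha\rangle^2$, quote Baker's evolution equation for the first part, and use constancy of $\nu_\alpha$ together with Gauss--Weingarten for the second. The paper reaches the displayed formula only by writing $2\langle\nabla^m h,e\rangle\langle\Delta\nabla^m h,e\rangle=\Delta\langle\nabla^m h,e\rangle^2-2\sum_i\langle\nabla_i\nabla^m h,e\rangle^2$. That step silently commutes $\nabla$ with the pairing against $e$. You rightly refuse to do this, but neither of your proposed ways out works.

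\textbf{The cross terms do not cancel.} Write $\xi=\nabla^m h$, $u=\langle\xi,e\rangle=\langle\xi,e^{\perp}\rangle$ and $X=e^{\top}$. Then $\nabla^{\perp}_i e^{\perp}=-h(\partial_i,X)$, $\nabla_i X=\mathcal{W}(\partial_i,e^{\perp})$ and $\nabla^{\perp}_t e^{\perp}=-\nabla^{\perp}_X H$. Using Codazzi,
\begin{gather*}
\nabla_i u=\langle\nabla_i\xi,e\rangle-\langle\xi,h(\partial_i,X)\rangle,\\
(\nabla_t-\Delta)u=\langle(\nabla_t-\Delta)\xi,e\rangle+2\langle\nabla_i\xi,h(\partial_i,X)\rangle+\langle\xi,h_{pq}\rangle\langle h_{pq},e\rangle .
\end{gather*}
Here the $\nabla_X H$ terms coming from $\partial_t$ and from $\Delta$ cancel. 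Hence $2\langle(\nabla_t-\Delta)u,u\rangle-2|\nabla u|^2$ contains, besides $-2|\langle\nabla\xi,e\rangle|^2$ and products of the displayed type, the terms
\begin{equation*}
4\big\langle\langle\nabla_i\xi,h(\partial_i,X)\rangle,\langle\xi,e\rangle\big\rangle+4\big\langle\langle\nabla_i\xi,e\rangle,\langle\xi,h(\partial_i,X)\rangle\big\rangle-2\sum_i\big|\langle\xi,h(\partial_i,X)\rangle\big|^2 ,
\end{equation*}
where the outer brackets are full $g$-contractions over the free indices of $\xi$.

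The two cross terms add rather than cancel. Passing to $\overset{\perp}{\pi}(\nu_\alpha)$ changes nothing, since $u$ already equals $\langle\xi,e^{\perp}\rangle$. Take $m=0$ and a convex hypersurface of a spacelike $(n+1)$-plane $P'$, with normal $\nu$, scalar second fundamental form $A$, and $e$ constant timelike and not orthogonal to $P'$. Then the cross terms sum to $4\langle\nu,e\rangle A(X,\nabla|A|^2)$. This is exactly the cross term $-2\nabla|A|^2\cdot\nabla\langle\nu,e\rangle^2$ in $(\partial_t-\Delta)(|A|^2\langle\nu,e\rangle^2)$, and it is generically nonzero, e.g.\ on a non-round ellipsoid. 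The computation is pointwise and uses only that $e$ is constant. For non-planar $\Sigma$ the paper's $\nu_\alpha$ have $\nu_\alpha^{\top}\neq0$ in general away from the base point, so the same terms appear.

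\textbf{The cross terms cannot be absorbed.} They are linear in $\nabla^{m+1}h$, while every displayed product has all factors of order at most $m$. Codazzi and the Simons identity only permute or commute derivatives at the cost of lower-order terms; they never remove a factor $\nabla^{m+1}h$. So your final paragraph does not close the proof.

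\textbf{What you can prove instead.} The following inequality holds and suffices for the later estimates. For the Euclidean norm one has $|\langle a,b\rangle|\le\|a\|\,\|b\|$, and $|X|$ is bounded by Step 1 of the proof of \cref{mean curvature does not tilt too much prop}. Young's inequality then bounds the cross terms, summed over $\alpha$ with weight $2$, by $\|\nabla^{m+1}h\|^2+C\|h\|^2\|\nabla^m h\|^2$. The last term in the display above is nonpositive and can be dropped. This gives
\begin{equation*}
(\partial_t-\Delta)\|\nabla^m h\|^2\le-\|\nabla^{m+1}h\|^2+C\|h\|^2\|\nabla^m h\|^2+\big(\text{products of the displayed lower-order type}\big).
\end{equation*}
That is the statement your argument should aim for.
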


\begin{proof}
    We start by computing the time derivative of the extra term involving $\l \n\vphantom{a}^m h, e \r$ (here $e$ denotes any $e_\a$). By \cref{evolution eq of higher derivative of second fundamental form prop} we have 
    \begin{equation*}
        \begin{split}
            \p_t \l \n^m h , e \r &= \p_t \l \iota \n^m h, e \r \\
            & = \l \Bar{\n}_t \iota \n^m h , e \r \\
            & = \l \n_t \n^m h, e \r - \l \W(\p_t, \n^m h), e \r\\
            & = \l \D \n^m h , e \r + \sum_{i+j+l = m} \n^i h * \n^j h * \l \n^l h, e \r + \n^m h * h * \l \p F, e \r, 
        \end{split}
    \end{equation*}
    where $\l \p F, e \r$ denotes terms of the form $\l \p_j F, e \r$. Note that at the last equality, we have applied \cref{weingarten local eq}: 
    \begin{equation*}
        \W(\p_t, \n^m h) =  (\n^m h)^i \cdot \tensor{h}{_i^j}\p_j F = \n^m h * h * \p F. 
    \end{equation*}
    Thus, 
    \begin{equation*}
        \begin{split}
            \p_t \l \n^m h , e \r^2 &= 2 \l \n^m h , e \r\p_t \l \n^m h , e \r \\
            & = 2 \l \n^m h , e \r \l \D \n^m h , e \r + \n^m h * h * \l \p F, e \r * \l \n^m h , e \r\\
            & \qquad + \sum_{i+j+l = m} \n^i h * \n^j h * \l \n^l h, e \r * \l \n^m h , e \r \\
            & = \D \l \n^m h , e \r^2 - 2\sum_{i = 1}^n \l \n_i \n^m h , e \r^2 + \n^m h * h * \l \p F, e \r * \l \n^m h , e \r\\
            & \qquad + \sum_{i+j+l = m} \n^i h * \n^j h * \l \n^l h, e \r * \l \n^m h , e \r. 
        \end{split}
    \end{equation*}

    The result then follows by \cref{evolution of norm of higher derivative of second fundamental form coro}. 
\end{proof}

We now show that $\|h\|$ is bounded for a short time. 

\begin{lemma}\label{h^2 stays bounded for some time lemma}
        Suppose $F: \S^n \times [0, T) \to \R^{n+1,k}$ is an embedded, compact and spacelike-convex solution to MCF, then $\|h\|^2 \leq C$ on a time interval $[0, \tau]$ for some $C > 0$ and $0 < \tau < T$. 
    \end{lemma}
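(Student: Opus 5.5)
The plan is to apply the maximum principle to $\|h\|^2$ through the evolution equation of \cref{evolution of euclidean norm of derivatives of h prop} with $m=0$, and compare with an ODE. We fix the timelike frame $\{\nu_\a\}$ of \cref{euclidean norm for spacelike-convex submanifolds with bounded second fundamental form}, chosen at time $0$ and thereafter held constant in $\R^{n+1,k}$. By \cref{euclidean norm for spacelike-convex submanifolds with bounded second fundamental form} and compactness, $\|h\|^2\le C_0$ at $t=0$, and by step 1 of the proof of \cref{mean curvature does not tilt too much prop} we also have $|\l\p_jF,\nu_\a\r|\le C_0'$ at $t=0$.

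Every term on the right-hand side, apart from $\D\|h\|^2-2\|\n h\|^2$, is a coarse product of $h$, $\l h,\nu_\a\r$ and $\l\p F,\nu_\a\r$. Here $|h|$ is bounded by $\|h\|$. Any contraction with the induced metric $g$ is controlled by $\|\cdot\|$ once $g$ is uniformly equivalent to $\bar g$ restricted to $T\S$ with the Euclidean norm. That equivalence is exactly the bound on $\l\p F,\nu_\a\r$. So it suffices to control the quantity
$$
\Phi\coloneqq \max_\S \max_{j,\a}\l \p_j F,\nu_\a\r^2
$$
measured in a $g$-orthonormal frame. Its evolution, using \cref{weingarten local eq} and \cref{evolution of g}, is
$$
\p_t\l\p_jF,\nu_\a\r=\l \p_j H,\nu_\a\r = \l\n_jH,\nu_\a\r - h*h*\l\p F,\nu_\a\r,
$$
which involves $\n h$. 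Rather than estimating it pointwise, we will use the heat-type equation satisfied by the height function $\l F,\nu_\a\r$, namely $(\p_t-\D)\l F,\nu_\a\r=0$. Combined with Bochner's formula, this gives a maximum principle for $|\n\l F,\nu_\a\r|^2=\sum_j\l\p_jF,\nu_\a\r^2$ of the form
$$
(\p_t-\D)|\n\l F,\nu_\a\r|^2 \le -2|\n^2\l F,\nu_\a\r|^2 + C\,\|h\|^2\,|\n\l F,\nu_\a\r|^2,
$$
where the Ricci term is controlled through \cref{gauss equation} and the $\p_tg$ term through \cref{evolution of g}. Alternatively, short-time smoothness and continuity alone give a bound $\Phi\le 2C_0'$ for some short time, and we may simply use that.

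Setting $u=\|h\|^2$ and using these bounds, the evolution equation yields $(\p_t-\D)u\le C(1+\Phi)^{2}u^2$. Note that the terms $\n^ih*\n^jh*\n^lh*\n^mh$ with $m=0$ are all cubic in $h$ contracted against $h$, i.e. quartic, and the $\l\cdot,e\r$ factors are bounded by $\|h\|$. We then run a joint continuity argument. Let $\tau$ be the supremum of times on which $u\le 2C_0$ and $\Phi\le 2C_0'+1$ both hold. On $[0,\tau]$, ODE comparison gives $u\le C_0/(1-C C_0 t)$, and the $\Phi$ inequality gives $\Phi\le (C_0'+\text{small})e^{Ct}$. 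Hence both bounds improve strictly for $t\le\tau_0=\tau_0(C_0,C_0',n)$, and $\tau\ge\tau_0$. Taking $\tau<\min(\tau_0,T)$ proves the lemma.

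The main obstacle is the second step, keeping the mixed terms $\l\p F,e\r$ under control: the frame $\nu_\a$ is fixed, but the tangent planes tilt under the flow. I would first try the Bochner approach for the height functions. If the curvature terms there are awkward, I would fall back on the plain short-time continuity argument, which is valid because the statement only asks for some $\tau>0$.
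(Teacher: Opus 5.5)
Your argument is correct. Its skeleton matches the paper's: the $m=0$ case of \cref{evolution of euclidean norm of derivatives of h prop}, a bound on the mixed terms, and ODE comparison at a spatial maximum. The genuine difference is how you control the tilt $\langle\p F,\nu_\a\rangle$ for $t>0$.

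The paper simply quotes Step~1 of the proof of \cref{mean curvature does not tilt too much prop}. That is a static bound derived from noncollapsing and pinching, and it is stated for timelike directions orthogonal to the \emph{current} $T_x\S\oplus\R\widehat{H}_x$. Applying it to a frame frozen at $t=0$ tacitly requires a further comparison, which you make explicit.

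Your Bochner route works and is in fact exact. For $f_\a=\langle F,\nu_\a\rangle$ one has $(\p_t-\D)f_\a=0$ and $\n^2 f_\a=\langle h,\nu_\a\rangle$. The $\langle H,h\rangle$ contributions of $\p_t g^{ij}$ and of $\mathrm{Ric}$ (via \cref{gauss equation}) cancel, so
\[
(\p_t-\D)|\n f_\a|^2=-2|\n^2 f_\a|^2+2\sum_k|h(\n f_\a,e_k)|^2\le-2|\n^2 f_\a|^2+2\|h\|^2|\n f_\a|^2,
\]
where the last step uses $|w|^2\le\|w\|^2$ for every vector $w$.

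What this buys is a self-contained argument in which $\tau$ and $C$ depend only on $C_0,C_0',n,k$. The same inequality also shows that a bound on $\|h\|$ over $[0,T)$ controls the tilt over $[0,T)$, which is what the long-time existence argument actually needs. The paper's geometric estimate is instead meant to give a scale-invariant tilt bound requiring no a priori control of $\|h\|$. Yours grows like $\exp(2\int_0^t\max\|h\|^2)$, so it does not provide that, and the scale-invariant version is what the later rescaling arguments want.

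Two small points.

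First, with \cref{evolution of euclidean norm of derivatives of h prop} as stated, the mixed term $h*h*\langle\p F,e\rangle*\langle h,e\rangle$ is only $\lesssim\Phi^{1/2}u^{3/2}$. This is not $\lesssim u^2$ where $u<1$, but you can fix it with $u^{3/2}\le\tfrac12(u+u^2)$. Incidentally, that term is dimensionally off. The tangential part of $\p_t$ of a normal field $\xi$ is $-g^{ij}\langle\xi,\overset{\perp}{\n}_jH\rangle\p_iF$. A correct derivation therefore produces $\n h*h*\langle\p F,e\rangle*\langle h,e\rangle$, and after absorbing it into $-2\|\n h\|^2$ you get exactly your form $C(1+\Phi)u^2$.

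Second, your fallback of ``continuity alone'' bounds $u$ just as well as $\Phi$, because $\|h\|^2$ is continuous on the compact set $\S\times[0,\tau]$ for any $\tau<T$. The lemma as literally stated is therefore immediate. The real content of either proof is the dependence of $\tau$ and $C$ on the initial data, which your Bochner version delivers and the pure continuity fallback does not.
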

    
    \begin{proof}
        At $t = 0$ we have $\|h\|^2 \leq C$ for some $C > 0$ by \cref{euclidean norm for spacelike-convex submanifolds with bounded second fundamental form}. By \cref{evolution of euclidean norm of derivatives of h prop}, 
        \begin{equation*}
            \begin{split}
                \p_t \|h\|^2 = \D \|h\|^2 - 2 \| \n h \|^2 + \sum_{q = 1}^k  h * h * \l \p F, e_q \r * \l  h, e_q \r  + h * h * h * h. 
            \end{split}
        \end{equation*}
        We will find estimates for the individual terms. Note that $|\l \p F, e_q \r|$ is bounded by step 1 of the proof of \cref{mean curvature does not tilt too much prop}, and $|\l h, e_q \r| \leq \|h\|$, hence 
        \begin{equation*}
            \p_t \|h\|^2 \leq \D \|h\|^2 - 2 \|\n h \|^2 + c_1 \|h\|^3 + c_2 \|h\|^4. 
        \end{equation*}
        Suppose that $\|h\| \gg 1$ but bounded (so that $\|h\|^4$ dominates) at some $0 < \Tilde{t} < T$, then 
        \begin{equation*}
            \p_t \|h\|^2 \leq \D \|h\|^2 + c_3\|h\|^4. 
        \end{equation*}
        By ODE comparison principle, $\|h\|^2$ stays bounded for at least some short time after $\Tilde{t}$.     
    \end{proof}

Next, we prove some higher derivative estimates. 

    \begin{proposition}\label{higher derivative stays bounded for some short time prop}
        Suppose $F: \S^n \times [0, T) \to \R^{n+1,k}$ is an embedded, compact and spacelike-convex solution to MCF, then there exists some $\tau \in (0, T)$ such that $\|\n\vphantom{a}^m h\|^2 \leq C_m (1 + 1/t^m)$ for all $t\in (0, \tau]$, where $C_m$ is a constant depending only on $m, n, k$. 
    \end{proposition}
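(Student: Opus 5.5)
The plan is a Shi-type / Bernstein interior-in-time derivative estimate, run by induction on $m$ using the Euclidean-norm evolution equations of \cref{evolution of euclidean norm of derivatives of h prop}. First fix $\tau$ from \cref{h^2 stays bounded for some time lemma}, so that $\|h\|^2 \leq C_0$ on $[0,\tau]$. Next record the other ingredients that will be treated as bounded on $[0,\tau]$. Step 1 of the proof of \cref{mean curvature does not tilt too much prop} gives $|\l \p F, e_\a\r|\leq C$; this requires noncollapsing and pinching to persist, which \cref{acausal spacelike-mean-convex submanifold preserves noncollapsing} and \cref{pinching preserved prop} provide. We also use $|\l \n^l h, e_\a\r| \leq \|\n^l h\|$. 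With these in hand, every coarse product in \cref{evolution of euclidean norm of derivatives of h prop} is bounded by products of Euclidean norms $\|\n^i h\|$. In particular the $m$-th equation takes the form
\begin{equation*}
\p_t \|\n^m h\|^2 \leq \D\|\n^m h\|^2 - 2\|\n^{m+1}h\|^2 + C\sum_{i+j+l=m}\|\n^i h\|\,\|\n^j h\|\,\|\n^l h\|\,\|\n^m h\| + C\|\n^m h\|^2 .
\end{equation*}
Since the paper's norms mix $|\cdot|$ and $\|\cdot\|$, part of this step is checking that the Laplacian and the good gradient term really appear with $\|\cdot\|$. This follows from the computation in that proposition, where $-2\sum_i\l \n_i\n^m h, e\r^2$ is added to $-2|\n^{m+1}h|^2$ with the factor $2$ of the Wick rotation.

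For $m=1$, consider $G = t\|\n h\|^2 + A\|h\|^2$. Using $\|h\|^2\leq C_0$, the reaction terms are bounded by $C\|\n h\|^2 + C$. The term $-2A\|\n h\|^2$ coming from the evolution of $\|h\|^2$ absorbs both $\|\n h\|^2$ and $t\cdot C\|\n h\|^2$ once $A$ is large and $t\leq\tau\le 1$. This gives $(\p_t-\D)G \leq C$, and the maximum principle then yields $G\leq C(1+t)$, i.e. $\|\n h\|^2\leq C_1/t$.

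The inductive step assumes $\|\n^i h\|^2\leq C_i(1+t^{-i})$ for all $i<m$. Take
\begin{equation*}
G_m = t^m\|\n^m h\|^2 + A_m\, t^{m-1}\|\n^{m-1}h\|^2 .
\end{equation*}
The reaction terms for $\|\n^m h\|^2$ contain $\|\n^m h\|$ at most quadratically, multiplied by $\|h\|^2$. Terms with lower-order factors are controlled by Young's inequality and the induction hypothesis, giving contributions of order $t^{-m}$ times bounded quantities. After multiplying by $t^m$, and absorbing the term $m t^{m-1}\|\n^m h\|^2$ from the time derivative of $t^m$ into $-2A_m t^{m-1}\|\n^m h\|^2$, we get $(\p_t-\D)G_m\leq C$. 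Hence $G_m\leq C$ on $(0,\tau]$, which is the claimed bound $C_m(1+1/t^m)$.

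I expect the main obstacle to be the extra $e$-terms, especially $\n^m h * h*\l\p F,e\r*\l\n^m h,e\r$ and the case $l=m$ in $\n^i h*\n^j h*\l\n^l h,e\r$. These only close because of a uniform bound on $\l \p F, e_\a\r$ along the flow. That bound comes from the slab and noncollapsing argument in Step 1 of \cref{mean curvature does not tilt too much prop} applied at each time with the basis $\{\nu_\a\}$ frozen at $t=0$. One must therefore check that the slab width and the curvature bounds used there stay uniform on $[0,\tau]$, which should follow from $\|h\|^2\leq C_0$ together with preserved noncollapsing. The constants will then depend on this data as well as on $m,n,k$.
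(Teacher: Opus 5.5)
The base case is fine. Your linear quantity $t\|\n h\|^2+A\|h\|^2$ is a valid, and somewhat cleaner, substitute for the paper's quotient $\|\n h\|^2/(M-\|h\|^2)$, which the paper handles by ODE comparison against $1-c_3G^2$; the paper defers the inductive step to Baker.

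The gap is in your inductive step: the function $G_m=t^m\|\n^m h\|^2+A_m t^{m-1}\|\n^{m-1}h\|^2$ does not satisfy $(\p_t-\D)G_m\le C$. You account for the term $m t^{m-1}\|\n^m h\|^2$ coming from differentiating $t^m$. But for $m\ge 2$ the lower summand is also time-weighted, and differentiating $t^{m-1}$ produces
\begin{equation*}
A_m(m-1)\,t^{m-2}\|\n^{m-1}h\|^2 .
\end{equation*}
Nothing in $(\p_t-\D)G_m$ absorbs this term. The only good term from the evolution of $\|\n^{m-1}h\|^2$ is $-2A_m t^{m-1}\|\n^m h\|^2$, which controls $\n^m h$, not $\n^{m-1}h$. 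The induction hypothesis bounds the bad term only by $C/t$; already for $m=2$ it is $A_2\|\n h\|^2\le A_2C_1/t$. So you get $(\p_t-\D)G_m\le C+C/t$, and since $1/t$ is not integrable at $t=0$, the maximum principle gives no bound on $G_m$.

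The repair is standard, and there are two versions. The first keeps the whole chain of lower-order terms,
\begin{equation*}
G_m=\sum_{i=0}^{m}\beta_i\,t^{i}\|\n^i h\|^2,\qquad \beta_m=1,
\end{equation*}
with each $\beta_{i-1}$ chosen large relative to $\beta_i$: at least $i\beta_i$, plus whatever is needed for the reaction terms. Then the weight derivative $i\beta_i t^{i-1}\|\n^i h\|^2$ at level $i$ is absorbed by the term $-2\beta_{i-1}t^{i-1}\|\n^i h\|^2$ from level $i-1$. The chain ends at the unweighted $\beta_0\|h\|^2$, whose coefficient has no time derivative.

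The second version restarts at $t_0/2$. On $[t_0/2,t_0]$ the induction hypothesis gives $\|\n^i h\|^2\le Ct_0^{-i}$ for $i<m$. Use $G=(t-t_0/2)\|\n^m h\|^2+A\|\n^{m-1}h\|^2$, whose lower term is unweighted. This gives $(\p_t-\D)G\le Ct_0^{1-m}$ and $G(t_0/2)\le Ct_0^{1-m}$, hence $\|\n^m h\|^2(t_0)\le Ct_0^{-m}$.

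In either version, your treatment of the reaction terms carries over unchanged. That treatment uses Young's inequality, the induction hypothesis, and the uniform bound on $\l\p F,e_\a\r$.
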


    \begin{proof}
        The proof is by induction on $m$. The inductive step follows similarly as of \cite[Proposition 4.8]{baker2011mean}. We prove the base case for $m = 1$. By \cref{h^2 stays bounded for some time lemma} we know that $\|h\|^2$ is bounded on $[0, \tau]$. Choose $M > \max_{(x,t)\in \S\times [0, \tau]} \|h\|^2$, consider the quantity $G \coloneqq \frac{\|\n h\|^2}{M - \|h\|^2}$, its Laplacian is given by 
        \begin{equation*}
            \D G = \frac{\D \|\n h\|^2}{M - \|h\|^2} + 2\sum_{i = 1}^n \frac{ \p_i \|h\|^2}{M - \|h\|^2} \n_i G + \frac{\|\n  h\|^2 \D \|h\|^2}{(M - \|h\|^2)^2}
        \end{equation*}

        Note that $\|\n h\|$ is bounded since $\|h\|$ is bounded and $F$ is a smooth solution. Suppose $\|\n h\| \gg 1$ (so that high power terms of $\|\n h\|$ dominates). We denote constants depending only on the initial bound for $\|h\|^2$ and $n, k$ by $c_j$, and carry out this notation for the rest of the proof of the base case. 
        
        By \cref{evolution of euclidean norm of derivatives of h prop}, at a maximum point of $G$ (the gradient term $\n_i G$ vanishes) we have
        \begin{equation*}
            \begin{split}
                \frac{\p G}{\p t} & = \frac{1}{M - \|h\|^2}\p_t \|\n h\|^2 + \frac{\|\n h\|^2 \p_t \|h\|^2}{(M - \|h\|^2)^2}\\
                & \leq \frac{1}{M - \|h\|^2}\left( \D \| \n h\|^2 - 2 \| \n^2 h\|^2 + c_1 \| \n h\|^2 \right) \\
                & \qquad + \frac{\|\n h\|^2}{(M - \|h\|^2)^2}\left( \D \|h\|^2 - c_2\| \n h\|^2\right)\\
                & \leq \frac{\D \| \n h\|^2}{M - \|h\|^2} + \frac{\|\n h\|^2 \D \|h\|^2}{(M - \|h\|^2)^2} - c_3 G^2 \\
                &\leq \D G - c_3G^2, 
            \end{split}
        \end{equation*}
        where we have absorbed $\|h\|$ into constants $c_j$ since $\|h\|^2$ is bounded for a short time. 

        For our purpose we add an extra factor of $1$ on the right hand side: 
        \begin{equation*}
            (\p_t - \D) G \leq 1 - c_3 G^2, 
        \end{equation*}
        then the result follows by ODE comparison. 
\end{proof}

The proof of \cref{long time existence thm} is the same as in \cite[Proof of Theorem 4.5]{baker2011mean} with the norm replaced by our Euclidean norm.

\section{Convergence of subsequence after rescaling}\label{subsequence convergence section}

Before proceeding to Huisken's theorem, we first show that a subsequence of suitably rescaled flow converges smoothly to a sphere. The proof involves dealing with degenerate parabolic operators, which requires the Bony-Hill maximum principle \cite{Hill70, bony69} with and without boundary terms. 

\begin{lemma}[Bony-Hill maximum principle]\label{bony max principle thm}
    Let $(M^n, g)$ be a compact Riemannian manifold and $X_1, \cdots, X_m \in \mathfrak{X}(M)$. Assume $f\in C^\infty(M)$ is nonnegative and satisfies 
    \begin{equation*}
        \p_t f \geq a^{ij} \n_{X_i}\n_{X_j} f + \n_b f + cf
    \end{equation*}
    for some positive definite $a^{ij}$, vector field $b\in \Span\{X_1, \cdots, X_m\}$ and $c \leq 0$. Let $F = \{x\in M: f(x) = 0\}$ denote the zero set of $f$, suppose that $\g:[0,1] \to M$ is a smooth curve with $\g(0)\in F$ and $\g' = f^iX_i$ for some $f^i\in C^\infty([0,1], \R)$, then $\g(s) \in F$ for all $s\in [0,1]$. 
\end{lemma}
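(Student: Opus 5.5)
The plan is to follow Bony's original argument. It has two parts. First, a Hopf-type barrier argument shows that every vector field $X_i$ is tangent to the zero set $F$ in a weak, "exterior normal" sense. Second, a purely geometric invariance lemma shows that any closed set with this tangency property is invariant under integral curves of vector fields in $\Span\{X_1,\dots,X_m\}$. Throughout I work in a coordinate chart (or with the Riemannian distance on the compact manifold $M$), and I first handle $f$ as a function on $M$. Where $f$ also depends on $t$, I apply the same argument on the parabolic cylinder, with a barrier that also decays in $t$; see the last paragraph.

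\textbf{Step 1 (exterior normals).} Say that $\nu\neq 0$ is an exterior normal to $F$ at $y\in F$ if there is $x_0$ with $y-x_0=r\nu/|\nu|$ and the open ball $B(x_0,r)$ disjoint from $F$. I claim $\l X_i(y),\nu\r=0$ for all $i$.

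Suppose instead that $\l X_j(y),\nu\r\neq 0$ for some $j$. Shrink the ball so that $y$ is the only point of $\overline{B(x_0,r)}\cap F$. On a small annular region $A=B(y,\rho)\cap B(x_0,r)$ consider the barrier
\begin{equation*}
    w(x)=e^{-\alpha|x-x_0|^2}-e^{-\alpha r^2}.
\end{equation*}
Computing $a^{ij}\n_{X_i}\n_{X_j}w$, the leading term is
\begin{equation*}
    4\alpha^2 e^{-\alpha|x-x_0|^2}\,a^{ij}\l X_i,x-x_0\r\l X_j,x-x_0\r.
\end{equation*}
This is bounded below by $c\alpha^2 e^{-\alpha|x-x_0|^2}$ near $y$, because $a^{ij}$ is positive definite and the vector $(\l X_i,x-x_0\r)_i$ is nonzero near $y$. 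The remaining terms, including those from $\n_b w$ and $cw$, are only $O(\alpha)e^{-\alpha|x-x_0|^2}$. So for $\alpha$ large, $w$ is a strict subsolution on $A$: $a^{ij}\n_{X_i}\n_{X_j}w+\n_b w+cw>0$.

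On $\partial A\cap \partial B(x_0,r)$ we have $w=0\le f$. On the inner boundary $\partial B(y,\rho)\cap B(x_0,r)$, which is compact and disjoint from $F$, we have $f\ge\mu>0$. So $f-\e w\ge 0$ on $\partial A$ for small $\e>0$. The comparison (weak maximum) principle for the operator, which uses $c\le 0$, then gives $f\ge\e w$ on $A$. Since $\n w(y)$ is a nonzero multiple of $\nu$, this forces $\partial_\nu f(y)\neq 0$. But $y$ is an interior minimum of $f\ge0$, so $\n f(y)=0$, a contradiction.

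\textbf{Step 2 (invariance).} Let $Y(s)=f^i(s)X_i$, a bounded family of smooth vector fields, and set $\d(s)=\mathrm{dist}(\g(s),F)$. For $\g(s)$ close to $F$, pick a nearest point $y\in F$. Then $\g(s)-y$ is an exterior normal at $y$, with the ball centred at $\g(s)$. By Step 1 it is orthogonal to $Y(y)$. Using the Lipschitz bound $|Y(\g(s))-Y(y)|\le L\d(s)$ we obtain the upper Dini derivative estimate $D^+\d(s)\le L\d(s)$. Since $\d(0)=0$, Gronwall gives $\d\equiv0$ on $[0,1]$. On a manifold I would do this in finitely many charts covering $\g([0,1])$, or use that the squared distance is smooth near the diagonal.

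\textbf{Expected obstacle.} The delicate point is Step 1: one must check that the degenerate operator still gives a strict subsolution. This works only because the direction $\nu$ is non-characteristic for some $X_j$, so positivity of $a^{ij}$ applied to $(\l X_i,\nu\r)_i$ is exactly what makes the $\alpha^2$ term dominate. It also requires the first-order term $b$ to lie in $\Span\{X_i\}$; otherwise $\n_b w$ could be of order $\alpha$ with the wrong sign and not controlled by the $X$-directions. For the time-dependent version I would replace $w$ by $e^{-\alpha(|x-x_0|^2+\kappa(t-t_0)^2)}-e^{-\alpha r^2}$ on a backward space-time cylinder. Since $\partial_t f(y,t_0)\le 0$ at an interior space-time minimum, the same contradiction follows.
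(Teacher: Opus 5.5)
The paper gives no proof of this lemma; it quotes the result from Bony and Hill. Your two-step argument is essentially Bony's proof and is correct: a Hopf-type barrier shows every exterior normal to the zero set is characteristic for all the $X_i$, and a Gronwall estimate for $\mathrm{dist}(\gamma(s),F)$ then gives invariance. One correction to your closing remark: the hypothesis $b\in\Span\{X_1,\dots,X_m\}$ is never used, since $\nabla_b w$ is $O(\alpha)e^{-\alpha|x-x_0|^2}$ whatever the direction of $b$ and is absorbed by the $\alpha^2$ term exactly as in your Step 1 computation (in Bony's theorem the drift is arbitrary and only adds one-sided propagation along it).
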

\begin{lemma}[Bony-Hill maximum principle with boundary]\label{bony max principle with boundary thm}
    Let $(M^n, g)$ be a compact Riemannian manifold with boundary and $X_1, \cdots, X_m \in \mathfrak{X}(M)$. Assume $f\in C^\infty(M)$ is nonnegative and satisfies 
    \begin{equation*}
        \p_t f \geq a^{ij} \n_{X_i}\n_{X_j} f + \n_b f
    \end{equation*}
    for some positive definite $a^{ij}$, vector field $b\in \Span\{X_1, \cdots, X_m\}$. Suppose that $\g:[0,1] \to M$ is a smooth curve with $\g(1)\in \p M$ and $\g' = f^iX_i$ for some $f^i\in C^\infty([0,1], \R)$, then $\n_{\g'(1)}f < 0$. 
\end{lemma}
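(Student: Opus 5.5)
The plan is to adapt the classical Hopf boundary lemma to the degenerate operator $\mathcal{L} f \coloneqq a^{ij}\n_{X_i}\n_{X_j} f + \n_b f$. The only place ellipticity enters a Hopf argument is the barrier computation in the direction normal to the sphere. The hypothesis that $\g$ is a subunit-type curve ($\g' = f^iX_i$) reaching $\p M$ supplies exactly one non-degenerate direction of this kind at $p \coloneqq \g(1)$.

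I read the statement in the intended Hopf form: $f(p,t_0)=0$ at the boundary point, $f>0$ at interior points near $p$ (in the application this comes from \cref{bony max principle thm}, which shows the zero set propagates along such curves, so either $f\equiv 0$ along $\g$ or $f>0$ in the interior), and $\g'(1)$ is transverse to $\p M$, pointing outward. The conclusion $\n_{\g'(1)}f<0$ is then the strict outward decrease at the boundary minimum.

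\textbf{Step 1 (interior ball adapted to $\g$).} Let $V \coloneqq \g'(1) = f^i(1)X_i(p)$, which is nonzero and outward. Choose a point $q$ in the interior, at small distance $r_0$ from $p$ along the geodesic from $p$ in direction $-V$, so that the geodesic ball $B_{r_0}(q)$ lies in $M$ and touches $\p M$ only at $p$; to arrange this I will tilt and shrink as in the classical lemma, e.g.\ use an interior sphere condition along a nearby point of $\g$. Set $\rho \coloneqq d(\cdot,q)^2$. Then $\n\rho(p)$ is a positive multiple of $V$, so
\begin{equation*}
    \sum_i f^i(1)\,\n_{X_i}\rho(p) = \l \n\rho, V\r > 0.
\end{equation*}
Hence the vector $(\n_{X_i}\rho)_i$ is nonzero at $p$, and by continuity $a^{ij}\n_{X_i}\rho\,\n_{X_j}\rho \ge c_0>0$ on a small annulus $\Omega \coloneqq B_{r_0}(q)\setminus \overline{B_{r_0/2}(q)}$ intersected with a neighbourhood of $p$.

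\textbf{Step 2 (barrier).} Let $w \coloneqq \e\,(e^{-A\rho}-e^{-Ar_0^2})$. Then
\begin{equation*}
    \mathcal{L}w = \e e^{-A\rho}\bigl(A^2 a^{ij}\n_{X_i}\rho\n_{X_j}\rho - A\,a^{ij}\n_{X_i}\n_{X_j}\rho - A\n_b\rho\bigr),
\end{equation*}
which is positive on $\Omega$ once $A$ is large, because the first term dominates by Step 1. Since $\p_t w=0$, we get $\p_t w < \mathcal{L}w$. On the inner sphere and on the lateral part of $\p\Omega$ away from $p$, $f$ has a positive lower bound on the relevant compact time interval $[t_0-\eta,t_0]$, so for $\e$ small $f\ge w$ there and at the initial time $t_0-\eta$. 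On the outer sphere $w=0\le f$.

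\textbf{Step 3 (comparison and conclusion).} Apply the weak maximum principle to $f-w$ on $\Omega\times[t_0-\eta,t_0]$. At a first negative interior minimum we would have $\p_t(f-w)\le 0$, the $X$-gradient vanishing, and $a^{ij}\n_{X_i}\n_{X_j}(f-w)\ge 0$ (which holds for degenerate positive semidefinite coefficients). Together with the strict inequality $\p_t w<\mathcal{L}w$ this is a contradiction. Hence $f\ge w$ on $\Omega$. Since $f(p)=w(p)=0$, differentiating along $V$, which leaves $B_{r_0}(q)$ transversally, gives $\n_V f \le \n_V w = -\e A e^{-Ar_0^2}\l\n\rho,V\r<0$.

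The main obstacle is Step 1: arranging a touching ball whose radial direction at $p$ is genuinely seen by the vector fields $X_i$. For a general degenerate operator no barrier exists, and it is precisely the subunit curve ending transversally at $\p M$ that supplies the one non-degenerate direction needed. I expect to need a careful choice of $q$ (possibly a paraboloid-shaped region along $\g$ instead of a ball) so that the interior positivity from \cref{bony max principle thm} is available on the inner boundary of $\Omega$.
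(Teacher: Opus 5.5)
\textbf{How this compares with the paper.} The paper gives no proof of this lemma; it only cites Bony and Hill. Your barrier argument is the Hopf-type mechanism behind Bony's key lemma, so your route is the expected one.

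\textbf{Your added hypotheses are necessary.} As literally stated the lemma is false: $f\equiv 1$ satisfies every hypothesis, yet $\nabla_{\gamma'(1)}f=0$. So you do need $f(p,t_0)=0$ and positivity of $f$ at interior points near $p$. Even then, if $\gamma'(1)\in T_p\partial M$ we get $\nabla_{\gamma'(1)}f=0$, because $f|_{\partial M}$ is minimised at $p$. So you also need $\gamma'(1)\notin T_p\partial M$; outwardness then follows automatically from $\gamma([0,1))\subset M$.

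\textbf{Step 1 is wrong as written, but harmlessly.} A ball through $p$ centred on the geodesic in direction $-V$ has boundary sphere with tangent space $V^\perp$ at $p$. Unless $V$ is normal to $\partial M$, this sphere crosses $\partial M$, so the ball is not contained in $M$. You do not need this ball. Take the ordinary interior ball, $q=\exp_p(-r_0\nu)$ with $\nu$ the outward unit normal. Then $\nabla\rho(p)=2r_0\nu$ and $\sum_i f^i(1)\nabla_{X_i}\rho(p)=2r_0\langle \nu,V\rangle>0$ by transversality, which is all that Steps 2 and 3 use. So what you call the ``main obstacle'' is not one; the subunit curve is only needed to certify that some $X_i(p)$ is transverse to $\partial M$.

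\textbf{The genuine gap is the bottom of your cylinder in Step 2.} You assert $f\ge w$ on $\Omega\times\{t_0-\eta\}$. But $p\in\overline{\Omega}$, and nothing in your hypotheses rules out $f(p,t)=0$ for $t<t_0$, for example a boundary zero that persists in time. In that case $\inf_{\Omega} f(\cdot,t_0-\eta)=0$, and no choice of $\varepsilon$ helps. Proving $f(\cdot,t_0-\eta)\ge w$ near $p$ would require exactly the linear lower bound you are trying to establish.

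\textbf{The standard repair is to tilt the region in space-time.}
\begin{itemize}
\item Set $\psi=\rho+\kappa(t_0-t)$. Work on $\{\psi<r_0^2\}$ intersected with your annulus and neighbourhood of $p$, and use $w=\varepsilon\bigl(e^{-A\psi}-e^{-Ar_0^2}\bigr)$.
\item The new term $\partial_t w=\varepsilon A\kappa e^{-A\psi}$ is absorbed by $\varepsilon A^2e^{-A\psi}a^{ij}\nabla_{X_i}\rho\,\nabla_{X_j}\rho$ once $A$ is large.
\item On the tilted face $w=0\le f$.
\item The closure of the region meets $\partial M\times[t_0-\eta,t_0]$ only at $(p,t_0)$. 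So the bottom, the inner sphere and the lateral part are compact subsets of $\mathrm{int}(M)\times[t_0-\eta,t_0]$, on which $f$ has a positive lower bound. This is where you need interior positivity for all $t\in[t_0-\eta,t_0]$, not just at $t_0$.
\item Since $(\gamma(s),t_0)$ lies in the region for $s<1$ close to $1$, your Step 3 then gives $\nabla_V f\le \nabla_V w<0$ as before.
\end{itemize}
Alternatively, keep the cylinder but assume $(p,t_0)$ is a first zero, that is, $f>0$ on $\overline{\Omega}\times[t_0-\eta,t_0)$.
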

To make sure the diffusion directions span the whole horizontal space, we need the following result and \cref{spacelike-convex has positive sectional curvature lemma}. 
\begin{lemma}\label{lie bracket of horizontal fields generate tangent bundle for positive sectional curvature manifold lemma}
    Let $(M^n,g)$ be a Riemannian manifold of positive sectional curvature, then the Lie bracket of horizontal lifted tangent vectors generate the unit tangent bundle $SM= \{(x, v) \in TM : g(v,v) = 1\}$.  
\end{lemma}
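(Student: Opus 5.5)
The plan is to compute brackets of horizontal lifts explicitly using the Levi-Civita connection of $(M,g)$. Then I will show that the vertical parts of these brackets already fill the whole vertical tangent space of $SM$, using only the curvature operator. Throughout, I identify $T_{(x,v)}TM \cong T_xM \oplus T_xM$ via the splitting into horizontal and vertical subspaces induced by $\n$. For $X\in\mathfrak{X}(M)$, let $X^h$ denote its horizontal lift; it is tangent to $SM$ because parallel transport preserves $g(v,v)$. The tangent space $T_{(x,v)}SM$ is then the horizontal space $\{X^h\}\cong T_xM$ (dimension $n$) plus the vertical vectors $w^v$ with $w\perp v$ (dimension $n-1$).

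\textbf{Step 1 (bracket formula).} Computing in local coordinates $(x^i, v^i)$, where $X^h = X^i\p_{x^i} - \Gamma_{ij}^k X^i v^j \p_{v^k}$, I will verify the standard identity
\begin{equation*}
    [X^h, Y^h]_{(x,v)} = [X,Y]^h_{(x,v)} + \bigl(R(X,Y)v\bigr)^v,
\end{equation*}
with the sign of the vertical term fixed by the paper's curvature convention $R(X,Y)\xi = \n_Y\n_X\xi - \n_X\n_Y\xi - \n_{[Y,X]}\xi$. The precise sign is irrelevant for spanning. This is a routine but slightly fiddly coordinate computation. The Christoffel derivative terms assemble into $\p_i\Gamma - \p_j\Gamma + \Gamma\Gamma - \Gamma\Gamma$, which is exactly the Riemann tensor applied to $v$.

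\textbf{Step 2 (verticals are tangent to $SM$).} By the skew-symmetry $g(R(X,Y)v,v)=0$, each vertical vector $(R(X,Y)v)^v$ lies in $\{w^v: w\perp v\} \subset T_{(x,v)}SM$. So the brackets stay within $SM$, consistent with Frobenius considerations.

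\textbf{Step 3 (spanning, the key step).} Fix $(x,v)\in SM$, choose $Y$ with $Y_x = v$, and let $X$ range over vector fields with $X_x = w \perp v$. The vertical part of $[X^h,Y^h]$ at $(x,v)$ is $\pm J_v(w)$, where $J_v(w) \coloneqq R(w,v)v$ is the Jacobi operator. By the symmetries of $R$, $J_v$ is a self-adjoint endomorphism of $v^\perp$. Moreover,
\begin{equation*}
    g(J_v w, w) = \pm R(w,v,w,v) = \pm K(w,v)\,|w|^2 \neq 0
\end{equation*}
for $w\neq 0$, since the sectional curvature is positive. So $J_v$ is definite, hence invertible on $v^\perp$. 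Its image is therefore all of $v^\perp$. Subtracting the horizontal component $[X,Y]^h$, which already lies in the span of the horizontal lifts, shows that the horizontal lifts together with their first brackets span $T_{(x,v)}SM$. This gives the Hörmander (bracket generating) condition, and by Chow--Rashevskii any two points of $SM$ can be joined by horizontal curves of the type required in \cref{bony max principle thm}.

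I expect the main obstacle to be Step 1: getting the bracket identity and its sign right in the paper's conventions. Step 3 is the conceptual heart, but once the identity is in place it is immediate from the definiteness of the Jacobi operator, which is exactly where \cref{spacelike-convex has positive sectional curvature lemma} will be used.
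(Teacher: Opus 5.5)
Your proposal is correct and follows the same route as the paper: compute $[X^h,Y^h]=[X,Y]^h+(R(X,Y)v)^v$ from the coordinate form of the horizontal lift, then use positive sectional curvature to show that the vertical parts fill $v^\perp$. Your Jacobi-operator step ($J_v$ is self-adjoint and definite, hence invertible on $v^\perp$) only spells out the linear algebra the paper leaves implicit. The paper computes $\langle [\mathcal{H}(v),\mathcal{H}(u)],\mathcal{V}(u)\rangle = R(u,v,u,v)>0$ and asserts that the brackets therefore reach every vertical direction.
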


\begin{proof}
        Let $\g: [0,1] \to M$ be a smooth curve on $M$ with $\g(0) = x_0$ and $\g'(0) = \frac{\p}{\p x^i}$. Consider the unit tangent bundle $SM$ over $M$, the horizontal lift of $\p_i$ with respect to the Levi--Civita connection is 
        \begin{equation*}
            \mathcal{H}(\p_i)|_{(x,v)} = \frac{\p}{\p x^i} - v^j \tensor{\G}{_{ij}^k} \frac{\p}{\p \Dot{x}^k}, 
        \end{equation*}
        where $\frac{\p}{\p \Dot{x}^k}$ are the local frames for the vertical components in $T(ST\S)$. Thus, the Lie bracket of lifts of basis frames is 
        \begin{equation*}
            [\mathcal{H}(\p_i) - \mathcal{H}(\p_j)]|_{(x,v)} = R(\p_j, \p_i)v.
        \end{equation*}
        
        Hence,  
        \begin{equation*}
            \l [\mathcal{H}(v) - \mathcal{H}(u)], \mathcal{V}(u) \r|_{(x,v)} = R(u,v,u,v) > 0,
        \end{equation*}
        where $\l \cdot, \cdot \r$ denotes the induced inner product on $T(SM)$, $u\in T_x\S$, $u  \perp v $ and $\mathcal{V}$ denotes an arbitrary vertical lift.  This indicates that the Lie bracket of the horizontal lifts reach every direction on the fibre of $T(SM)$.   
\end{proof}

We first prove the subsequence convergence for $ n \geq 2$.  

\begin{proposition}\label{rescaled solns converge to sphere surface case prop}
        Suppose $F: \S^n \times [0, T) \to \R^{n+1,k}$, $n\geq 2$ is a compact spacelike-convex solution to MCF, then there exists a subsequence in a sequence of suitably rescaled flow that converges smoothly to a $n$-sphere in some $\R^{n+1,0}\subset \R^{n+1,k}$. 
    \end{proposition}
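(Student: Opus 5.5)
We follow the Andrews noncollapsing approach to Huisken's theorem, adapted to the present setting: rescale by curvature, extract a limit using the Euclidean norm, then use a strong maximum principle to show the limit is totally umbilic.

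\textbf{Step 1: rescale and extract a limit.} By \cref{long time existence thm} and \cref{euclidean norm for spacelike-convex submanifolds with bounded second fundamental form}, the maximal time $T$ is finite and $\sup\|h\|^2\to\infty$. By \cref{mean curvature comparison at different points prop} and \cref{mean curvature does not tilt too much prop}, $|H|_{\max}$ is comparable to $\|h\|_{\max}$, uniformly in $t$. This uniformity holds because the constants depend only on $\delta_\pm$, which are preserved by \cref{acausal spacelike-mean-convex submanifold preserves noncollapsing}.

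Choose times $t_j\to T$ and rescale:
\begin{equation*}
\tilde F_j(x,\tau)=\lambda_j\big(F(x,t_j+\lambda_j^{-2}\tau)-F(x_j,t_j)\big),\qquad \lambda_j=|H|_{\max}(t_j).
\end{equation*}
This is a Lorentz transformation composed with a dilation. To keep the Euclidean norm meaningful, we also apply the Lorentz boost taking $T_{x_j}\Sigma\oplus\R H_{x_j}$ to a fixed $\R^{n+1,0}$.

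The rescaled flows have the following properties on a fixed backward time interval:
\begin{itemize}
\item pinching is uniform, by \cref{pinching preserved prop};
\item the diameter is bounded, by \cref{diameter bound coro};
\item $\|h\|$ is bounded;
\item higher derivatives are bounded, by \cref{higher derivative stays bounded for some short time prop}.
\end{itemize}
Arzelà--Ascoli then gives a subsequence converging smoothly to a compact spacelike-convex solution $\tilde F_\infty$. This solution is noncollapsed with the same constants $\delta_\pm$.

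\textbf{Step 2: the limit has extremal noncollapsing constants.} Since $\underline{Q}$ and $\overline{Q}$ are scale and Lorentz invariant, the optimal constants
\begin{equation*}
\delta_-(t)=\sup\{\delta:\underline Q(\cdot,\delta,t)\ge 0\}
\end{equation*}
(and the analogous $\delta_+(t)$) are monotone in $t$ and converge to limits $\delta_-^\infty\le\delta_+^\infty$. We first show that $\delta_-(t)$ is nondecreasing; this is a slight strengthening of \cref{acausal spacelike-mean-convex submanifold preserves noncollapsing}, obtained by the same computation.

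Choosing $t_j$ appropriately, the limit flow attains $\delta_-^\infty$ at every time of an interval. Hence $\underline Q(\cdot,\delta_-^\infty,\tau)$ vanishes somewhere at each time, at either an interior or a boundary point of $\widehat\Sigma\times\mathcal N^+(H)$.

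\textbf{Step 3: strong maximum principle.} The computation in the proof of \cref{acausal spacelike-mean-convex submanifold preserves noncollapsing} shows that $f=\langle\Upsilon,N_x\rangle-\delta_-^\infty$ satisfies
\begin{equation*}
\partial_t f\ge\mathcal L f+(\text{gradient terms})+c f.
\end{equation*}
Here $\mathcal L=\sum(\partial_i^x-\partial_i^y)^2$ is degenerate, with diffusion only along the horizontal directions $\partial^x_i-\partial^y_i$.

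By \cref{lie bracket of horizontal fields generate tangent bundle for positive sectional curvature manifold lemma}, these horizontal directions together with their brackets span everything. That lemma applies because $\Sigma$ has positive sectional curvature by \cref{spacelike-convex has positive sectional curvature lemma}; this is where $n\ge2$ is used. We then apply:
\begin{itemize}
\item \cref{bony max principle thm}, which propagates the zero set of $f$ across $(\Sigma\times\Sigma)\setminus D$;
\item \cref{bony max principle with boundary thm}, which rules out a boundary minimum with nonzero inward derivative and so pushes a boundary zero into the interior.
\end{itemize}
We conclude that $f\equiv0$ on a full connected component.

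Vanishing of $f$ means every point of $\tilde\Sigma_\tau$ lies on the exterior touching pseudosphere at $x$. Moreover, each nonnegative term in the evolution, in particular $c\sum\langle h_{ii},N_x\rangle^2$ and the $(\lambda_i+1)$ terms, must vanish. This forces $\lambda_i=-1$, $\langle h_{ii},N_x\rangle=\delta_-^\infty$ for all $i$, and umbilicity $h=\frac1n Hg$ in every null direction. Hence $\delta_-^\infty=\frac1n$ by \cref{noncollapsing constants bound lemma}.

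\textbf{Step 4: identify the sphere, and the main obstacle.} A totally umbilic compact spacelike-convex submanifold with $h(v,v)=\frac1nH$ lies in the intersection of all its touching pseudospheres. It is therefore a round $S^n$ contained in an affine $(n+1)$-plane spanned by $T_x\Sigma\oplus\R H_x$. That this plane is constant follows by differentiating via the Weingarten equation, using $\overset{\perp}{\nabla}H=0$, which follows from the Codazzi equation \eqref{codazzi equation} and umbilicity.

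The main obstacle is Step 3. One must verify that the degenerate operator, including the auxiliary variable $N\in\mathcal N^+_x(H)$ (which carries no diffusion), still satisfies the Hörmander bracket condition, and that the gradient terms lie in the span of the diffusion fields. I would first try to treat $N$ as a parameter: take the infimum over $N$ as a Lipschitz function and apply Bony--Hill on $\widehat\Sigma$ alone. Umbilicity in each null direction then follows pointwise.
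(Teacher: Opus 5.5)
Your route differs from the paper's, and the step it hinges on is not supported.

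\textbf{The strong maximum principle on the two-point space.} For $n\ge2$ the paper never uses the two-point quantity. It works with $f(x,v,N)=\langle h(v,v)-\alpha H,N\rangle$ on the unit tangent bundle $S\Sigma$ and derives $\partial_tf\ge\Delta^{\mathrm{Hor}}f+a^{ij}D_iD_jf+b^iD_if$ there. \cref{lie bracket of horizontal fields generate tangent bundle for positive sectional curvature manifold lemma} says exactly that brackets of \emph{horizontal lifts on $S\Sigma$} produce the vertical directions, via $R(u,v,u,v)>0$; that is where $n\ge2$ enters. The lemma says nothing about the fields $\partial^x_i-\partial^y_i$ on $(\Sigma\times\Sigma)\setminus D$, so your Step 3 is unjustified.

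Moreover, those fields and the inequality you quote exist only at a minimum. In the proof of \cref{acausal spacelike-mean-convex submanifold preserves noncollapsing}, the frame at $y$ comes from the reflection/singular-value construction (\cref{singular value decomp for maximal dimensional spacelike subspaces}). That construction uses the critical-point identity $\langle\partial^y_i\Upsilon,N_x\rangle=0$ (to make $N_y$ normal at $y$) and the choice $\lambda_i\le-1$. Bony--Hill needs smooth vector fields and an inequality $\partial_tf\ge\mathcal Lf+b\cdot\nabla f+cf$ on an open set. Here that would presumably live on a bundle of frame correspondences over $\Sigma\times\Sigma$ together with the $N$-fibre. You would then need a separate proof that these fields are bracket-generating there. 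Neither is provided. The paper uses the two-point quantity only for $n=1$, where there is a single diffusion field and no bracket condition is invoked.

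\textbf{The $N$-fibre.} Your fallback fails as stated. Vanishing of $\inf_Nf$ only says that $\Upsilon-\delta H_x$ (on the boundary, $h(v,v)-\delta H$) is inward null or zero. For $k\ge2$ that does not give umbilicity. Take $n=2$, an orthonormal normal frame $E$ (spacelike), $\nu_1,\nu_2$ (timelike), and $0<\delta<\tfrac12$. The form
\[
h(v,v)=\delta|v|^2H+|v|^2E+(v_1^2-v_2^2)\nu_1+2v_1v_2\nu_2,\qquad H=\tfrac{2}{1-2\delta}E,
\]
is spacelike-convex, traces to $H$, and has $h(v,v)-\delta H$ inward null for every unit $v$, yet $h(e_1,e_2)=\nu_2\neq0$.

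What is needed is the analogue of the paper's Step 3. That step produces a point where the \emph{vector} $h(v,v)-\alpha H$ vanishes: from $\Lambda\equiv0$ one gets $(h(v,v)-\alpha H)\cdot H=0$ and $|h(v,v)|^2=\alpha^2|H|^2$. Then $f$ vanishes on the entire $N$-fibre at that point, and only propagation in $(x,v)$ remains.

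In your setting this is available. At a zero minimum of the limit flow, $(\partial_t-\mathcal L)f\le0$. So the nonnegative term $c\sum_i\langle h_{ii},N_x\rangle^2$, where $\Upsilon-\delta H_x=cN_x$, must vanish there, forcing $\Upsilon=\delta H_x$. That still leaves the propagation problem above. The economical fix for $n\ge2$ is to run the argument with the pinching quantity on $S\Sigma$, as the paper does.
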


    \begin{proof}
        The proof is rather long so we divided it into a few parts. \\

        \noindent \textbf{Step 1}: Show that a suitable sequence of rescaled flows converges. 
        
        Fix $P \cong \R^{n+1,0}$, let $\{t_j\}$ be a sequence of times such that $t_j \nearrow T$. Set $\sigma_j \coloneqq \lambda_j^2 t_j$, where $\lambda_j = (T - t_j)^{-1/2}$ and consider the sequence of rescaled flows $\{\S_t^j\}_{t\in [-\lambda_j^2t_j, 1), i\in \N}$
        \begin{equation*}
            \S_t^j \coloneqq \mathscr{L}_j\left(\lambda_j \S_{-\lambda_j^2t + t_j}\right), 
        \end{equation*}
        where $\mathscr{L}_j$ is the Lorentz transform mapping $T_x\S \oplus \R \widehat{H}_x$ for any $x\in \S$ at the rescaled initial time to $P$. Observe that $\sigma_j \to \infty$ as $j\to \infty$ and 
        \begin{equation*}
            \| h_j \|^2(\cdot, t) = \lambda_j^{-2}\|h\|^2(\cdot, -\lambda_j^2t + t_j) \leq \frac{C_0}{1 - t},
        \end{equation*}
        where 
        \begin{equation*}
            C_0 = \sup_{\S\times [0, T)} (T - t) \|h \|^2(x,t) < \infty. 
        \end{equation*}

        On a compact time interval, all derivatives of the rescaled solutions are bounded uniformly in $j$ by \cref{higher derivative stays bounded for some short time prop}, it follows from Arzelà–Ascoli theorem that a subsequence of rescaled flows converge in the smooth topology to a smooth, compact, spacelike-convex limit flow on compact intervals. \vspace{\baselineskip}

        \noindent \textbf{Step 2}: Show that $|h(\cdot, \cdot) - \a H|^2$ is constant on the limit flow. 

        Define $f(x,v, N) \coloneqq (h(v,v) - \a H) \cdot N$ with $(x,v)\in ST\S$ and $N\in N\S$ in the inward space cone. At a minimum point of $f$ with respect to both $x$ and $v$ we have vanishing of the first variation in the unit direction $\cos{s} \, v + \sin{s}\, u$ where $u\in T(ST\S)$ is unit and $u\perp v$, i.e., 
        \begin{equation*}
            \begin{split}
                D_{\cos{s} \, v + \sin{s}\, u}f &= \frac{d}{ds}\biggr|_{s = 0} \l h(\cos{s} \, v + \sin{s}\, u, \cos{s} \, v + \sin{s}\, u) - \a H , N\r \\ 
                & = 2\l h(v,u), N \r = 0. 
            \end{split}
        \end{equation*}
        This indicates that $v$ is an eigenvalue of $\Xi_{ij} = (h_{ij} - \a H) \cdot N$, so that we have an orthonormal basis $\{v \coloneqq e_1, e_i\}_{i = 2}^n$ that diagonalises $\Xi_{ij}$. 
        
        The second variation gives 
        \begin{equation*}
            \begin{split}
                 D^2_{\cos{s} \, v + \sin{s}\, u}f &= \frac{d^2}{ds^2}\biggr|_{s = 0} \l h(\cos{s} \, v + \sin{s}\, u, \cos{s} \, v + \sin{s}\, u) - \a H , N\r \\
                 & = \frac{d}{ds}\biggr|_{s = 0} 2\l h(-\sin{s} \, v + \cos{s}\, u, \cos{s} \, v + \sin{s}\, u) , N \r\\
                 & = 2 \l h(u,u) - h(v,v) , N \r \geq 0. 
            \end{split}
        \end{equation*}

        Similar to the proof of \cref{pinching preserved prop} we have that for any $\o\in H_x\S$, 
        \begin{equation*}
        \begin{split}
            &\quad \p_t \l h(\o,\o) - \a H, N \r\\
            & = \D^{\text{Hor}} \l h(\o,\o) - \a H, N \r + \sum_l \underbrace{(h(\o, \o) - \a H) \cdot h(e_l, e_l)}_{\geq 0} \underbrace{h(e_l, e_l) \cdot N}_{\geq 0} \\
            & \qquad + 2\o^k \o^l(\tensor{h}{_{jl}} \cdot h^{jp} h_{kp}\cdot N - h_{il}\cdot h_{kp} h^{ip} \cdot N )\\
            & \geq \D^{\text{Hor}} \l h(\o,\o) - \a H, N \r + 2 \left( \o^l h_{jl} \cdot h^{jp} \frac{\p}{\p \Dot{x}^p} f - \o^l h_{il} \cdot \o^k h_{kp} \frac{\p^2}{\p \Dot{x}^i \Dot{x}^p} f\right)
         \end{split}
    \end{equation*}
    where $\D^{\text{Hor}}$ denotes the horizontal Laplacian, and $\Dot{x}^i$ denotes the vertical coordinates.   

    In the previous step we have shown that there exists $v\in T_x\S$ such that $h(v,v) = \a H$ on the limit flow, so $f$ attains its minimum at zero, which can only happen if $N$ is null. We extend $N$ to a pseudo orthonormal frame $\{\nu_1, \cdots, \nu_{k-1}, N, W\}$ with $W$ null, $g(N, W) = - 1$ and $N, W \perp \nu_\a$ for each $\a$. Let $\{\o, e_2, \cdots, e_n\}$ be an orthonormal basis for $T_x\S$, then we have $h(e_l, \o) = a^{\a} \nu_\a + b N + c W$ for some $a,b,c\in \R$ and 
    \begin{equation*}
        \begin{split}
            \biggr|h\left(\frac{\p}{\p \Dot{x}^l}, \o \right)\biggr|^2 = - \sum_{\a = 1}^{k-1} (a^{\a})^2 - 2 b c = - \sum_{\a = 1}^{k-1} (a^\a)^2 + 2 b \frac{\p}{\p \Dot{x}^l} f,  
        \end{split}
    \end{equation*}
    Now we denote the collection of coefficients of the timelike parts by $a^{ij}$, which is diagonal with entry $\sum_{i = 1}^n (a^{\a})^2$, this indicates $a^{ij}$ is nonnegative definite. Hence, we can write 
    \begin{equation*}
        \p_t f \geq \D^{\text{Hor}} f + a^{ij}D_iD_j f + b^i D_i f, 
    \end{equation*}
    where $i,j$ sum over the vertical coordinates, and the product of first derivative and second derivative terms are absorbed in $b^i D_i f$ since everything is smooth. \\

    \noindent \textbf{Step 3}: Show that there exists $v\in T\S$ such that $h(v,v) = \a H$ for some $\a > 0$ on the limit flow, where $\a$ is the constant corresponding to inward pinching.

    Recall from the proof of \cref{pinching preserved prop} that 
    \begin{equation}\label{evolution of pinching quantity in proof of rescaled solns converge eq}
            \begin{split}
                (\p_t - \D) |h(v,v) - \a H|^2 & = \sum_i\underbrace{ (\Lambda(e_i,e_i))^2}_{\geq 0} + 2\sum_{i>1}(\Lambda(v,v)- \Lambda(e_i,e_i))\underbrace{h(v,e_i)\cdot h(v,e_i)}_{\leq 0} \\
                & \qquad - 2 \underbrace{|\n (h(v,v) - \a H)|^2}_{\leq 0} < 0,
            \end{split}
        \end{equation}
        at a minimum point of $|h(\cdot, \cdot) - \a H|^2$, where $\L_{ij} = (h(v,v) - \a H) \cdot h_{ij}$. By scalar maximum principle we have that $|h(v,v) - \a H|^2$ is monotone and hence convergent as $t \to T$. On any given compact interval $[a,b] \subset (-\infty, 1)$, 
        \begin{equation*}
            \min_{\S_b} |h(\cdot, \cdot) - \a H|^2 - \min_{\S_a} |h(\cdot, \cdot) - \a H|^2 = \min_{\S_0}|h(\cdot, \cdot) - \a H|^2 - \min_{\S_0} |h(\cdot, \cdot) - \a H|^2.
        \end{equation*}
        Since $\lambda_j \to \infty$, $\lambda_j^{-2}b + t_j, \lambda_j^{-2}a + t_j \to \infty$ we conclude that $|h(v,v) - \a  H|^2$ is constant on the limit flow for some $v\in T_x\S$ of which the minimum of $|h(\cdot, \cdot) - \a H|^2$ is achieved on $\S_0$. 

        Thus on the limit flow, the signed quantities in \cref{evolution of pinching quantity in proof of rescaled solns converge eq} must all be zero, i.e., $\L \equiv 0$ and $\n_i(h(v,v) - \a H) = 0$ for all $i$. Taking the trace of $\L_{ij}$ yields 
        \begin{equation*}
           0 = \tr\L =  (h(v,v) - \a H) \cdot H = h(v,v) \cdot H - \a |H|^2, 
        \end{equation*}
        and $\Lambda(v,v) = 0$ implies
        \begin{equation*}
            \begin{split}
                0 = \Lambda(v,v) = (h(v,v) - \a H)\cdot h(v,v) &= |h(v,v)|^2 - \a H \cdot h(v,v)\\
                & = |h(v,v)|^2 - \a^2|H|^2. 
            \end{split}
        \end{equation*}
        These together indicates that $h(v,v) = \a H$. 
    
    By \cref{bony max principle thm} and \cref{lie bracket of horizontal fields generate tangent bundle for positive sectional curvature manifold lemma}, since $h(v,v) - \a H = 0$ at some $v$ from the previous step, we have $f \equiv 0$ since the zero set flows everywhere on $\S$. This indicates that $h(\cdot, \cdot) = \a H$ for some $\a > 0$, but since $h$ traces to $H$ we have $\a = \frac{1}{n}$, i.e., $\S$ is totally umbilic in $\R^{n+1,k}$. By a classification result in \cite{classificationsemi-Riemannian} and \cref{gauss map spacelike-convex prop}, $\S^j_t$ converges to an $n$-sphere. 
    \end{proof}

Since second fundamental form pinching is not useful for $n = 1$, we consider noncollapsing instead. 

\begin{proposition}
    Suppose $F: \S^1 \times [0, T) \to \R^{n+1,k}$ is a compact spacelike-convex solution to MCF, then there exists a subsequence in a sequence of suitably rescaled flow that converges smoothly and uniformly to $S^1$ in some $\R^{2,0}\subset \R^{2,k}$. 
\end{proposition}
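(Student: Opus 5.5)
Step 1 of \cref{rescaled solns converge to sphere surface case prop} goes through unchanged for curves. With $\lambda_j=(T-t_j)^{-1/2}$ and $\mathscr{L}_j$ the Lorentz transform taking $T_x\S\oplus\R\widehat{H}_x$ at the rescaled initial time to a fixed $P\cong\R^{2,0}$, the rescaled flows satisfy $\|h_j\|^2\le C_0/(1-t)$. Together with \cref{higher derivative stays bounded for some short time prop} and Arzel\`a--Ascoli, a subsequence converges smoothly on compact time intervals to a spacelike-convex limit flow. What changes is the rigidity argument that identifies the limit: for $n=1$ pinching carries no information, since $h(v,v)=H$ for the unit tangent $v$. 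We will therefore use the noncollapsing quantities of \cref{noncollapsing section} in its place.

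\textbf{Monotone quantity.} By \cref{acausal spacelike-mean-convex submanifold preserves noncollapsing}, the optimal exterior constant $\d_-(t)$ is nondecreasing and the optimal interior constant $\d_+(t)$ is nonincreasing. These are the sup/inf of $\d$ with $\underline{Q}(\cdot,\d,t)\ge0$, respectively $\overline{Q}(\cdot,\d,t)\le0$. Both are scale invariant, and they are also Lorentz invariant, since $Z$ and $\l N,H\r$ are. Both are bounded, by \cref{noncollapsing constants bound lemma}: $0<\d_-\le 1\le\d_+$ when $n=1$. Hence both converge as $t\to T$, and on the limit flow $\d_\pm$ are constant in time. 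Call them $\d_-^\infty,\d_+^\infty$.

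\textbf{Rigidity via the strong maximum principle.} At the extremal configuration of the limit flow, $Q(x,y,N,\d_-^\infty)$ attains the value $0$ at some time, either at an interior point $(x,y,N)$ or on the boundary $S\S$ (where $Q=\l h(v,v)-\d_- H,N\r$). In the interior, the computation in the proof of \cref{acausal spacelike-mean-convex submanifold preserves noncollapsing} gives $(\p_t-\mathcal{L})\l\Upsilon,N_x\r\ge$ a sum of nonnegative terms. These include $c\sum_i\l h_{ii},N_x\r^2$ and $\frac4{d^2}\sum_i(\l h_{ii},N_x\r-\d_-)(1+\lambda_i)$-type terms. Since the value is a constant minimum, the strong maximum principle applies: Bony--Hill (\cref{bony max principle thm}) in the $(x,y)$ directions spanned by $\p^x-\p^y$, together with the boundary version \cref{bony max principle with boundary thm} to rule out approach to the diagonal. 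From this we obtain that the zero set propagates, and that every nonnegative term vanishes. In particular $\l h(v,v),N\r=\d_-^\infty$ with $\d_-^\infty=1$ if the minimum is on the boundary, and $\Upsilon-\d_-H_x$ must be null, i.e. $c=0$. Running the same argument for $\d_+$ and combining with $\d_-\le1\le\d_+$, we aim to conclude $\d_-^\infty=\d_+^\infty=1$. By \cref{normal component of separation bound prop}, the normal separation then satisfies $|b|=0$ and $a=2/|H_x|$ exactly. So every $F(y)$ lies on the circle of curvature $|H_x|$ in $T_x\S\oplus\R\widehat H_x$. This means the limit curve is planar with constant curvature, i.e. a round $S^1$ in a spacelike plane $\R^{2,0}$; by the Gauss map of \cref{gauss map spacelike-convex prop} and turning number $1$, it is embedded once around.

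\textbf{Main obstacle.} The hardest step is showing that the limit actually saturates $\d_-=\d_+$, rather than merely having constant noncollapsing ratios. I would argue as follows. If $\d_-^\infty<1$, the extremum is interior or boundary with a strictly positive reaction term (e.g.\ $\sum_i\l h_{ii},N\r^2>0$ when $c>0$, or the $\lambda_i\le-1$ gap), and then $\d_-$ would strictly increase on the limit flow, a contradiction. Failing a clean argument there, a fallback is Gage--Hamilton style: combine $\|h\|$ bounds with the known planar shrinking circle as the unique compact self-similar spacelike solution, after projecting to $P$ via \cref{projection of spacelike-convex submanifold to spacelike subspace is convex lemma}.
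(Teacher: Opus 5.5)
Your overall architecture matches the paper's. You get compactness as in Step 1 of \cref{rescaled solns converge to sphere surface case prop}, use the noncollapsing quantity $\langle\Upsilon,N_x\rangle-\delta_-$ in place of pinching, apply Bony--Hill on the limit flow, and then aim for $\delta_-=\delta_+=1$, $\Upsilon=H_x$ and a planar circle. Your monotonicity of the optimal constants also makes explicit why the limit flow attains the value $0$, which the paper leaves implicit.

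The gap is the step that carries everything, $\delta_-^\infty=\delta_+^\infty=1$, which you only ``aim'' at. The bounds $\delta_-\le1\le\delta_+$ hold for every curve and give nothing. The mechanism you propose, a strictly positive reaction term whenever $\delta_-<1$, is false. For $n=1$ the zeroth-order terms at an interior minimum in the proof of \cref{acausal spacelike-mean-convex submanifold preserves noncollapsing} reduce to
\[
\frac{4}{d^2}(1-\delta_-)\Bigl(-(1+\lambda)+\frac{2}{d^2}\langle\omega,T_x\rangle^2\Bigr)+c .
\]
Take a planar ellipse with semi-axes $A>B$, with $x=(A\cos\phi,B\sin\phi)$ and $y=(A\cos\theta,B\sin\theta)$. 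One finds $\langle\Upsilon,N_x\rangle=D(\phi)/D(\tfrac{\phi+\theta}{2})$ with $D(\psi)=A^2\sin^2\psi+B^2\cos^2\psi$. So the optimal constant $\delta_-=B^2/A^2<1$ is attained at the pair of major vertices. There $\langle\omega,T_x\rangle=0$, $\lambda=-1$ and $c=0$, so every zeroth-order term vanishes and nothing forces $\delta_-$ to increase. Your Gage--Hamilton fallback would need a monotonicity formula making the limit self-similar, plus a classification of compact self-similar spacelike curves in $\mathbb{R}^{2,k}$; neither is available here.

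The missing idea, which is how the paper concludes, is to push the propagation you already invoke all the way to the diagonal. For curves the diffusion field of $\mathcal{L}$ is $\partial^x_s-\partial^y_s$. Its integral curves slide $x$ and $y$ toward each other along the curve and so run into $\partial\widehat\Sigma$. By \cref{bony max principle thm}, $\langle\Upsilon,N_x\rangle-\delta_-$ vanishes along such a curve starting from the minimising configuration. By continuity of the extension, its value at the diagonal is $\langle h(T,T),N\rangle-\delta_-=\langle H,N\rangle-\delta_-=1-\delta_-$, hence $\delta_-=1$. This is exactly your boundary observation, now reached from an interior minimum, so approach to the diagonal should not be ruled out with \cref{bony max principle with boundary thm}. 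The same argument for $\overline{Q}$ gives $\delta_+=1$, and equal two-sided constants force $\Upsilon=H_x$.

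There is one slip in your ending. With $\Upsilon=H_x$ one has $\overset{\perp}{\pi}_x(\omega)=\frac{d^2}{2}H_x$, so the coefficient is $a=\frac{d^2}{2}|H_x|$, which varies with $y$; it is not $2/|H_x|$. Your conclusion survives. Indeed $\omega\in T_x\Sigma\oplus\mathbb{R}H_x$ and $|\omega-H_x/|H_x|^2|^2=d^2-2\langle\omega,H_x\rangle/|H_x|^2+1/|H_x|^2=1/|H_x|^2$. So every $F(y)$ lies on the circle of radius $1/|H_x|$ in that spacelike plane, which is a more direct finish than the paper's route through \cref{mean curvature comparison at different points prop}.
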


\begin{proof}
    Following the proof of \cref{rescaled solns converge to sphere surface case prop} we have that a subsequence of a sequence of suitable rescaled flows converge in the smooth topology to a smooth, compact, spacelike-convex limit flow uniformly on compact intervals.
    
\vspace{\baselineskip}
    \noindent \textbf{Step 1}: Show $\l \Upsilon , N_x\r - \d_-$ is constant on the limit flow along the diffusion directions. 
    
    We consider the exterior noncollapsing quantity first. Since the extreme points may be attained on the boundary, we split the proof into two cases.

\vspace{\baselineskip}
    \textbf{Case 1: boundary}
    
    Suppose the minimum is achieved on the boundary, then $\l \Upsilon - \d_- H_x, N_x\r = 1 - \d_-$ and $(\p_t - \D_x)\l \Upsilon - \d_- H_x, N_x\r = 0$. Choose some geodesic $\g: [0,1] \to \widehat{\S}$ with $\g(1)\in \p \S$, then the minimality indicates that 
    \begin{equation*}
        \n_{\g'(1)}(\l \Upsilon - \d_- H_x, N_x\r) = \n_{\g'(1)}(1 - \d_-) = 0, 
    \end{equation*}
    which contradicts \cref{bony max principle with boundary thm}, hence the minimum cannot be achieved on the boundary.     
\vspace{\baselineskip}

    \textbf{Case 2: interior} 

    For curves we denote $T_x \coloneqq \p_s F(x,t)$, $T_y \coloneqq \p_s F(y,t)$ and the arclength parameter by $s$. Recall that in \cref{definition l_t eq}, \cref{defining l_i eq}, we have chosen $N_x$ such that $\ell_t \equiv 0$. Consider the submanifold $\mathcal{N}^+(H) = \{(x, N_x): N_x\in N_x\S, |N_x|^2 = 0, \l N_x, H_x\r = 1\}$, one can view it as a fibre bundle, where each fibre is topologically $S^{k-1}$. Its tangent space at each point is 
    \begin{equation*}
        T_{(x,N_x)}\mathcal{N}^+(H) = \{(x, N_x, \ell): \ell \in N_x^\perp \cap H_x^\perp\}. 
    \end{equation*}
    Thus, moving in $\p_s$ direction is equivalent to moving on a chosen horizontal direction in $T\mathcal{N}^+(H)$. For simplicity, we choose the direction such that $\ell \equiv 0$ for \cref{defining l_i eq}. 

    Similar to the proof of \cref{acausal spacelike-mean-convex submanifold preserves noncollapsing}, considering the exterior noncollapsing quantity we have 
    \begin{equation*}
        \begin{split}
            (\p_x - \mathcal{L})(\l \Upsilon, N_x\r - \d_-) & = \frac{4}{d^2}\l (1 + \l T_x, T_y\r)(\Upsilon - H_x) - \l \o, T_x + T_y\r \p_x \Upsilon , N_x\r  \\
            & \qquad + \frac{4}{d^2}\l \o, T_x + T_y\r \p_y \l \Upsilon, N_x\r + \l \Upsilon, H_x\r - |H_x|^2 \l \Upsilon, N_x\r   \\
            & \qquad - 2 \l \overset{\perp}{\n}_x H_x,  N_x\r^2 \l \Upsilon, N_x\r + 2 \l \overset{\perp}{\n}_x H_x, N_x\r \l \p_x \Upsilon, N_x\r. 
        \end{split}
    \end{equation*}
    First of all, we notice that $\l \Upsilon - \d_- H_x, H_x\r \geq 0$ since $\Upsilon - \d_- H_x$ is inward null or spacelike, and 
    \begin{equation*}
        \l \Upsilon, H_x\r - |H_x|^2 \l \Upsilon, N_x\r \geq - |H_x|^2(\l \Upsilon, N_x\r - \d_-). 
    \end{equation*}
    On the other hand, 
    \begin{equation*}
        \p_x \l \Upsilon, N_x\r = \l \p_x \Upsilon, N_x\r - \l \overset{\perp}{\n}_x H_x, N_x\r \l \Upsilon, N_x\r  
    \end{equation*}
    implies that 
    \begin{equation*}
        \l \overset{\perp}{\n}_x H_x, N_x\r \l \p_x \Upsilon, N_x\r = \l \overset{\perp}{\n}_x H_x, N_x\r \p_x \l \Upsilon, N_x\r + \l \overset{\perp}{\n}_x H_x, N_x\r^2\l \Upsilon, N_x\r. 
    \end{equation*}
    Lastly, expanding $\p_x \Upsilon$ we obtain
    \begin{equation*}
        \begin{split}
            & \quad \l (1 + \l T_x, T_y\r)(\Upsilon - H_x) - \l \o, T_x + T_y\r \p_x \Upsilon , N_x\r \\
            & = \left( 1 + \l T_x, T_y\r - \frac{2}{d^2}\l \o, T_y\r \l \o, T_x\r - \frac{2}{d^2}\l \o, T_x\r^2\right)\l \Upsilon - H_x, N_x\r\\
            & = \left(1 + \l T_y, R(u) T_x\r - \frac{2}{d^2}\l \o, T_x\r^2 \right)\l \Upsilon - H_x, N_x\r, 
        \end{split}
    \end{equation*}
    where $R(u)$ denotes the reflection with respect to $u = \frac{\o}{d}$, and the last term in the bracket satisfies 
    \begin{equation*}
        \begin{split}
            - \l \o, T_x\r^2 \l \Upsilon - H_x, N_x\r &= - \l \o, T_x\r^2 (\l \Upsilon, N_x\r - \d_-) - \l \o, T_x\r^2(\d_- - 1)\\
            & \geq - \l \o, T_x\r^2 (\l \Upsilon, N_x\r - \d_-). 
        \end{split}
    \end{equation*}

    Recall that 
    \begin{equation*}
        \l T_y , R(u)N_x\r = \frac{d^2}{2}\p_y \l \Upsilon, N_x\r, 
    \end{equation*}
    we can extend $R(u)T_x, R(u)N_x$ to a pseudo orthonormal basis $\{R(u)T_x, R(u)N_x \coloneqq n, m, \nu_1, \cdots, \nu_{k-1}\}$ for $\R^{2,k}$, where $m$ is null and $\l n, m \r = 1$ and $\{\nu_1, \cdots, \nu_{k - 1}\}$ are unit timelike. Thus, we can write
    \begin{equation*}
        T_y = a\, R(u)T_x + b\, n + c\, m + \zeta, 
    \end{equation*}
    for some timelike $\zeta \in \Span\{\nu_1, \cdots, \nu_{k-1}\}$. Since $T_y$ is unit spacelike, 
    \begin{equation*}
        1 - a^2 =  2bc + |\zeta|^2, 
    \end{equation*}
    where $c$ is known as a scalar multiple of $\p_y \l \Upsilon, N_x\r$. Choosing an orientation of $T_y$ such that $a \leq 0$ yields
    \begin{equation*}
        1 + \l T_y, R(u)T_x\r = 1 + a =  \frac{2bc}{1 - a} + \frac{|\zeta|^2}{1 - a},  
    \end{equation*}
    which indicates
    \begin{equation*}
        \begin{split}
            \left( 1 + \l T_y, R(u)T_x\r \right)\l \Upsilon - H_x, N_x\r & = \left(\frac{2bc}{1 - a} + \frac{|\zeta|^2}{1 - a}\right)(\l \Upsilon, N_x\r - \d_- + (\d_- - 1))\\
            & = \left(\frac{2bc}{1 - a} + \frac{|\zeta|^2}{1 - a}\right)(\l \Upsilon, N_x\r - \d_-) + \frac{2bc}{1 - a}(\d_- - 1)\\
            & \qquad  + \underbrace{\frac{|\zeta|^2}{1 - a}(\d_- - 1)}_{\geq 0}\\
            & \geq \left(\frac{2bc}{1 - a} + \frac{|\zeta|^2}{1 - a}\right)(\l \Upsilon, N_x\r - \d_-) + \frac{2bc}{1 - a}(\d_- - 1),  
        \end{split}
    \end{equation*}
    as $|\zeta|^2 \leq 0$. 

    Therefore, we have 
    \begin{equation*}
        \begin{split}
            (\p_t - \mathcal{L})(\l \Upsilon, N_x\r - \d_-) & \geq f(x,y,t)\p_x (\l \Upsilon, N_x\r - \d_-) + g(x,y,t)\p_y (\l \Upsilon, N_x\r - \d_-) \\
            & \qquad + h(x,y,t)(\l \Upsilon, N_x\r - \d_-) 
        \end{split}
    \end{equation*}
    for some smooth local functions $f, g, h$. By \cref{bony max principle thm}, we conclude that $\l \Upsilon, N_x\r - \d_-$ is constant along the diffusion directions. \\

    \noindent \textbf{Step 2}: Show that the limit is a sphere. 

    Since all the inequalities become equalities in the limit, we have that $0 = \l \Upsilon, N_x\r - \d_-$ for $N_x$ in the diffusion directions. Taking the limit as $y \to x$ we obtain $\d_- = 1$, and similarly considering the interior noncollapsing quantity yields $\d_+ = 1$, so that the limit is both exterior and interior noncollapsed with $\d_- = \d_+ = 1$. This implies that $\Upsilon = H_x$, and by \cref{mean curvature comparison at different points prop} we conclude that $\S$ has constant $|H|^2$. 

    Let $|H| = C$ and denote $\widehat{H}_x \coloneqq H_x/C$, by \cref{exterior ball lemma noncollapsing} we know $\S$ lies on the pseudosphere centred at $F(x) + \frac{1}{C}\widehat{H}_x$ with radius $\frac{1}{C}$ for all $x\in \S$. Fix $x\in \S$, then 
    \begin{equation*}
        \begin{split}
            \frac{1}{C^2} &= \biggr|F(y) - \left(F(x) + \frac{1}{C}\widehat{H}_x \right)\biggr|^2 \\
            & = d^2 - \frac{2}{C}\l \widehat{H}_x , \o \r + \frac{1}{C^2}, 
        \end{split}
    \end{equation*}
    which indicates that $\l \o, \widehat{H}_x \r = \frac{C\,d^2}{2}$. Since $\Upsilon = H_x$, we have that $\frac{2}{d^2}\o - H_x = a\, T_x$ for some $a \in \R$, where 
    \begin{equation*}
        \frac{4}{d^2}\l \o, T_x \r^2 = a^2 = \biggr| \frac{2}{d^2}\o - H_x \biggr|^2 = \frac{4}{d^2} - \frac{4}{d^2}\l \o, H_x\r + C^2, 
    \end{equation*}
    rearranging this yields 
    \begin{equation*}
        \begin{split}
            \l \o, T_x \r^2 = d^2 - \frac{C^2\, d^4}{4} & = d^2 - d^2 \l \o, H_x\r + \frac{d^4 C^2}{4}\\
            & = d^2 - 2\l \o, \widehat{H}_x\r^2 + \l \o, \widehat{H}_x\r^2\\
            & = d^2 - \l \o, \widehat{H}_x \r^2, 
        \end{split}
    \end{equation*}
    which implies $F(y) - F(x) \in \Span\{T_x, H_x\}$ for all $y \neq x$, i.e., all points on $\S$ must lie on the same spacelike plane. By \cref{existence of antipodal point lemma}, we conclude that $\S_t^j$ converges to a sphere. 
\end{proof}

\section{Stability}

It remains to show convergence without rescaling, we do that by showing the solution is stable around the final sphere.

\subsection{Normalised flow}

We have a fixed $\R^{n+1,0}$ such that the rescaled flows converge to a sphere. To show convergence of MCF, we consider a suitable normalised flow that fixes the volume of $\S$ to that of a unit sphere, centre of mass of $\S$, and the minimum projected volume of $\S$ to any maximal spacelike subspace is achieved on this particular $\R^{n+1,0}$. 
\begin{definition}
    The centre of mass for $F: \S^n \to \R^{n+1,k}$ is 
    \begin{equation*}
        \Bar{F} = \fint_\S F d\mu 
    \end{equation*}
    over the usual surface measure, where $\fint$ denotes the average integral. 
\end{definition}

In other words, we need to consider the normalised flow of the form 
\begin{equation}\label{normalised flow eq}
    \frac{\p F}{\p \tau } = H + a(\tau)F + b^\rho(\tau) E_\rho + L(\tau) (F), 
\end{equation}
where $a(\tau)$ is a scaling, $\{E_\rho\}_{\rho = 1}^{n+1+k}$ is some fixed orthonormal basis for $\R^{n+1,k}$, $b^\rho(\tau)E_\rho$ is a translation, and $L(\tau)\in \mathfrak{so}(n+1,k)/(\mathfrak{so}(n+1) \times \mathfrak{so}(k))$ is a rotation, here the quotient rules out the part of rotations that maps from $\R^{n+1,0}$ to $\R^{n+1,0}$ or $\R^{0,k}$ to $\R^{0,k}$. 

The evolution equation of the induced metric is 
\begin{equation*}
    \p_\tau g_{ij} = - 2 \l H, h_{ij}\r + 2 a(\tau) g_{ij},  
\end{equation*}
since the translation does not affect the inner product, and the action of $L$ is skew-symmetric. This leads to the evolution of the volume form: 
\begin{equation*}
    \p_\tau d\mu = \frac{1}{2}\sqrt{\det g}\, g^{ij}\p_\tau g_{ij} d\mu = (- |H|^2 + n a(\tau)) d\mu. 
\end{equation*}

Since the flow is assumed to be volume preserving, $\p_\tau \int_\S d\mu = 0$ implies 
\begin{equation}\label{expression for rescaling eq}
    a(\tau) = \frac{1}{n} \fint_\S |H|^2. 
\end{equation}

The expression of $b$ can be derived from the definition of centre of mass, $\p_\tau \fint_\S F d\mu = 0$ implies 
\begin{equation}\label{expression for translation in normalisation eq}
    b^\rho(\tau) E_\rho = \fint_\S |H|^2 F - ((n+1)a(\tau) + L(\tau))\fint_\S F. 
\end{equation}

To obtain an expression for $L(\tau)$, we denote the fixed spacelike subspace $P \cong \R^{n+1,0}$, the volume of the projection $\pi_P(\S)$ is 
\begin{equation*}
    \text{Vol}(\pi_P(\S)) = \frac{1}{n+1}\left(\int_\S F \wedge (\e^{j_1\cdots j_n}_{i_1\cdots i_n}\p_{j_1} F \wedge \cdots \wedge \p_{j_n} F) dx^{i_1} \cdots dx^{i_n}\right) \wedge \nu_1 \wedge \cdots \wedge \nu_k, 
\end{equation*}
where $\{\nu_1, \cdots, \nu_k\}$ is an orthonormal basis for $P^\perp \cong \R^{0, k}$, and $\{\p_{i_1}, \cdots, \p_{i_n}\}$ is a local coordinate chart for $\S$, and $\e^{j_1\cdots j_n}_{i_1\cdots i_n}$ is the elementary alternating tensor. 

Since the volume is minimised on $P$, the variation of volume in any direction $\L \in \mathfrak{so}(n+1,k)/(\mathfrak{so}(n+1) \times \mathfrak{so}(k))$ satisfies 
\begin{equation}\label{first variation of volume eq}
    \begin{split}
        0 & = \frac{d}{ds} \text{Vol}(\pi_P(e^{s\L} \S))\biggr|_{s = 0}\\
        & = \left(\int_\S \L F \wedge (\e^{j_1\cdots j_n}_{i_1\cdots i_n}\p_{j_1} F \wedge \cdots \wedge \p_{j_n}  F) dx^{i_1} \cdots dx^{i_n}\right) \wedge \nu_1 \wedge \cdots \wedge \nu_k. 
    \end{split}
\end{equation}
Note that the projected volume is a convex in $P$, so the minimiser $L$ is achieved. Likewise, since the minimised volume is achieved on $P$ throughout the flow, the time derivative of the expression above is zero, that gives 
\begin{equation}\label{definition for rotation in normalised flow eq}
    \begin{split}
        0 & = \frac{d}{d\tau} \frac{d}{ds} \text{Vol}(\pi_P(e^{s\L} \S))\biggr|_{s = 0}\\
        & = \int_\S \L (H + a F + L F) \wedge d\mu \wedge \nu_1 \wedge \cdots \wedge \nu_k\\
        & \qquad + \int_\S \Lambda F \wedge (\e^{j_1\cdots j_n}_{i_1\cdots i_n}\p_{j_1} (H + a F + L F) \wedge \cdots \wedge \p_{j_n} F) dx^{i_1} \cdots dx^{i_n} \wedge \nu_1 \wedge \cdots \wedge \nu_k + \cdots \\
        & \qquad + \int_\S \Lambda F \wedge (\e^{j_1\cdots j_n}_{i_1\cdots i_n}\p_{j_1} F \wedge \cdots \wedge \p_{j_n} (H + a F + L F)) dx^{i_1} \cdots dx^{i_n} \wedge \nu_1 \wedge \cdots \wedge \nu_k
    \end{split}
\end{equation}
where 
\begin{equation*}
    \int_\S E \wedge (\e^{j_1\cdots j_n}_{i_1\cdots i_n}\p_{j_1} F \wedge \cdots \wedge \p_{j_n} F)dx^{i_1} \cdots dx^{i_n} = 0
\end{equation*}
for any constant vector $E$, and 
\begin{equation*}
    \begin{split}
        &\quad \int_\S F \wedge (\e^{j_1\cdots j_n}_{i_1\cdots i_n}\p_{j_1} F \wedge \cdots \wedge \p_{j_l}V \wedge \cdots \wedge \p_{j_n} F)dx^{i_1} \cdots dx^{i_n}\\
        &= \int_\S V \wedge (\e^{j_1\cdots j_n}_{i_1\cdots i_n} \p_{j_1}F \wedge \cdots \wedge \p_{j_n} F)dx^{i_1} \cdots dx^{i_n}, 
    \end{split}
\end{equation*}
for any vector valued function $V$ on $S^n$, where both of them can be obtained by integration by parts, the antisymmetry of wedge products and $\e^{j_1\cdots j_n}_{i_1\cdots i_n}$. 

We can write the constraint in \cref{definition for rotation in normalised flow eq} by
\begin{equation*}
    0 = B_{\Xi} \L^{\Xi} + L^{\Xi }\L^{\Theta}\mathscr{Q}_{\Theta \Xi},
\end{equation*}
where the indices $\Xi, \Theta$ correspond to some basis of $\mathfrak{so}(n+1,k)/(\mathfrak{so}(n+1) \times \mathfrak{so}(k))$, $B$ is linear and contains the scaling and mean curvature terms. The bilinear form $\mathscr{Q}$ is invertible because $\text{Vol}(\pi_P(\S))$ is convex in $P$, thus 
\begin{equation}\label{expression for L eq}
    L^{\Xi} =  -(\mathscr{Q}^{-1})^{\Theta \Xi}B_{\Theta}. 
\end{equation}

Since $\S$ can be parameterised by the Gauss map by \cref{Gauss map parameterisation eq}, we want to show that $\sigma, \eta^\a$ converge exponentially, so that the solution is stable near the sphere and hence the rotation converges exponentially to a fixed rotation. To achieve this, we need the linearisation of the evolution of $\sigma, \eta^\a$.

\subsection{Linearisation}

Fix $P \cong \R^{n+1,0}$, then consider a variation $F: \S^n \times [0, \infty) \times (-\e, \e)$ for some $\e > 0$, where $F(\cdot, \tau, 0)$ is the unit sphere in $P$. At $\e = 0$, we have $a(\tau) = n, b^\a(\tau) = L(t) = 0$. 

Define
\begin{equation*}
    \Dot{F}(x,\tau) = \frac{d}{d\e}F(x,\tau, \e)\biggr|_{\e = 0},    
\end{equation*}
We assume that at $t = 0$, $\p_\e F$ is normal (we can always reparameterise the initial manifold). The tangetial component of this quantity can be written as $g^{ij} \l \p_i F, \p_\e F \r \p_j F$, the evolution equation at $\e = 0$ is 
\begin{equation*}
    \p_\tau (g^{ij} \l \p_i F, \p_\e F \r \p_j F)\biggr|_{\e = 0} = g^{ij} \l \p_\tau \p_i F, \p_\e F \r \p_j F + g^{ij} \l \p_i F, \p_\e F \r \p_\tau\p_j F\biggr|_{\e = 0} = 0, 
\end{equation*}
since $\p_\tau F|_{\e = 0} = 0$. Thus, $\p_\e F$ remains normal throughout the flow. 

Denote $\cdot$ above a quantity the quantity with a derivative with respect to $\e$ at $\e = 0$, then 
\begin{equation*}
    \Dot{H} = \p_\e H \biggr|_{\e = 0} = \n_\e H\biggr|_{\e = 0} = \D^\perp \Dot{F} + \l \Dot{F}, h_{ij}\r h^{ij} = \D^\perp \Dot{F} + n \l \Dot{F} , F \r  F, 
\end{equation*}
where $\p_\e H$ is the variation of $H$ in the direction $\p_\e F$, so the result follows from \cref{flat evolution h}, and $\D^\perp \coloneqq g^{ij}\overset{\perp}{\n}_i \overset{\perp}{\n}_j$. Similarly, 
\begin{equation*}
    \frac{\p }{\p \e} d\mu \biggr|_{\e = 0} = - \l H, \Dot{F} \r. 
\end{equation*}

Therefore, the linearisation of \cref{normalised flow eq} around the unit sphere in $P$ is 
\begin{equation}\label{linearisation of normalised flow eq}
    \begin{split}
        \p_\tau \Dot{F} = \D^\perp \Dot{F} + n \l \Dot{F} , F \r  F + (\Dot{a} + \Dot{L})F + \Dot{b}^\rho E_\rho + n \Dot{F}, 
    \end{split}
\end{equation}
where the second fundamental form for a unit sphere is $h_{ij} = - g_{ij}F$. It remains to compute the expressions for $\Dot{a}, \Dot{L}$ and $\Dot{b}^\rho$. 

For $\Dot{a}$, differentiating \cref{expression for rescaling eq} we obtain 
\begin{equation}\label{expression for scaling linearised}
    \begin{split}
        \Dot{a} = - 2n \fint_{S^n} \l \Dot{F}, F \r
    \end{split}
\end{equation}
where we have applied the following integration by parts argument: 
\begin{equation*}
    \int_{S^n} \l \D^\perp \Dot{F}, H \r = \int_{S^n} \l \Dot{F}, \D^\perp H \r = 0
\end{equation*}
since $\D^\perp H = 0$ and $H = - n F$ on $S^n$. 

For $\Dot{b}^\rho E_\rho$, differentiating \cref{expression for translation in normalisation eq} we get 
\begin{equation}\label{expression for translation linearised eq}
    \Dot{b}^\rho E_\rho = \fint_{S^n} -n^2 \l \Dot{F}, F \r  F - n \Dot{F}, 
\end{equation}
where we have applied 
\begin{equation*}
    \p_\e \left( \frac{1}{|\S|} \int_\S F d\mu\right)\biggr|_{\e = 0} = \fint_{S^n} \Dot{F} - \l H, \Dot{F} \r F, 
\end{equation*}
and 
\begin{equation*}
    \begin{split}
        \int_{S^n} \l \D^\perp\Dot{F}, H \r H &= \int_{S^n} g^{ij} \n_i \left(\l \overset{\perp}{\n}_j \Dot{F}, H \r H \right) - g^{ij}\l \overset{\perp}{\n}_j \Dot{F}, \overset{\perp}{\n}_i H \r H \\
        & \qquad - g^{ij}\l \overset{\perp}{\n}_j \Dot{F},  H \r \n_i H\\
        & = \int_{S^n} \l \Dot{F}, H \r \D H \\
        & = -n^3\int_{S^n} \l \Dot{F}, F \r F
    \end{split}
\end{equation*}
since $\overset{\perp}{\n} H = 0$ on $S^n$.

For the remaining part of the section we fix an orthonormal basis $\{e_i\}_{i = 1}^{n+1}$ for $P$ and $\{\nu_\a\}_{\a = 1}^k$ for $P^\perp$. For $\Dot{L}$, fix some $\L \in \mathfrak{so}(n+1,k)/(\mathfrak{so}(n+1) \times \mathfrak{so}(k))$, differentiating $B_\Xi \L^\Xi$ we obtain 
\begin{equation}\label{dot B acting on rotation eq}
    \begin{split}
        \Dot{B}_{\Xi} \L^{\Xi} & = \int_{S^n} \L (\Dot{H} + a \Dot{F}) d\mu \wedge \nu_1 \wedge \cdots \wedge \nu_k \\
        & \qquad + \int_{S^n} \Lambda F \wedge (\e^{j_1\cdots j_n}_{i_1\cdots i_n}\p_{j_1} (\Dot{H} + a \Dot{F}) \wedge \cdots \wedge \p_{j_n} F) dx^{i_1} \cdots dx^{i_n} \wedge \nu_1 \wedge \cdots \wedge \nu_k + \cdots\\
        & \qquad + \int_{S^n} \Lambda F \wedge (\e^{j_1\cdots j_n}_{i_1\cdots i_n}\p_{j_1} F \wedge \cdots \wedge \p_{j_n} (\Dot{H} + a \Dot{F})) dx^{i_1} \cdots dx^{i_n} \wedge \nu_1 \wedge \cdots \wedge \nu_k.
    \end{split}
\end{equation}
where we used the fact that $H + a F = 0$ on $S^n$ and \cref{first variation of volume eq}. For further analysis we decompose $\Dot{F}$ into spherical harmonics on $S^n$ and separate the first degree ones for later use, 
\begin{equation}\label{eigenvalue decomposition for dot F eq}
    \begin{split}
        \Dot{F} & = \Dot{F}^{0\g } \varphi_{\g} F + \Dot{F}^{\a\g}\varphi_{\g} \nu_\a\\
        &= \Dot{F}^{01j} \l F, e_j \r F + \sum_{\g \neq 1}\Dot{F}^{0\g } \varphi_{\g} F + \Dot{F}^{\a 1 j}\l F, e_j \r \nu_\a + \sum_{\g \neq 1}\Dot{F}^{\a\g}\varphi_{\g} \nu_\a,
    \end{split}
\end{equation}
where $\Dot{F}^{01j}, \Dot{F}^{0\g}, \Dot{F}^{\a 1 j}, \Dot{F}^{\a\g}\in \R$ are coefficients, $\varphi_{\g}$ denotes the $\g$-th spherical harmonics satisfying $\D \varphi_{\g} + \lambda \varphi_{\g} = 0$, where the eigenvalues for $-\D$ on $S^n$ is $\lambda_{\g} = -\g(n+ \g - 1)$. Moreover, $F$ itself is the first degree spherical harmonics on $S^n$. 

Substituting \cref{eigenvalue decomposition for dot F eq} to \cref{expression for scaling linearised} and \cref{expression for translation linearised eq} yields 
\begin{equation}
    \Dot{a} = -2n \Dot{F}^{00} , \qquad \Dot{b}^\rho E_\rho = -n \Dot{F}^{01j} e_j - n \Dot{F}^{\a 0}\nu_\a, 
\end{equation}
where other terms vanish due to orthogonality and 
\begin{equation*}
    \fint_{S^n} \l F, e_i \r \l F, e_j \r = \frac{\d_{ij}}{n + 1}
\end{equation*}
since $F$ itself is a first degree spherical harmonics. 

Furthermore, we denote $\L_{i \a}$ to be the generator that acts by $e_i \to \nu_\a$ and $\nu_\a \to e_i$, substituting \cref{eigenvalue decomposition for dot F eq} back to \cref{dot B acting on rotation eq} yields $\Dot{B}(\L_{i \a})  = 0$, since the terms involving $\Lambda_{i\a} F$ vanish, and the rest of the expression only sees the first spherical harmonics coefficients in the $\nu_\a$ direction, those from $\D^\perp \Dot{F}, n \Dot{F}$ cancel out. Since \cref{expression for L eq} is linear, we deduce that $\Dot{L} = 0$.

\subsection{Exponential convergence}

Denote quantities on $S^n$ by a tilde, we want to show that $\sigma - 1, \eta^\a$ converge to $0$ exponentially, so that the limit is stable around $S^n \subset P$. We can write 
\begin{equation*}
    \A_{ij}[\sigma] = \Tilde{g}_{ij} + \A_{ij}[\sigma - 1], 
\end{equation*}
so that \cref{induced metric with gauss map parameterisation eq} becomes 
\begin{equation}\label{rewriting the metric for stability eq}
    g_{kl} = \Tilde{g}_{kl} + 2\A_{kl}[\sigma - 1] + \A_{ki}[\sigma - 1]\Tilde{g}^{ij}\A_{jl}[\sigma - 1] - \Tilde{\p}_k \eta^\a \Tilde{\p}_l \eta_\a.  
\end{equation}
The inverse metric can be found in the following integral form 
\begin{equation*}
    \begin{split}
        g^{pq} & = \Tilde{g}^{pq} + \int_0^1 \frac{d}{ds}g^{pq}[1 + s(\sigma - 1), s \eta^\a] \\
        & = \Tilde{g}^{pq} + \int_0^1 g^{pk} g^{ql} \frac{d}{ds} g_{kl}[1 + s(\sigma - 1), s \eta^\a]\\
        & = \Tilde{g}^{pq} + \int_0^1 g^{pk} g^{ql} \left(2 \A_{kl}[\sigma - 1] + 2\A_{ki}[\sigma - 1]\Tilde{g}^{ij} \A_{jl}[\sigma - 1] - 2s\, \Tilde{\p}_k \eta^\a \Tilde{\p}_l \eta_\a \right), 
    \end{split}
\end{equation*}
where $g^{pq}[1 + s(\sigma - 1), s \eta^\a]$ denotes $g^{pq}$ evaluated at $1 + s(\sigma - 1), s \eta^\a$. We note that the integral (error) term $\mathscr{E}$ can be bounded by $C(\|\sigma - 1\|^2_{C^2} + \sum_\a \| \eta^\a\|^2_{C^1})$. We thus write $g^{pq} = \Tilde{g}^{pq} + \mathscr{E}$, substituting this back to the previous equation gives
\begin{equation*}
    \begin{split}
        g^{pq} &= \Tilde{g}^{pq} + \int_0^1 (\Tilde{g}^{pk} + \mathscr{E}) (\Tilde{g}^{ql} + \mathscr{E}) \biggr(2 \A_{kl}[\sigma - 1] + 2\A_{ki}[\sigma - 1]\Tilde{g}^{ij} \A_{jl}[\sigma - 1] \\
        & \qquad  - 2s\, \Tilde{\p}_k \eta^\a \Tilde{\p}_l \eta_\a \biggr)\\
        & = \Tilde{g}^{pq} + 2\Tilde{g}^{pk}\Tilde{g}^{ql}\A_{kl}[\sigma - 1] + \mathscr{E}', 
    \end{split}
\end{equation*}
where we collected the linear term, and $\mathscr{E}' \leq C(\| \sigma - 1\|^p_{C^2} + \sum_\a \|\eta^\a\|^q_{C^1})$ denotes a new error term that can be bounded by (sum of) norms of $\sigma - 1, \eta^\a$ with $p, q > 2$ (apply Peter-Paul inequality if needed), this contains all the quadratic terms.  This expression also gives the linearisation of $g^{pq}$ at $\sigma = 1$ in the direction of $\sigma - 1$. 

The evolution of the support function is 
\begin{equation*}
    \begin{split}
        \p_\tau \sigma = g^{kl} \mathscr{A}_{kl}[\sigma] + a(\tau)\sigma + b^\rho(\tau) \l E_\rho, z \r + \l L(\tau) \GG^{-1}, z \r.  
    \end{split}
\end{equation*}
Linearising it at $\sigma = 1$ in the direction of $\sigma - 1$ gives
\begin{equation*}
    \begin{split}
        \p_\tau \sigma & \leq  n + \Tilde{g}^{kl}\A_{kl}[\sigma - 1] + 2 \Tilde{g}^{kp} \Tilde{g}^{lq} \Tilde{g}_{kl}\A_{pq}[\sigma - 1] + 2\Tilde{g}^{kp}\Tilde{g}^{lq}\A_{pq}[\sigma - 1] \A_{kl}[\sigma - 1]\\
        & \qquad + \mathscr{E}'\A_{kl}[\sigma - 1] + n(\sigma - 1) - 2 n \Dot{F}^{00} - n \Dot{F}^{01j}\l e_j, z \r + C\|\sigma - 1\|^2_{C^2}\\
        & \leq 3\D(\sigma - 1) + 4n(\sigma - 1) + C\|\sigma - 1 \|^2_{C^2} + C\biggr( \sum_\a \|\eta^\a \|^2_{C^1} \|\sigma - 1\|^2_{C^2} \\
        & \qquad + \|\sigma - 1 \|^4_{C^2} + \| \sigma - 1\|^p_{C^2} + \sum_\a \|\eta^\a\|^q_{C^1} \biggr), 
    \end{split}
\end{equation*}
hence, 
\begin{equation*}
    \begin{split}
        &\quad\frac{d}{d\tau}\int_{S^n} (\sigma - 1)^2  = 2 \int_{S^n} (\sigma - 1)\p_\tau\sigma\\
        &\leq 2\int_{S^n} (\sigma - 1)\biggr(3\D(\sigma - 1) + 4n (\sigma - 1) \biggr) + C\|\sigma - 1\|_{L^2}^{\frac{1}{2}} \biggr( \|\sigma - 1 \|^2_{C^2} \\
        & \qquad + \|\sigma - 1 \|^4_{C^2} + \sum_\a \|\eta^\a \|^2_{C^1} \|\sigma - 1\|^2_{C^2} + \| \sigma - 1\|^p_{C^2} + \sum_\a \|\eta^\a\|^q_{C^1}\biggr).
    \end{split}
\end{equation*}

On the other hand, the evolution of the defining function $\eta^\a$ under the normalised flow is 
\begin{equation*}
    \begin{split}
        \p_\tau \eta^\a &= g^{lk}\biggr( \Tilde{\p}_l\Tilde{\p}_k\eta^\a + \Tilde{g}^{pq}(\Tilde{\p}_p \eta^\b \Tilde{\p}_l \Tilde{\p}_k \eta_\b + \Tilde{\p}_l \mathscr{A}_{ak}[\sigma]\Tilde{g}^{ab}\mathscr{A}_{bp}[\sigma])\Tilde{\p}_q \eta^\a \biggr) + a(\tau)\eta^\a \\
        & \qquad - b^\rho(\tau) \l E_\rho, \nu_\a \r - \l L(\tau) \GG^{-1}, \nu_\a \r.  
    \end{split}
\end{equation*}

Linearising at $\eta^\a = 0$ in the direction of $\eta^\a$ gives
\begin{equation*}
    \begin{split}
        \p_\tau \eta^\a &\leq \D \eta^\a  + C\sum_{\b} \|\eta^\b\|^3_{C^2} + C\|\sigma - 1\|^2_{C^3} \|\eta^\a \|_{C^1} + C\|\sigma - 1\|^2_{C^2} \sum_{\b} \|\eta^\b\|^2_{C^2}  \\
        & \qquad + C\|\sigma - 1\|^2_{C^2} \|\eta^\a\|_{C^2} + C\|\sigma - 1 \|^4_{C^3} \|\eta^\a\|_{C^1} + n \eta^\a - n \Dot{F}^{\a 0} \\
        & \qquad  + C(\| \sigma - 1\|^p_{C^2} + \sum_\a \|\eta^\a\|^q_{C^1}) 
    \end{split}
\end{equation*}
hence, 
\begin{equation*}
    \begin{split}
        \frac{d}{d\tau}\int_{S^n} (\eta^\a)^2 & = 2 \int_{S^n} \eta^\a \p_\tau \eta^\a \\
        & \leq 2\int_{S^n} \eta^\a \biggr(\D \eta^\a + n \eta^\a\biggr) + C\|\eta^\a\|^{\frac{1}{2}}_{L^2} \biggr( \sum_{\b} \|\eta^\b\|^3_{C^2} + \|\sigma - 1\|^2_{C^3} \|\eta^\a \|_{C^1}  \\
        & \qquad + \|\sigma - 1\|^2_{C^2} \sum_{\b} \|\eta^\b\|^2_{C^2}  + \|\sigma - 1\|^2_{C^2} \|\eta^\a\|_{C^2} + \|\sigma - 1 \|^4_{C^3} \|\eta^\a\|_{C^1}  \\
        & \qquad + \| \sigma - 1\|^p_{C^2} + \sum_\a \|\eta^\a\|^q_{C^1}\biggr). 
    \end{split}
\end{equation*}

To deal with some of the terms, we need the following special case of Gagliardo-Nirenberg interpolation inequality \cite[Equation (16.38)]{ben2020}: let $f\in C^\infty(S^n)$, then for any $k \geq n, k > m$ we have 
\begin{equation}\label{interpolation inequality eq}
    \|f\|_{C^m} \leq C(m) \|f\|_{L^2}^{\frac{k-m}{n+k}} \|f\|_{C^k}^{\frac{n+m}{n+k}}. 
\end{equation}

The analysis of the previous section shows that we can find a sequence of times $t_k$ approaching $T$ (corresponding to a sequence $\tau_k$ approaching infinity for \cref{normalised flow eq} such that the surface $\S_{t_k}$ converges after suitable rotations to a sphere). This implies that $\sigma(\cdot, \tau_k) \to 1, \eta^\a \to 0$ in $C^\infty(S^n)$ as $k \to \infty$, and the regularity estimates indicate that we have uniform bounds on all derivatives, so there exists constants $C_m, C_m'$ such that 
\begin{equation*}
    \|\sigma(\cdot, \tau)\|_{C^m(S^n)} \leq C_m, \qquad \|\eta^\a(\cdot, \tau)\|_{C^m(S^n)} \leq C_m'
\end{equation*}
for all $\tau \in [0, \infty)$ and each $m \geq 0$. Applying the interpolation with large $k$ and the uniform bound on derivatives for $\eta^\a, \sigma - 1$ we have  that 
\begin{equation*}
    \|\sigma - 1\|_{C^3} \leq C(\e) \|\sigma - 1 \|_{L^2}^{1 - \e}, \qquad \|\eta^\a\|_{C^2} \leq C(\e)\|\eta^\a\|_{L^2}^{1 - \e}
\end{equation*}
for any $\e > 0$. We note that the junk terms are of the form $C \|\sigma - 1\|^a_{C^3} \|\eta^\a\|^b_{C^2} \|f\|^{\frac{1}{2}}_{L^2}$ for some $a \geq 2, b \geq 1$ and $f$ denotes either $\sigma - 1$ or $\eta^\a$ depending on the evolution equation, and $\|\eta^\a\|$ has a power of at least $3/2$ in $\eta^\a \p_\tau \eta_\a$. We can thus apply \cref{interpolation inequality eq} (with Peter-Paul inequality if needed) to obtain that 
\begin{equation*}
    C \|\sigma - 1\|^a_{C^3} \|\eta^\a\|^b_{C^2} \|f\|^{\frac{1}{2}}_{L^2} \leq C(\|\sigma - 1\|^p_{L^2} + \|\eta^\a\|^q_{L^2})
\end{equation*}
for some $p, q > 2$. Therefore, 
\begin{equation*}
    \begin{split}
        \frac{d}{d\tau} \int_{S^n} (\sigma - 1)^2 + \eta^\a \eta_\a & \leq 2\int_{S^n} \biggr( (\sigma - 1)\biggr(3\D(\sigma - 1) + 4n (\sigma - 1) \biggr)  \\
        & \qquad +  \eta^\a (\D \eta_\a + n \eta_\a) \biggr) + C(\|\sigma - 1\|^p_{L^2} + \sum_\a \|\eta^\a\|^q_{L^2})
    \end{split}
\end{equation*}
for some $p, q > 2$, where the last term denotes the sum of $L^2$-norms of $\sigma - 1, \eta^\a$ with such powers.  

Now we write $\sigma - 1, \eta^\a$ in spherical harmonics: 
\begin{equation*}
    \sigma - 1 = \sum_{\g = 0}^\infty \phi_{\g}, \qquad \eta^\a =  \sum_{\g = 0}^\infty \varphi_{\g}^\a,
\end{equation*}
then we have 
\begin{equation*}
    \begin{split}
        &\quad\frac{d}{d\tau} \int_{S^n} (\sigma - 1)^2 + \eta^\a \eta_\a\\
        &\leq 2\sum_{\g} \int_{S^n} \biggr( (4n - 3 \lambda_{\g})\phi_{\g}^2 + \sum_\a(n - \lambda_{\g}^\a )\varphi_{\g}^2 \biggr) + C(\|\sigma - 1\|^p_{L^2} + \sum_\a \|\eta^\a\|^q_{L^2})\\
        & \leq 2 \sum_\a \left( \int_{S^n}\left( 4n \phi_0^2 + n \phi_1^2 + n (\varphi_0^\a)^2\right) - 2\sum_{\g \geq 2} \int_{S^n} (2n+2) (\phi_{\g}^2 + (\varphi_{\g}^\a)^2) \right) \\
        & \qquad + C(\|\sigma - 1\|^p_{L^2} + \sum_\a \|\eta^\a\|^q_{L^2})\\
        & \leq -4(n+1)\left((\|\sigma - 1\|_{L^2}^2 + \sum_\a \|\eta^\a \|_{L^2}^2 \right)  + C(\|\sigma - 1\|^p_{L^2} + \sum_\a \|\eta^\a\|^q_{L^2}), 
    \end{split}
\end{equation*}
where we separated the terms with positive coefficients in $4n - 3\lambda_{\g}$ and $n - \lambda_{\g}$. The boundedness of the first term in the second inequality comes from the constraint of fixing the volume and centre of mass: expanding $F$ around $S^n$ in terms of the spherical harmonics we get that 
\begin{equation*}
    0 = \fint_{S^n} \Dot{F}^{0\g} \Upsilon_{\g} \Tilde{F} + \Dot{F}^{\a\g} \Upsilon_{\g} \nu_\a = \Dot{F}^{01j} e_j + \Dot{F}^{\a 0} \nu_\a, 
\end{equation*}
where $\Dot{F}$ denotes the coefficients, $\Tilde{F}$ denotes the embedding for $S^n$ and $\Upsilon$ denotes spherical harmonics, this indicates that the integral of $\phi_1^2, \varphi_0^2$ are bounded. For the fixed volume, we can expand the volume by \cref{rewriting the metric for stability eq} around $S^n$: 
\begin{equation*}
    |S^n| = \int_{S^n} \sqrt{\det g} = \int_{S^n} \biggr(1 + C_1(\D (\sigma - 1) + n(\sigma - 1)) \biggr) + C_2(\|\sigma - 1\|^p_{L^2} + \sum_\a \|\eta^\a\|^q_{L^2})
\end{equation*}
for some $p,q > 2$. This indicates that the integral of $\phi_0^2$ is bounded once we expand $\sigma - 1$ in spherical harmonics, therefore by ODE comparison we have that 
\begin{equation*}
    \|\sigma - 1\|_{L^2} + \sum_\a \|\eta^\a\|_{L^2} \leq C e^{-4(n+1)\tau}, 
\end{equation*}
provided $\|\sigma - 1\|_{L^2}, \|\eta^\a\|_{L^2}$ are initially small, which is guaranteed by subsequence convergence, and the exponential decay holds as $\tau \to \infty$. 

The convergence of higher derivatives with exponential rate $-4(n+1)$ now follows from the $L^2$ convergence and the interpolation \cref{interpolation inequality eq}. Finally, the convergence of the embeddings $F(\cdot, \tau)$ can be recovered from the convergence of $\sigma, \eta^\a$ by \cref{Gauss map parameterisation eq}.

\subsection{Huisken's theorem}

The stability result indicates that $a(\tau)$ converges exponentially to a constant, this together indicates the exponential convergence of $L(\tau)$ by \cref{definition for rotation in normalised flow eq}, hence $b^{\rho}(\tau)$. The MCF can be related to the normalised flow as follows:  

Let $\check{F}(\cdot, t(\tau)) = R(\tau) \psi(\tau) F + T(\tau)$, where $R(\tau) \in \mathfrak{so}(n+1,k)/(\mathfrak{so}(n+1) \times \mathfrak{so}(k))$, $\psi(\tau)\neq 0$ for any $\tau$, and $T(\tau)$ is a translation. We want to find $R, \psi, T$ such that $\p_t \check{F} = \check{H}$ for some $t(\tau)$. The induced metric is 
\begin{equation*}
    \check{g}_{ij} = \l \check{F}_* \p_i , \check{F}_* \p_j \r = \psi^2\l R (\p_i F), R (\p_j F) \r = \psi^2 \l  \p_i F,  \p_j F \r = \psi^2 g_{ij}
\end{equation*}
since $R$ is an isometry. From this we easily deduce that $\check{g}^{ij} = \psi^{-2} g^{ij}$. Likewise, by \cref{gauss local eq} we have that 
\begin{equation*}
    \tensor{\check{h}}{_{ij}^\a} = - \l \frac{\p \check{F}^\a}{\p x^i \p x^j}, \check{\nu}_\a \r = - \psi \l R \left(\frac{\p F^\a}{\p x^i \p x^j} \right), R (\nu_\a) \r = \psi R \tensor{h}{_{ij}^\a}, 
\end{equation*}
where $\nu_\a$ denotes a basis vector for the normal space. Thus, 
\begin{equation*}
    \check{H} = \check{g}^{ij} \check{h}_{ij} = \psi^{-1} R H.  
\end{equation*}

We note that $\p_t \check{F} = \frac{d \tau}{d t } \p_\tau \check{F}$ and 
\begin{equation*}
    \begin{split}
        \p_\tau \check{F} &= (R'\psi + \psi' R)F + R\psi(H + a F + L F + b^\rho E_\rho) + T' \\
        & = \left( R^{-1} R' + \frac{d \ln \psi}{d\tau} + a + L  \right)(\check{F} - T) + \psi^2 \check{H} + R\psi b^\rho E_\rho + T', 
    \end{split}
\end{equation*}
where the superscript $'$ denotes the derivative with respect to $\tau$. Choosing 
\begin{equation*}
    \frac{d\tau}{dt} = \psi^{-2}, \qquad R^{-1}R' = - L, \qquad \frac{d \ln \psi}{d\tau} = - a, \qquad T' = - R\psi b^\rho E_\rho
\end{equation*}
yield the desired MCF equation, the rescaled time is defined by 
\begin{equation*}
    t(\tau) = \int_0^\tau \psi^2(\tau') d\tau'. 
\end{equation*}
The exponential decay in $L, a, b^\rho E_\rho$ indicates the exponential decay of $R, \psi, T$. In particular, the equation 
\begin{equation*}
    \frac{d R}{d\tau} = - R(\tau) L(\tau)
\end{equation*}
indicates that $R$ converges to some fixed rotation as $\tau \to \infty$, since $L$ converges exponentially. This implies that the rescaled flow in \cref{subsequence convergence section} converges to a sphere rotated by $R(\infty)$, so the original MCF converges to a point, hence the Huisken's theorem for spacelike-convex submanifolds is proved. 

\begin{theorem}
    Let $F: \S^n \times [0, T) \to \R^{n+1,k}$ be a maximal solution to MCF, if $F_0(\cdot) \coloneqq F(\cdot, 0)$ is a spacelike-convex embedding, then $F_t(\cdot) \coloneqq F(\cdot, t)$ is a spacelike-convex embedding for all $0 < t < T $, $F_t$ converges uniformly to a point $p\in \R^{n+1,0} \times \{0\} \subset \R^{n+1,k}$ as $t \to T$. Moreover, a suitably rescaled flow $\Tilde{F}(\cdot, \tau)$ exists for all $\tau \in [0, \infty)$ and converges in smooth topology to a smooth embedding whose image coincides with $S^n \subset \R^{n+1,0}$ of $\R^{n+1,k}$. 
\end{theorem}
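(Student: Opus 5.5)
The plan is to assemble the final theorem from the results already established, in four stages: preservation of the convexity class, finite maximal time with blow-up, subsequential convergence of the rescaled flow, and the stability argument that upgrades this to full convergence.

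\textbf{Stage 1: the convexity class is preserved.} By \cref{short time existence MCF} a solution exists. By \cref{spacelike-convexity is preserved}, \cref{pinching preserved prop} and \cref{spacelike-convex data preserves spacelike property coro}, each $F_t$ stays spacelike and spacelike-convex with uniform pinching $\a$. By \cref{preservation of acausality general}, acausality is preserved. Since $d^2 = |F(y,t)-F(x,t)|^2 > 0$ for $y \neq x$, $F_t$ stays injective. Together with \cref{spacelike convex diffeo to spheres and acausal}, this shows that $F_t$ is a spacelike-convex embedding of a sphere for all $t < T$.

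\textbf{Stage 2: finite time and extinction to a point.} To show $T < \infty$, I would use \cref{evolution of area form} together with the inward pinching. Alternatively, one can fix a decomposition $\R^{n+1,0} \oplus \R^{0,k}$ and compare the support function $\sigma(z,t)$, which is nonincreasing because $H$ is inward. The cleanest route is the exterior pseudosphere of \cref{exterior ball lemma noncollapsing}. A shrinking pseudosphere of radius $R$ evolves like $\sqrt{R^2 - 2nt}$ and acts as a barrier via the avoidance principle for the distance function $d^2$, as in \cref{preservation of acausality general}. This bounds $T$. \cref{long time existence thm} then gives $\sup \|h\|^2 \to \infty$. Since \cref{noncollapsing constants bound lemma} and \cref{acausal spacelike-mean-convex submanifold preserves noncollapsing} show that noncollapsing is preserved with fixed $\d_\pm$, \cref{diameter bound coro} and \cref{mean curvature comparison at different points prop} give
\[
\operatorname{diam}(F_t)^2 \leq C\,|H|_{\max}^{-2} \leq C'\,|H|_{\min}^{-2}.
\]
Blow-up of $\|h\|$ forces $|H|_{\max} \to \infty$ via \cref{mean curvature does not tilt too much prop}, so the diameter tends to $0$. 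Because $|\partial_t F| = |H|$ is controlled in the Euclidean norm of \cref{euclidean norm for spacelike-convex submanifolds with bounded second fundamental form}, with $\int_0^T$ finite by the type-I bound $|H|^2 \leq C/(T-t)$, $F_t$ converges uniformly to a point $p$.

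\textbf{Stage 3: rescaled convergence.} \cref{rescaled solns converge to sphere surface case prop}, together with its $n=1$ counterpart, gives subsequential smooth convergence of the rescaled flows to the unit sphere in some $\R^{n+1,0}$. I then pass to the normalised flow \cref{normalised flow eq}, which is related to MCF by the reparametrisation $\check F = R\psi F + T$ with $t(\tau) = \int_0^\tau \psi^2$. The Stability section yields exponential decay of $\sigma - 1$ and of $\eta^\a$ in every $C^m$. Consequently $a$, $b^\rho$ and $L$ converge exponentially, and $R(\tau) \to R(\infty)$. The rescaled flow $\Tilde F(\cdot,\tau)$ therefore exists for all $\tau \ge 0$ and converges smoothly to $R(\infty)(S^n)$. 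This is a round sphere in the maximal spacelike subspace $R(\infty)P$; after relabelling, it is $S^n \subset \R^{n+1,0}$. Choosing coordinates so that $\R^{n+1,0}$ is this subspace through $p$ places $p$ in $\R^{n+1,0} \times \{0\}$.

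\textbf{Main obstacle.} I expect the hardest step to be justifying that the subsequential convergence feeds into the stability regime. Specifically, one needs uniform $C^m$ bounds on $\sigma$ and $\eta^\a$ along the whole normalised flow, not merely along a subsequence. I would get these from the rescaled derivative estimates of \cref{higher derivative stays bounded for some short time prop} under the type-I bound. Then, choosing $\tau_k$ late enough that the $L^2$ smallness holds, the ODE inequality of the exponential-convergence subsection propagates the smallness for all later $\tau$.
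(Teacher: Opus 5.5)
Your architecture matches the paper's: preservation of the convexity class, the blow-up criterion, subsequential convergence of the Lorentz-rescaled flows, then the normalised flow with exponential decay of $\sigma-1,\eta^\alpha$ and $R(\tau)\to R(\infty)$.

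\textbf{The finite-time bound in Stage 2 fails.} The barrier argument uses only $(\partial_t-\Delta)|F-c|^2=-2n$, which gives $\max_{\Sigma_t}|F-c|^2\le R^2-2nt$. Once the right-hand side is negative, this only says that $\Sigma_t$ lies in the timelike cone of $c$. Compact spacelike-convex submanifolds can do that, so no bound on $T$ follows.

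Here is a concrete counterexample. Take a round sphere of radius $r_0$ centred at $q$ in a spacelike $(n+1)$-plane $P$, so that $\delta_-=1/n$. Apply the exterior pseudosphere lemma, which allows any unit spacelike $N_x$, with $N_x=\cosh\phi\,\widehat H_x+\sinh\phi\,\nu$ and $\nu\perp P$ unit timelike. Then $c=q+r_0\tanh\phi\,\nu$ and $R=r_0/\cosh\phi$, and along the flow $|F-c|^2=r(t)^2-r_0^2\tanh^2\phi=R^2-2nt$ holds exactly on $\Sigma_t$. Yet the sphere survives until $r_0^2/(2n)>R^2/(2n)$.

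Your fallback routes do not close either. Monotonicity of $\sigma$ gives no rate. The identity $\frac{d}{dt}|\Sigma_t|=-\int|H|^2\,d\mu$ together with $|H|^2\ge\min_{\Sigma_0}|H|^2$ only gives exponential decay of area.

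The right tool is already in the paper. At a spatial minimum of $|H|^2$ we have $\langle\nabla_iH,H\rangle=0$, so $\nabla H$ is timelike. The computation proving preservation of spacelike-mean-convexity, together with $\sum_{i,p}(H\cdot h_{ip})^2\ge\frac1n|H|^4$, then gives
\[
\frac{d}{dt}\min_{\Sigma_t}|H|^2\ \ge\ \frac{2}{n}\Bigl(\min_{\Sigma_t}|H|^2\Bigr)^2 .
\]
Hence $T\le n/(2\min_{\Sigma_0}|H|^2)$, which is sharp for spheres. The paper simply assumes $T<\infty$ when it sets $\lambda_j=(T-t_j)^{-1/2}$. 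Since Stage 3 needs finite $T$, this step is worth including, but with this argument rather than the barrier.

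\textbf{Extinction to a point also does not follow as written in Stage 2.} The bound $|F(y)-F(x)|^2\le C|H|_{\max}^{-2}\to0$ is a statement about the indefinite form. A limit of spacelike separations whose length tends to $0$ may be a nonzero null vector. So this does not make $F_t(\Sigma)$ shrink in any Euclidean sense.

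In addition, the Euclidean norm of the corollary is built from a normal frame at one point of one time slice. It therefore gives no time-uniform control of $\|H\|$. The type-I bound you invoke is likewise only asserted in the paper.

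The paper instead obtains the point limit from the stability step:
\begin{itemize}
\item exponential convergence of $a$, $b^\rho$, $L$ gives $R(\tau)\to R(\infty)$;
\item $\frac{d}{d\tau}\ln\psi=-a\to-n$, so $\psi\to0$;
\item $T'=-R\psi\, b^\rho E_\rho$ is integrable, so $\check F=R\psi F+T$ converges uniformly to $T(\infty)$.
\end{itemize}
You already have all of this in Stage 3, so the extinction claim should be derived there.
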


For $n = 1$, this is an analogue of Gage--Hamilton theorem.

\section*{Acknowledgement}

This result will form part of second author's doctoral thesis at the Australian National University, he wishes to thank the first author and Mat Langford for many helpful discussions and support throughout the years.


\bibliography{bib}

@book{ben2020,
  author = {Ben Andrews and Bennet Chow and Christine Guenther and Mat Langford},
  publisher = {American Mathematical Society},
  title = {{Extrinsic Geometric Flows}},
  year = {2020}, 
  series = {Graduate Texts in Mathematics}
}

@misc{baker2011mean,
      title={The mean curvature flow of submanifolds of high codimension}, 
      author={Charles Baker},
      year={2011},
      howpublished = {\url{https://arxiv.org/abs/1104.4409}}
}

@article{classificationsemi-Riemannian,
  title={Totally umbilic Lorentzian submanifolds},
  author={Seong-Soo Ahn and Dong-Soo Kim and Young Ho Kim},
  journal={Journal of Korean Mathematical Society},
  year={1996},
  volume={33},
  pages={507-512},
}

@article{andrews2011noncollapsingmeanconvexmeancurvature,
      title={Non-collapsing in mean-convex mean curvature flow}, 
      author={Ben Andrews},
      year={2012},
      journal = {{Geometry and Topology}},
      volume = {16},
      issue = {3},
      pages = {1413-1418},
}

@article{locallyconvexmanifold,
  title={On locally convex manifolds},
  author={John Van Heijenoort},
  year={1952},
  volume = {5},
  issue = {3},
  journal={Communications on Pure and Applied Mathematics},
  pages = {223 - 242}  
}

@article{huisken84,
    author = {Gerhard Huisken},
    title = {Flow by mean curvature of convex surfaces into spheres},
    journal = {Journal of differential geometry},
    year = {1984},
    volume = {20},
    issue = {1},
    pages = {237-266}
}

@article{andrewsbaker2010,
    author = {Ben Andrews and Charles Baker},
    title = {MEAN CURVATURE FLOW OF PINCHED
SUBMANIFOLDS TO SPHERES},
    journal = {Journal of differential geometry},
    year = {2010},
    volume = {85},
    issue = {3},
    pages = {357-395}
}

@misc{vogiatzi2023singularitymodelshighcodimension,
      title={Singularity Models for High Codimension Mean Curvature Flow in Riemannian Manifolds}, 
      author={Artemis A. Vogiatzi and Huy T. Nguyen},
      year={2023},
      howpublished = {\url{https://arxiv.org/abs/2303.00414}} 
}

@article{bakernguyen2023,
    author = {Charles Baker and Huy The Nguyen},
    title = {Evolving pinched submanifolds of the sphere by mean curvature flow},
    journal = {Mathematische Zeitschrift},
    year = {2023},
    volume = {303},
    number = {50}
}

@article{lynchnguyen2021,
    author = {Stephen Lynch and Huy The Nguyen },
    title = {Pinched Ancient Solutions to the High Codimension Mean Curvature Flow},
    journal = {Calculus of Variations and Partial Differential Equations},
    year = {2021},
    volume = {60},
    number = {29}
}

@misc{vogiatzi2024sharpquarticpinchingmean,
      title={Sharp Quartic Pinching for the Mean Curvature Flow in the Sphere}, 
      author={Artemis A. Vogiatzi},
      year={2024},
      howpublished = {\url{https://arxiv.org/abs/2408.08022}}, 
}

@article{vogiatzi2023meancurvatureflowhigh,
      title={Mean Curvature Flow of High Codimension in Complex Projective Space}, 
      author={Artemis A. Vogiatzi},
      year={2026},
      journal={The Journal of Geometric Analysis},
      volume = {36},
      number = {87},
}

@article{Lambert_2019,
   title={Spacelike Mean Curvature Flow},
   volume={31},
   ISSN={1559-002X},
   url={http://dx.doi.org/10.1007/s12220-019-00266-4},
   DOI={10.1007/s12220-019-00266-4},
   number={2},
   journal={The Journal of Geometric Analysis},
   publisher={Springer Science and Business Media LLC},
   author={Lambert, Ben and Lotay, Jason D.},
   year={2019},
   month=aug, pages={1291–1359} }

@article{andrews96,
    author = {Ben Andrews},
    title = {Contraction of convex hypersurfaces by their affine normal},
    journal = {Journal of differential geometry},
    year = {1996},
    volume = {43},
    issue = {2},
    pages = {207-230}
}

@article{Risa_2018,
   title={Ancient Solutions of Geometric Flows with Curvature Pinching},
   volume={29},
   ISSN={1559-002X},
   url={http://dx.doi.org/10.1007/s12220-018-0036-0},
   DOI={10.1007/s12220-018-0036-0},
   number={2},
   journal={The Journal of Geometric Analysis},
   publisher={Springer Science and Business Media LLC},
   author={Risa, Susanna and Sinestrari, Carlo},
   year={2018},
   month=may, pages={1206–1232} }

@article{Hill70,
    author = {Clyde Denson Hill},
    title = {{A Sharp Maximum Principle for Degenerate
Elliptic-Parabolic Equations}},
    journal = {Indiana University Mathematics Journal},
    year = {1970},
    volume = {20},
    issue = {3},
    pages = {213-229}
}

@article{bony69,
    author = {J. M. Bony},
    title = {Principe du Maximum, Inégalité de Harnack et Unicité du Problème de Cauchy Pour les Opérateurs Elliptiques Dégénérés},
    journal = {Annales de L’institut Fourier (Grenoble)},
    year = {1969},
    volume = {19},
    pages = {277-304}
}

@article{shengwang,
    title={Singularity Profile in the Mean Curvature Flow}, 
      author={Weimin Sheng and Xu-Jia Wang},
      year={2009},
    journal = {Methods and Applications of Analysis},
    volume = {16},
    issue = {2}, 
    pages = {139-156}
}
\bibliographystyle{amsplain}

\end{document}